\newcommandx{\unsure}[2][1=]{\todo[linecolor=red,backgroundcolor=red!25,bordercolor=red,#1]{#2}}
\newcommandx{\change}[2][1=]{\todo[linecolor=blue,backgroundcolor=blue!25,bordercolor=blue,#1]{#2}}
\newcommandx{\info}[2][1=]{\todo[linecolor=OliveGreen,backgroundcolor=OliveGreen!25,bordercolor=OliveGreen,#1]{#2}}
\newcommandx{\improvement}[2][1=]{\todo[linecolor=Plum,backgroundcolor=Plum!25,bordercolor=Plum,#1]{#2}}
\newcommand{%
    
    \import{./Figures/}{.pdf_tex}
}[1]{%
    
    \import{./Figures/}{#1.pdf_tex}
}
\numberwithin{equation}{section}
\theoremstyle{plain}
\newtheorem{theorem}{Theorem}[section]
\theoremstyle{theorem}
\newtheorem{prop}[theorem]{Proposition}
\newtheorem{lem}[theorem]{Lemma}
\newtheorem{cor}[theorem]{Corollary}
\newtheorem{question}[theorem]{Question}
\newtheorem*{question*}{Question}
\theoremstyle{definition}
\newtheorem{defn}[theorem]{Definition}
\newtheorem{remark}[theorem]{Remark}
\newcommand{\R}{\mathbb{R}}
\newcommand{\T}{\mathbb{T}}
\newcommand{\C}{\mathbb{C}}
\newcommand{\Q}{\mathbb{Q}}
\newcommand{\Z}{\mathbb{Z}}
\newcommand{\N}{\mathbb{N}}
\newcommand{\D}{\mathbb{D}}
\newcommand{\E}{\mathscr{E}}
\newcommand{\cF}{\mathcal{F}}
\newcommand{\cN}{\mathcal{N}}
\newcommand{\cR}{\mathcal{R}}
\newcommand{\cK}{\mathcal{K}}
\newcommand{\cL}{\mathcal{L}}
\newcommand{\cD}{\mathcal{D}}
\newcommand{\cT}{\mathcal{T}}
\DeclareMathOperator{\Aut}{Aut}
\DeclareMathOperator{\Int}{Int}
\DeclareMathOperator{\cusp}{cusp}
\numberwithin{figure}{section}
\DeclareFontFamily{U}{tipa}{}
\DeclareFontShape{U}{tipa}{m}{n}{<->tipa10}{}
\newcommand{\arc@char}{{\usefont{U}{tipa}{m}{n}\symbol{62}}}
\newcommand{\arc}[1]{\mathpalette\arc@arc{#1}}
\newcommand{\arc@arc}[2]{%
  \sbox0{$\m@th#1#2$}%
  \vbox{
    \hbox{\resizebox{\wd0}{\height}{\arc@char}}
    \nointerlineskip
    \box0
  }%
}
\date{\today}
\begin{document}

\title[Slices of Schwarz reflections, and B-involutions]{A general dynamical theory\\ of Schwarz reflections, B-involutions,\\ and algebraic correspondences}


\begin{author}[Y.~Luo]{Yusheng Luo}
\address{Department of Mathematics, Cornell University, 212 Garden Ave, Ithaca, NY 14853, USA}
\email{yusheng.s.luo@gmail.com}
\end{author}
\thanks{Y.L. was partially supported by NSF Grant DMS-2349929.}

\begin{author}[M.~Lyubich]{Mikhail Lyubich}
\address{Institute for Mathematical Sciences, Stony Brook University, 100 Nicolls Rd, Stony Brook, NY 11794-3660, USA}
\email{mlyubich@math.stonybrook.edu}
\end{author}
\thanks{M.L. was partially supported by NSF grants DMS-2247613 and 1901357, the Clay Fellowship, and the Centre for Nonlinear Analysis and Modeling (CNAM)}

\begin{author}[S.~Mukherjee]{Sabyasachi Mukherjee}
\address{School of Mathematics, Tata Institute of Fundamental Research, 1 Homi Bhabha Road, Mumbai 400005, India}
\email{sabya@math.tifr.res.in}
\thanks{S.M. was supported by the Department of Atomic Energy, Government of India, under project no.12-R\&D-TFR-5.01-0500, an endowment of the Infosys Foundation, and SERB research project grants MTR/2022/000248.}
\end{author}

\begin{abstract}
In this paper, we study matings of (anti-)polynomials and Fuchsian/reflection groups as Schwarz reflections/B-involutions or as (anti-)holomorphic correspondences, as well as their parameter spaces.
We prove the existence of matings of generic (anti-)polynomials, such as periodically repelling, or geometrically finite (anti-)polynomials, with circle maps arising from the corresponding groups.
These matings emerge naturally as degenerate (anti-)polynomial-like
maps, and we show that the corresponding parameter space slices for such matings bear strong resemblance with parameter spaces of polynomial maps.
Furthermore, we provide algebraic descriptions for these matings, and construct algebraic correspondences that combine generic (anti-)polynomials and genus zero orbifolds in a common dynamical plane, providing a new concrete evidence to Fatou’s vision of a unified theory of groups and~maps.
\end{abstract}

\maketitle

\setcounter{tocdepth}{1}
\tableofcontents

\section{Introduction}\label{intro_sec}

\bigskip

A dynamical theory of Schwarz reflection maps has emerged in the past decade
that provided a general framework for matings between anti-polynomial  maps and Fuchsian reflection groups as (anti-)algebraic correspondences, leading to relations between the corresponding parameter spaces (which also gave a new insight into the holomorphic case). Many special families have been thoroughly explored \cite{LLMM1,LLMM2,LLMM3,LMM2,LMMN,LMM23,MM3}. In this paper we will unify these various pieces into
a general theory.

A prototype of the theory we develop is the classical theory of polynomial-like maps.
Polynomial-like maps were first defined by Douady and Hubbard as a tool to study similarities between parameter spaces of complex polynomials and those of more general families of holomorphic maps \cite{DH2}. This led to great deal of work on the understanding of parameter spaces of complex dynamical systems. In the process, intimate connections between polynomial-like maps and renormalization theory emerged (see \cite{Sul1,McM1,Lyu99}). 

Inou and Kiwi developed a more general theory of polynomial-like maps and the associated straightening maps (that relate parameter spaces of holomorphic maps to appropriate standard models) which address the additional complexity furnished by higher degree renormalizations and critical orbit interactions \cite{IK}. 

However, the above theory has some limitations: over the years, various situations related to \emph{parabolic external dynamics} started to emerge which necessitated a more general set-up of pinched polynomial-like maps \cite{Mak93,BF05,Lom15,LLMM3,PR21,LMM23}. 

The presence of parabolic elements is also an important trait of all the groups that appear in our mating theory, which brings us to the general theory of \emph{pinched polynomial-like maps} (that generalizes a theory that has been recently developed in \cite{BLLM}).

The principal results of this paper can be summarized as follows.
\begin{enumerate}
\item\label{mateable_generic} We prove the existence of conformal matings between `generic' polynomials (with connected Julia set) and large classes of Fuchsian/reflection groups. Such matings are realized both as maps in one complex variable as well as algebraic correspondences (Theorems~\ref{conf_mating_polygonal_thm},~\ref{conf_mating_antiFarey_thm},~\ref{conf_mating_b_inv_thm},~\ref{all_corr_thm}).

\item We introduce the class of \emph{degenerate polynomial-like maps} that unifies the above mating constructions. This framework allows us
\begin{enumerate}
    \item to interpret the mating structure of algebraic correspondences in terms of the classical description of polynomial-like maps as matings of `hybrid classes' and `external maps' (Sections~\ref{dpl_maps_subsec},~\ref{poly_schwarz_sec},~\ref{schwarz_anti_farey_sec},~\ref{holo_para_space_reln_sec}), and

    \item to prove general bijection/homeomorphism statements between connectedness loci of polynomials and various parameter spaces of matings (Theorems~\ref{thm:CL}, \ref{thm:A},~\ref{thm:C}, \ref{farey_antipoly_thm}, \ref{b_inv_fbs_thm}).
\end{enumerate}
\end{enumerate}
In item~\eqref{mateable_generic} above, the term `generic' refers to the collection of at most finitely renormalizable, periodically repelling polynomials and geometrically finite polynomials.

Before elaborating on the main theorems, let us briefly mention some of the technical novelties involved in carrying out the tasks mentioned above. 

\begin{enumerate}
\item We use the structure of degenerate polynomial-like maps to adapt classical `puzzle piece' machinery for our setting. This plays a pivotal role in establishing various combinatorial continuity and rigidity results, which lie at the heart of our conformal mateability theorems (Sections~\ref{comb_class_schwarz_sec} and~\ref{fin_renorm_rigid_sec}).

\item The construction of algebraic correspondences (global objects) from one variable matings between polynomials and groups (i.e., certain degenerate polynomial-like maps that are semi-global objects) requires algebraic descriptions of such matings. This is straightforward for Schwarz reflection maps, which are known to be `rationally semi-conjugate' to reflection in the unit circle. To deal with the holomorphic situation, we introduce a class of meromorphic maps called \emph{B-involutions}, and give an explicit algebraic description to facilitate the task of globalization (see Section~\ref{b_inv_sec}, cf. \cite{BLLM}).

\item In order to manufacture algebraic correspondences that are matings between generic polynomials and groups with multiple parabolics, we need to define such correspondences on \emph{nodal spheres} or \emph{cacti of spheres}. This extends previously
known mating frameworks for correspondences; and in particular, gives first examples of correspondences associated with piecewise Schwarz reflection maps (Sections~\ref{polygonal_corr_sec},~\ref{corr_b_sigma_subsec}). Even for the simplest examples of piecewise Schwarz reflection maps (namely, the \emph{Circle-and-Cardioid} Schwarz reflections studied in \cite{LLMM1,LLMM2}), such a correspondence description is new.
\end{enumerate}

In Appendix~\ref{appendix}, we investigate canonical product structures in certain families of degenerate polynomial-like maps. This product structure is based on the classical idea of decomposing a polynomial-like map into a \emph{hybrid class} and an \emph{external map} \cite{DH2,Lyu99}. Loosely speaking, these families bind connectedness loci of polynomials (parameter space of hybrid classes) and Teichm{\"u}ller spaces of certain surfaces (parameter space of external maps) together in a dynamically natural fashion. A more detailed analysis of the boundary of such families of degenerate polynomial-like maps and the associated degeneration phenomena will be presented in a subsequent work.

\subsection{Antiholomorphic matings: Schwarz reflections}
A domain $\Omega$ in the Riemann sphere is called a \emph{quadrature domain} if it admits an anti-meromorphic map $S:\Omega\to\widehat{\C}$ that continuously extends to the identity map on the boundary $\partial\Omega$. Such a map  $S:\overline{\Omega}\to\widehat{\C}$ is called the \emph{Schwarz reflection map} of $\Omega$. A simply connected domain $\Omega\subsetneq\widehat{\C}$ is a quadrature domain if and only if its Riemann uniformization $\phi:\D\to\Omega$ extends as a rational map of the Riemann sphere. 

A \emph{quadrature multi-domain} $\cD$ is the union of finitely many disjoint simply connected domains $\{\Omega_j\}_{j=1}^k$ admitting a continuous map $S:\overline{\cD}\to\widehat{\C}$ which is anti-meromorphic on $\cD$ and which fixes $\partial\cD$ pointwise. Each component of a quadrature multi-domain is a quadrature domain. The map $S$, which we call a \emph{Schwarz reflection of a quadrature multi-domain}, acts as the Schwarz reflection of $\Omega_j$ on each $\overline{\Omega_j}$, for $j\in\{1,\cdots,k\}$.

The compact set $T(S):=\widehat{\C}\setminus\cD$ is called a \emph{droplet}. Its boundary consists of real-analytic curves possibly with finitely many cusp and double point singularities. Removing these finitely many singular points from the droplet yields the \emph{desingularized droplet} or the \emph{fundamental tile} of $S$. We denote the fundamental tile of $S$ by $T^0(S)$.

The dynamical plane of $S$ admits an invariant partition into the \emph{escaping/tiling set} $T^\infty(S)$ and the \emph{non-escaping set} $K(S)$. The former set  comprises all points that eventually escape to the fundamental tile $T^0(S)$, and the latter set is its complement (see Section~\ref{schwarz_sec} for precise definitions).
\begin{figure}[ht]
\captionsetup{width=0.96\linewidth}
\begin{center}
\includegraphics[width=0.5\linewidth]{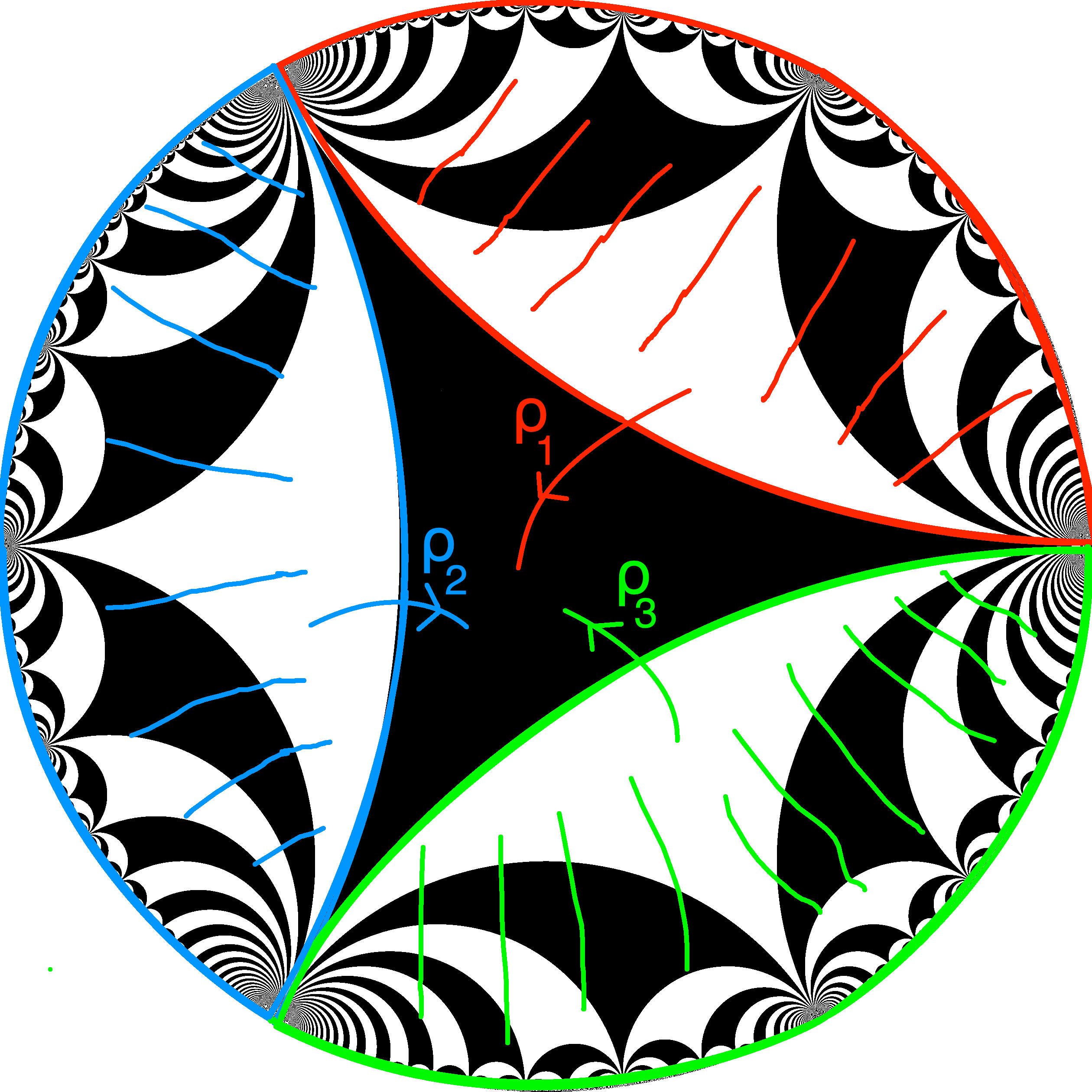}
\end{center}
\caption{The ideal triangle reflection group is generated by the reflections $\rho_1,\rho_2,\rho_3$ in the sides of an ideal hyperbolic triangle. These generators define the corresponding Nielsen map as a piecewise anti-M{\"o}bius map on the shaded regions. The action of the Nielsen map on $\mathbb{S}^1$ is topologically conjugate to $\overline{z}^2$.}
\label{itg_nielsen_fig}
\end{figure}

The exploration of Schwarz reflection dynamical systems was initiated in \cite{LLMM1,LLMM2}, and it was observed that in many cases, the Schwarz reflection map $S$ of a quadrature multi-domain is a \emph{combination/mating} of an anti-holomorphic polynomial (anti-polynomial for short) and an ideal polygon reflection group. Roughly, a Schwarz reflection $S$ is called a \emph{mating} of an anti-polynomial and  an ideal polygon reflection group if the dynamics of $S$ on its non-escaping set is topologically conjugate (conformally conjugate on the interior) to that of an anti-polynomial on its filled Julia set, while the action of $S$ on the tiling set is conjugate to a piecewise anti-M{\"o}bius map, called the \emph{Nielsen map}, that is orbit equivalent to the ideal polygon reflection group (see Definition~\ref{conf_mat_def_1} for the precise notion of conformal mating and Figure~\ref{itg_nielsen_fig} for an illustration of the Nielsen map of the ideal triangle reflection group). The Nielsen map of an ideal $(d+1)-$gon reflection group is also compatible with degree $d$ anti-polynomials as the action of the Nielsen map on the unit circle is topologically conjugate to the map $\overline{z}^d$.
In \cite{LLMM1,LLMM2}, the so-called \emph{Circle-and-Cardioid} (or C\&C) family of Schwarz reflections was studied in details, and it was shown that many maps in the C\&C family arise as matings of quadratic anti-polynomials and the ideal triangle reflection group.

In the following, we will discuss the general theory of two natural classes of Schwarz reflections:
\begin{enumerate}
\item polygonal Schwarz reflections; and
\item anti-Farey Schwarz reflections.
\end{enumerate}

\subsubsection{Polygonal Schwarz reflections}

We say that $\cD$ is a \emph{tree-like} quadrature multi-domain if $\overline{\cD}$ is connected and simply connected. In this case, each $\Omega_j$ is a Jordan domain, and their touching structure can be recorded by a plane tree (see Figure~\ref{tree_like_qmd_fig}).
A Schwarz reflection $S:\overline{\cD}\to\widehat{\C}$ associated with a tree-like quadrature multi-domain is \emph{polygonal} if the fundamental tile $T^0(S)$ is conformally equivalent to an ideal $(d+1)-$gon in the hyperbolic plane (where $d$ is the degree of the Schwarz reflection), and $S$ has no critical value in $T^0(S)$. The terminology `polygonal' is motivated by the fact that a polygonal Schwarz reflection is conformally conjugate to the Nielsen map of an ideal polygon reflection group near its droplet (see Section~\ref{poly_schwarz_sec} for more details).
We remark that Schwarz reflections in the C\&C family are prototypical examples of polygonal Schwarz reflections. 
\begin{figure}[ht]
\captionsetup{width=0.96\linewidth}
\begin{tikzpicture}
\node[anchor=south west,inner sep=0] at (0,0) {\includegraphics[width=0.8\textwidth]{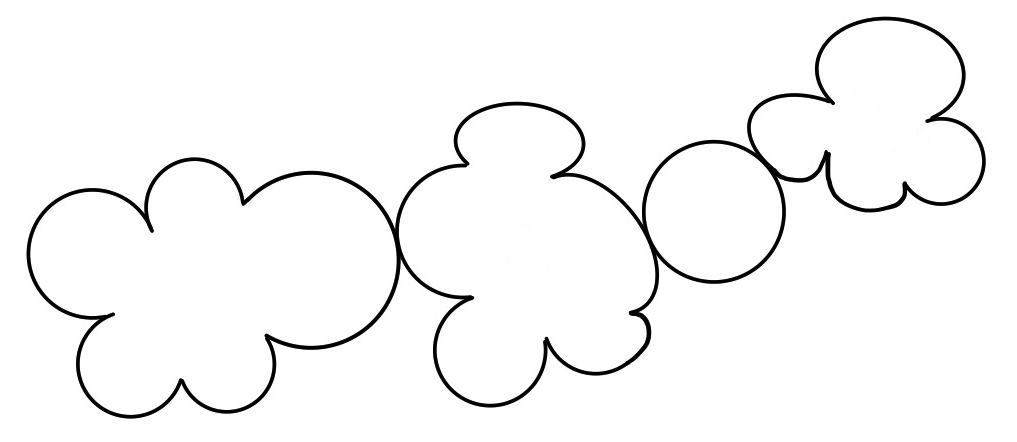}}; 
\node[anchor=south west,inner sep=0,rotate=120] at (6.6,3.888) {\includegraphics[width=0.12\textwidth]{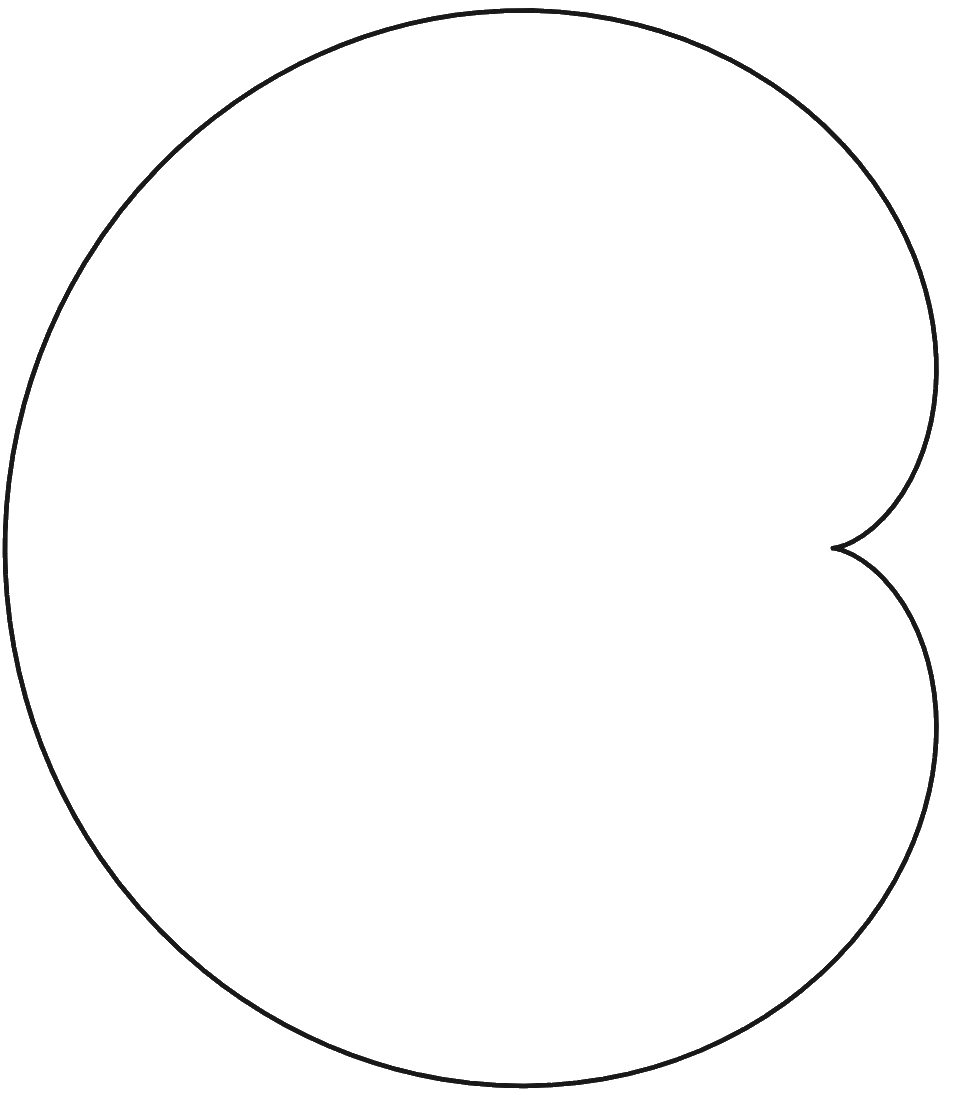}};
\node[anchor=south west,inner sep=0,rotate=120] at (6.6,3.89) {\includegraphics[width=0.12\textwidth]{postpinch_2.png}};
\node[anchor=south west,inner sep=0,rotate=120] at (6.604,3.898) {\includegraphics[width=0.12\textwidth]{postpinch_2.png}};
\node[anchor=south west,inner sep=0,rotate=-40] at (6.3,0.5) {\includegraphics[width=0.16\textwidth]{postpinch_2.png}};
\node[anchor=south west,inner sep=0,rotate=-40] at (6.29,0.5) {\includegraphics[width=0.16\textwidth]{postpinch_2.png}};
\node[anchor=south west,inner sep=0,rotate=-40] at (6.28,0.5) {\includegraphics[width=0.16\textwidth]{postpinch_2.png}};
\node[anchor=south west,inner sep=0,rotate=-40] at (6.304,0.5) {\includegraphics[width=0.16\textwidth]{postpinch_2.png}};
\node[anchor=south west,inner sep=0,rotate=-40] at (6.304,0.5) {\includegraphics[width=0.16\textwidth]{postpinch_2.png}};
\node at (7.75,0.6) {$\Omega_5$};
\node at (5.6,4.2) {$\Omega_6$};
\node at (5,2) {$\Omega_2$};
\node at (1.8,1.6) {$\Omega_1$};
\node at (7.2,2.4) {$\Omega_3$};
\node at (8.8,3.2) {$\Omega_4$};
\end{tikzpicture}
\caption{A tree-like quadrature multi-domain.}
\label{tree_like_qmd_fig}
\end{figure}

As in the degree two case, a degree $d$ polygonal Schwarz reflection map $S$ has a connected non-escaping set if and only if no critical point of $S$ escapes to the fundamental tile. This property is also equivalent to the fact that the tiling set dynamics of $S$ is conformally conjugate to the Nielsen map of an ideal $(d+1)-$gon reflection group.

We denote the connectedness locus of (normalized) degree $d$ polygonal Schwarz reflections by $\mathscr{S}_{\pmb{\cN}_d}$. Similarly, we denote by $\mathcal{C}^-_d$ the connectedness locus of monic and centered anti-polynomials of degree $d$. We say that two maps $f, g\in\mathcal{C}^-_d$ (respectively, $f, g\in \mathscr{S}_{\pmb{\cN}_d}$) are combinatorially equivalent, denoted by $f\sim g$, if they have the same rational laminations (in analogy with polynomial dynamics, the \emph{rational lamination} of a polygonal Schwarz reflection captures the co-landing patterns of its pre-periodic dynamical rays, see Definition~\ref{def_preper_lami}).

Let $\mathcal{C}^-_{d, r} \subset \mathcal{C}^-_d$ be the subset of periodically repelling anti-polynomials. For polygonal Schwarz reflections, the set $\mathfrak{S}$ of singular (fixed) points on the droplet boundary is special as these points always have parabolic dynamics. Hence, we define the analogous subspace $\mathscr{S}_{\pmb{\cN}_d, r}$ for polygonal Schwarz reflections as the collection of those maps $S$ in $\mathscr{S}_{\pmb{\cN}_d}$ for which no point of $\mathfrak{S}$ has an attracting direction in $K(S)$ and each periodic point of $S$ not in $\mathfrak{S}$ is repelling. Finally, let $\widehat{\mathcal{C}^-_{d, r}} = \mathcal{C}^-_{d, r}/\sim$ (respectively, $\widehat{\mathscr{S}_{\pmb{\cN}_d, r}} = \mathscr{S}_{\pmb{\cN}_d, r}/\sim$) be the space of corresponding combinatorial classes.

The following theorems generalize and sharpen the main results of \cite{LLMM2}. We remark that in \cite{LLMM2}, only the space of geometrically finite degree two polygonal Schwarz reflections were studied.

\begin{theorem}\label{thm:CL}
There is a dynamically natural homeomorphism 
$$
\Phi: \widehat{\mathcal{C}^-_{d, r}} \longrightarrow \widehat{\mathscr{S}_{\pmb{\cN}_d, r}}.
$$
Here `dynamically natural' means that the circle homeomorphism conjugating the Nielsen map to the power map $\overline{z}^d$ carries the rational lamination of a Schwarz reflection to the rational lamination of an anti-polynomial.
\end{theorem}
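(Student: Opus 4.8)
The plan is to define $\Phi$ directly at the level of laminations and then to establish bijectivity and bicontinuity as separate steps. Recall that the rational lamination of a periodically repelling anti-polynomial $f\in\mathcal{C}^-_{d,r}$ (respectively, of a polygonal Schwarz reflection $S\in\mathscr{S}_{\pmb{\cN}_d,r}$) is a closed, $\overline{z}^d$-invariant equivalence relation on the rational angles encoding the co-landing pattern of pre-periodic dynamical rays. Since the tiling-set dynamics of any $S$ in the connectedness locus is conformally conjugate to the Nielsen map, whose boundary action on $\mathbb{S}^1$ is topologically conjugate to $\overline{z}^d$, there is a canonical circle homeomorphism matching the external-angle coordinates on the two sides. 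I would therefore define $\Phi$ by declaring the combinatorial class carrying a given rational lamination $\lambda$ on the anti-polynomial side to be sent to the class carrying the same $\lambda$ on the Schwarz reflection side; the entire content of the theorem is that this assignment is a well-defined bijection and a homeomorphism.

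Well-definedness together with surjectivity amounts to a realization statement in both directions. For the forward direction I would invoke the conformal mating existence result (Theorem~\ref{conf_mating_polygonal_thm}): it produces, for each periodically repelling anti-polynomial, a polygonal Schwarz reflection realizing it as a mating with the ideal polygon reflection group, and the dynamically natural semiconjugacy underlying the mating guarantees that the rational lamination is transported unchanged, so the target class $\Phi([f])$ is genuinely occupied. For the reverse inclusion---that every lamination of a map in $\mathscr{S}_{\pmb{\cN}_d,r}$ arises from some anti-polynomial---I would restrict $S$ to its non-escaping set to obtain a degenerate (anti-)polynomial-like map and straighten it, producing an anti-polynomial with the matching combinatorics.

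Injectivity is the combinatorial rigidity step. By definition, two maps on the same side sharing a rational lamination already lie in the same combinatorial class, so injectivity of $\Phi$ reduces to showing that the rational lamination is a complete combinatorial invariant within the at-most-finitely-renormalizable, periodically repelling class (with no attracting directions at the distinguished parabolic set $\mathfrak{S}$). I would extract this from the puzzle-piece machinery and the rigidity theorems of Sections~\ref{comb_class_schwarz_sec} and~\ref{fin_renorm_rigid_sec}, which show that combinatorics determines dynamics up to the equivalence $\sim$ once non-repelling cycles away from $\mathfrak{S}$ are excluded.

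For the homeomorphism I would topologize $\widehat{\mathcal{C}^-_{d,r}}$ and $\widehat{\mathscr{S}_{\pmb{\cN}_d,r}}$ by Hausdorff convergence of laminations (equivalently, as quotients of the respective parameter spaces). Since $\Phi$ preserves laminations by construction, bicontinuity reduces to combinatorial continuity: nearby maps must have nearby co-landing data, and the two realization procedures above must vary continuously in parameters. I expect this bicontinuity---in particular, controlling the degeneration of ray portraits as parameters approach the parabolic locus $\mathfrak{S}$---to be the main obstacle, precisely because it is where the pinching built into degenerate polynomial-like maps interacts most delicately with the combinatorial data; the puzzle and para-puzzle constructions are the tools I would deploy to overcome it.
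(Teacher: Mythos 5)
Your high-level skeleton (a lamination-transporting map, realization in both directions, continuity via puzzles) matches the paper's strategy, but both of your realization steps have genuine problems, and they are exactly where the real work lies.

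First, the forward realization is circular. Theorem~\ref{conf_mating_polygonal_thm} is not an available input: in the paper it is deduced \emph{from} Theorems~\ref{thm:CL}, \ref{thm:A} and~\ref{thm:C}. In particular, for an infinitely renormalizable periodically repelling $f$, the only clause of Theorem~\ref{conf_mating_polygonal_thm} that applies is the ``combinatorial mating'' statement, which is precisely the well-definedness assertion you are trying to prove. The only realization result proved independently of Theorem~\ref{thm:CL} is the geometrically finite one (Theorem~\ref{thm:C}, via David surgery), and the paper's proof leans on exactly this: approximate $f$ by post-critically finite maps $f_n$ in the combinatorial metric $\widehat{d}$ (Theorem~\ref{thn:approxbypcf}), take their matings $S_n\in\mathscr{S}_{\pmb{\cN}_d,r}$ given by Theorem~\ref{thm:C}, extract a limit $S$ using compactness of $\mathscr{S}_{\pmb{\cN}_d}$ (Theorem~\ref{thm:compact}), rule out non-repelling cycles for $S$, and then combine the combinatorial continuity theorems (Theorems~\ref{thm:ccf} and~\ref{thm:scc}) with Kiwi's realization of $m_{-d}$-invariant laminations to conclude $\left(\pmb{\mathcal{E}}_d\right)_*(\lambda(S))=\lambda(f)$. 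Your proposal contains none of this approximation/compactness mechanism, which is the core of the argument.

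Second, your reverse realization---``restrict $S$ to its non-escaping set and straighten the degenerate anti-polynomial-like map''---fails, because no straightening theorem exists for these objects; this is the central obstruction the whole paper is organized around. The external map $\pmb{\cN}_d$ has parabolic fixed points on $\mathbb{S}^1$ and is therefore not quasisymmetrically conjugate to $\overline{z}^d$, so quasiconformal surgery cannot produce a hybrid-equivalent anti-polynomial (the paper states explicitly that these matings ``cannot be constructed by quasiconformal surgery''). There is even a local obstruction: at a point of $\mathfrak{S}$ the germ of $S$ is parabolic (with attracting petals in the tiling set), whereas the corresponding fixed point of a periodically repelling anti-polynomial is repelling, and no quasiconformal conjugacy on a neighborhood can match such germs. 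The viable route for the inverse is again purely combinatorial: push $\lambda(S)$ forward by $\pmb{\mathcal{E}}_d$, realize the resulting invariant rational lamination by a periodically repelling anti-polynomial via Kiwi's theorem, and prove continuity by the same puzzle arguments. Finally, a smaller point: injectivity of $\Phi$ needs no rigidity whatsoever---once $\Phi$ transports laminations, equal images force equal laminations, hence equal combinatorial classes by definition. Rigidity (Section~\ref{fin_renorm_rigid_sec}) is what upgrades Theorem~\ref{thm:CL} to Theorem~\ref{thm:A} for finitely renormalizable maps; if injectivity truly required it, your proof would be stuck on infinitely renormalizable classes, where rigidity is not known.
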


\noindent (See Section~\ref{comb_homeo_sec} for the proof of Theorem~\ref{thm:CL}.)

Let $\mathscr{S}_{\pmb{\cN}_d, fr} \subset \mathscr{S}_{\pmb{\cN}_d, r}$ be the subset of at most finitely renormalizable periodically repelling Schwarz reflections, and let $\mathcal{C}^-_{d, fr} \subset \mathcal{C}^-_{d,r}$ be its counterpart for anti-polynomials.
Adapting similar methods as in the polynomial setting, we show that each map in $\mathscr{S}_{\pmb{\cN}_d, fr}$ (or $\mathcal{C}^-_{d, fr}$) is \emph{combinatorially rigid}; i.e., its combinatorial class consists of a single map.
As an immediate corollary of this rigidity, we have the following result, which is proved in Section~\ref{fin_renorm_rigid_sec}.
\begin{theorem}\label{thm:A}
There is a dynamically natural homeomorphism 
$$
\Phi: \mathcal{C}^-_{d, fr} \longrightarrow \mathscr{S}_{\pmb{\cN}_d, fr}.
$$
Here dynamically natural means that for any $f\in \mathcal{C}^-_{d, fr}$, the Schwarz reflection $\Phi(f)$ is the unique conformal mating of $f$ and the Nielsen map of the ideal $(d+1)-$gon reflection group.
\end{theorem}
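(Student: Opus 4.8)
The theorem asserts that $\Phi$ restricts to a bijection (in fact homeomorphism) between the finitely renormalizable loci, realized as a genuine conformal-mating map rather than a mere combinatorial identification. My plan is to deduce this from Theorem~\ref{thm:CL} together with the combinatorial rigidity statement announced just before the theorem. The key point is that finite renormalizability upgrades the combinatorial equivalence $\sim$ to honest equality of maps.

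\begin{proof}[Proof strategy]
The plan is to proceed in three steps. \textbf{Step 1 (rigidity collapses the combinatorial classes).} First I would invoke the combinatorial rigidity result stated above: each $f \in \mathcal{C}^-_{d,fr}$ and each $S \in \mathscr{S}_{\pmb{\cN}_d, fr}$ has a combinatorial class consisting of a single map. Equivalently, the quotient maps $\mathcal{C}^-_{d,fr} \to \widehat{\mathcal{C}^-_{d,fr}}$ and $\mathscr{S}_{\pmb{\cN}_d, fr} \to \widehat{\mathscr{S}_{\pmb{\cN}_d, fr}}$ are bijections onto their images. This is the engine of the whole argument, and it is precisely the content established in Section~\ref{fin_renorm_rigid_sec} via the puzzle-piece machinery adapted to degenerate polynomial-like maps.

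\textbf{Step 2 (transport the homeomorphism through the quotients).} Next I would observe that the finitely renormalizable loci are saturated under $\sim$ and are carried to one another by $\Phi$ from Theorem~\ref{thm:CL}. The homeomorphism $\Phi: \widehat{\mathcal{C}^-_{d,r}} \to \widehat{\mathscr{S}_{\pmb{\cN}_d,r}}$ of Theorem~\ref{thm:CL} preserves rational laminations in a dynamically natural way, and finite renormalizability is a property detected by the combinatorics (the number of renormalization levels is read off from the lamination), so $\Phi$ maps $\widehat{\mathcal{C}^-_{d,fr}}$ bijectively onto $\widehat{\mathscr{S}_{\pmb{\cN}_d,fr}}$. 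Composing with the inverse bijections from Step~1 yields a bijection $\mathcal{C}^-_{d,fr} \to \mathscr{S}_{\pmb{\cN}_d, fr}$, which I continue to call $\Phi$. Since each space injects into its combinatorial quotient with the subspace/quotient topologies agreeing under rigidity, and since $\Phi$ on the quotients is a homeomorphism, the induced map on the loci is a homeomorphism as well.

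\textbf{Step 3 (identify $\Phi(f)$ as the conformal mating).} Finally I would verify the dynamical naturality: that $\Phi(f)$ is the conformal mating of $f$ with the Nielsen map of the ideal $(d+1)$-gon reflection group. By the definition of $\Phi$ at the combinatorial level, $S = \Phi(f)$ shares the rational lamination of $f$ via the circle conjugacy carrying the Nielsen map to $\overline{z}^d$. One then checks that the non-escaping dynamics of $S$ is topologically conjugate to $f$ on its filled Julia set (conformally on the interior), while the tiling-set dynamics is conjugate to the Nielsen map; this is exactly the conformal-mating condition of Definition~\ref{conf_mat_def_1}. Rigidity guarantees that such a mating, if it exists, is unique, so $\Phi(f)$ is \emph{the} conformal mating.

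The main obstacle is Step~3, specifically producing the conformal conjugacy on the non-escaping set (not merely a topological/combinatorial matching). The subtlety is that the parabolic points in $\mathfrak{S}$ and the matching of periodic dynamics across the two systems must be handled so that a quasiconformal conjugacy between the corresponding degenerate polynomial-like restrictions can be built and then promoted to a conformal one; the finite-renormalizability hypothesis is what makes the relevant rigidity (no invariant line fields, pullback argument terminating after finitely many renormalization levels) go through. I expect the bulk of the work to reside in this promotion from combinatorial equivalence to conformal conjugacy, for which the rigidity theorem of Section~\ref{fin_renorm_rigid_sec} is the decisive input.
\end{proof}
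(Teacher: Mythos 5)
Your proposal follows essentially the same route as the paper, which proves Theorem~\ref{thm:A} in one line as a corollary of Theorem~\ref{thm:CL} (the homeomorphism between combinatorial classes) and Theorem~\ref{thm:rs} (combinatorial rigidity of finitely renormalizable Schwarz reflections, including local connectivity of the limit set), i.e.\ exactly your Steps 1--3. The one imprecision is in your Step 3: since periodically repelling maps have no Fatou components in the non-escaping set, conformality of the conjugacy on the interior is vacuous, so no quasiconformal-to-conformal promotion is needed to identify $\Phi(f)$ as the mating (that promotion occurs instead inside the proof of Theorem~\ref{thm:rs}, to show two Schwarz reflections with the same lamination coincide); the mating identification itself follows from local connectivity together with the matching of laminations, and uniqueness is Theorem~\ref{thm:rs}(3).
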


It is natural to ask if the above natural homeomorphism can be extended to a homeomorphism between $\mathcal{C}^-_d$ and $\mathscr{S}_{\pmb{\cN}_d}$.
The next result shows that it is not the case.
\begin{theorem}\label{thm:C}
There is a dynamically natural bijection 
$$
\Phi: \mathcal{C}^-_{d, gf} \longrightarrow \mathscr{S}_{\pmb{\cN}_d, gf}
$$ 
between geometrically finite anti-polynomials and polygonal Schwarz reflections with connected Julia/limit set, such that for any $f\in \mathcal{C}^-_{d, gf}$, the Schwarz reflection $\Phi(f)$ is the unique conformal mating of $f$ and the Nielsen map of the ideal $(d+1)-$gon reflection group.
However, both $\Phi$ and $\Phi^{-1}$ are discontinuous.
\end{theorem}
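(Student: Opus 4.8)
The plan is to treat the two assertions separately: first construct the bijection $\Phi$ together with its conformal-mating characterization, and then exhibit a single degenerating family that simultaneously witnesses the failure of continuity of $\Phi$ and of $\Phi^{-1}$.

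\emph{Bijection.} I would build $\Phi$ in three steps. For \emph{existence}, given $f\in\mathcal{C}^-_{d,gf}$, the general conformal mating theorem (Theorem~\ref{conf_mating_polygonal_thm}) produces a polygonal Schwarz reflection $S$ that is a conformal mating of $f$ with the Nielsen map of the ideal $(d+1)$-gon reflection group; by construction $S$ acts as the Nielsen map on its tiling set and as $f$ on its non-escaping set, so $S$ has connected limit set and $S\in\mathscr{S}_{\pmb{\cN}_d, gf}$. Set $\Phi(f):=S$. For \emph{uniqueness and injectivity}, any two conformal matings of $f$ agree with the Nielsen map on the tiling set and are conformally conjugate on the interior of the non-escaping set; geometric finiteness supplies quasiconformal rigidity (no invariant line fields on the limit set), so after normalization the two matings coincide, whence $\Phi$ is well defined and injective. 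For \emph{surjectivity}, a map $S\in\mathscr{S}_{\pmb{\cN}_d, gf}$ restricts, on a neighborhood of its non-escaping set, to a degenerate anti-polynomial-like map whose straightening is an anti-polynomial $f$; since the only non-repelling cycles produced in the gluing are the parabolic singular points $\mathfrak{S}$, geometric finiteness is preserved and $\Phi(f)=S$. This yields the dynamically natural bijection.

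\emph{Discontinuity.} The key observation is purely set-theoretic: if I can produce a sequence $f_n\to f_\ast$ in $\mathcal{C}^-_{d,gf}$ whose matings $\Phi(f_n)$ converge in $\mathscr{S}_{\pmb{\cN}_d, gf}$ to some $S_\ast$ with $S_\ast\neq\Phi(f_\ast)$, then $\Phi$ is discontinuous at $f_\ast$; moreover, writing $S_\ast=\Phi(g)$ (legitimate since $\Phi$ is onto the space) and using injectivity of $\Phi$, we get $g\neq f_\ast$, so $\Phi^{-1}(S_\ast)=g\neq f_\ast$ while $\Phi^{-1}(\Phi(f_n))=f_n\to f_\ast$, exhibiting discontinuity of $\Phi^{-1}$ at $S_\ast$ — both failures from one family. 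The mechanism for producing such a family is the antiholomorphic parabolic structure that is absent in the periodically repelling setting of Theorem~\ref{thm:A}: parabolic parameters of anti-polynomials occur along real-analytic \emph{parabolic arcs} parametrized by the critical Ecalle height $h\in\R$, a conformal invariant of the parabolic germ. I would take $f_\ast$ at an end (cusp/root) of such an arc and approach it by parameters $f_n$ with controlled Ecalle heights, arranging the degeneration so that the imploding matings $\Phi(f_n)$ stabilize to the mating of a parabolic Schwarz reflection whose Ecalle data is pinned by the \emph{rigid} cusp structure of the ideal polygon group, and hence equals $\Phi(g)$ for a parameter $g$ carrying different Ecalle data than $f_\ast$.

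The main obstacle is precisely this convergence analysis under parabolic degeneration: one must show that the critical Ecalle height governing the anti-polynomial's parabolic cycle is \emph{not} matched continuously by the corresponding modulus on the Schwarz reflection side, where the relevant cusps are prescribed once and for all by the group. Concretely this demands a parabolic-implosion and quasiconformal-surgery analysis near $\mathfrak{S}$ and near the parabolic cycle of $f$, in the spirit of the discontinuity of straightening maps in antiholomorphic dynamics established for the Tricorn and its small copies. The remaining ingredients — existence, rigidity, and surjectivity — should follow routinely from the degenerate polynomial-like machinery developed earlier in the paper.
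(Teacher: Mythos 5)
Your overall skeleton for the discontinuity part (one degenerating family witnessing failure of continuity of both $\Phi$ and $\Phi^{-1}$) coincides with the paper's, but both halves of your proposal have genuine gaps. On the \emph{bijection}: invoking Theorem~\ref{conf_mating_polygonal_thm} for existence is circular, since that theorem is deduced in the paper \emph{from} Theorems~\ref{thm:CL}, \ref{thm:A} and~\ref{thm:C}. Existence has to be built by hand: the paper passes to a postcritically finite model $f_0$ with the same Julia dynamics as $f$, replaces the basin-of-infinity dynamics of $f_0$ by the Nielsen model and the bounded Fatou dynamics by Blaschke models using David/quasiconformal extensions of circle conjugacies (Proposition~\ref{extension_david_prop}), and integrates the resulting invariant David coefficient. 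More seriously, your surjectivity step presumes a straightening theorem for the degenerate anti-polynomial-like restriction of $S$; no such theorem exists, precisely because the external map $\pmb{\cN}_d$ has parabolic fixed points and is \emph{not} quasisymmetrically conjugate to $\overline{z}^d$ (Section~\ref{hyp_vs_para_subsubsec} makes this point explicitly), so quasiconformal straightening is unavailable. The paper instead proves surjectivity combinatorially: it realizes the pushed-forward rational lamination $(\pmb{\mathcal{E}}_d)_{\ast}(\widehat{\lambda(S)})$ by a postcritically finite anti-polynomial and then surgers its Fatou dynamics to match $S$. Finally, your uniqueness/injectivity argument via ``no invariant line fields'' is not the right tool: the glued conjugacy (conformal off the limit/Julia set) is not a priori quasiconformal, so one needs \emph{conformal removability} of $\Lambda(S)$ (respectively of $\mathcal{J}(f)$), which the paper obtains from the John-domain property and the removability results of [LMMN].

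On the \emph{discontinuity}: your mechanism and your choice of basepoint are both off. The paper takes $f_n=f_{c_n}$ with $c_n$ in the period-two component $H_1$ of the Multicorn and $f_\ast=f_{\pmb{c_\infty}}$ a \emph{simple} parabolic on $\partial H_0\cap\partial H_1$. For $c_n\in H_1$ the fixed rays at angles $0$ and $\tfrac{1}{d+1}$ coland, so $\Phi(f_{c_n})$ lies in the pinched stratum $\mathscr{S}_{\pmb{\cN}_d,\cL_1}$, and pinching persists under limits (Corollary~\ref{limit_dominates_cor} together with compactness, Theorem~\ref{thm:compact}); but $\Phi(f_{\pmb{c_\infty}})$ lies in the unpinched stratum $\cL_0$ because a simple parabolic has a Jordan-curve Julia set. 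That lamination jump is the whole obstruction: what is rigid on the Schwarz side is not any Ecalle datum but the forced multiplier-one (parabolic) nature of the droplet's singular points, i.e.\ the fixed ray lamination, which can only grow in the limit, while on the anti-polynomial side the colanding pattern collapses across the parabolic arc. The limit $S_\ast$ is then identified as $\Phi(f_{\pmb{c_0}})$ (image of the \emph{double} parabolic) by the elementary observation that the multipliers of the attracting $2$-cycles tend to $1$ (Lemma~\ref{discont_lem_2}); no parabolic implosion enters. Your plan places $f_\ast$ at the cusp, but approaching the cusp from $H_1$ yields $\Phi(f_{c_n})\to\Phi(f_{\pmb{c_0}})=\Phi(f_\ast)$, i.e.\ continuity, while approaching it along the parabolic arc forces the Ecalle height to diverge, contradicting your ``controlled Ecalle heights''; in neither case do you obtain $S_\ast\neq\Phi(f_\ast)$. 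Moreover, the interior Ecalle height of a parabolic cycle is a hybrid invariant carried faithfully by the mating, so it is not ``pinned by the group''. The convergence analysis you defer as the ``main obstacle'' is therefore not the right obstacle; the argument that works is the stratification/lamination-jump argument of Lemmas~\ref{discont_lem_1} and~\ref{discont_lem_2}.
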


The existence of the bijection $\Phi$ appearing in Theorem~\ref{thm:C} is established in Section~\ref{geom_fin_polygonal_schwarz_sec}, and the discontinuity of $\Phi, \Phi^{-1}$ is proved in Section~\ref{discont_sec}.

The above theorem is analogous to a famous theorem of Kerckhoff-Thurston which states that the natural identification between two different Bers embeddings fails to extend to a continuous map on their boundaries.

It is worth emphasizing that the map $\Phi$ that relates the connectedness loci $\mathcal{C}^-_d$ and $\mathscr{S}_{\pmb{\cN}_d}$ (appearing in the previous two theorems) replaces the external dynamics of an anti-polynomial $f$ (i.e., the dynamics of $f$ on its basin of infinity) with the action of the Nielsen map of an ideal $(d+1)-$gon reflection group, or equivalently, mates the filled Julia set dynamics of $f$ with the dynamics of the Nielsen map. The following implication of Theorems~\ref{thm:CL},~\ref{thm:A} and~\ref{thm:C} is important in its own right (see Definition~\ref{comb_mating_def} for the definition of combinatorial mating).

\begin{theorem}\label{conf_mating_polygonal_thm}
Let $f$ be a degree $d$ anti-polynomial with connected Julia set.
Suppose that $f$ is either 
\begin{itemize}
\item geometrically finite; or 
\item periodically repelling, finitely renormalizable.
\end{itemize}
Then, $f$ can be conformally mated with the Nielsen map of an ideal $(d+1)-$gon reflection~group, and the resulting Schwarz reflection map is unique up to M{\"o}bius conjugacy.

Further, each periodically repelling map in $\mathcal{C}_d^-$ can be combinatorially mated with the Nielsen map of an ideal $(d+1)-$gon reflection~group.
\end{theorem}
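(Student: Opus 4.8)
The plan is to deduce this theorem as a direct consequence of the three structural results stated above (Theorems~\ref{thm:CL},~\ref{thm:A}, and~\ref{thm:C}), by matching each hypothesis on $f$ with the appropriate sub-locus of $\mathcal{C}^-_d$ and invoking the corresponding bijection or homeomorphism onto a space of polygonal Schwarz reflections. Since the hard analytic and combinatorial work is already packaged into those theorems, the proof is essentially an assembly argument, together with a reconciliation of the uniqueness statement with the ``normalized'' representatives used in the definitions of $\mathscr{S}_{\pmb{\cN}_d, gf}$ and $\mathscr{S}_{\pmb{\cN}_d, fr}$.

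First I would establish existence of the conformal mating. If $f$ is geometrically finite with connected Julia set, then $f\in\mathcal{C}^-_{d, gf}$, and I would take $\Phi(f)\in\mathscr{S}_{\pmb{\cN}_d, gf}$ for the bijection $\Phi$ of Theorem~\ref{thm:C}; by construction this $\Phi(f)$ is a conformal mating of $f$ with the Nielsen map. If instead $f$ is periodically repelling and at most finitely renormalizable, then $f\in\mathcal{C}^-_{d, fr}$, and I would take $\Phi(f)\in\mathscr{S}_{\pmb{\cN}_d, fr}$ via Theorem~\ref{thm:A}, which is again the conformal mating by construction. In both cases the assumption of connected Julia set is exactly what places $f$ inside the connectedness locus and lands $\Phi(f)$ in a locus of polygonal Schwarz reflections with connected non-escaping set.

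For uniqueness up to M\"obius conjugacy, I would show that any conformal mating $S$ of $f$ and the Nielsen map must be a normalization of $\Phi(f)$. If $S$ is such a mating, its tiling-set dynamics is conformally conjugate to the full Nielsen map of the ideal $(d+1)$-gon reflection group; this forces the fundamental tile $T^0(S)$ to be an ideal $(d+1)$-gon with no critical value inside, and forces every critical point to stay non-escaping, so $S$ is polygonal with connected non-escaping set, i.e. $S\in\mathscr{S}_{\pmb{\cN}_d}$. Moreover the conjugacy between $K(S)$ and the filled Julia set $K(f)$ transports the dynamical type of $f$ (geometrically finite, or periodically repelling and finitely renormalizable) onto $S$, placing $S$ in the same sub-locus as $\Phi(f)$. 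Since $\Phi$ is a bijection onto normalized representatives, the normalization of $S$ equals $\Phi(f)$, so all conformal matings are mutually M\"obius conjugate. I expect the most delicate point of the whole argument to be precisely this step: verifying that the abstract data of a conformal mating pins down the normalized Schwarz reflection within its sub-locus, which is where I would lean on the combinatorial rigidity input underlying Theorems~\ref{thm:A} and~\ref{thm:C}.

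Finally, for the combinatorial mating of an arbitrary periodically repelling $f\in\mathcal{C}^-_d$ (possibly infinitely renormalizable, hence outside the scope of the conformal statements), I would invoke the homeomorphism $\Phi$ of Theorem~\ref{thm:CL} at the level of combinatorial classes. The class $[f]\in\widehat{\mathcal{C}^-_{d, r}}$ has a well-defined, nonempty image $\Phi([f])\in\widehat{\mathscr{S}_{\pmb{\cN}_d, r}}$, and the ``dynamically natural'' property ensures that the circle homeomorphism conjugating the Nielsen map to $\overline{z}^d$ carries the rational lamination of $f$ onto that of the corresponding Schwarz reflection. Unwinding Definition~\ref{comb_mating_def}, this compatibility of rational laminations across the circle identification is exactly the assertion that $f$ and the Nielsen map are combinatorially mateable, with the common lamination realized by an actual polygonal Schwarz reflection in the image class. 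The one point to check carefully here is that the lamination correspondence delivered by Theorem~\ref{thm:CL} matches, verbatim, the compatibility condition demanded by the definition of combinatorial mating.
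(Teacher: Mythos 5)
Your proposal takes essentially the same route as the paper: the authors present Theorem~\ref{conf_mating_polygonal_thm} precisely as an implication of Theorems~\ref{thm:CL},~\ref{thm:A} and~\ref{thm:C} --- existence of the conformal mating from the maps $\Phi$ of Theorems~\ref{thm:A} and~\ref{thm:C}, and the combinatorial mating statement by unwinding Definition~\ref{comb_mating_def} against the lamination-preserving homeomorphism of Theorem~\ref{thm:CL} --- which is exactly your assembly. The only remark worth making is that the uniqueness up to M{\"o}bius conjugacy is already asserted in the statements of Theorems~\ref{thm:A} and~\ref{thm:C} (and proved there via conformal removability of the limit set, resp.\ combinatorial rigidity), so your separate reconstruction of it --- which would otherwise require justifying that an abstract conformal mating is automatically a normalized Schwarz reflection of the same dynamical type, a point your sketch glosses over --- can simply be replaced by a citation.
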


We remark that notions of mating without assuming local connectivity of Julia sets also appeared \cite{Dud11}.

\subsubsection{Anti-Farey Schwarz reflections}
In the antiholomorphic world, there is another important expansive piecewise real-analytic circle endomorphism, called the \emph{anti-Farey map}, that is closely related to reflection groups. The anti-Farey map, which was first formally introduced in \cite{LMM23} (and had its precursors in \cite{LLMM3}), is a factor of the Nielsen map of a \emph{regular} ideal polygon reflection group (see Section~\ref{nielsen_farey_sec}). Schwarz reflection maps that admit the anti-Farey map as the conformal model of their escaping set dynamics will be referred to as \emph{anti-Farey Schwarz reflections}.

The proofs of Theorems~\ref{thm:CL}--\ref{thm:C} can be adapted for the family of anti-Farey Schwarz reflections, demonstrating close parameter space relations between the space of anti-Farey Schwarz reflections and the connectedness loci of anti-polynomials. We refer the reader to Theorem~\ref{farey_antipoly_thm} for the precise statements. As a consequence of these results, we have the following generalization of \cite[Theorem~C]{LMM23}.
\begin{theorem}\label{conf_mating_antiFarey_thm}
Let $f$ be a degree $d$ anti-polynomial with connected Julia set.
Suppose that $f$ is either 
\begin{itemize}
\item geometrically finite; or 
\item periodically repelling, finitely renormalizable.
\end{itemize}
Then, $f$ can be conformally mated with the degree $d$ anti-Farey map, and the resulting Schwarz reflection map is unique up to M{\"o}bius conjugacy.

Further, each periodically repelling map in $\mathcal{C}_d^-$ can be combinatorially mated with the degree $d$ anti-Farey map.
\end{theorem}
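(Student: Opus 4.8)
derive Theorem~\ref{conf_mating_antiFarey_thm} as a direct consequence of the anti-Farey analogues of Theorems~\ref{thm:CL}--\ref{thm:C}, exactly as Theorem~\ref{conf_mating_polygonal_thm} was obtained from the polygonal versions. The key structural observation, stated in the excerpt, is that the anti-Farey map is a \emph{factor} of the Nielsen map of a regular ideal polygon reflection group, and that its action on the unit circle is topologically conjugate to $\overline{z}^d$. Thus the anti-Farey map is \emph{compatible} with degree $d$ anti-polynomials in the same sense as the Nielsen map, which is precisely what makes the mating meaningful.

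\medskip

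\emph{First} I would set up the analogous connectedness loci and combinatorial classes for anti-Farey Schwarz reflections: the space $\mathscr{S}_{\mathrm{aF}_d}$ of (normalized) degree $d$ anti-Farey Schwarz reflections, its periodically-repelling and finitely-renormalizable subspaces $\mathscr{S}_{\mathrm{aF}_d, r}$, $\mathscr{S}_{\mathrm{aF}_d, fr}$, and its geometrically finite subspace $\mathscr{S}_{\mathrm{aF}_d, gf}$, together with the rational lamination equivalence $\sim$ and the combinatorial quotients. Here the singular fixed points on the droplet again carry parabolic dynamics, so the definition of $\mathscr{S}_{\mathrm{aF}_d, r}$ mirrors the polygonal case (no attracting direction into $K(S)$ at singular points, all other periodic points repelling). \emph{Next}, I would invoke Theorem~\ref{farey_antipoly_thm} (the anti-Farey counterpart of Theorems~\ref{thm:CL}--\ref{thm:C}), which supplies a dynamically natural homeomorphism on combinatorial classes, a dynamically natural homeomorphism on the finitely-renormalizable loci, and a dynamically natural bijection on the geometrically finite loci. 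The \textbf{conformal mating} assertion of the theorem then follows verbatim: for $f$ geometrically finite, the bijection $\Phi$ sends $f$ to the unique anti-Farey Schwarz reflection that is the conformal mating of $f$ with the anti-Farey map; for $f$ periodically repelling and finitely renormalizable, the homeomorphism on the $fr$-loci does the same. The uniqueness up to M\"obius conjugacy is built into the fact that $\Phi$ is a \emph{bijection} (or homeomorphism) onto the normalized parameter space.

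\medskip

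For the final \textbf{combinatorial mating} assertion---that \emph{every} periodically repelling $f\in\mathcal{C}_d^-$ (not merely the finitely renormalizable ones) can be \emph{combinatorially} mated with the anti-Farey map---I would use Theorem~\ref{thm:CL}'s anti-Farey analogue, namely the homeomorphism $\widehat{\mathcal{C}^-_{d,r}} \to \widehat{\mathscr{S}_{\mathrm{aF}_d, r}}$ on the level of combinatorial classes. Since a combinatorial mating is specified purely by rational laminations, and the dynamically natural homeomorphism carries the rational lamination of $f$ to the rational lamination of a Schwarz reflection via the circle conjugacy to $\overline{z}^d$, the combinatorial mating exists for the full periodically repelling class, without any finite-renormalization hypothesis. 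This is where the weaker conclusion (combinatorial rather than conformal) is unavoidable: conformal mateability requires the rigidity input available only on $\mathscr{S}_{\mathrm{aF}_d, fr}$.

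\medskip

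\textbf{The main obstacle} will be verifying that the proofs of Theorems~\ref{thm:CL}--\ref{thm:C} genuinely transfer to the anti-Farey setting rather than merely asserting they do. The delicate point is that the anti-Farey map is only a \emph{factor} of the Nielsen map, so the droplet of an anti-Farey Schwarz reflection and its singular-point structure differ from the polygonal case; one must check that the puzzle-piece machinery and the combinatorial rigidity argument (Sections~\ref{comb_class_schwarz_sec},~\ref{fin_renorm_rigid_sec}) still apply, i.e.\ that the expansiveness and the co-landing structure of pre-periodic rays behave identically under the factored dynamics. Granting the analogue Theorem~\ref{farey_antipoly_thm}, however, the present statement is a formal corollary, and the only remaining work is the bookkeeping of identifying $\Phi(f)$ with the conformal (respectively combinatorial) mating of $f$ and the degree $d$ anti-Farey map.
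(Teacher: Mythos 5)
Your proposal matches the paper's own route: the paper derives Theorem~\ref{conf_mating_antiFarey_thm} exactly as a consequence of Theorem~\ref{farey_antipoly_thm}, which it obtains by noting that the proofs of Theorems~\ref{thm:CL},~\ref{thm:A}, and~\ref{thm:C} carry over mutatis mutandis to the space $\mathscr{S}_{\pmb{\cF}_d}$ of anti-Farey Schwarz reflections (set up in Section~\ref{schwarz_anti_farey_sec} with the same combinatorial-class, finitely-renormalizable, and geometrically finite subspaces you describe, plus the compactness input from \cite{LMM23}). Your identification of the conformal mating statement with the $gf$/$fr$ parts of Theorem~\ref{farey_antipoly_thm}, and of the combinatorial mating statement with the homeomorphism on combinatorial classes $\widehat{\mathcal{C}^-_{d,r}}\to\widehat{\mathscr{S}_{\pmb{\cF}_d,r}}$, is precisely how the paper intends the deduction.
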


\subsection{Holomorphic matings: B-involutions}
We now pass to the holomorphic analog of Schwarz reflection maps that we will study in this paper. 
In order to develop a mating framework for complex polynomials and Fuchsian groups, we define a suitable class of holomorphic maps which we call \emph{$B-$involutions}. 
Special cases of B-involutions were introduced in \cite{BLLM} in the context of mating parabolic rational maps with Hecke groups (cf. Section~\ref{hyp_vs_para_subsubsec}).

An \emph{inversive multi-domain} $\cD$ is the union of finitely many disjoint simply connected domains $\{\Omega_j\}_{j=1}^k$ admitting a continuous map $S:\overline{\cD}\to\widehat{\C}$ which is meromorphic on $\cD$ and which restricts to an orientation-reversing order two self-homeomorphism of the boundary $\partial\cD$. The map $S$ is called a \emph{B-involution of an inversive multi-domain} (see Section~\ref{b_inv_sec} for a precise definition). 
We use a conformal welding argument to give an algebraic characterization of inversive multi-domains and B-involutions in terms of uniformizing rational maps. This parallels the characterization of quadrature domains as univalent images of disks under global rational maps (see Theorem~\ref{b_inv_thm}).


To study the dynamics of a B-involution of an inversive multi-domain, we partition its dynamical plane into \emph{escaping} and \emph{non-escaping} sets (see Section~\ref{b_inv_sec}). The action of a B-involution of an inversive multi-domain on its escaping set will be referred to as its \emph{external dynamics}.

Natural examples of such B-involutions are provided by matings of complex polynomials with connected Julia set and certain orientation-preserving external maps, called \emph{factor Bowen-Series maps} (see Theorem~\ref{fbs_poly_conf_mat_thm} and Proposition~\ref{mating_always_b_inv_prop}). Factor Bowen-Series maps, which first appeared in \cite{MM2}, can be thought of as holomorphic cousins of Nielsen or anti-Farey maps. These maps are semi-conjugates of classical Bowen-Series maps associated with certain genus zero orbifolds (more precisely, with the Fuchsian models of such orbifolds); in particular, they are piecewise analytic, expansive, circle coverings of degree greater than one. A formal description of factor Bowen-Series maps is given in Section~\ref{fbs_sec}.

Our analysis shows that spaces of B-involutions having factor Bowen-Series maps as the conformal model of their external dynamics are closely related to the connectedness loci of complex polynomials (see Theorem~\ref{b_inv_fbs_thm}). As a consequence of  Theorem~\ref{b_inv_fbs_thm}, we have an analog of Theorem~\ref{conf_mating_polygonal_thm} that confirms the existence of conformal matings of generic complex polynomials (with connected Julia set) and appropriate factor Bowen-Series maps.

\begin{theorem}\label{conf_mating_b_inv_thm}
Let $f$ be a degree $d$ polynomial with connected Julia set.
Suppose that $f$ is either 
\begin{itemize}
\item geometrically finite; or 
\item periodically repelling, finitely renormalizable.
\end{itemize}
Then, $f$ can be conformally mated with any degree $d$ factor Bowen-Series map, and the resulting B-involution is unique up to M{\"o}bius conjugacy. 

Further, each periodically repelling degree $d$ polynomial with connected Julia set can be combinatorially mated with any degree $d$ factor Bowen-Series~map.
\end{theorem}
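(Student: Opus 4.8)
The plan is to deduce Theorem~\ref{conf_mating_b_inv_thm} from the parameter-space correspondence of Theorem~\ref{b_inv_fbs_thm} in exactly the same way the antiholomorphic matings of Theorem~\ref{conf_mating_polygonal_thm} follow from Theorems~\ref{thm:CL}--\ref{thm:C}. First I would observe that a factor Bowen-Series map plays, in the holomorphic setting, precisely the role that the Nielsen map plays for Schwarz reflections: it is an expansive piecewise-analytic circle covering of degree $d$ whose action is topologically conjugate (via a circle homeomorphism) to $z\mapsto z^d$, and it governs the external dynamics of the relevant B-involutions. Thus a conformal mating of a polynomial $f$ with a factor Bowen-Series map should be a B-involution $S$ whose non-escaping set carries a copy of the filled Julia set dynamics of $f$ (conformally on the interior, topologically on the boundary) and whose escaping-set dynamics is conformally conjugate to the chosen factor Bowen-Series map. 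I would record this as the operative \emph{definition} of conformal mating in this context, matching Definition~\ref{conf_mat_def_1}.

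Next I would split into the two cases stated. For geometrically finite $f$, existence of the mating follows from the bijection part of the holomorphic analog of Theorem~\ref{thm:C} contained in Theorem~\ref{b_inv_fbs_thm}: the image $\Phi(f)$ is a B-involution realizing the mating, and the classical pullback/rigidity argument (gluing the conformal conjugacy on the filled Julia set to the conformal conjugacy on the basin provided by the external map, via a quasiconformal surgery that is then promoted to conformal by uniqueness of the invariant conformal structure) gives uniqueness up to M\"obius conjugacy. For the periodically repelling, \emph{finitely renormalizable} case, I would invoke the combinatorial rigidity established for $\mathscr{S}_{\pmb{\cN}_d, fr}$ (and its B-involution counterpart), which forces each combinatorial class to be a singleton; existence then comes from realizing the combinatorial data by a B-involution and uniqueness from rigidity, paralleling Theorem~\ref{thm:A}. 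The final clause—combinatorial mateability of \emph{every} periodically repelling $f\in\mathcal{C}_d^-$ (without the finiteness hypothesis)—would follow from the combinatorial homeomorphism $\Phi$ of the type in Theorem~\ref{thm:CL}, which transports the rational lamination of $f$ to that of a B-involution, yielding a combinatorial mating even when local connectivity (hence conformal mating) is unavailable.

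The main obstacle, as in the antiholomorphic prototype, is the passage from combinatorial data to an actual conformal mating in the periodically repelling case. Concretely, the hard step is establishing that the candidate B-involution has a locally connected non-escaping set and that the co-landing patterns of its dynamical rays exactly match the rational lamination of $f$; this is where the puzzle-piece machinery adapted to degenerate polynomial-like maps (Sections~\ref{comb_class_schwarz_sec},~\ref{fin_renorm_rigid_sec}) does the real work, and where the finite-renormalizability hypothesis is used to control the geometry near the (possibly parabolic) cycles. I expect the B-involution case to require only cosmetic modifications of these arguments once the algebraic characterization of B-involutions (Theorem~\ref{b_inv_thm}) and the factor Bowen-Series formalism (Section~\ref{fbs_sec}) are in place, since the orientation-preserving setting is handled by the same pinched polynomial-like/surgery framework; accordingly I would organize the proof so that the genuinely new input is isolated in Theorem~\ref{b_inv_fbs_thm}, and Theorem~\ref{conf_mating_b_inv_thm} is then a short corollary assembling its existence, uniqueness, and combinatorial statements.
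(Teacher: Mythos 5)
Your proposal is correct and follows essentially the same route as the paper: Theorem~\ref{conf_mating_b_inv_thm} is deduced there as a direct consequence of Theorem~\ref{b_inv_fbs_thm}, whose three parts are in turn proved by transporting the arguments of Theorems~\ref{thm:CL},~\ref{thm:A}, and~\ref{thm:C} (David surgery for geometrically finite parameters, puzzle machinery plus combinatorial rigidity for finitely renormalizable ones, combinatorial continuity for the periodically repelling case) to the B-involution setting. The only detail you gloss over is that the level-zero puzzles for B-involutions must also include rays landing at period-two cycles, to account for the coarse lamination and the fact that the singular points of $\partial\cD$ may be $2$-periodic rather than fixed; the paper flags this as the one non-cosmetic adjustment.
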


\subsection{Matings as degenerate (anti-)polynomial-like maps}\label{dpl_maps_subsec}

To provide a common language for all the mating results stated above, we now put forward an appropriate generalization of polynomial-like maps, called \emph{degenerate (anti-)polynomial-like maps}. Pinched versions of polynomial-like maps were introduced in \cite{Mak93,BF05}, and more general notions (still not sufficient for the purpose of the current paper) appeared in the papers mentioned above.

A \textit{polygon} is a closed Jordan disk in $\widehat{\C}$ with a piecewise smooth boundary. The points where two smooth local boundary arcs meet are called the \emph{corners} of the polygon (note that cusps are also considered to be corners).

A \textit{pinched polygon} is a set in $\widehat{\C}$ which is homeomorphic to a closed disk quotiented by a finite geodesic lamination, and which has a piecewise smooth boundary. The cut-points of a pinched polygon will be called its \emph{pinched points}, and the non-cut points where two smooth local boundary arcs meet are called the \emph{corners} of the pinched polygon. 

\begin{defn}\label{degenerate_poly_def}
Let $P_1, P_2\subset \widehat{\C}$ be pinched polygons such that 
\begin{enumerate}
\item $P_1\subset P_2$,
\item the pinched points (respectively, corners) of $P_2$ are also pinched points (respectively, corners) of $P_1$, and
\item $\partial P_1\cap \partial P_2$ consists of the corners and pinched points of $P_2$.
\end{enumerate}
Suppose that there is a degree $d$ continuous map $g:P_1 \to P_2$ which is a proper holomorphic/anti-holomorphic map from each component of $\Int{P_1}$ onto some component of $\Int{P_2}$, and such that the singular points of $\partial P_1$ are the $g$-preimages of the singular points of $\partial P_2$.

We then call the triple $(g, P_1, P_2)$ a \emph{degenerate (anti-)polynomial-like map} of degree $d$.
\end{defn}
\begin{figure}[ht]
\captionsetup{width=0.96\linewidth}
\begin{tikzpicture}
\node[anchor=south west,inner sep=0] at (0,0) {\includegraphics[width=0.6\textwidth]{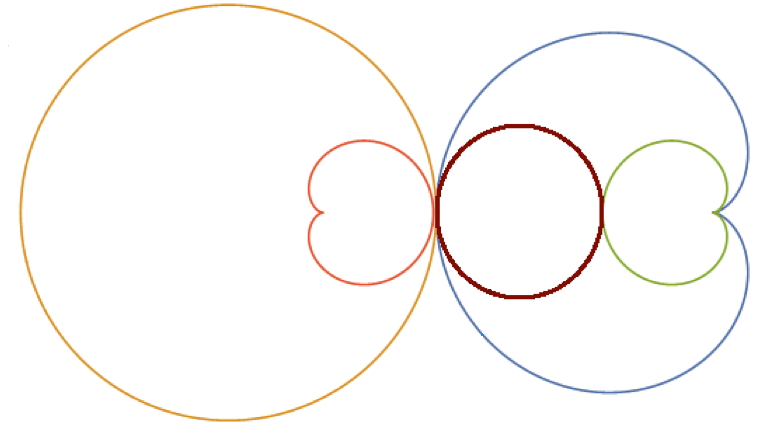}}; 
\node at (6.1,1.4) {\begin{small}$\partial P_1$\end{small}};
\node at (4.36,1) {\begin{small}$\partial P_2$\end{small}};
\node[anchor=south west,inner sep=0] at (8,0) {\includegraphics[width=0.32\textwidth]{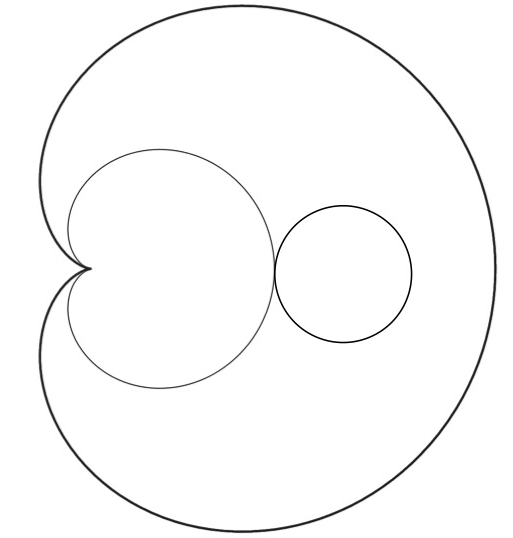}}; 
\node at (10.28,1.44) {\begin{small}$\partial P_1$\end{small}};
\node at (10.32,0.4) {\begin{small}$\partial P_2$\end{small}};
\end{tikzpicture}
\caption{Left: Pictured is the domain and codomain of a degenerate anti-polynomial-like map induced by a polygonal Schwarz reflection map. Here, $P_1$ (respectively, $P_2$) is a pinched polygon with two pinched points and two corners (respectively, one pinched point and one corner). Right: The domain and codomain of a degenerate (anti-)polynomial-like map induced by a Schwarz reflection with the anti-Farey external map are shown.}
\label{degenerate_poly_like_fig}
\end{figure}

As with polynomial-like maps, we define the \textit{filled Julia set} or \textit{non-escaping set} of a degenerate (anti-)polynomial-like map to be 
$$
K(g)=\bigcap_{n\geq 0} g^{-n}( P_1).
$$ 
Analogous to classical polynomial-like maps, the filled Julia set $K(g)$ of a degenerate (anti-)polynomial-like map is connected if and only if it contains all of the critical values of $g$.

For a degree $d$ degenerate (anti-)polynomial-like map $(g, P_1, P_2)$ with connected filled Julia set, one can define the notion of an \emph{external map} following the classical construction of external maps of polynomial-like maps (cf. \cite[Chapter I.1]{DH2}). Note that the external map of such a $g$ is a piecewise real-analytic, degree $d$, covering map of $\mathbb{S}^1$. Since the fundamental domain $P_2\setminus\Int{P_1}$ of $g$ is a pinched annulus, the external map of $g$ is in general not expanding on $\mathbb{S}^1$. This causes significant difficulty in the study of degenerate polynomial-like maps.
In fact, the external maps of all degenerate polynomial-like maps that we will encounter in this paper have parabolic cycles and hence are not expanding. However, they enjoy the weaker property of being expansive on $\mathbb{S}^1$.

Polygonal or anti-Farey Schwarz reflections and B-involutions are some natural examples of degenerate (anti-)polynomial-like maps.
For concreteness, consider $S:\overline{\cD}\to\widehat{\C}$ as a polygonal Schwarz reflection map.
The restriction $S:\overline{S^{-1}(\cD)}\to\overline{\cD}$ is a finite degree proper anti-holomorphic map from a pinched polygon to a larger pinched polygon (see Figure~\ref{degenerate_poly_like_fig}). We note that $\overline{S^{-1}(\cD)}$ is not compactly contained in $\overline{\cD}$. Thus, the map $S:\overline{S^{-1}(\cD)}\to\overline{\cD}$ is a degenerate anti-polynomial-like maps as in Definition \ref{degenerate_poly_def}.

The description of $S$ as a mating of an anti-polynomial and an ideal polygon reflection group can be recast in the classical language of polynomial-like maps: the \emph{internal class} of $S:\overline{S^{-1}(\cD)}\to\overline{\cD}$ is that of a degree $d$ anti-polynomial with connected Julia set and its \emph{external class} is represented by the Nielsen map of an ideal $(d+1)-$gon reflection group.
The parameter space homeomorphisms appearing in the theorems stated in the previous subsections can be regarded as \emph{straightening theorems} between slices of degenerate (anti-)polynomial-like maps and connectedness loci of (anti-)polynomials.

\subsection{Matings as (anti-)holomorphic correspondences}

The first obstruction to mating a polynomial map with a reflection/Fuchsian group is the fundamental mismatch between invertible and non-invertible dynamics. 
The passage from a group to its Nielsen map, anti-Farey map or factor Bowen-Series map (i.e., reduction from group dynamics to semi-group dynamics) allowed us to address this discord and combine the two dynamical systems into a single plane. However, as the conformal mating is a non-invertible map, its dynamics does not showcase the whole structure of the underlying group.

It was originally proposed by Fatou in the 1920s that the category of dynamical systems which truly encompasses rational dynamics as well as actions of Kleinian group is given by \emph{algebraic correspondences} \cite{Fatou29}. The first precise formulation of this proposal was given by Bullett and Penrose \cite{BP}, who studied certain algebraic correspondences of bi-degree $2$:$2$ on the Riemann sphere that combine the actions of quadratic polynomials with the modular group in a precise sense. 

\begin{defn}
Let $f$ be a degree $d$ (anti-)polynomial with connected Julia set and let $G$ be a discrete subgroup of (anti-)conformal automorphism group of the unit disk $\D$.
Let $\mathfrak{C}$ be an (anti-)holomorphic correspondence on a compact, simply connected (possibly noded) Riemann surface $\mathfrak{W}$.

We say $\mathfrak{C}$ a {\em mating} of $f$ and $G$ if there is a $\mathfrak{C}-$invariant partition $\mathfrak{W}=\cT\sqcup\cK$ such that the following hold.
\begin{enumerate}
	\item On $\cT$, the dynamics of $\mathfrak{C}$ is equivalent to the action of a group of (anti-)conformal automorphisms acting properly discontinuously. Further, $\cT/\mathfrak{C}$ is biholomorphic to $\D/G$.
	
	\item $\cK$ can be written as the union of two copies $\cK_1, \cK_2$ of $\cK(f)$ (where $\cK(f)$ is the filled Julia set of $f$), such that $\cK_1$ and $\cK_2$ intersect in finitely many points. Furthermore, $\mathfrak{C}$ has a forward (respectively, backward) branch carrying $\cK_1$ (respectively, $\cK_2$) onto itself with degree $d$, and this branch is conformally (respectively, anti-conformally) conjugate to $f\vert_{\mathcal{K}(f)}$. 
\end{enumerate}
\end{defn}

It turns out that in all three cases we considered in this paper, we can define (anti-)holomorphic correspondences that combine actions of (anti-)polynomials with the corresponding groups.
Specifically, such a correspondence is manufactured by pulling back the dynamics of the mating via the rational maps that uniformize the associated simply connected quadrature/inversive domains.
The first instance of this construction appeared in \cite[Appendix~B]{LLMM3}, where matings of $\overline{z}^d$ with regular ideal polygon reflection groups were realized as antiholomorphic correspondences. 
Our next theorem provides a systematic way of producing such examples.

\begin{theorem}\label{all_corr_thm}
Let $G$ be an ideal $(d+1)-$gon reflection group or the anti-Hecke group.

Let $f$ be a degree $d$ anti-polynomial with connected Julia set which is either
\begin{itemize}
\item geometrically finite; or 
\item periodically repelling, finitely renormalizable.
\end{itemize}

Then there exists an antiholomorphic correspondence $\mathfrak{C}$ which is a mating of $f$ and $G$.

Similarly, let $\Sigma$ be a hyperbolic orbifold of genus zero with arbitrarily many (at least one) punctures, at most one order two orbifold point, and at most one order $\nu\geq 3$ orbifold point, with the corresponding Fuchsian group $G$.
Let 
$$
d(\Sigma)=\begin{cases}
1-2\nu\cdot\chi_{\mathrm{orb}}(\Sigma) \text{ if $\Sigma$ has an order $\nu \geq 3$ orbifold point,}\\
1-2\chi_{\mathrm{orb}}(\Sigma) \text{ otherwise.}
\end{cases}
$$ 
Let $f$ be a degree $d(\Sigma)$ polynomial with connected Julia set which is either
\begin{itemize}
\item geometrically finite; or 
\item periodically repelling, finitely renormalizable.
\end{itemize}
Then there exists a holomorphic correspondence $\mathfrak{C}$ which is a mating of $f$ and $G$.
\end{theorem}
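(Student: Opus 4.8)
The plan is to reduce the theorem to the one-variable conformal mating results already in hand and then to \emph{globalize} each mating into a correspondence by pulling it back through the rational map that uniformizes the underlying quadrature/inversive multi-domain. First I would record, case by case, the conformal model supplied by the earlier theorems. For an ideal $(d+1)$-gon reflection group, Theorem~\ref{conf_mating_polygonal_thm} furnishes a polygonal Schwarz reflection $S$ realizing the conformal mating of $f$ with the Nielsen map; for the anti-Hecke group, Theorem~\ref{conf_mating_antiFarey_thm} furnishes an anti-Farey Schwarz reflection; and for a genus zero Fuchsian orbifold $\Sigma$, Theorem~\ref{conf_mating_b_inv_thm} furnishes a B-involution $S$ realizing the mating of $f$ with the degree $d(\Sigma)$ factor Bowen-Series map of $\Sigma$. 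In the last case I would first verify the arithmetic that the factor Bowen-Series map attached to $\Sigma$ has degree exactly $d(\Sigma)$, computing it from $\chi_{\mathrm{orb}}(\Sigma)$ and the orders of the cone points; this is precisely the role of the case distinction in the formula for $d(\Sigma)$, and it guarantees that $f$ and the external model share a common degree so that the mating hypotheses apply.

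Next I would construct the correspondence. By the characterization of quadrature domains as univalent images of $\D$ under global rational maps, and by Theorem~\ref{b_inv_thm} for inversive multi-domains, the map $S$ is rationally semi-conjugate to the relevant circle involution $\iota$ (inversion $z\mapsto 1/\overline{z}$ in the antiholomorphic case and $z\mapsto 1/z$ in the holomorphic case): there is a rational map $R$ with $R\vert_{\D}$ univalent onto each quadrature/inversive component and $S\circ R=R\circ\iota$ on $\overline{\D}$. The correspondence $\mathfrak{C}$ is then cut out by the algebraic relation $R(z)=S(R(w))=R(\iota(w))$, possibly after discarding a trivial graph component; its two projections are (anti-)holomorphic and provide the forward and backward branches. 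For a single quadrature domain this reproduces and generalizes the construction of \cite[Appendix~B]{LLMM3}, and $\mathfrak{W}=\widehat{\C}$. When $G$ has several parabolics the components $\Omega_j$ touch, and $R$ must be replaced by a tuple of component uniformizers living on the components of a \emph{nodal sphere} or \emph{cactus of spheres} $\mathfrak{W}$; the correspondence is then assembled from these uniformizers and glued across the nodes according to the touching pattern of the $\Omega_j$.

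I would then verify the two mating axioms by transporting through $R$ the invariant partition $\widehat{\C}=T^\infty(S)\sqcup K(S)$. The tiling set lifts to $\cT\subset\mathfrak{W}$, on which $\mathfrak{C}$ unwinds the non-invertible Nielsen/anti-Farey/factor Bowen-Series dynamics into the genuinely invertible, properly discontinuous action of $G$; since $R$ conjugates $S$ to $\iota$ and the external model is a boundary factor of $G$, the quotient $\cT/\mathfrak{C}$ is biholomorphic to $\D/G$, giving axiom~(1). The non-escaping set lifts to $\cK=\cK_1\cup\cK_2$, where $\cK_1=(R\vert_{\overline{\D}})^{-1}(K(S))$ and $\cK_2=\iota(\cK_1)$ is its reflection. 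Because $S\vert_{K(S)}$ is conformally conjugate to $f\vert_{\mathcal{K}(f)}$ by the defining property of the conformal mating, each $\cK_i$ is a copy of $\mathcal{K}(f)$; the two copies meet in the finitely many points lying over the pinched/singular points of the droplet, and the forward (respectively backward) branch of $\mathfrak{C}$ is conformally (respectively anti-conformally) conjugate to $f\vert_{\mathcal{K}(f)}$, giving axiom~(2).

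The main obstacle is the multi-parabolic case, where $\mathfrak{W}$ is genuinely nodal. One must build the noded surface so that the several component uniformizers glue into a single (anti-)holomorphic correspondence whose projections carry the correct bi-degree, and then check that passing to the tiling-set quotient recovers the \emph{entire} group $G$ --- including the parabolic generators created at the nodes --- rather than a proper sub-semigroup. Establishing proper discontinuity and the identification $\cT/\mathfrak{C}\cong\D/G$ across the nodes is the delicate point, and is exactly where the argument extends the single-sphere framework of \cite{LLMM3}. For the holomorphic half there is the additional bookkeeping of the order-two and order-$\nu$ orbifold points of $\Sigma$, which force local branching of $R$ and of $\mathfrak{C}$; one must confirm that the resulting correspondence descends to $\D/G$ compatibly with the cone-point structure, again matching the two cases in the definition of $d(\Sigma)$.
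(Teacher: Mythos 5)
Your proposal is correct and follows essentially the same route as the paper: it invokes Theorems~\ref{conf_mating_polygonal_thm},~\ref{conf_mating_antiFarey_thm}, and~\ref{conf_mating_b_inv_thm} to obtain the one-variable conformal matings, lifts them through the uniformizing rational maps of the quadrature/inversive components to cut out the algebraic correspondence (discarding the diagonal factor), glues across the touching points to form the noded surface $\mathfrak{W}$, and verifies the two mating axioms by transporting the invariant partition $T^\infty(S)\sqcup K(S)$ --- exactly the structure of Sections~\ref{polygonal_corr_sec},~\ref{schwarz_anti_farey_sec}, and~\ref{corr_b_sigma_subsec}. The delicate points you flag (recovering the full free product $\Z/2\Z\ast\Z/(d+1)\Z$ rather than a sub-semigroup, and proper discontinuity on the lifted tiling set) are precisely what the paper settles in Proposition~\ref{grand_orbit_group_gen_prop} via the tile-rank argument.
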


The antiholomorphic parts of Theorem~\ref{all_corr_thm} are proved in Sections~\ref{polygonal_corr_sec},~\ref{schwarz_anti_farey_sec}, and the holomorphic part of it is proved in Section~\ref{holo_para_space_reln_sec}.

We remark that the correspondences produced by Theorem~\ref{all_corr_thm} are \emph{reversible}; i.e., their forward branches are (anti-)conformally conjugate to their backward branches. This property facilitates the dynamical study of such correspondences.

\subsection{Connections with known combination theorems for correspondences}\label{connections_subsec}

\subsubsection{Special cases of Theorem~\ref{all_corr_thm}}\label{special_cases_subsubsec}
We now list several known results in the literature that are generalized by Theorem~\ref{all_corr_thm}.
\begin{enumerate}
\item The algebraic correspondences that arise as matings of quadratic polynomials and the modular group are precisely the bi-degree $2$:$2$ correspondences introduced by Bullett and Penrose \cite{BP} (cf. \cite{BL20,BL22}). Bullett and Ha{\"i}ssinsky showed in \cite{BH07} that this space of correspondences contains matings of Collet-Eckmann quadratic polynomials (with connected Julia set) and the modular group.

\item The antiholomorphic correspondences that arise from Theorem~\ref{all_corr_thm} as matings of quadratic anti-polynomials and the anti-Hecke group isomorphic to $\Z/2\Z\ast\Z/3\Z$ were investigated in \cite{LLMM3}.

\item Some examples of correspondences realizing matings of degree $d$ complex polynomials with the Hecke group (isomorphic to $\Z/2\Z\ast\Z/(d+1)\Z$) were furnished in \cite{BF03,BF05}. In \cite{BF03}, it was conjectured that all complex polynomials with connected Julia set can be mated with the corresponding Hecke group. Theorem~\ref{all_corr_thm} proves this conjecture for all  geometrically finite polynomials and finitely renormalizable, periodically repelling polynomials.

\item In \cite{LMM23}, correspondences were constructed as matings of degree $d$ semi-hyperbolic anti-polynomials (i.e., anti-polynomials with no parabolic cycles and no recurrent Julia critical points) with connected Julia set and the anti-Hecke group (isomorphic to $\Z/2\Z\ast\Z/(d+1)\Z$).

\item In \cite{MM2}, correspondences were constructed as matings of semi-hyperbolic polynomials with connected Julia set and the corresponding Fuchsian group under an additionally imposed real-symmetry condition. These examples also become special cases of Theorem~\ref{all_corr_thm}.
\end{enumerate}

\subsubsection{(Anti-)polynomials vs parabolic (anti-)rational maps}\label{hyp_vs_para_subsubsec}

We now compare Theorem~\ref{all_corr_thm} with related (but technically different) constructions of algebraic correspondences as matings of \emph{parabolic (anti-)rational maps} and (anti-)Hecke groups.

The external maps arising from reflection/Fuchsian groups (namely, Nielsen, anti-Farey, and factor Bowen-Series maps) have at least one parabolic fixed point on the circle, and hence are not quasisymmetrically conjugate to the power map $z^d$ or $\overline{z}^d$. This discrepancy prohibits one from applying standard quasiconformal surgery tools to mate Nielsen/anti-Farey/factor Bowen-Series maps with (anti-)polynomials. In other words, the conformal matings described in Theorems~\ref{conf_mating_polygonal_thm},~\ref{conf_mating_antiFarey_thm}, and~\ref{conf_mating_b_inv_thm} cannot be constructed by quasiconformal surgery.

However, it turns out that the anti-Farey maps and the factor Bowen-Series maps associated with Hecke groups are somewhat special as they have a unique parabolic fixed point on the circle. Equivalently, the (anti-)Hecke groups have a unique conjugacy class of parabolic elements. This implies that the above maps are quasisymmetrically conjugate to parabolic (anti-)Blaschke products since the latter class of maps also have a unique parabolic fixed point on the circle. This quasisymmetric compatibility property was utilized
\begin{enumerate}
\item to prove that every quadratic parabolic rational map (with connected Julia set) can be mated with the modular group as a holomorphic correspondence \cite{BL20},

\item to construct anti-holomorphic correspondences as matings of arbitrary degree $d$ parabolic anti-rational maps (with connected Julia set) with anti-Hecke groups \cite{LLMM3,LMM23}, and 

\item to construct holomorphic correspondences as matings of arbitrary degree $d$ parabolic rational maps (with connected Julia set) with Hecke groups \cite{BLLM}.
\end{enumerate}

To conclude, let us note that for semi-hyperbolic (anti-)polynomials with connected Julia set, the conformal matings furnished by Theorems~\ref{conf_mating_polygonal_thm},~\ref{conf_mating_antiFarey_thm}, and~\ref{conf_mating_b_inv_thm} have \emph{David regularity}. Specifically, for a semi-hyperbolic (anti-)polynomial $f$ with connected Julia set, the topological conjugacy (conformal on the interior) from the filled Julia set dynamics of $f$ to the non-escaping set dynamics of the conformal mating between $f$ and the Nielsen/anti-Farey/factor Bowen-Series map is the restriction of a global David homeomorphism (cf. \cite[Theorem~10.21]{LMMN}, \cite[Theorem~C]{LMM23}). In general, we ask the following question:

\begin{question}
What is the regularity of the mating between a general periodically repelling, finitely renormalizable (anti-)polynomial  and the Nielsen/anti-Farey/factor Bowen-Series map?
\end{question}

\subsection{Proof techniques}
Here we present an overview of the techniques employed in this paper:
\begin{itemize}
    \item Utilizing the David surgery method, we construct matings of post-critically finite (anti-)polynomials / geometrically finite (anti-)polynomials with appropriate ideal polygon reflection groups / anti-Hecke groups / Fuchsian genus zero orbifold groups.
    It is important to note that while David surgery is a powerful tool, there are analytic and geometric constraints, as the current techniques only apply to John domains.
    \item To extend our constructions to more general setting, we take limits of post-critically finite matings.
    These matings emerge naturally as degenerate (anti-)polynomial-like maps.
    As a preliminary step, we prove a boundedness result for the connectivity locus of such degenerate (anti-)polynomial-like maps.
    \item The structure of degenerate (anti-)polynomial-like maps enables us to adapt puzzle techniques, which allow us to establish combinatorial continuity of the association between periodically repelling (anti-)polynomials and their corresponding matings.
    \item By dividing the plane into pieces using puzzles, we apply generalized renormalization methods to establish combinatorial rigidity for periodically repelling, finitely renormalizable matings.
\end{itemize}
These results permit us to conclude the existence of conformal matings and combinatorial matings, as well as continuity and combinatorial continuity of `straightening maps' between the parameter spaces.
Notably, the fixed ray puzzles give a natural stratification of the space of Schwarz reflections. 
This stratification leads to discontinuity of the straightening map between geometrically finite parameters.
Furthermore, we derive algebraic descriptions for these matings, which enable us to transfer the previous mating results to the framework of correspondences.

\medskip

\noindent\textbf{Acknowledgements.} Part of the work on this project was carried out during the authors' visit to The Fields Institute, Urgench State University, and Simons Laufer Mathematical Sciences Institute (MSRI).

\medskip

\noindent\textbf{Notation:} 
\begin{itemize}
\item For $X\subset\widehat{\C}$, the sets $\partial X, X^c$ stand for the boundary and the complement of $X$ in $\widehat{\C}$.

\item $\eta^+(z)=1/z$.

\item $\eta^-(z)=1/\overline{z}$.

\item $\D^*=\widehat{\C}\setminus\overline{\D}$.

\item $B(a,r):=\{\vert z-a\vert<r\}$, where $a\in\C$ and $r>0$.

\item $\overline{B}(a,r):=\{\vert z-a\vert\leq r\}$, where $a\in\C$ and $r>0$.


\item $\textrm{Aut}(\D)$ = the group of all M{\"o}bius automorphisms of $\D$,\\ 
$\textrm{Aut}^\pm(\D)$ = the group of all M{\"o}bius and anti-M{\"o}bius automorphisms of $\D$.
\end{itemize}

\begin{LARGE}\part{Antiholomorphic world}\end{LARGE}\label{part_one}
\bigskip

\section{Reflection groups and associated external maps}\label{nielsen_farey_sec}

We denote the regular $(d+1)-$gon reflection group by $\pmb{G}_{d}$. It is generated by reflections $\rho_1,\cdots,\rho_{d+1}$ in the circles $C_1,\cdots,C_{d+1}$ such that $C_i\cap\D$ is a hyperbolic geodesic of $\D$ connecting $\omega^{i-1}$ and $\omega^i$, where $\omega:=e^{\frac{2\pi i}{d+1}}$. Let $\Pi$ be the ideal polygon in $\D$ with vertices at the $(d+1)-$st roots of unity. Then $\Pi$ is a fundamental domain for the $\pmb{G}_d-$action on $\D$. 

The group of conformal and anti-conformal automorphisms of $\D$ generated by $\pmb{G}_d$ and $\langle M_\omega\rangle$, where $M_\omega(z)=\omega z$ and $\omega:=e^{\frac{2\pi i}{d+1}}$, is called the \emph{anti-Hecke group}. We denote the anti-Hecke group by $\mathbbm{G}_d$. It is an index $d+1$ extension of $\pmb{G}_d$.

\subsection{Nielsen map} The \emph{Nielsen map} $\pmb{\cN}_d$ associated with the regular ideal polygon group $\pmb{G}_d$ is defined as
$$
\pmb{\cN}_d:\overline{\D}\setminus\Int{\Pi}\longrightarrow\overline{\D},\quad z\mapsto \rho_i(z)\quad \mathrm{if}\quad z\in \overline{H_i},
$$
where $H_i$ is the hyperbolic half-plane cut out by the bi-infinite geodesic $C_i\cap\D$ and the counter-clockwise circular arc $\arc{\omega^{i-1},\omega^i}$, for $i\in\{1,\cdots,d+1\}$ (see Figure~\ref{itg_nielsen_fig}). The Nielsen map $\pmb{\cN}_d$ fixes $\partial\Pi$ pointwise, and restricts to an orientation-reversing, $C^1$, expansive, degree $d$ covering map of $\mathbb{S}^1$. Hence, it is topologically conjugate to the map $\overline{z}^d:~\mathbb{S}^1 \longrightarrow \mathbb{S}^1$. There is a unique such conjugacy that sends $1$ to $1$. We denote this map by $\pmb{\mathcal{E}}_d:\mathbb{S}^1\to\mathbb{S}^1$, and call it the \emph{$d-$th Minkowski circle homeomorphism}.

\subsection{Anti-Farey map}

Let us now associate a circle endomorphism with the anti-Hecke group $\mathbbm{G}_d$.
Roughly speaking, we first pass to the index $d$ subgroup $\pmb{G}_d\leqslant\mathbbm{G}_d$, look at its Nielsen map $\pmb{\cN}_d$, and then pass to a semi-conjugate (or factor) of the Nielsen map using the torsion element $M_\omega$.

More precisely, observe that due to the rotational symmetry of the fundamental domain $\Pi$, the Nielsen map $\pmb{\cN}_d$ of commutes with $M_\omega$. This allows one to construct a factor dynamical system
$$
\widehat{\pmb{\cN}_d}:\faktor{\left(\overline{\D}\setminus \Int{\Pi}\right)}{\langle M_\omega\rangle}\longrightarrow \faktor{\overline{\D}}{\langle M_\omega\rangle}
$$
that is semi-conjugate to $\pmb{\cN}_d:\overline{\D}\setminus\Int{\Pi}\to\overline{\D}$ via the quotient map~$\overline{\D}~\to~\overline{\D}/\langle M_\omega\rangle$.
\begin{figure}[ht]
\captionsetup{width=0.96\linewidth}
\begin{tikzpicture}
\node[anchor=south west,inner sep=0] at (0,0) {\includegraphics[width=0.41\textwidth]{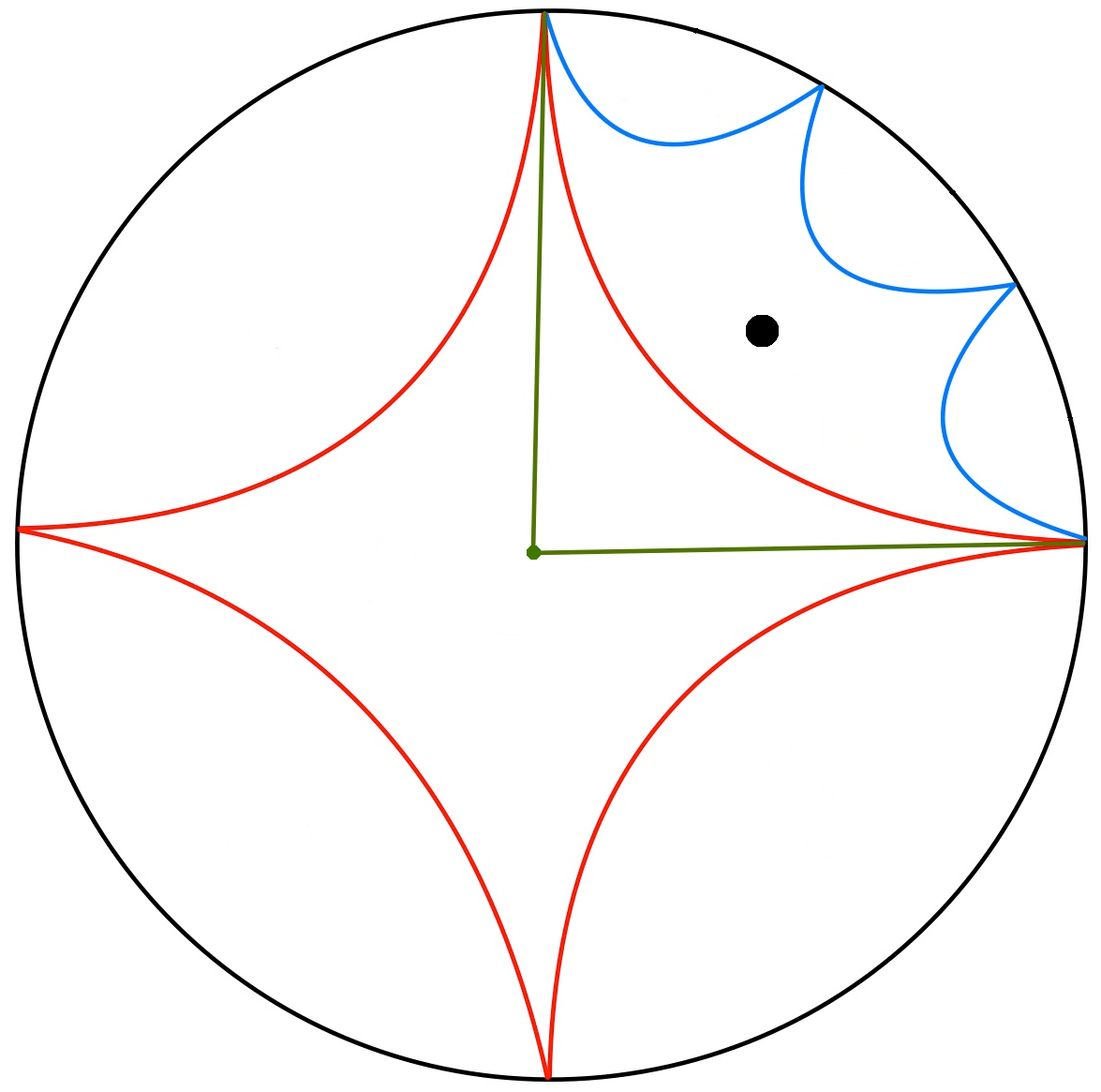}}; 
\node[anchor=south west,inner sep=0] at (6,0) {\includegraphics[width=0.4\textwidth]{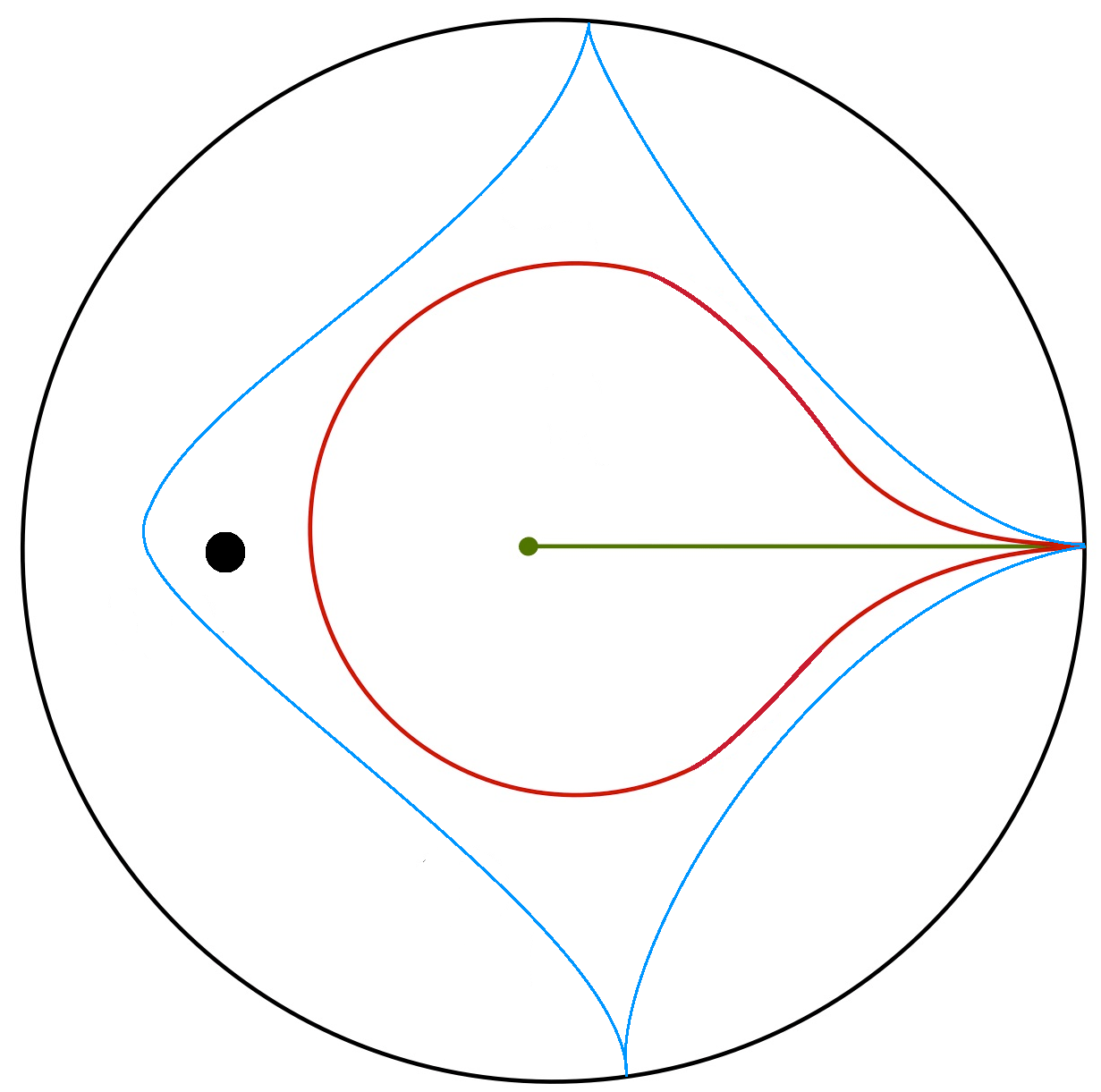}};
\node at (8.48,3.2) {$\mathcal{H}$};
\end{tikzpicture}
\caption{Left: a fundamental domain for the $\pmb{G}_3-$action on $\D$ is given by the regular quadrilateral bounded by the red geodesics. A fundamental domain for the action of $\mathbbm{G}_3$ on $\D$ is given by the triangle having the two green lines and the red geodesic connecting $1$ to $i$ as its edges. Right: When the Riemannian orbifold $\faktor{\D}{\langle M_i\rangle}$ is uniformized by the disk, the map $\pmb{\cF}_3$ has the region bounded by the black unit circle and the red monogon as its domain of definition. The map $\pmb{\cF}_3$ fixes the red monogon pointwise, and acts as an orientation-reversing, degree three circle covering with a unique parabolic fixed point at $1$. The heavy black dot represents the triple critical point of $\pmb{\cF}_3$.}
\label{anti_farey_fig}
\end{figure}

Finally, as $z\mapsto z^{d+1}$ defines a conformal isomorphism $\xi:\overline{\D}/\langle M_\omega\rangle\longrightarrow\overline{\D}$, we have the so-called \emph{degree $d$ anti-Farey map}
$$
\pmb{\cF}_d:= \xi\circ\widehat{\pmb{\cN}_d}\circ\xi^{-1}:\overline{\D}\setminus\Int{\mathcal{H}}\to\overline{\D},
$$
where $\mathcal{H}:=\xi(\Pi/\langle M_\omega\rangle)$ (see Figure~\ref{anti_farey_fig}).

The fact that $\pmb{\cN}_d$ fixes $\partial\Pi$ pointwise, implies that $\pmb{\cF}_d$ fixes $\partial\mathcal{H}$ pointwise. Moreover, the map $\pmb{\cF}_d$ restricts to an expansive, orientation-reversing, degree $d$, piecewise anti-analytic covering of $\mathbb{S}^1$. Hence, $\pmb{\cF}_d\vert_{\mathbb{S}^1}$ is topologically conjugate to $\overline{z}^d_{\mathbb{S}^1}$.

The anti-Farey map $\pmb{\cF}_d$ has a couple of key differences from the Nielsen map $\pmb{\cN}_d$. These dissimilarities come from passing to a factor dynamical system. Note that the piecewise M{\"o}bius map $\pmb{\cN}_d$ has no critical point, but the anti-Farey map has a unique critical point (of multiplicity $d$) at $\xi(\rho_1(0))$ with associated critical value at $0$. Further, while the Nielsen map has $d+1$ parabolic fixed points on the circle, the anti-Farey map has a unique parabolic fixed point on $\mathbb{S}^1$ (at the point $1$).

We refer the reader to \cite[\S 3.1]{LMM23} for a detailed discussion of the anti-Farey map and the anti-Hecke group.

\begin{remark}
The idea of constructing an external map as a factor of another will also be used in Section~\ref{fbs_sec} to define factor Bowen-Series maps.
\end{remark}

\section{Schwarz reflection maps}\label{schwarz_sec}

\subsection{Simply connected quadrature domains}

\begin{defn}\label{qd_schwarz_def}
A domain $\Omega\subsetneq\widehat{\C}$ with $\infty\notin\partial\Omega$ and $\mathrm{int}(\overline{\Omega})=\Omega$ is called a \emph{quadrature domain} if there exists a continuous function $S:\overline{\Omega}\to\widehat{\C}$ satisfying the following two properties:
\begin{enumerate}
\item $S=\mathrm{id}$ on $\partial \Omega$, and

\item $S$ is anti-meromorphic on $\Omega$.
\end{enumerate}

The map $S$ is called the \emph{Schwarz reflection map} of $\Omega$.
\end{defn}

The boundary of a quadrature domain is real-analytic with conformal cusps and double points as only possible singularities \cite{Sak91}. Thus, the map $S$ can be regarded as a semi-global anti-meromorphic extension of the local Schwarz reflection maps with respect to the non-singular points of $\partial\Omega$.

\begin{prop}\label{simp_conn_quad_prop}\cite[Theorem~1]{AS76},\cite[Lemma~4.1]{LM16}
A simply connected domain $\Omega\subsetneq\widehat{\C}$ (with $\infty\notin\partial\Omega$ and $\mathrm{int}(\overline{\Omega})=\Omega$) is a quadrature domain if and only if the Riemann map $\phi:\mathbb{D}\to\Omega$ is rational.
In this case, the Schwarz reflection map $S$ of $\Omega$ is given by $\phi\circ\eta^-\circ(\phi\vert_{\D})^{-1}$. 

Moreover, if the degree of the rational map $\phi$ is $d$, then $S:S^{-1}(\Omega)\to\Omega$ is a (branched) covering of degree $(d-1)$, and $S:S^{-1}(\Int{\Omega^c})\to\Int{\Omega}^c$ is a (branched) covering of degree $d$.
\end{prop}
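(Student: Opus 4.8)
The plan is to prove the two implications separately and then read off the degree count from the explicit reflection formula. For the \emph{if} direction, suppose the Riemann map $\phi:\D\to\Omega$ extends to a rational map of $\widehat{\C}$ of degree $d$. I would simply \emph{define} $S:=\phi\circ\eta^-\circ(\phi\vert_\D)^{-1}$ on $\overline{\Omega}$ and verify the two defining properties of a Schwarz reflection. Since $\phi$ is univalent on $\D$, the inverse $(\phi\vert_\D)^{-1}:\Omega\to\D$ is a well-defined holomorphic map; $\eta^-$ is anti-holomorphic and carries $\D$ into $\D^*$; and $\phi$ is globally holomorphic on $\widehat{\C}$, so the composite is anti-meromorphic on $\Omega$. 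Because $\phi$ is rational it extends continuously to $\overline{\D}$, and since $\eta^-$ restricts to the identity on $\mathbb{S}^1$, for $w=\phi(\zeta)$ with $\zeta\in\mathbb{S}^1$ one gets $S(w)=\phi(\eta^-(\zeta))=\phi(\zeta)=w$. Thus $S=\mathrm{id}$ on $\partial\Omega$, which shows $\Omega$ is a quadrature domain and simultaneously establishes the claimed formula for $S$.

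For the \emph{only if} direction, suppose $\Omega$ is a quadrature domain with Schwarz reflection $S$, and set $\mathcal{S}(w):=\overline{S(w)}$, the associated Schwarz function; it is meromorphic on $\Omega$ (its only poles are the finitely many anti-poles of $S$) and satisfies $\mathcal{S}(w)=\overline{w}$ on $\partial\Omega$. I would pull this back by the Riemann map. The function $\mathcal{S}\circ\phi$ is meromorphic on $\D$, with boundary values $\overline{\phi(\zeta)}$ on $\mathbb{S}^1$. On the other hand, $\phi^*(\zeta):=\overline{\phi(1/\overline{\zeta})}$ is holomorphic on $\D^*$ and has the same boundary values $\overline{\phi(\zeta)}$ on $\mathbb{S}^1$. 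A Schwarz-reflection (Morera) gluing across $\mathbb{S}^1$ then produces a single function that agrees with $\mathcal{S}\circ\phi$ on $\D$ and with $\phi^*$ on $\D^*$, and which is meromorphic on all of $\widehat{\C}$, hence rational. Reading off the $\D^*$-piece, $\phi(w)=\overline{\phi^*(1/\overline{w})}$ is a rational function of $w$, so $\phi$ is rational.

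For the degree statement, I would use the formula $S=\phi\circ\eta^-\circ(\phi\vert_\D)^{-1}$ to reduce the counting of $S$-preimages to the counting of points of the fibre $\phi^{-1}(w)$: a point $z=\phi(u)\in\overline{\Omega}$ satisfies $S(z)=w$ exactly when $1/\overline{u}\in\phi^{-1}(w)$, so each point $p$ of $\phi^{-1}(w)$ lying in $\D^*$ contributes the unique solution $z=\phi(1/\overline{p})\in\Omega$, while the single point of $\phi^{-1}(w)$ in $\D$ (which exists precisely when $w\in\Omega$, by univalence of $\phi\vert_\D$) contributes nothing. Since $\deg\phi=d$, a generic $w\in\Omega$ has one preimage in $\D$ and therefore $d-1$ in $\D^*$, giving $\deg\bigl(S:S^{-1}(\Omega)\to\Omega\bigr)=d-1$; and for $w\in\Int{\Omega^c}$ the whole fibre $\phi^{-1}(w)$ lies in $\D^*$ (as $\phi(\overline{\D})=\overline{\Omega}$), yielding degree $d$.

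I expect the delicate step to be the gluing in the \emph{only if} direction. Continuity of $\phi$ and of $\mathcal{S}\circ\phi$ up to $\mathbb{S}^1$ is clear across the real-analytic arcs of $\partial\Omega$, where $\phi$ in fact extends analytically; the genuine obstacle is the finitely many conformal cusps and double points of $\partial\Omega$, at which the identification of $\mathcal{S}\circ\phi$ with $\phi^*$ must be justified by a removable-singularity argument, exploiting that these exceptional points are isolated on $\mathbb{S}^1$ and that both functions stay bounded near them (their only poles being interior). Securing this boundary regularity of the Riemann map—equivalently, the removability of the singular boundary points in the welding—is the crux; the remaining assertions are formal consequences of the reflection formula.
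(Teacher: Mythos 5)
The paper offers no proof of this proposition at all --- it is imported verbatim from \cite{AS76} and \cite{LM16} --- so there is nothing internal to compare against; what you have written is essentially the classical argument of those references, and it is correct in outline: the ``if'' direction by defining $S=\phi\circ\eta^-\circ(\phi\vert_{\D})^{-1}$ and checking boundary values, the ``only if'' direction by welding the pulled-back Schwarz function $\mathcal{S}\circ\phi$ on $\D$ with the reflected map $\zeta\mapsto\overline{\phi(1/\overline{\zeta})}$ on $\D^*$, and the degree count by transporting fibres of $\phi$ through $\eta^-$.

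One dependency in your welding step must be made explicit, since as phrased it is the only point where the argument could collapse. You speak of ``the real-analytic arcs of $\partial\Omega$'' and ``the finitely many conformal cusps and double points'' as if this structure were given; a priori the boundary of a quadrature domain (defined only via existence of a continuous Schwarz reflection) has no such description, and this regularity is exactly Sakai's theorem \cite{Sak91}, which the paper quotes immediately before the proposition. Invoking it is legitimate in context, but it is a deep external input and has to be cited as such --- otherwise the reasoning is circular, since for rational $\phi$ the regularity is a consequence of the very statement being proved. Alternatively, you can bypass Sakai entirely: since $\infty\notin\partial\Omega$ and $S$ is continuous on $\overline{\Omega}$ with $S=\mathrm{id}$ on $\partial\Omega$, both $\mathcal{S}\circ\phi$ and $\overline{\phi(1/\overline{\zeta})}$ are bounded holomorphic near $\mathbb{S}^1$, and by Fatou's theorem together with properness of $\phi\colon\D\to\Omega$ (radial limits of $\phi$ lie on $\partial\Omega$ a.e.) their nontangential boundary values agree almost everywhere; the Cauchy-integral (Morera) gluing for bounded functions with a.e.\ coincident boundary values then gives the meromorphic extension across all of $\mathbb{S}^1$ with no exceptional set, and rationality of $\phi$ follows. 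Two minor points: in the ``if'' direction, continuity of $S$ at a double point of $\partial\Omega$ needs a short compactness argument, since $(\phi\vert_{\D})^{-1}$ itself does not extend continuously there; and in the ``only if'' direction the formula $S=\phi\circ\eta^-\circ(\phi\vert_{\D})^{-1}$ for the \emph{given} $S$ should be extracted at the end via the identity principle applied to the glued rational function.
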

\[
  \begin{tikzcd}
    \D \arrow{d}[swap]{\eta^-} \arrow{r}{\phi} & \Omega \arrow{d}{S} \\
     \widehat{\C} \arrow{r}{\phi}  & \widehat{\mathbb{C}}
  \end{tikzcd}
\] 

We will call the degree of the rational map $\phi$ the \emph{degree} of the associated quadrature domain $\Omega$.

\subsection{Schwarz reflections in quadrature multi-domains}\label{subsec:qmd}

\begin{defn}\label{mqd_schwarz_def}
Let $\{\Omega_1,\cdots,\Omega_k\}$ be a finite collection of disjoint simply connected domains with $\Omega_j\subsetneq\widehat{\C}$, $\infty\notin\partial\Omega_j$ and $\mathrm{int}(\overline{\Omega_j})=\Omega_j$, for $j=1,\cdots,k$. We call $\cD=\displaystyle\bigsqcup_{j=1}^k \Omega_j$ a \emph{quadrature multi-domain} if there exists a continuous function $S:\overline{\cD}\to\widehat{\C}$ satisfying the following two properties:
\begin{enumerate}
\item $S=\mathrm{id}$ on $\partial \cD$, and

\item $S$ is anti-meromorphic on $\cD$.
\end{enumerate}

The map $S$ is called the \emph{Schwarz reflection map} of the quadrature multi-domain $\cD$.
\end{defn}

Note that for a quadrature multi-domain $\cD:=\displaystyle\bigsqcup_{j=1}^k\Omega_j$, each component $\Omega_j$ is a quadrature domain and $S\vert_{\overline{\Omega_j}}$ agrees with the Schwarz reflection map $S_j$ of $\Omega_j$. 

Let $\phi_j:\overline{\D}\to\overline{\Omega}_j$ be the Riemann uniformizations of the simply connected quadrature domains $\Omega_j$ such that each $\phi_j$ extends as a rational map of $\widehat{\C}$ of degree $d_j$. It follows that $S_j:S_j^{-1}(\Omega_j)\to\Omega_j$ is a branched covering of degree $(d_j-1)$, and $S_j:S_j^{-1}(\Int{\Omega_j^c})\to\Int{\Omega_j^c}$ is a branched covering of degree $d_j$. Therefore, $S:S^{-1}(\cD)\to\cD$ is a branched covering of degree 
$$
d:=\sum_{j=1}^k d_j-1.
$$
On the other hand, $S:S^{-1}(\Int{\cD^c})\to\Int{\cD^c}$ is a branched covering of degree $d+1$.
We refer to $S$ as a degree $d$ Schwarz reflection map.

Set 
$$
 T(S):=\widehat{\C}\setminus\cD,\quad \mathrm{and}\quad T^0(S):=T\setminus \mathfrak{S},
$$
where $\mathfrak{S}$ is the set of singular points (cusps and double points) on $\partial T(S)$. We call the components of $T^0(S)$ the \emph{fundamental tile} and the components of $S^{-n}(T^0(S))$ the \emph{tiles of rank $n$}.
The \emph{escaping/tiling set} $T^\infty(S)$ of $S$ is defined as the union of all tiles; i.e., 
$$
T^\infty(S):=\displaystyle\bigcup_{n=0}^\infty S^{-n}(T^0(S)).
$$ 
The \emph{non-escaping set} of $S$ is defined as $K(S):=\widehat{\C}\setminus T^\infty(S)$. A connected component of $\Int{K(S)}$ is called a \emph{Fatou component} of $S$. The common boundary of $K(S)$ and $T^\infty(S)$ is called the \emph{limit set} of $S$, and is denoted by~$\Lambda(S)$.

\subsection{Conformal mating and Schwarz reflections}\label{nielsen_poly_mating_def_subsec}

Let $f$ be a monic, centered complex polynomial of degree $d$ with a connected and locally connected Julia set. We denote the \emph{B{\"o}ttcher coordinate} of $f$ by
$$
\psi_f:\widehat{\C}\setminus\overline{\D}\to\mathcal{B}_\infty(f):=\widehat{\C}\setminus\mathcal{K}(f).
$$ 
This map, which conjugates $\overline{z}^{d}$ to $f$, is normalized to be tangent to the identity map near infinity.
As $\mathcal{J}(f)$ is locally connected, $\psi_f$ extends continuously to $\mathbb{S}^1$ to yield a semi-conjugacy between $\overline{z}^d\vert_{\mathbb{S}^1}$ and $f\vert_{\mathcal{J}(f)}$. 
By Section~\ref{nielsen_farey_sec}, there exists a homeomorphism $\mathfrak{h}:\mathbb{S}^1\to\mathbb{S}^1$ that conjugates $\overline{z}^d$ to $\pmb{\cN}_d$ (respectively, $\pmb{\cF}_d$). We normalize $\mathfrak{h}$ so that it sends the fixed point $1$ of $z^d$ to the fixed point $1$ of $\pmb{\cN}_d$ (respectively, $\pmb{\cF}_d$).

We define an equivalence relation $\sim$ on $\mathcal{K}(f)\bigsqcup \overline{\D}$ generated by 
\begin{equation}
\psi_{f}(\zeta)\sim \mathfrak{h}(\overline{\zeta}),\ \textrm{for all}\ z\in\mathbb{S}^1.
\label{conf_mat_equiv_rel_1}
\end{equation} 

We now precise what it means to \emph{conformally mate} $f$ with the Nielsen map $\pmb{\cN}_d$ (the case of the anti-Farey map $\pmb{\cF}_d$ is exactly the same).

\begin{defn}\label{conf_mat_def_1}
The maps $f$ and $\pmb{\cN}_d$ are said to be \emph{conformally mateable} if there exist a continuous map $S: \mathrm{Dom}(S)\subsetneq\widehat{\C}\to\widehat{\C}$ (called a \emph{conformal mating} of $\pmb{\cN}_d$ and $f$) that is complex-analytic in the interior of $\mathrm{Dom}(S)$ and continuous maps 
	$$
	\mathfrak{X}_f:\mathcal{K}(f)\to\widehat{\C}\ \textrm{and}\ \mathfrak{X}_{\pmb{\cN}_d}: \overline{\D}\to\widehat{\C},
	$$
	conformal on $\Int{\mathcal{K}(f)}$ and $\D$ (respectively), satisfying
	\begin{enumerate}
		\item\label{topo_cond_1} $\mathfrak{X}_f\left(\mathcal{K}(f)\right)\cup \mathfrak{X}_\Sigma\left(\overline{\D}\right) = \widehat{\C}$,
		
		\item\label{dom_cond_1} $\mathrm{Dom}(S)= \mathfrak{X}_f(\mathcal{K}(f))\cup\mathfrak{X}_{\pmb{\cN}_d}(\overline{\D}\setminus\Int{\Pi})$,
		
		\item $\mathfrak{X}_f\circ f(z) = S\circ \mathfrak{X}_f(z),\quad \mathrm{for}\ z\in\mathcal{K}(f)$,
		
		\item $\mathfrak{X}_{\pmb{\cN}_d}\circ \pmb{\cN}_d (w) = F\circ \mathfrak{X}_{\pmb{\cN}_d}(w),\quad \mathrm{for}\ w\in
		\overline{\D}\setminus\Int{\Pi}$,\quad and 
		
		\item\label{identifications_1} $\mathfrak{X}_f(z)=\mathfrak{X}_{\pmb{\cN}_d}(w)$ if and only if $z\sim w$ where $\sim$ is the equivalence relation on $\mathcal{K}(f)\sqcup \overline{\D}$ defined by Relation~\eqref{conf_mat_equiv_rel_1}.
	\end{enumerate}
\end{defn}

Using David surgery techniques, it was proved in \cite{LMMN,LMM23} that many anti-polynomials with connected Julia set can be conformally mated with the Nielsen/anti-Farey map. Crucially, the property of $\pmb{\cN}_d:\overline{\D}\setminus\Int{\Pi}\longrightarrow\overline{\D}$ (respectively, of $\pmb{\cF}_d:\overline{\D}\setminus\Int{\mathcal{H}}\longrightarrow\overline{\D}$) that it fixes $\partial\Pi$ (respectively, $\partial\mathcal{H}$) pointwise, translates to the fact that the resulting conformal matings are Schwarz reflection maps.

\begin{theorem}\cite[Lemma~10.17, Theorem~10.21]{LMMN}\cite[Proposition~3.7]{LMM23}\label{nielsen_antipoly_conf_mat_thm}
Let $f$ be a degree $d$ hyperbolic anti-polynomial with connected Julia set.
Then, there exists a conformal mating $S$ of $f:\mathcal{K}(f)\to\mathcal{K}(f)$ and $\pmb{\cN}_d:\overline{\D}\setminus\Int{\Pi}\longrightarrow\overline{\D}$ (respectively, $\pmb{\cF}_d:\overline{\D}\setminus\Int{\mathcal{H}}\longrightarrow\overline{\D}$). Moreover, $S$ is a Schwarz reflection map in a quadrature multi-domain.
\end{theorem}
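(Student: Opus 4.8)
The plan is to construct the conformal mating $S$ directly via David quasiconformal surgery, leveraging the fact that $f$ is hyperbolic. The strategy mirrors the classical Douady--Hubbard mating construction but replaces the power map $\overline{z}^d$ on the basin of infinity with the Nielsen map $\pmb{\cN}_d$ (respectively, the anti-Farey map $\pmb{\cF}_d$). The key structural input is that $\pmb{\cN}_d$ fixes $\partial\Pi$ pointwise and is conjugate on $\mathbb{S}^1$ to $\overline{z}^d$ via the Minkowski homeomorphism $\pmb{\mathcal{E}}_d$; this is exactly what forces the resulting mated dynamical system to be a Schwarz reflection.

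\textbf{Step 1: Topological gluing.} First I would form the topological model. Glue $\mathcal{K}(f)$ to $\overline{\D}\setminus\Int{\Pi}$ along $\mathbb{S}^1$ using the equivalence relation $\sim$ of Relation~\eqref{conf_mat_equiv_rel_1}, i.e. identify $\psi_f(\zeta)$ with $\mathfrak{h}(\overline{\zeta})$. Since $f$ is hyperbolic with connected (hence locally connected) Julia set, $\psi_f$ extends continuously to $\mathbb{S}^1$, and the identification space $\mathfrak{X}_f(\mathcal{K}(f))\cup\mathfrak{X}_{\pmb{\cN}_d}(\overline{\D})$ is a topological sphere by a Moore-type argument (the non-trivial fibers of the gluing are the Carathéodory impressions, which are points or finite trees under local connectivity). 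This produces a topological degree-$d$ branched map $S$ together with the semiconjugacies $\mathfrak{X}_f$ and $\mathfrak{X}_{\pmb{\cN}_d}$ satisfying conditions~\eqref{topo_cond_1}--\eqref{identifications_1} of Definition~\ref{conf_mat_def_1}.

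\textbf{Step 2: Conformal structure via David surgery.} The map $\mathfrak{h}=\pmb{\mathcal{E}}_d\circ(\text{conjugacy})$ is only a circle homeomorphism, not quasisymmetric --- the parabolic fixed points of $\pmb{\cN}_d$ destroy quasisymmetry. Hence standard Beurling--Ahlfors or qc surgery fails, and one must instead show that $\mathfrak{h}$ extends to a \emph{David homeomorphism} of $\overline{\D}$ (an orientation-preserving homeomorphism whose distributional derivatives are locally integrable with the David integrability condition on $\mu=\overline\partial H/\partial H$). This is precisely the technical heart of \cite{LMMN,LMM23}: near each parabolic fixed point the local model of $\mathfrak{h}$ has controlled modulus of continuity, and the David condition holds provided the relevant Fatou components of $f$ are John domains --- which is guaranteed by hyperbolicity. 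Pulling back the standard complex structure by this David extension and invoking the David Integrability Theorem (the David--Mañé--Sad--Sullivan / David integrability theorem for David--Beltrami coefficients) yields a new integrable complex structure, invariant under the topological model, which we straighten to obtain a genuinely (anti-)meromorphic $S$ on $\Int{\mathrm{Dom}(S)}$.

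\textbf{Step 3: Identifying $S$ as a Schwarz reflection.} Finally I would verify that the straightened map is a Schwarz reflection of a quadrature multi-domain. Because $\pmb{\cN}_d$ fixes $\partial\Pi$ pointwise and acts as anti-Möbius reflections on the half-planes $\overline{H_i}$, the image $\mathfrak{X}_{\pmb{\cN}_d}(\overline{\D}\setminus\Int{\Pi})$ becomes a union of simply connected domains whose boundary is fixed pointwise by $S$, and $S$ is anti-meromorphic on the interior; invoking Proposition~\ref{simp_conn_quad_prop} identifies each such component as a quadrature domain with rational uniformization. \textbf{The main obstacle} I expect is Step~2: establishing the David regularity of the welding homeomorphism at the parabolic points and verifying the John condition on the Fatou components, together with checking that the David integrability theorem applies to the \emph{invariant} structure (David coefficients are not closed under composition in general, so one must control the grand orbit carefully, as in \cite{LMMN}). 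The hyperbolicity hypothesis is exactly what makes this tractable here, which is why the theorem is stated only for hyperbolic $f$; the general periodically-repelling and geometrically-finite cases require the degenerate polynomial-like machinery developed later in the paper.
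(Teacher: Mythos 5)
Your proposal is correct and is essentially the paper's own route: the paper does not reprove this statement but quotes it from \cite{LMMN,LMM23}, whose proof is precisely the David surgery you describe (David extension of the welding circle homeomorphism at the parabolic points, integrability of the resulting invariant David--Beltrami coefficient using the John property furnished by hyperbolicity, and the pointwise fixing of $\partial\Pi$, respectively $\partial\mathcal{H}$, forcing the straightened map to be a Schwarz reflection of a quadrature multi-domain via Proposition~\ref{simp_conn_quad_prop}). The only differences are cosmetic: the cited construction (and the analogous argument the paper itself gives in the proof of Theorem~\ref{thm:C}) performs the surgery directly in the dynamical plane of $f$ by replacing its external map on the basin of infinity, rather than first forming the topological mating by a Moore-type quotient, and the integrability theorem you invoke is due to David alone (the Ma\~n\'e--Sad--Sullivan attribution is a slip).
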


\section{Polygonal Schwarz reflection maps}\label{poly_schwarz_sec}

\subsection{Basic definitions and properties}\label{subsec:bdp}

We continue to use the notation of the previous section.
Suppose further that $\overline{\cD}$ is connected and simply connected. Since the boundary of a quadrature domain consists of real-analytic curves possibly with cusps and double point singularities, no more than two distinct components of $\cD$ can touch at a point. Thus, the above condition is equivalent to following ones.
\begin{enumerate}
\item Each $\Omega_j$ is a Jordan domain, and the \emph{contact graph} of the quadrature multi-domain (i.e., a graph having a vertex for each $\Omega_j$ and an edge connecting two vertices if the corresponding quadrature domains touch) is a tree.
\item $\overline{\cD}$ is a pinched polygon, and the associated finite geodesic lamination contains no polygon.
\end{enumerate}
We will refer to such a $\cD$ as a \emph{tree-like} quadrature multi-domain, and call the Schwarz reflection of the tree-like quadrature multi-domain $\cD$ a \emph{Schwarz reflection of a pinched polygon.}

For a tree-like quadrature multi-domain, the desingularized droplet $T^0(S)$ is homeomorphic to an ideal polygon in the hyperbolic plane.

\begin{defn}\label{poly_schwarz_def}
A degree $d$ Schwarz reflection map $S:\overline{\cD}\to\widehat{\C}$ of a pinched polygon $\overline{\cD}$
is said to be \emph{polygonal} if  $T^0(S)$ is homeomorphic to an ideal $(d+1)-$gon in $\D$ and $S$ has no critical values in $T^0(S)$.
 
A degree $d$ polygonal (piecewise) Schwarz reflection map is called \emph{regular polygonal} if $T^0(S)$ is conformally equivalent to the regular ideal $(d+1)-$gon $\Pi$.  
We denote by~$\mathscr{S}_d$ the space of degree $d$ \emph{regular} polygonal Schwarz reflections.
\end{defn}
We will often omit the word `regular' for brevity. We normalize a polygonal Schwarz reflection $S$ so that the conformal map between $\Pi$ and $T^0(S)$ sends the origin to $\infty$ and has asymptotics $z\mapsto 1/z+O(z)$ as $z\to 0$.

\begin{remark}
In the above definition, the condition that $T^0(S)$ is homeomorphic to an ideal $(d+1)-$gon in $\D$ is equivalent to requiring that the sum of the number of cusps and twice the number of double points on $\partial\cD$ is equal to $d+1$.
\end{remark}

The following result justifies the nomenclature `polygonal Schwarz reflection maps'. Indeed, it shows that such maps are characterized by having the Nielsen map of a polygonal reflection group as the conformal model of their dynamics near the droplet.

\begin{lem}\label{polygonal_dpl_lem}
Let $S:\overline{\cD}\to\widehat{\C}$ be a degree $d$ Schwarz reflection map of a pinched polygon. Then, $S$ is regular polygonal if and only if $S:S^{-1}(T^0(S))\to T^0(S)$ is conformally conjugate to $\pmb{\cN}_d:\pmb{\cN}_d^{-1}(\Pi)\to\Pi$.
\end{lem}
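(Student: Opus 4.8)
The plan is to prove both implications by comparing the first-return structure of $S$ over the fundamental tile with the explicit model supplied by the Nielsen map. First I would unpack the model side. Since $\Pi$ is the regular ideal $(d+1)$-gon with sides $C_i\cap\D$, the reflection $\rho_i$ interchanges the two components of $\D\setminus(C_i\cap\D)$, so $\rho_i(\Pi)$ is the ideal polygon adjacent to $\Pi$ across its $i$-th side and is contained in the ``ear'' $H_i$. A direct check then gives
\[
\pmb{\cN}_d^{-1}(\Pi)=\bigsqcup_{i=1}^{d+1}\rho_i(\Pi),
\]
a pinched polygon built from $d+1$ ideal polygons attached to $\Pi$ along its sides and to one another at the ideal vertices $\omega^0,\dots,\omega^d$, with $\pmb{\cN}_d$ restricting to the anti-M\"obius homeomorphism $\rho_i\colon\rho_i(\Pi)\to\Pi$ on each piece. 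This is the exact combinatorial mirror of the Schwarz-reflection picture: because $T^0(S)$ is a topological disk carrying no critical values of $S$, and $S\colon S^{-1}(\Int{\cD^c})\to\Int{\cD^c}$ is a degree $d+1$ branched cover, the preimage $S^{-1}(T^0(S))$ splits into $d+1$ rank-one tiles $T_1,\dots,T_{d+1}$, each mapped anti-conformally and homeomorphically onto $T^0(S)$ and attached to $T^0(S)$ across the non-singular boundary arcs of $\partial\cD$. The reverse implication is then immediate: a conjugacy $\Phi$ between $S\colon S^{-1}(T^0(S))\to T^0(S)$ and $\pmb{\cN}_d\colon\pmb{\cN}_d^{-1}(\Pi)\to\Pi$ must carry codomain to codomain, so $\Phi|_{T^0(S)}\colon T^0(S)\to\Pi$ is a conformal isomorphism; hence $T^0(S)$ is conformally the regular ideal $(d+1)$-gon, is homeomorphic to an ideal $(d+1)$-gon, and inherits the absence of critical values from the unbranched model, i.e.\ $S$ is regular polygonal.

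For the forward implication I would construct the conjugacy by hand. Begin with a conformal isomorphism $\Psi_0\colon T^0(S)\to\Pi$, which exists precisely because $S$ is regular. Using that both $T^0(S)$ and $\Pi$ are ideal $(d+1)$-gons, I would post-compose $\Psi_0$ with a rotational symmetry $M_\omega^k$ of $\Pi$ so that the induced boundary map respects the cyclic labelling of the ideal vertices; since $\Psi_0$ is orientation-preserving, this matching can be achieved by a rotation alone, and it forces $\Psi_0$ to send each non-singular boundary arc $\gamma_i$ of $T^0(S)$ onto a side $C_{\sigma(i)}\cap\D$ of $\Pi$ for some bijection $\sigma$. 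I then extend $\Psi_0$ over the tiles by lifting through the two covering maps: on each $\overline{T_i}$ set $\Psi:=\rho_{\sigma(i)}\circ\Psi_0\circ S$. This composition is conformal (it contains two orientation-reversing factors and one orientation-preserving factor), and since $\Psi(T_i)=\rho_{\sigma(i)}(\Pi)$, on which $\pmb{\cN}_d=\rho_{\sigma(i)}$, one computes $\pmb{\cN}_d\circ\Psi=\rho_{\sigma(i)}\circ\rho_{\sigma(i)}\circ\Psi_0\circ S=\Psi_0\circ S=\Psi\circ S$, so $\Psi$ is the desired conjugacy on each piece.

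It remains to glue these pieces into a single conformal map, and this is where the defining property of Schwarz reflections enters. Along a common arc $\gamma_i\subset\partial\cD$ the map $S$ restricts to the identity, and in a neighbourhood of $\gamma_i$ it is exactly the anti-holomorphic reflection in the real-analytic curve $\gamma_i$; since $\Psi_0(\gamma_i)=C_{\sigma(i)}\cap\D$ lies in the fixed-point set of $\rho_{\sigma(i)}$, the formula $\rho_{\sigma(i)}\circ\Psi_0\circ S$ agrees with $\Psi_0$ on $\gamma_i$ and is precisely the reflection of $\Psi_0$ in the pair of arcs $(\gamma_i,\,C_{\sigma(i)}\cap\D)$. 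By the Schwarz reflection principle this is the analytic continuation of $\Psi_0$ across $\gamma_i$, so $\Psi$ is holomorphic across every $\gamma_i$ and hence conformal on the interior of $\overline{T^0(S)}\cup\overline{S^{-1}(T^0(S))}$; being a bijection onto $\overline{\pmb{\cN}_d^{-1}(\Pi)}$, it is the conjugacy we want.

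The main obstacle I anticipate lies not in the gluing across the smooth arcs, which the reflection principle handles cleanly, but at the singular points of $\partial\cD$, namely the cusps and double points that play the role of the ideal vertices $\omega^j$. One must verify that $\Psi_0$ extends continuously to these pinched points with the correct identifications (a cusp contributing one ideal vertex and a double point two, as recorded in the remark after Definition~\ref{poly_schwarz_def}), so that the $d+1$ tiles are glued to one another in the same cyclic pattern as the polygons $\rho_i(\Pi)$ and $\sigma$ is genuinely an order-preserving bijection. Establishing this compatible continuous extension at the parabolic/singular points, using the local normal forms of the droplet boundary from Section~\ref{schwarz_sec}, is the delicate step, and I would treat cusps and double points separately.
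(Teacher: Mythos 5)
Your proof is correct and follows essentially the same route as the paper: both lift a conformal isomorphism between the fundamental tiles $T^0(S)$ and $\Pi$ through the dynamics, using the fact that $S$ and $\pmb{\cN}_d$ fix $\partial T^0(S)$ and $\partial \Pi$ pointwise to glue the lifts along the non-singular boundary arcs (your reflection-principle computation makes explicit the conformality across these arcs, which the paper leaves implicit). The ``delicate step'' you flag at the cusps and double points is in fact vacuous: since $T^0(S)=T(S)\setminus\mathfrak{S}$ and the points of $\mathfrak{S}$ are fixed by $S$, neither $T^0(S)$ nor $S^{-1}(T^0(S))$ contains any singular point, so the conjugacy never needs to be defined or extended there.
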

\begin{proof}
It is clear that if $S:S^{-1}(T^0(S))\to T^0(S)$ is conformally conjugate to $\pmb{\cN}_d:\pmb{\cN}_d^{-1}(\Pi)\to\Pi$, then $S$ is regular polygonal.

For the converse, let us assume that $S$ is regular polygonal. Then, there exists a conformal map $\psi_S:\Pi\to T^0(S)$ that preserves the ideal vertices. The fact that $T^0(S)$ is simply connected and $S$ has no critical point in $S^{-1}(T^0(S))$ together imply that $S^{-1}(T^0(S))$ consists of $d+1$ components, each of which maps diffeomorphically onto $T^0(S)$ under $S$. Similarly, $\pmb{\cN}_d^{-1}(\Pi)$ consists of $d+1$ components, each of which maps diffeomorphically onto $\Pi$ under $\pmb{\cN}_d$.
Moreover, $S$ and $\pmb{\cN}_d$ act as identity maps on $\partial T^0(S)$ and $\partial \Pi$, respectively. Hence, $\psi_S$ can be lifted to a conformal isomorphism from $\pmb{\cN}_d^{-1}(\Pi)$ onto $S^{-1}(T^0(S))$. Note that the trivial actions of $S$ and $\pmb{\cN}_d$ on $\partial T^0(S)$ and $\partial \Pi$ (respectively) ensure that the lifts match on the boundaries of $\Pi, T^0(S)$. By construction, the extended map conjugates $S:S^{-1}(T^0(S))\to T^0(S)$ to $\pmb{\cN}_d:\pmb{\cN}_d^{-1}(\Pi)\to\Pi$.
\end{proof}

An important consequence of Lemma~\ref{polygonal_dpl_lem} is that for a degree $d$ polygonal (piecewise) Schwarz reflection $S:\overline{\cD}\to\widehat{\C}$, the restriction $S:\overline{S^{-1}(\cD)}\to\overline{\cD}$ is a degree $d$ degenerate anti-polynomial-like map. Many of the following results can be regarded as analogs of basic facts about classical polynomial-like maps in the current setting.

\begin{prop}\label{basic_top_prop}
Let $S\in\mathscr{S}_d$.
\noindent\begin{enumerate}\upshape
\item $T^\infty(S)$ and $K(S)$ are completely invariant under $S$.

\item The escaping set $T^\infty(S)$ is an open and connected. The non-escaping set $K(S)$ is a full compact subset of $\widehat{\C}$. 

\item Each Fatou component of $S$ is simply connected.
\end{enumerate}
\end{prop}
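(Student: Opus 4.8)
\textbf{Proof proposal for Proposition~\ref{basic_top_prop}.}

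The plan is to establish the three assertions in the order stated, since each builds on the previous one, and to exploit throughout the degenerate anti-polynomial-like structure $S:\overline{S^{-1}(\cD)}\to\overline{\cD}$ furnished by Lemma~\ref{polygonal_dpl_lem}. For part (1), complete invariance of $T^\infty(S)$ is immediate from its definition as $\bigcup_{n\geq 0} S^{-n}(T^0(S))$: forward invariance follows because $S$ maps each tile of rank $n+1$ onto a tile of rank $n$, and backward invariance follows because the preimage of a tile is a union of tiles of one higher rank. Since $K(S) = \widehat{\C}\setminus T^\infty(S)$ is the complement, it inherits complete invariance automatically. The only mild care needed is at the singular set $\mathfrak{S}$, which lies in $K(S)$ by definition and is fixed pointwise by $S$.

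For part (2), openness of $T^\infty(S)$ is clear since each tile $S^{-n}(T^0(S))$ is open (as $T^0(S)$ is the desingularized, hence open, droplet and $S$ is a local homeomorphism away from critical points and singular boundary points). For connectedness, I would argue that the rank-zero tile $T^0(S)$ is connected (it is homeomorphic to an ideal $(d+1)$-gon), and that each tile of rank $n+1$ is attached to the body of lower-rank tiles through the common boundary structure; concretely, since $S$ fixes $\partial T^0(S)$ pointwise and the tiles tile a neighborhood of the droplet, one can show inductively that $\bigcup_{k=0}^{n} S^{-k}(T^0(S))$ is connected, so the increasing union $T^\infty(S)$ is connected. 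That $K(S)$ is compact follows from it being a closed (complement of open) subset of $\widehat{\C}$. For fullness (i.e., $K(S)$ has connected complement, equivalently every complementary component is unbounded / $T^\infty(S)$ is connected): this is exactly the connectedness of $T^\infty(S)$ just established, since $K(S)^c = T^\infty(S)$.

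For part (3), I would use that each Fatou component $U$ is a component of $\Int{K(S)}$, and that its complement contains the connected escaping set $T^\infty(S)$. The cleanest route is to invoke the Riemann–Hurwitz / maximum principle machinery available for degenerate anti-polynomial-like maps: since $K(S)$ is full and $T^\infty(S)$ is connected and dense-at-infinity in the complement, a multiply connected Fatou component would have to surround a piece of $T^\infty(S)$, contradicting connectedness of the escaping set (any bounded complementary component of $\overline{U}$ would be a hole in $K(S)$, violating fullness from part (2)). Alternatively, one lifts the simple connectivity of $T^0(S)$ through the dynamics. \emph{The main obstacle} I anticipate is the connectedness argument in part (2): unlike the classical polynomial-like case where the fundamental annulus is genuinely an annulus, here the fundamental domain $P_2\setminus\Int P_1$ is a \emph{pinched} annulus, so the inductive gluing of tiles must be done carefully across the pinched points and cusps of $\mathfrak{S}$, verifying that passing through these singular fixed points does not disconnect the union of tiles. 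Once connectedness of $T^\infty(S)$ is secured, fullness of $K(S)$ and simple connectivity of Fatou components follow formally.
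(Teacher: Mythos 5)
Your overall architecture matches the paper's proof: part (1) is immediate from the definitions, part (2) rests on the observation that each tile of rank $k\geq 1$ is attached to a tile of rank $k-1$ along a boundary curve (so the increasing union of tiles is connected), and part (3) is the standard consequence of $K(S)$ being a full compact set. However, there is one genuine error in your part (2): the claim that ``each tile $S^{-n}(T^0(S))$ is open (as $T^0(S)$ is the desingularized, hence open, droplet)'' is false. The fundamental tile $T^0(S)=T(S)\setminus\mathfrak{S}$ is the \emph{closed} droplet with only the finitely many singular points removed; it still contains all the non-singular real-analytic arcs of $\partial\cD$. These boundary points are fixed by $S$ and lie in $T^\infty(S)$, but no single tile contains a neighborhood of them, and preimages of the non-open set $T^0(S)$ under the continuous map $S$ need not be open either. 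So openness of $T^\infty(S)$ is not ``clear''; it is precisely the point requiring an argument.

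The repair is the attachment idea you already invoke for connectedness, deployed once more: writing $E^k$ for the union of tiles of rank $0$ through $k$, a point of $T^\infty(S)$ in the interior of a rank-$k$ tile lies in $\Int{E^k}$, while a point on the \emph{boundary} of a rank-$k$ tile lies in $\Int{E^{k+1}}$, because the neighboring tiles of rank $k+1$ are glued along exactly those boundary arcs. Hence $T^\infty(S)=\bigcup_{k\geq 0}\Int{E^k}$ is an increasing union of open connected sets, which yields openness and connectedness simultaneously (this is how the paper argues). One further small correction to your ``main obstacle'' paragraph: the gluing of tiles never happens across the pinched points or cusps, since $\mathfrak{S}$ and its iterated preimages lie in $K(S)$, not in the escaping set; the tiles are attached along the open smooth boundary arcs, so no special care at singular points is needed for connectedness.
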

\begin{proof}
1) This is immediate from the definitions.

2) Let us denote the union of the tiles of rank $0$ through $k$ by $E^k$. Since every tile of rank $k\geq1$ is attached to a tile of rank $(k-1)$ along a boundary curve and $E^0=T^0(S)$ is connected, it follows that $\Int{E^k}$ is connected. Moreover, $\Int{E^k}\subsetneq\Int{E^{k+1}}$ for each $k\geq 0$. 

Note that if $z\in T^\infty(S)$ belongs to the interior of a tile of rank $k$, then it lies in the interior of $E^k$. On the other hand, if $z\in T^\infty(S)$ belongs to the boundary of a tile of rank $k$, then it lies in the interior of $E^{k+1}$. Hence, $T^\infty(S)=\bigcup_{k\geq 0}\Int{E^k}.$ 
Thus, $T^\infty(S)$ is an increasing union of open, connected sets, and hence itself is such. Consequently, its complement $K(S)$ is a full compact subset of $\widehat{\C}$. 

3) This follows from the fact that $K(S)$ is a full compact.
\end{proof}

\begin{lem}\label{qcdef_lemma}
Let $(S:\overline{\cD}\to\widehat{\C})\in\mathscr{S}_d$, $\mu$ be an $S-$invariant Beltrami coefficient supported on $K(S)$, and $\Phi$ be a quasiconformal map solving the Beltrami equation with coefficient $\mu$. Then $\Phi\circ S\circ\Phi^{-1}\in\mathscr{S}_d$.
\end{lem}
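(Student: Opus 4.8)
The plan is to show that conjugating a regular polygonal Schwarz reflection by a quasiconformal map $\Phi$ solving an $S$-invariant Beltrami equation supported on $K(S)$ produces another regular polygonal Schwarz reflection of the same degree. The key point is that such a conjugacy must preserve all the structural features that define membership in $\mathscr{S}_d$: being a Schwarz reflection of a tree-like quadrature multi-domain, having degree $d$, being polygonal, and being \emph{regular}.

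First I would verify that $\widetilde{S}:=\Phi\circ S\circ\Phi^{-1}$ is anti-holomorphic on the interior of its domain. Since $\mu$ is supported on $K(S)$ and is $S$-invariant, the pullback $S^*\mu=\mu$ on $K(S)$, while on the tiling set $T^\infty(S)$ we have $\mu=0$. The standard computation (as in the proof that hybrid-equivalent polynomial-like maps give polynomial-like maps) shows that $\widetilde{S}$ is anti-meromorphic wherever $S$ was: the Beltrami coefficient of $\widetilde{S}$ vanishes because $\Phi$ straightens $\mu$ to the standard complex structure and $S$ preserves $\mu$. I would set $\widetilde{\cD}:=\Phi(\cD)$ and $\widetilde{\Omega}_j:=\Phi(\Omega_j)$; since $\Phi$ is a homeomorphism of $\widehat{\C}$, the $\widetilde{\Omega}_j$ are disjoint simply connected domains, their closures still form a connected, simply connected set (tree-likeness is a topological property preserved by homeomorphisms), and the contact tree is unchanged. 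The condition $S=\mathrm{id}$ on $\partial\cD$ transfers to $\widetilde{S}=\mathrm{id}$ on $\partial\widetilde{\cD}=\Phi(\partial\cD)$ by direct substitution, so $\widetilde{S}$ is a Schwarz reflection of a tree-like quadrature multi-domain. Degree is a topological invariant of the branched covering, hence $\widetilde{S}$ has degree $d$.

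Next I would check the polygonal and regularity conditions. The fundamental tile satisfies $T^0(\widetilde{S})=\Phi(T^0(S))$, and since $\Phi$ is a homeomorphism carrying singular points to singular points, $T^0(\widetilde{S})$ is still homeomorphic to an ideal $(d+1)$-gon, so $\widetilde{S}$ is polygonal. The subtle point is \emph{regularity}: we need $T^0(\widetilde{S})$ to be \emph{conformally} equivalent to the regular ideal $(d+1)$-gon $\Pi$. Here I would use Lemma~\ref{polygonal_dpl_lem}: regularity is equivalent to $S:S^{-1}(T^0(S))\to T^0(S)$ being conformally conjugate to $\pmb{\cN}_d:\pmb{\cN}_d^{-1}(\Pi)\to\Pi$. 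Because $\mu$ is supported on $K(S)$, the map $\Phi$ is conformal on a neighborhood of the tiling set $T^\infty(S)\supset T^0(S)$ (where $\mu=0$). Therefore $\Phi$ restricts to a conformal isomorphism near the droplet, conjugating $S$ to $\widetilde{S}$ conformally there; composing the conformal conjugacy from $\pmb{\cN}_d$ to $S$ near the droplet with $\Phi$ yields a conformal conjugacy from $\pmb{\cN}_d$ to $\widetilde{S}$ near the new droplet. By Lemma~\ref{polygonal_dpl_lem} again, $\widetilde{S}$ is regular polygonal.

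The main obstacle I anticipate is the regularity check rather than the formal Schwarz-reflection verification: a general quasiconformal conjugacy could distort the conformal type of the fundamental tile and destroy regularity. The essential mechanism rescuing this is that $\mu$ is supported on the non-escaping set $K(S)$, forcing $\Phi$ to be conformal throughout a full neighborhood of $T^0(S)$, so that the delicate conformal structure encoded by the Nielsen-map conjugacy of Lemma~\ref{polygonal_dpl_lem} is transported faithfully. Finally, after confirming $\widetilde{S}\in\mathscr{S}_d$ as an abstract regular polygonal Schwarz reflection, I would apply a M\"obius normalization so that the conformal map $\Pi\to T^0(\widetilde{S})$ sends $0\mapsto\infty$ with the prescribed asymptotics $z\mapsto 1/z+O(z)$, matching the normalization convention fixed after Definition~\ref{poly_schwarz_def}; this is a routine post-composition that does not affect the preceding structural arguments.
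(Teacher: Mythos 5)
Your proof is correct and follows essentially the same route as the paper's: $S$-invariance of $\mu$ makes the conjugate anti-meromorphic on $\Phi(\cD)$ with identity boundary values, and the support condition forces $\Phi$ to be conformal on $T^\infty(S)$, so the conformal type of the fundamental tile is preserved. The only cosmetic difference is that the paper concludes regularity directly from the fact that $T^0(\Phi\circ S\circ\Phi^{-1})=\Phi(T^0(S))$ is conformally equivalent to $\Pi$, without routing through Lemma~\ref{polygonal_dpl_lem}.
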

\begin{proof}
The assumption that $\mu$ is $S-$invariant implies that $\Phi\circ S\circ\Phi^{-1}$ is anti-meromorphic on $\Phi(\cD)$ that continuously extends to the identity map on $\partial\Phi(\cD)$. It follows that each component of $\Phi(\cD)$ is a simply connected quadrature domain, and hence $\Phi\circ S\circ\Phi^{-1}$ is a degree $d$ Schwarz reflection map with connected non-escaping set. Since $\mu$ is trivial on $T^\infty(S)$, the quasiconformal map $\Phi$ is conformal on $T^\infty(S)$. Therefore, $T^0(\Phi\circ S\circ\Phi^{-1})=\Phi(T^0(S))$ is conformally equivalent to $\Pi$; i.e., $\Phi\circ S\circ\Phi^{-1}\in\mathscr{S}_d$.
\end{proof}

\subsection{Connectedness locus of polygonal Schwarz reflections}

\begin{prop}\label{conn_locus_prop}
Let $(S:\overline{\cD}\to\widehat{\C})\in \mathscr{S}_d$. Then the following are equivalent.
\noindent\begin{enumerate}
\item $K(S)$ is connected.
\item $S^{\circ n}(c)\notin T^\infty(S)$, $\forall\ c\in \mathrm{crit}(S)$.
\item $S:T^\infty(S)\setminus\Int{T^0(S)}\longrightarrow T^\infty(S)$ is conformally conjugate to the Nielsen map $\pmb{\cN}_d:\D\setminus \Int{\Pi}\to \D$.
\end{enumerate}
\end{prop}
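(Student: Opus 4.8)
The plan is to prove the cyclic chain $(1)\Leftrightarrow(2)$, $(2)\Rightarrow(3)$, $(3)\Rightarrow(1)$. The starting point is the observation, recorded right after Lemma~\ref{polygonal_dpl_lem}, that the restriction $g:=S\vert_{\overline{S^{-1}(\cD)}}\colon\overline{S^{-1}(\cD)}\to\overline{\cD}$ is a degree $d$ degenerate anti-polynomial-like map, whose filled Julia set $\bigcap_{n\ge 0}g^{-n}(\overline{S^{-1}(\cD)})$ coincides with the non-escaping set $K(S)$ (a point lies in $K(S)$ precisely when no forward iterate carries it into the fundamental tile). For $(1)\Leftrightarrow(2)$ I would invoke the connectivity criterion for degenerate (anti-)polynomial-like maps recalled in Section~\ref{dpl_maps_subsec}, namely that $K(g)$ is connected if and only if it contains all critical values of $g$, and then match the condition ``$K(S)$ contains every critical value'' with $(2)$. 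This matching is exactly where complete invariance of $K(S)$ and $T^\infty(S)$ (Proposition~\ref{basic_top_prop}(1)) enters: if every critical value lies in $K(S)$, forward invariance propagates this along the entire critical orbit while backward invariance places each critical point itself in $K(S)$, yielding $(2)$; the reverse implication is immediate since $(2)$ in particular puts the critical values (the images $S(c)$) in $K(S)$.

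For $(2)\Rightarrow(3)$ I would bootstrap from Lemma~\ref{polygonal_dpl_lem}, which already provides a conformal conjugacy between $S\colon S^{-1}(T^0(S))\to T^0(S)$ and $\pmb{\cN}_d\colon\pmb{\cN}_d^{-1}(\Pi)\to\Pi$, i.e.\ a conjugacy on the tiles of rank $\le 1$. Writing $T^\infty(S)$ and the tiling set $\D$ of $\pmb{\cN}_d$ as increasing unions of tiles of successive ranks, I would extend the conjugacy one rank at a time. Condition $(2)$ ensures that $S$ has no critical point in the tiling set, so $S\colon T^\infty(S)\setminus\Int{T^0(S)}\to T^\infty(S)$ is an unbranched covering, exactly as $\pmb{\cN}_d\colon\D\setminus\Int{\Pi}\to\D$ is; the rank-$(n{+}1)$ tiles are the $S$-preimages of the rank-$n$ tiles, so a conjugacy defined through rank $n$ lifts through these coverings to rank $n{+}1$. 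The lifts are conformal, being lifts of conformal maps through (anti-)holomorphic coverings, and their union is a conformal conjugacy on all of $T^\infty(S)$, which is precisely $(3)$.

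The implication $(3)\Rightarrow(1)$ is then immediate: a conjugacy as in $(3)$ furnishes a homeomorphism $\D\to T^\infty(S)$, so $T^\infty(S)$ is a topological disk, hence simply connected, and therefore its complement $K(S)=\widehat{\C}\setminus T^\infty(S)$ is connected. The main obstacle is the inductive extension in $(2)\Rightarrow(3)$: one must check that the tile-by-tile lifts are mutually consistent across the pinched boundary structure, so that the independently chosen branches agree along the arcs and pinched points shared by adjacent tiles and assemble into a single well-defined conformal map, and that the union over all ranks genuinely yields a conjugacy on the full escaping set rather than merely on each finite stage. The mechanism forcing this consistency is the pointwise-fixed boundary behaviour of $S$ and $\pmb{\cN}_d$ on their fundamental tiles, already exploited in the proof of Lemma~\ref{polygonal_dpl_lem}, which pins down the matching of branches along common boundaries.
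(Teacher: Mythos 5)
Your overall structure is sound and two of your three implications coincide with the paper's: your inductive, rank-by-rank lifting argument for $(2)\Rightarrow(3)$ is exactly the paper's proof (the paper lifts a conformal map $\psi_S:\Pi\to T^0(S)$ through the unbranched dynamics, using the fact that each tile maps diffeomorphically onto $T^0(S)$ under some iterate, with the pointwise-fixed boundary behaviour gluing the lifts consistently), and $(3)\Rightarrow(1)$ is the same one-line observation in both.

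The genuine point of divergence, and the weak spot, is $(1)\Leftrightarrow(2)$. You outsource both directions to the assertion in Section~\ref{dpl_maps_subsec} that the filled Julia set of a degenerate (anti-)polynomial-like map is connected if and only if it contains all critical values. That assertion appears only in the introduction, stated by analogy with Douady--Hubbard and never proved in the paper; Proposition~\ref{conn_locus_prop} is precisely the kind of statement that substantiates it for this family, so leaning on it here is at least borderline circular. Moreover, in the degenerate setting the classical proof does not transfer verbatim: the domains are pinched polygons, the restriction $g$ is not proper in the classical sense (the domain is not compactly contained in the codomain, and tiles of equal rank touch at singular points), so the usual Riemann--Hurwitz/nested-preimage argument needs re-examination at the pinched points. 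The paper avoids all of this by proving $(1)\Rightarrow(2)$ directly: if a critical point lies in the interior of a tile, that tile maps forward with degree larger than one and the closure of the tile has at least two complementary components, each of which meets $K(S)$, disconnecting it; if a critical point lies on the boundary of a tile, it is a common boundary point of two tiles of equal rank and the closure of their union again separates $K(S)$. The converse $(2)\Rightarrow(1)$ is then obtained for free via $(2)\Rightarrow(3)\Rightarrow(1)$, which you already have. If you replace your appeal to the unproved criterion by this separation argument (or supply a proof of the criterion valid for pinched domains), your proof is complete; note also that your reduction of ``all critical values in $K(S)$'' to condition $(2)$ via complete invariance is correct and unproblematic.
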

\begin{proof}
(1) $\implies$ (2) If some critical point of $S$ lies in the interior of a tile, then this tile would be mapped forward with degree larger than one and its closure would have at least two complementary components. Each of these complementary components would intersect non-escaping set of $S$, providing a disconnection of $K(S)$.

Next suppose that some critical point of $S$ lies on the boundary of a tile. Then this critical point is a common boundary point of two tiles of equal rank, and the closure of the union of these two tiles has at least two complementary components. Again, each of these components would intersect $K(S)$, proving that $K(S)$ is disconnected.

(2) $\implies$ (3) Let $\psi_S:\Pi\longrightarrow T^0(S)$ be a homeomorphism that is conformal on the interior. Since $T^0(S)$ is simply connected and none of the critical points of $S$ escapes to $T^0(S)$, it follows that each tile of $S$ maps diffeomorphically onto $T^0(S)$ under some iterate of $S$. Similarly, the tiles of the tessellation of $\D$ arising from the regular $(d+1)-$gon reflection group $\pmb{G}_d$ map diffeomorphically onto $\Pi$ under iterates of $\pmb{\cN}_d$. Furthermore, $S$ and $\pmb{\cN}_d$ act as identity maps on $\partial T^0(S)$ and $\partial \Pi$, respectively. This allows us to lift $\psi_S$ to a conformal isomorphism from $\D$ (which is the union of all iterated preimages of $\Pi$ under $\pmb{\cN}_d$) onto $T^\infty(S)$ (which is the union of all iterated preimages of $T^0(S)$ under $S$). Note that the trivial actions of $S$ and $\pmb{\cN}_d$ on $\partial T^0(S)$ and $\partial \Pi$ (respectively) ensure that the iterated lifts match on the boundaries of the tiles. By construction, the extended map conjugates $\pmb{\cN}_d:\D\setminus\Int{\Pi}\to\D$ to $S:T^\infty(S)\setminus \Int{T^0(S)}\to T^\infty(S)$.

(3) $\implies$ (1) The assumption that $T^\infty(S)$ is a topological disk implies that $K(S)$ is connected.
\end{proof}

\begin{defn}
We define the  {\em connectedness locus} as
$$
\mathscr{S}_{\pmb{\cN}_d}:= \{S \in \mathscr{S}_d: K(S) \text{ is connected.}\} \subseteq \mathscr{S}_d.
$$ 
\end{defn}

Note that by Proposition \ref{conn_locus_prop}, the connectedness locus $\mathscr{S}_{\pmb{\cN}_d}$ consists precisely of maps having the Nielsen map $\pmb{\cN}_d$ as the conformal model of their tiling set dynamics.

\subsection{Fatou components and critical points}
\begin{prop}\label{fatou_comp_prop}
Let $(S:\overline{\cD}\to\widehat{\C})\in\mathscr{S}_d$.
\noindent\begin{enumerate}
\item Every Fatou component of $S$ is eventually periodic.

\item Every periodic Fatou component $U$ of $S$ is of one of the following types.
\begin{enumerate}
\item $U$ is an immediate basin of attraction of an attracting cycle.
\item $U$ is an immediate basin of attraction of a parabolic cycle.
\item $U$ is a Siegel disk.
\item $U$ is an immediate basin of attraction of some $w\in\mathfrak{S}$.
\end{enumerate}
\end{enumerate}
\end{prop}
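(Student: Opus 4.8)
The plan is to mimic the classical Fatou component classification for rational maps, adapted to the fact that $S$ is only defined on $\overline{\cD}$ rather than on all of $\widehat{\C}$. The key structural input is that every Fatou component $U$ lies inside $K(S)$, which by Proposition~\ref{basic_top_prop} is a full compact set, and that $S$ restricts to a proper anti-holomorphic self-map between Fatou components. In particular, since the escaping dynamics is already understood (every point of $T^\infty(S)$ is captured by the tiling set), the interesting recurrence happens entirely inside $\Int{K(S)}$.

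For part~(1), I would first observe that the orbit of any Fatou component $U$ either eventually lands on a singular point of $\mathfrak{S}$ (giving case 2(d)) or stays in the region where $S$ acts as a genuine finite-degree anti-holomorphic branched cover of $K(S)$. In the latter situation I would apply a Sullivan-type no-wandering-domains argument. The cleanest route is to transfer the problem to the polynomial side: by the mating description recorded after Lemma~\ref{polygonal_dpl_lem}, the non-escaping dynamics of $S$ is topologically conjugate (conformally on the interior) to a degree $d$ anti-polynomial on its filled Julia set, so $\Int{K(S)}$ is conformally identified with the interior of a polynomial filled Julia set. Since anti-polynomials have no wandering Fatou components (by the classical Sullivan theorem applied to $f\circ f$, which is a polynomial), the corresponding components of $S$ are eventually periodic as well. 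I would take care to note that the conjugacy is only a priori available once connectedness is known; in the general $\mathscr{S}_d$ case I would instead run the quasiconformal deformation argument of Lemma~\ref{qcdef_lemma} directly, showing that a wandering component would furnish an infinite-dimensional space of $S$-invariant Beltrami coefficients, contradicting the finite dimensionality of the parameter space $\mathscr{S}_d$.

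For part~(2), given a periodic Fatou component $U$ of period $p$, I would analyze the return map $S^{\circ p}: U\to U$. If the closure $\overline{U}$ meets $\mathfrak{S}$, then the singular fixed point attracts an orbit inside $U$ and we are in case~2(d); the parabolic nature of points of $\mathfrak{S}$ (noted in the discussion preceding Theorem~\ref{thm:CL}) is what forces this behavior. Otherwise $S^{\circ p}$ is a legitimate anti-holomorphic self-map of the simply connected domain $U$ (simple connectivity comes from Proposition~\ref{basic_top_prop}(3)), and I would invoke the standard classification of the dynamics of a holomorphic self-map of a disk via the Denjoy--Wolff theorem applied to the holomorphic second iterate $S^{\circ 2p}$. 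This yields precisely the attracting, parabolic, and Siegel-disk alternatives, i.e.\ cases 2(a)--(c). The Denjoy--Wolff dichotomy rules out any other recurrent behavior, and the absence of Herman rings follows because $U$ is simply connected.

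The main obstacle I anticipate is establishing the no-wandering-domains statement in part~(1) \emph{uniformly} over $\mathscr{S}_d$, i.e.\ before connectedness of $K(S)$ is assumed. The conjugacy to an anti-polynomial is only guaranteed on the connectedness locus $\mathscr{S}_{\pmb{\cN}_d}$, so for a general $S\in\mathscr{S}_d$ I cannot directly import Sullivan's theorem. The right fix is the quasiconformal surgery argument: a wandering domain would allow one to build quasiconformal deformations supported on its grand orbit, and Lemma~\ref{qcdef_lemma} guarantees these deformations stay within $\mathscr{S}_d$, contradicting the finite dimensionality of this space. Making the dimension count rigorous—in particular verifying that distinct Beltrami perturbations yield genuinely non-conjugate Schwarz reflections—is the step that requires the most care.
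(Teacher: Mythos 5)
Your treatment of part~(2) sets up the wrong dichotomy. You branch on whether $\overline{U}$ meets $\mathfrak{S}$ and claim that if it does, ``the singular fixed point attracts an orbit inside $U$,'' putting you in case~2(d). That implication is unjustified: a point of $\mathfrak{S}$ lying on $\partial U$ need not attract anything in $U$. The singular points have parabolic-type local dynamics whose attracting directions always point into the tiling set, while the existence of an attracting direction in $K(S)$ is exactly the non-automatic hypothesis appearing in Proposition~\ref{fatou_crit_prop}(3). A priori an attracting basin or a Siegel disk could have a point of $\mathfrak{S}$ on its boundary, and your scheme would misclassify it as case~2(d); conversely, when $\overline{U}$ does meet $\mathfrak{S}$ you have not ruled out cases 2(a)--(c) for $U$. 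The correct order of argument (which is the paper's) is to apply the classification of self-maps of the simply connected domain $U$ \emph{first} --- via \cite[Theorem~5.2, Lemma~5.5]{Mil06}, or equivalently the Denjoy--Wolff theorem for the holomorphic iterate, as you suggest: either $S^{\circ p}\vert_U$ has an attracting fixed point, or $U$ is a Siegel disk, or all orbits in $U$ converge to a single boundary fixed point $w$. Only then does one split on the location of $w$: if $w\in\cD$, then $S^{\circ p}$ is anti-holomorphic near $w$ and the Snail Lemma forces $w$ to be parabolic (case~2(b)); otherwise $w\in K(S)\setminus\cD\subset\mathfrak{S}$, since $\partial\cD\setminus\mathfrak{S}\subset T^0(S)\subset T^\infty(S)$, giving case~2(d).

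For part~(1), your ``cleanest route'' is circular. What is recorded after Lemma~\ref{polygonal_dpl_lem} is only that $S$ restricts to a \emph{degenerate} anti-polynomial-like map; it is not a straightening statement. No conjugacy between $S\vert_{K(S)}$ and an actual anti-polynomial on its filled Julia set is available at this stage --- indeed it cannot be obtained by classical surgery at all, because the external map $\pmb{\cN}_d$ has parabolic fixed points and hence is not quasisymmetrically conjugate to $\overline{z}^d$. Establishing such conjugacies (conformal matings) is the main content of the paper, proved much later and only for restricted classes, and those proofs themselves invoke the present proposition. So even on the connectedness locus you may not import Sullivan's theorem through a conjugacy to polynomials. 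Fortunately, your fallback is precisely the paper's proof: $\mathscr{S}_d$ is finite-dimensional (it is parametrized by finitely many rational maps whose degrees sum to $d+1$) and is closed under quasiconformal conjugation by Lemma~\ref{qcdef_lemma}, so the classical Sullivan/Epstein wandering-domain argument applies directly, with no connectedness assumption. That argument should be the sole proof of part~(1), not a patch for the disconnected case.
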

\begin{proof}
(1) Since the space $\mathscr{S}_d$ is parametrized by finitely many rational maps the sum of whose degrees is $d+1$, it follows that $\mathscr{S}_d$ is finite-dimensional. Moreover, the family $\mathscr{S}_d$ of maps is closed under quasiconformal conjugation in the sense of Lemma~\ref{qcdef_lemma}. Hence, the classical proof of non-existence of wandering Fatou components for rational maps can be adapted for the space $\mathscr{S}_d$ (cf. \cite{Sul,Eps93}).
 
(2) Let $U$ be a periodic Fatou component of period $k$ of $S$. By Proposition~\ref{basic_top_prop}, $U$ is simply connected. According to \cite[Theorem~5.2, Lemma~5.5]{Mil06}, the Fatou component $U$ is either the (immediate) basin of attraction of an attracting cycle, or a Siegel disk, or all orbits in $U$ converge to a unique boundary point $w$. If $w$ lies in $\cD$, then the classical Snail Lemma implies that $w$ must lie on a parabolic cycle. Otherwise, $w\in\partial\cap K(S)$, and hence $w$ is a singular point of $\partial T(S)$.
\end{proof}

\begin{prop}\label{fatou_crit_prop}
Let $(S:\overline{\cD}\to\widehat{\C})\in\mathscr{S}_d$.
\noindent\begin{enumerate}
\item If $S$ has an attracting or parabolic cycle, then the forward orbit of a critical point of $S$ converges to this cycle.  

\item  Every Cremer point (i.e. an irrationally neutral, non-linearizable periodic point) and Siegel disk boundary (of $S$) is contained in the $\omega-$limit set of a recurrent the critical point of $S$. 

\item If $w\in\mathfrak{S}$ has an attracting direction in $K(S)$, then some critical orbit of $S$ converges to $w$.
\end{enumerate}
\end{prop}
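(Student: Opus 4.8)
The three assertions are the exact analogs, in the setting of polygonal Schwarz reflections, of the classical correspondence between critical orbits and non-repelling/parabolic dynamics for rational maps. The plan is to run the classical arguments while observing that all of the relevant dynamics takes place in $\Int{K(S)}$ (for parts (1) and (2)) or near the droplet (for part (3)), where $S$, or a suitable iterate of it, restricts to a genuine proper branched (anti-)holomorphic map; the classical machinery then applies with only cosmetic changes. Whenever $S$ is anti-holomorphic one passes to an even iterate to work with a holomorphic map, which does not affect the location of critical points or $\omega$-limit sets.

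For (1), let $U$ be a periodic Fatou component realizing the immediate basin of the attracting or parabolic cycle; such a component exists by the classification in Proposition~\ref{fatou_comp_prop}, and by Proposition~\ref{basic_top_prop} it is simply connected and contained in $\Int{K(S)}$. Hence the first-return map $R=S^{\circ k}$ restricts to a proper (anti-)holomorphic self-map of the cycle of components. First I would argue that $R$ must branch somewhere on this cycle: if $R$ were unbranched on the immediate basin, it would be a covering, hence (after uniformizing the relevant component by $\D$) an automorphism of the disk, which cannot realize the interior contraction at an attracting fixed point (via the K{\oe}nigs coordinate) nor the flower dynamics at a parabolic fixed point (via the Leau--Fatou theory); this is the classical statement that each immediate attracting or parabolic basin contains a critical point, cf.\ \cite{Mil06}. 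Riemann--Hurwitz then forces a critical point in the cycle of immediate basins, and its forward orbit converges to the cycle.

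Part (3) is a localized version of the parabolic case of (1). A point $w\in\mathfrak{S}$ with an attracting direction in $K(S)$ carries parabolic local dynamics, so its attracting petal lies in a periodic Fatou component of type (d) in Proposition~\ref{fatou_comp_prop}. Applying the covering and Riemann--Hurwitz argument of part (1) to the first-return map on this component yields a critical point whose forward orbit converges to $w$.

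The substantial part is (2), which is the analog of Ma{\~n}{\'e}'s theorem that boundaries of Siegel disks and Cremer points lie in the $\omega$-limit set of a recurrent critical point. I would adapt Ma{\~n}{\'e}'s proof directly: arguing by contradiction, if such a point $z$ were bounded away (spherically) from the recurrent critical points together with their $\omega$-limit sets, then along suitable subsequences the iterates $S^{\circ n}$ would enjoy uniformly bounded distortion on a fixed neighborhood of $z$, by the absence of accumulating critical orbits and the resulting hyperbolic contraction; this would force normality near $z$, placing $z$ in $T^\infty(S)$ (contradicting $z\in\Lambda(S)$) in the Siegel case, or linearizing a neighborhood of the Cremer point (contradicting non-linearizability). \textbf{The main obstacle} is precisely this estimate: Ma{\~n}{\'e}'s distortion and normal-family inputs are stated for rational maps of $\widehat{\C}$, and porting them requires verifying that the semi-global, partially defined map $S$ admits the requisite branched-covering structure and Koebe-type control on neighborhoods of $\Lambda(S)$. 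Here one invokes the finite dimensionality of $\mathscr{S}_d$ and the quasiconformal rigidity of Lemma~\ref{qcdef_lemma}, the classification of Fatou components in Proposition~\ref{fatou_comp_prop}, and the absence of wandering domains established there, in order to reproduce the compactness arguments underlying Ma{\~n}{\'e}'s original proof.
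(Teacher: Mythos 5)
Your treatment of part (3) has a genuine gap. You assert that a point $w\in\mathfrak{S}$ with an attracting direction ``carries parabolic local dynamics'' and then invoke the argument of part (1). But $S$ admits no anti-holomorphic extension to any neighborhood of $w$: the points of $\mathfrak{S}$ are cusps or double points of $\partial\cD$, and only at a double point is $S^{\circ 2}$ given by honest tangent-to-identity parabolic germs (namely $S_2\circ S_1$ and $S_1\circ S_2$ for the two touching quadrature domains). At a cusp, $S^{\circ 2}$ has a Puiseux-type expansion $\zeta\mapsto \zeta+c_1(\zeta-w)^{n/2}+O\left((\zeta-w)^{(n+1)/2}\right)$ with half-integer powers and $n\geq 5$ (see \cite[Appendix~A]{LMM23}); it is not the germ of a holomorphic map at $w$, so Leau--Fatou theory is not available off the shelf. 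The paper's proof of (3) is devoted precisely to this point: it performs the change of variable $\zeta\mapsto 1/(\zeta-w)^{n/2-1}$ on a sector containing the attracting direction to bring $S^{\circ 2}$ to the form $\xi\mapsto\xi+1+O(1/\xi)$ on a right half-plane, and only then runs the analogs of \cite[Theorem~10.9]{Mil06} (existence of a petal/Fatou coordinate) and \cite[Theorem~10.15]{Mil06} (a critical point on the boundary of the maximal domain of the Fatou coordinate). Your proposal skips this entirely.

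Separately, the contradiction mechanism you propose --- ``if unbranched, the return map is a disk automorphism, which cannot realize flower dynamics'' --- does not work even in the classical setting: a parabolic M{\"o}bius automorphism of $\D$ is conjugate to a translation on a half-plane, and its orbits do converge to a boundary fixed point exactly as in petal dynamics. The genuine contradiction in Milnor's argument comes from extending the inverse Fatou coordinate by iterated lifting: in the absence of critical points one obtains an entire map into a hyperbolic domain, contradicting Picard/Liouville. For the parabolic case of your part (1) this is a repairable imprecision, since there the cycle lies in $\cD$ where $S^{\circ 2}$ is a genuine holomorphic germ; for part (3) it compounds the gap above, since the Fatou coordinate must first be constructed by hand at the singular point. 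Your parts (1) and (2) are otherwise in line with the paper, which handles them by citing \cite[Proposition~5.32]{LLMM1} and Ma{\~n}{\'e}'s argument \cite{Man93}.
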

\begin{proof}
(1) and (2) The proof of \cite[Proposition~5.32]{LLMM1} applies to the current setting (cf. \cite{Man93}).

(3) Let us first assume that $w\in\mathfrak{S}$ is a cusp and has an attracting direction in $K(S)$. By \cite[Appendix~A]{LMM23}, the asymptotics of $S^{\circ 2}$ near $w$ is of the form 
\begin{equation}
\zeta\mapsto \zeta+c_1(\zeta-w)^{n/2}+O((\zeta-w)^{(n+1)/2}),
\label{asymp_eqn_1}
\end{equation} 
for some $c_1\in\C^*$, and $n\geq 5$.
Standard arguments used in the theory of parabolic germs allow one to count the number of attracting/repelling directions at $w$ (cf. \cite[Proposition~A.4]{LMM23}). These directions occur in an alternating fashion. Let $U$ be a Fatou component of $S$ such that the $S^{\circ 2}-$orbit of every point in $U$ converges to $w$ asymptotic to some attracting direction. A change of coordinate of the form $\zeta\mapsto 1/(\zeta-w)^{n/2-1}$ that sends $w$ to $\infty$ can now be applied to a sector containing this attracting direction to bring the map $S^{\circ 2}$ to the form $\xi\mapsto \xi+1+O(1/\xi)$ on a right-half plane. This enables one to argue as in \cite[Theorem~10.9]{Mil06} to prove the existence of an attracting petal in $U$ on which the action of $S^{\circ 2}$ is conformally conjugate to translation by $+1$ on a right half-plane. Finally, the proof of \cite[Theorem~10.15]{Mil06}, applied to the current setting, shows that the boundary of the maximal domain of definition of this conjugacy contains a critical point of $S^{\circ 2}$. It follows that $U$ contains a critical point of $S$.

Let us now consider a double point $w\in\mathfrak{S}$ that has an attracting direction in $K(S)$. After possibly renumbering the quadrature domains in $\cD$, we may assume that $\partial\Omega_1\cap\partial\Omega_2=\{w\}$. Then, $S^{\circ 2}$ is given by the two tangent-to-identity parabolic germs $S_2\circ S_1, S_1\circ S_2$ near $w$. These two germs are inverses of each other, and hence have the same number of attracting/repelling directions. Let $U$ be a Fatou component of $S$ such that the $S^{\circ 2}-$orbit of every point in $U$ converges to $w$ asymptotic to some attracting direction. Since $S^{\circ 2}$ is given by a parabolic germ, it follows that there exists an attracting petal in $U$ on which the action of $S^{\circ 2}$ is conformally conjugate to translation by $+1$ on a right half-plane, and that the boundary of the maximal domain of definition of this conjugacy contains a critical point of $S^{\circ 2}$ (cf. \cite[\S 10]{Mil06}. Thus, $U$ contains a critical point of $S$.
\end{proof}

\subsection{Dynamical rays and lamination}\label{subsec:dl}

We now define $\pmb{G}_d-$rays in $\D$ arising from the Cayley graph of $\pmb{G}_d$ with respect to the generating set $\rho_1,\cdots, \rho_{d+1}$. 

\begin{defn}\label{Gd_rays}
Let $(i_1, i_2, \cdots)\in\{1,\cdots, d+1\}^{\N}$ with $i_j\neq i_{j+1}$ for all $j$. The corresponding infinite sequence of tiles $\{\Pi,\rho_{i_1}(\Pi),\rho_{i_1}\circ\rho_{i_2}(\Pi),\cdots\}$ shrinks to a single point of $\mathbb{S}^1\cong \R/\Z$, which we denote by $\theta(i_1, i_2, \cdots)$. We define a \emph{$\pmb{G}_d-$ray at angle $\theta(i_1, i_2, \cdots)$} to be the concatenation of hyperbolic geodesics (in $\D$) connecting the consecutive points of the sequence $\{0,\rho_{i_1}(0),\rho_{i_1}\circ\rho_{i_2}(0),\cdots\}$. 
\end{defn}

We note that in general there may be more than one $\pmb{G}_d-$rays at a given angle, but all of them land at the point $\theta(i_1, i_2, \cdots)\in\mathbb{S}^1$.
For instance, both the symbol sequences $(1,d+1,1,d+1,\cdots)$ and $(d+1,1,d+1,1,\cdots)$ yield $\pmb{G}_d-$rays at angle $0$, and both land at the point $1$.
Moreover, the image of the tail of a ray at angle $\theta$ under $\pmb{\cN}_d$ is the tail of a ray at angle $\pmb{\cN}_d(\theta)$.

\begin{defn}\label{dyn_ray_schwarz}
For $S\in\mathscr{S}_{\pmb{\cN}_d}$, the image of a $\pmb{G}_d-$ray at angle $\theta$ under the map $\psi_S:\D\longrightarrow T^\infty(S)$ (that conjugates $\pmb{\cN}_d$ to $S$) is called a $\theta-$dynamical ray of $S$.
\end{defn}
Evidently, the image of the tail of a dynamical $\theta-$ray (of $S$) under $S$ is the tail of a dynamical ray at angle $\pmb{\cN}_d(\theta)$.
We denote the set of all (pre)periodic points of $\pmb{\cN}_d:\mathbb{S}^1\to\mathbb{S}^1$ by $\mathrm{Per}(\pmb{\cN}_d)$.

\begin{prop}[Landing of pre-periodic rays]\label{per_rays_land}
Let $S\in\mathscr{S}_{\pmb{\cN}_d}$. Then the following statements hold true.
\begin{enumerate}
\item Let $\theta\in\mathrm{Per}(\pmb{\cN}_d)$. Then a dynamical ray of $S$ at angle $\theta$ lands on~$\Lambda(S)$. 

\item The dynamical rays of $S$ at fixed angles land at points of $\mathfrak{S}$. Conversely, each point of $\mathfrak{S}$ is the landing point of a fixed ray.

\item The dynamical rays of $S$ at periodic angles of period larger than two land at repelling or parabolic periodic points of $\Lambda(S)$. 
\end{enumerate}
\end{prop}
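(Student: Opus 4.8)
The plan is to transport the problem to the disk model through the conjugacy $\psi_S:\D\to T^\infty(S)$ furnished by Proposition~\ref{conn_locus_prop}, and then run the classical Douady--Hubbard landing argument while keeping track of the parabolic behaviour imposed by the singular set $\mathfrak{S}$. The key structural input from connectivity is that $S$ has \emph{no} critical point in $T^\infty(S)$: taking $n=0$ in condition~(2) of Proposition~\ref{conn_locus_prop} shows $\mathrm{crit}(S)\cap T^\infty(S)=\varnothing$, so $S:T^\infty(S)\setminus\Int{T^0(S)}\to T^\infty(S)$ is an \emph{unbranched} covering, conformally conjugate via $\psi_S$ to $\pmb{\cN}_d:\D\setminus\Int{\Pi}\to\D$. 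Since $\pmb{\cN}_d$ is piecewise anti-M\"obius, it is a local isometry of the hyperbolic metric of $\D$; transporting this metric by the conformal isomorphism $\psi_S$, the map $S$ becomes a local isometry of the hyperbolic metric of $T^\infty(S)$, along which the $\pmb{G}_d$-rays of Definition~\ref{Gd_rays} map to quasigeodesics whose successive fundamental arcs (the $\psi_S$-images of the geodesic segments joining consecutive vertices $\rho_{i_1}\cdots\rho_{i_k}(0)$) have \emph{uniformly bounded} hyperbolic length. For the accumulation statement in part~(1), note that a $\pmb{G}_d$-ray exits every compact subset of $\D$ and $\psi_S$ is proper onto $T^\infty(S)$; hence each dynamical ray $R_\theta$ leaves every compact subset of $T^\infty(S)$, so its accumulation set $\mathcal{A}(R_\theta)$ is a nonempty, compact, connected subset of $\partial T^\infty(S)=\Lambda(S)$.

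I would first prove single-point landing for a \emph{periodic} angle $\theta$ of period $p$ under $\pmb{\cN}_d\vert_{\mathbb{S}^1}$. Then $S^{\circ p}$ carries a tail of $R_\theta$ isometrically onto a shorter tail of itself, so $\mathcal{A}(R_\theta)$ is a connected, compact, $S^{\circ p}$-invariant subset of $\Lambda(S)$. Because $S^{\circ p}$ acts as a hyperbolic local isometry and the fundamental arcs of $R_\theta$ have bounded hyperbolic length, the shortest-ray/contraction argument of Douady and Hubbard (as presented, e.g., in \cite{Mil06}) applies and collapses $\mathcal{A}(R_\theta)$ to a single point $z_0$ fixed by $S^{\circ p}$. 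The Snail Lemma then forces the multiplier of $S^{\circ p}$ at $z_0$ to have modulus $>1$ or to equal $1$, so $z_0$ is repelling or parabolic; it cannot be attracting (it lies on $\Lambda(S)$), nor can it be a Cremer point or a Siegel-boundary point. I expect this \emph{collapse of the accumulation continuum to a point} to be the \textbf{main obstacle}, precisely because the external map is only expansive and not expanding: at the parabolic ideal vertices of $\Pi$ the usual expansion estimates degenerate, and one must supplement them with the parabolic normal form of $S^{\circ 2}$ near $\mathfrak{S}$ recorded in~\eqref{asymp_eqn_1} together with the attracting/repelling petal count it produces.

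With periodic landing established, part~(1) for a strictly pre-periodic $\theta$ follows by pullback: choosing $k$ with $\pmb{\cN}_d^{\circ k}(\theta)$ periodic, the ray $R_{\pmb{\cN}_d^{\circ k}(\theta)}$ lands, and since $S$ is a local homeomorphism on $T^\infty(S)$ (no critical points there), $R_\theta$ is an unbranched lift of a tail of $R_{\pmb{\cN}_d^{\circ k}(\theta)}$ and hence lands at one of the finitely many $S^{\circ k}$-preimages of that landing point, a point of $\Lambda(S)$. For part~(2), recall that the fixed points of $\pmb{\cN}_d\vert_{\mathbb{S}^1}$ are exactly the ideal vertices of $\Pi$ (the $(d+1)$-st roots of unity), and that the conjugacy of Lemma~\ref{polygonal_dpl_lem} identifies these vertices with the singular points of $\partial\cD$, i.e.\ with $\mathfrak{S}$. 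A fixed ray, whose $\pmb{G}_d$-ray abuts an ideal vertex, therefore lands at the associated point of $\mathfrak{S}$; conversely, each $w\in\mathfrak{S}$ is a parabolic fixed point of $S$ (at a cusp) or of $S^{\circ 2}$ (at a double point) whose repelling petals reach into $T^\infty(S)$, and the two $\pmb{G}_d$-rays bounding the corresponding ideal vertex map to fixed rays converging to $w$ along these petals, again via the asymptotics~\eqref{asymp_eqn_1}.

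Finally, part~(3) is immediate from the periodic case: a ray at an angle of period $p>2$ lands, by the argument above, at a point $z_0\in\Lambda(S)$ fixed by $S^{\circ p}$ that is repelling or parabolic. Since every point of $\mathfrak{S}$ is fixed by $S$ or by $S^{\circ 2}$ (cusps have period one and double points period two), no point of $\mathfrak{S}$ can be the landing point of a ray of period $p>2$; hence such rays land at genuine repelling or parabolic periodic points of $\Lambda(S)$ lying off $\mathfrak{S}$, as asserted.
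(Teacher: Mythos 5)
Your overall framework---transporting the hyperbolic metric of $\D$ by $\psi_S$, using that the fundamental arcs of a periodic ray have bounded hyperbolic length, and locating the accumulation set on $\Lambda(S)$---is the same as the paper's. But the step you yourself flag as the main obstacle, the collapse of the accumulation continuum to a single point, is exactly where the proposal stops being a proof: you assert that the Douady--Hubbard/Milnor contraction argument ``applies,'' and your proposed repair (parabolic normal forms near $\mathfrak{S}$ plus petal counting) both misdiagnoses the difficulty and is never carried out. The obstruction is not that the external map is expansive rather than expanding---the argument of \cite[Theorem~18.10]{Mil06} nowhere uses expansion and works for polynomials with parabolic cycles---but that $S^{\circ k}$ fails to be anti-holomorphic at every point of the countable set $\bigcup_{n\geq 0}S^{-n}(\mathfrak{S})\subset\Lambda(S)$. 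Because of this, you can neither conclude that an accumulation point lying in that set is a fixed point in the usual analytic sense, nor that the set of fixed points of $S^{\circ k}$ met by the accumulation continuum is totally disconnected. The paper overcomes this with a dichotomy: either the accumulation set is contained in $\bigcup_{n\geq 0}S^{-n}(\mathfrak{S})$, in which case it is a connected subset of a totally disconnected set and hence a point; or it contains a point $w$ where $S^{\circ k}$ extends anti-holomorphically, Milnor's argument gives $S^{\circ k}(w)=w$, and discreteness of the fixed points of the \emph{holomorphic} iterate $S^{\circ 2k}$ together with connectedness again forces landing. Some version of this case analysis is indispensable and is missing from your proposal. (A smaller issue: your claim that the landing point of a periodic ray is repelling or parabolic, with a well-defined multiplier, presupposes that $S^{\circ p}$ is holomorphic there; this fails when the landing point lies in $\mathfrak{S}$, which by part (2) is exactly what happens for fixed rays.)

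Part (3) contains a genuine non sequitur. From ``every point of $\mathfrak{S}$ is fixed by $S$ (hence by $S^{\circ 2}$)'' you conclude that no ray of period $p>2$ can land on $\mathfrak{S}$. But the period of a landing ray is in general only a multiple of the period of its landing point: for quadratic polynomials the $\alpha$-fixed point is the landing point of rays of period $q>1$, so periodicity of the point does not bound the period of rays landing there. The paper supplies the missing ingredient: near $w\in\mathfrak{S}$ the map $S^{\circ 2}$ extends to an orientation-preserving local homeomorphism (via the asymptotics~\eqref{asymp_eqn_1} at cusps, or the parabolic germs at double points), so the argument of \cite[Lemma~2.3]{Mil00} shows that all dynamical rays landing at $w$ have the same period under $S^{\circ 2}$; since by part (2) a fixed ray already lands at $w$, every ray landing at $w$ is fixed by $S^{\circ 2}$, and therefore rays of period larger than two land off $\mathfrak{S}$, where the Snail lemma applies. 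Relatedly, the forward direction of your part (2) (``the conjugacy identifies the ideal vertices with $\mathfrak{S}$, therefore fixed rays land there'') is circular as written: the continuous extension of $\psi_S$ to an ideal vertex along the ray \emph{is} the landing statement being proved. The paper instead lands the image of the radial line using local connectivity of $\partial T^0(S)$, identifies its endpoint as a point of $\mathfrak{S}$ by non-extendability of $S$, and only then lands the actual dynamical ray by the inverse-branch/horocyclic-convergence estimate---essentially the petal-type argument that you invoke only for the converse direction.
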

\begin{proof}
1) After possibly applying some iterate of $S$, we may assume that $\theta$ is periodic of period $k$ under $\pmb{\cN}_d$. Then, the angle $\theta$ corresponds to a sequence in $\{1,\cdots, d+1\}^{\N}$ that is periodic under the left-shift map; i.e., $\theta=\theta\left(\overline{i_1, i_2, \cdots, i_k}\right)$ (see Definition~\ref{dyn_ray_schwarz}). Consider the sequence of points $w_n:=\psi_S\left(\left(\rho_{i_1}\circ\cdots\circ\rho_{i_k}\right)^{\circ n}(0)\right)$, $n\geq1$, on the $\theta-$dynamical ray of $S$. 
Note that if $\gamma_n$ is the sequence of arcs on this $\theta-$dynamical ray bounded by $w_n$ and $w_{n+1}$, then $S^{\circ k}(\gamma_n)=\gamma_{n-1}$, and $\ell_{\mathrm{hyp}}(\gamma_n)=c$ for all $n\geq1$, where $c$ is a fixed positive constant, and $\ell_{\mathrm{hyp}}$ denotes the hyperbolic length in $T^\infty(S)$. We now consider two cases.

\noindent\textbf{Case a.} (The accumulation set of the $\theta-$ray is contained in $\cup_{n\geq 0} S^{-n}(\mathfrak{S})$.) Since the accumulation set of a ray is connected, and the set of iterated preimages of $\mathfrak{S}$ is totally disconnected, it follows that the $\theta-$ray lands. 

\noindent\textbf{Case b.} (The accumulation set of the $\theta-$ray intersects $\Gamma(S)\setminus\cup_{n\geq 0} S^{-n}(\mathfrak{S})$.) Let $w\in \Gamma(S)\setminus\cup_{n\geq 0} S^{-n}(\mathfrak{S})$ be an accumulation point of the $\theta-$ray. By our assumption, the map $S^{\circ k}$ admits an (anti-)holomorphic extension in a neighborhood of $w$. Moreover, since the arcs $\gamma_n$ of fixed hyperbolic length accumulate on the limit set $\Gamma(S)$, their Euclidean lengths must shrink to $0$. One can now apply the arguments of \cite[Theorem~18.10]{Mil06} to the current setting, and deduce that $S^{\circ k}(w)=w$. But the set of fixed points of $S^{\circ k}$ in $\Gamma(S)$ is totally disconnected (as the set of fixed points of the holomorphic second iterate $S^{\circ 2k}$ in $\Gamma(S)$ is discrete). As before, connectedness of the accumulation set of a ray implies that the $\theta-$ray lands.

2) Let $\theta$ be fixed under $\pmb{\cN}_d$; i.e., $\theta=\frac{j}{d+1}$, for some $j\in\{0,\cdots,d\}$. Since $\partial T^0(S)$ is locally connected, the image of the radial line at angle $\theta$ under $\psi_S$ lands at some point $w\in\Lambda(S)$. The fact that $\pmb{\cN}_d$ does not admit an anti-conformal extension to a neighborhood of an ideal boundary point of $\Pi$ implies that $S$ does not have an anti-conformal extension to a neighborhood of $w$. Hence, $w\in\mathfrak{S}$. We will show that the $\theta-$dynamical ray of $S$ lands at~$w$.

By the asymptotic development of $S^{\circ 2}$ (which is given by parabolic germs if $w$ is a double point and by Equation~\eqref{asymp_eqn_1} if $w$ is a cusp), the map $S^{\circ 2}$ admits an inverse branch $g$ in a relative neighborhood of $w$ with the following properties:\\
i) $g(w)=w$, and\\
ii) for $w'\in S^{-2}(T^0(S))\cap\mathrm{Domain}(g)$ is sufficiently close to $w$, the sequence $g^{\circ n}(w')$ converges to $w$ horocyclically (in the simply connected domain $T^\infty(S)$).\\
It is easy to see that the sequence $g^{\circ n}(w')$ is within a bounded distance from the dynamical $\theta-$ray of $S$, and hence the $\theta-$ray lands at~$w$.

For the converse statement, note that each $w\in\mathfrak{S}$ is the landing point of the curve $\{\psi_S(re^{2\pi i\theta}):r\in[0,1)\}$, for some angle $\theta$ fixed under $\pmb{\cN}_d$. By the previous paragraph, the dynamical ray of $S$ at angle $\theta$ lands at $w$.
 
3) We claim that the dynamical rays of $S$ of period larger than two do not land on $\mathfrak{S}$. Once this claim is established, the result follow by the classical Snail lemma.

Let us fix $w\in\mathfrak{S}$. Once again, by the asymptotic development of $S^{\circ 2}$ near $w$, the map $S^{\circ 2}\vert_{S^{-2}(\cD)}$ extends to an orientation-preserving local homeomorphism in a neighborhood of $w$. One can apply the arguments of \cite[Lemma~2.3]{Mil00} to this map to conclude that all rays landing at $w$ have the same period under $S^{\circ 2}$ (cf. \cite[Proposition~5.44]{LLMM1}). Since $w$ is the landing point of some fixed ray of $S$ (by the previous part), it follows that all rays landing at $w$ are fixed by $S^{\circ 2}$. Therefore, the dynamical rays of $S$ of period larger than two cannot land on $\mathfrak{S}$
\end{proof}

\begin{prop}\label{rep_para_landing_point}
Let $S\in\mathscr{S}_{\pmb{\cN}_d}$. Then, every repelling and parabolic periodic point of $S$ is the landing point of finitely many (at least one) dynamical rays. Moreover, all these rays are periodic of the same period under $S^{\circ 2}$.
\end{prop}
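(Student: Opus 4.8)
The plan is to prove the two assertions---landing of at least finitely many rays at each repelling/parabolic periodic point, and the statement that they are all periodic of the same period under $S^{\circ 2}$---by transporting the classical polynomial theory of ray landing through the conformal conjugacy $\psi_S:\D\to T^\infty(S)$ provided by Proposition~\ref{conn_locus_prop}(3). Since $S\in\mathscr{S}_{\pmb{\cN}_d}$ has $K(S)$ connected, the tiling-set dynamics is conformally conjugate to $\pmb{\cN}_d$ on $\D\setminus\Int{\Pi}$, and the dynamical rays of $S$ are the $\psi_S$-images of the $\pmb{G}_d$-rays. The key point is that the external dynamics here plays the role that the B\"ottcher coordinate plays for polynomials, so the arguments of Milnor's treatment of ray landing at repelling and parabolic points (\cite[Theorem~18.10]{Mil06} and the surrounding material) can be adapted. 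I would first handle the \emph{repelling} case: let $w$ be a repelling periodic point of $S$, periodic of period say $k$, and consider the $S^{\circ k}$-dynamics in a linearizing neighborhood of $w$. The standard argument shows that the collection of dynamical rays accumulating at $w$ is forward-invariant (up to the period) and that, because $S^{\circ 2k}$ is \emph{holomorphic} near $w$, one may pass to the holomorphic iterate and run the usual linearization-plus-expansion argument to conclude that at least one ray lands at $w$, that only finitely many rays land there, and that the set of rays landing at $w$ is permuted by the dynamics, forcing a common period.

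For the \emph{parabolic} case I would argue analogously but replace linearization by the parabolic (Fatou-coordinate) normal form. Here the subtlety is that $S$ itself is anti-holomorphic, so I would work throughout with the holomorphic second iterate $S^{\circ 2}$; the asymptotic development of $S^{\circ 2}$ near parabolic points was already recorded in the excerpt (the germ is tangent-to-identity, with the cusp asymptotics given by Equation~\eqref{asymp_eqn_1} and the double-point case given by composing the two tangent-to-identity germs, exactly as in the proof of Proposition~\ref{per_rays_land}). Using these normal forms, a ray accumulating at a parabolic point $w$ must approach within a repelling petal, and the finitely-many-attracting/repelling-direction structure forces the accumulation set to be a single point; the same petal-counting shows the number of rays landing at $w$ is finite and that they are permuted among themselves, hence share a common $S^{\circ 2}$-period. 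I would lean on the fact, already established in Proposition~\ref{per_rays_land}(1), that all (pre)periodic rays land, so that the remaining task is purely to bound their number and pin down the common period.

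To organize both cases uniformly I would reduce to the holomorphic iterate at the outset: set $F:=S^{\circ 2}$, which is holomorphic near any periodic point of $S$ not in $\mathfrak{S}$ (and near those in $\mathfrak{S}$ after the change of coordinates used in Proposition~\ref{per_rays_land}), replace $w$ by a point periodic under $F$, and prove the statement for $F$; the fact that $S$ squares to $F$ then gives the ``same period under $S^{\circ 2}$'' phrasing directly. The counting of rays landing at a common point is the place where I would invoke the standard argument that a finite forward-invariant set of rays landing at a single repelling/parabolic fixed point of a holomorphic germ cannot be infinite: if infinitely many rays accumulated at $w$, their landing pattern would violate the local expansion/petal geometry, and one obtains the bound by comparing the combinatorial rotation induced on the set of landing rays with the local multiplier (in the repelling case) or the alternating attracting/repelling directions (in the parabolic case).

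\textbf{Main obstacle.} I expect the principal difficulty to be the adaptation of the holomorphic ray-landing machinery to the present setting where the external map $\pmb{\cN}_d$ is only \emph{expansive} rather than expanding on $\mathbb{S}^1$ (as emphasized in Section~\ref{dpl_maps_subsec}): the usual proofs exploit uniform hyperbolic expansion of the external dynamics to control the hyperbolic length and Euclidean shrinking of ray segments, and here one must instead work with the $S^{\circ k}$-invariant arcs $\gamma_n$ of constant hyperbolic length (as in the proof of Proposition~\ref{per_rays_land}) and argue that their Euclidean diameters shrink purely from the accumulation on the limit set. Handling the parabolic periodic points on $\Lambda(S)$---as opposed to the cusps and double points of $\mathfrak{S}$, which were treated separately---requires verifying that no ray of period exceeding the relevant value lands there, and that the petal structure of the holomorphic germ $F$ near such a point is compatible with only finitely many landing rays; this is where I would have to be most careful to rule out an infinite accumulation of distinct rays.
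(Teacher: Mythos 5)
Your proposal is correct and takes essentially the same approach as the paper: the paper's entire proof is a one-line citation stating that the classical argument for polynomials with connected Julia set (Lyubich, \S 24.3) ``can be easily adapted for the current setting,'' which is precisely the strategy you outline---transport the polynomial ray-landing machinery through the conjugacy $\psi_S$ and pass to the holomorphic second iterate to handle antiholomorphicity and the parabolic normal forms. Your identification of the expansive-but-not-expanding external dynamics as the main point of care, to be handled by the constant-hyperbolic-length arc argument already used in the proof of Proposition~\ref{per_rays_land}, matches how the paper deals with the same issue.
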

\begin{proof}
The proof of the corresponding statement for polynomials with connected Julia set (cf. \cite[\S 24.3]{Lyu23}) can be easily adapted for the current setting.
\end{proof}

Using the same techniques as in \cite[Appendix B]{GM93}, we can prove stability of dynamical rays landing at pre-repelling points whose orbit avoids critical points.
\begin{prop}\label{rep_stab}
Let $S\in\mathscr{S}_{\pmb{\cN}_d}$. Assume that a dynamical ray lands at a repelling pre-periodic point $z$ such that the orbit of $z$ avoids the critical points of $S$.
Then, the corresponding dynamical ray lands at the real-analytic continuation $z'$ (of the pre-periodic point $z$) for all $S'\in\mathscr{S}_{\pmb{\cN}_d}$ sufficiently close to~$S$.
\end{prop}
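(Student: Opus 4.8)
The plan is to adapt the trapping-and-perturbation argument of \cite[Appendix~B]{GM93}, exploiting one structural simplification specific to our setting: since the tiling-set dynamics of every $S'\in\mathscr{S}_{\pmb{\cN}_d}$ is modeled on the \emph{fixed} Nielsen map $\pmb{\cN}_d$ (Proposition~\ref{conn_locus_prop}), the combinatorial angle $\theta$ of the ray, together with the entire $\pmb{G}_d$-ray structure inside $\D$, does not move with $S'$; only its image $\psi_{S'}(\cdot)\subset T^\infty(S')$ in the dynamical plane varies. In particular the ray $R_{S'}$ at angle $\theta$ is unambiguously defined for every nearby $S'$, and there is no need to track how the angle itself deforms. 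I would first reduce to the periodic case: writing $S^{\circ m}(z)=\zeta$ with $\zeta$ periodic, the map $S^{\circ m}$ carries the $\theta$-ray to the $\pmb{\cN}_d^{\circ m}(\theta)$-ray landing at $\zeta$. Because the orbit of $z$ avoids $\mathrm{crit}(S)$, the iterate $S^{\circ m}$ is a local diffeomorphism along this orbit, so the inverse branch sending $\zeta\mapsto z$ persists and depends real-analytically on $S'$; granting stability of landing at the periodic point, pulling back this branch transports the landing ray at $\zeta(S')$ to a landing ray at $z'$.

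Next I would establish persistence of the repelling cycle. Since $S$ is anti-holomorphic, $S^{\circ 2}$ is holomorphic; taking $\zeta$ of period $p$ under $S$, the point $\zeta$ is a repelling fixed point of the holomorphic return map $R:=S^{\circ 2p}$ with multiplier $\lambda$, $\lvert\lambda\rvert>1$. As the cycle avoids critical points, $R':=(S')^{\circ 2p}$ remains a local biholomorphism near $\zeta$, and the implicit function theorem (applied within the finite-dimensional real-analytic family $\mathscr{S}_d$) produces a real-analytic continuation $\zeta=\zeta(S')$, repelling with multiplier close to $\lambda$. By Proposition~\ref{rep_para_landing_point} only finitely many rays land at $\zeta$, all of the same period under $S^{\circ 2}$, so it suffices to treat the single ray we are tracking.

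The core of the argument is the trapping estimate in a linearizing neighborhood. Fix a neighborhood $N$ of $\zeta$ on which $R$ is conjugate to $w\mapsto\lambda w$ via a Koenigs coordinate, and let $\gamma$ be a terminal segment of $R_S$ contained in $N$; in the linearizing coordinate $\gamma$ lies in a curvilinear wedge $W$ that the inverse branch of $R$ fixing $\zeta$ maps strictly into itself, with $\bigcap_n R^{-n}(W)=\{\zeta\}$, which is precisely the analytic content of the landing. On the other hand, the finite-rank tiles of $S'$ are cut out by inverse branches of $S'$ that converge to those of $S$ as $S'\to S$, so any initial segment of $R_{S'}$ of a fixed combinatorial length converges uniformly to the corresponding segment of $R_S$. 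Choosing this length large enough that the segment already reaches inside the perturbed neighborhood $N'$ and into the persistent wedge $W'$ around $\zeta(S')$, the entire tail of $R_{S'}$ is then captured by $W'$ and is forced to accumulate only at $\zeta(S')$; hence $R_{S'}$ lands at $\zeta(S')$, and pulling back as above gives landing at $z'$.

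The main obstacle I expect is the \emph{uniform} trapping: one must verify that the wedge $W$ and the radius of the linearizing neighborhood, as well as the Koenigs coordinate itself, can be chosen continuously (indeed real-analytically, since the multiplier stays off the unit circle) over $S'$ in a neighborhood of $S$, and that the perturbed ray, once inside the coordinate, does not spiral out of $W'$. Two features of the present situation make these estimates transfer essentially verbatim from \cite[Appendix~B]{GM93}: the landing point is repelling rather than parabolic, and its orbit avoids both $\mathrm{crit}(S)$ and the parabolic singular set $\mathfrak{S}\subset\Lambda(S)$, so the relevant local dynamics is purely hyperbolic; and the fixed $\pmb{G}_d$-combinatorics eliminates any coupling between the perturbation of the map and the perturbation of the external angle.
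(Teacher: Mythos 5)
Your proposal is correct and follows exactly the route the paper has in mind: the paper gives no detailed argument for Proposition~\ref{rep_stab}, merely invoking ``the same techniques as in [GM93, Appendix~B],'' and your write-up is precisely a fleshed-out adaptation of that trapping-and-perturbation argument (reduction to the periodic case via inverse branches along a critical-point-free orbit, persistence of the repelling cycle for the holomorphic second iterate, Koenigs-coordinate wedge trapping, and convergence of initial ray segments via convergence of the uniformizations $\psi_{S'}$). Your observation that the external combinatorics is frozen because all maps in $\mathscr{S}_{\pmb{\cN}_d}$ share the fixed external model $\pmb{\cN}_d$ is exactly the simplification that makes the Goldberg--Milnor argument transfer verbatim here.
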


In light of Proposition~\ref{per_rays_land}, one can define a landing map $L:\mathrm{Per}(\pmb{\cN}_d)\to \Lambda(S)$ that associates to every (pre)periodic angle $\theta$  the landing point of the $\theta-$dynamical ray of $S$.

\begin{defn}\label{def_preper_lami}
For $S\in\mathscr{S}_{\pmb{\cN}_d}$, the \emph{rational lamination} of $S$ is defined as the equivalence relation on $\mathrm{Per}(\pmb{\cN}_d)\subset\R/\Z$ such that $\theta, \theta'\in\mathrm{Per}(\pmb{\cN}_d)$ are related if and only if $L(\theta)=L(\theta')$. We denote the rational lamination of $S$ by $\lambda(S)$.
\end{defn}

Rational laminations of Schwarz reflections in $\mathscr{S}_{\pmb{\cN}_d}$ are analogous to rational laminations of anti-polynomials with connected Julia set (cf. \cite{Kiw01}). In particular, $\lambda(S)$ is a $\pmb{\cN}_d-$invariant equivalence relation.

\begin{defn}\label{push_lami}
The push-forward $(\pmb{\mathcal{E}}_d)_{\ast}(\lambda(S))$ of the rational lamination of $S$ is defined as the image of $\lambda(S)\subset\mathrm{Per}(\pmb{\cN}_d)\times\mathrm{Per}(\pmb{\cN}_d)$ under $\pmb{\mathcal{E}}_d\times\pmb{\mathcal{E}}_d$. Clearly, $(\pmb{\mathcal{E}}_d)_{\ast}(\lambda(S))$ is an equivalence relation on $\Q/\Z$. Similarly, the pullback $(\pmb{\mathcal{E}}_d)^{\ast}(\lambda(f))$ of the rational lamination of an anti-polynomial $f$ with connected Julia set is defined as the preimage of $\lambda(f)\subset\Q/\Z\times\Q/\Z$ under $\pmb{\mathcal{E}}_d\times\pmb{\mathcal{E}}_d$.
\end{defn}

For Schwarz reflections as well as anti-polynomials, we will denote by $\widehat{\lambda(S)}$ (respectively, $\widehat{\lambda(f)}$) the smallest equivalence relation on $\R/\Z$ that contains the closed set $\overline{\lambda(S)}\subset\R/\Z\times\R/\Z$ (respectively, $\overline{\lambda(f)}\subset\R/\Z\times\R/\Z$).
Note that the push-forward operation defined in Definition~\ref{push_lami} can be applied to $\widehat{\lambda(S)}$ as well.

\subsection{Combinatorial mating of $\pmb{\cN}_d$ with periodically repelling anti-polynomials}

Recall that the filled Julia set of a periodically repelling anti-polynomial $f\in\mathcal{C}_d^-$ has empty interior. By Proposition~\ref{fatou_comp_prop}, the same statement holds for the non-escaping set of a periodically repelling Schwarz reflection $S\in\mathscr{S}_{\pmb{\cN}_d}$.
The proof of \cite[Lemma~4.17]{Kiw01} implies that if the Julia/limit set of such a map is locally connected, then its filled Julia set/non-escaping set dynamics is topologically modeled by the induced map
$$
m_{-d}:\faktor{\R/\Z}{\widehat{\lambda(f)}}\longrightarrow \faktor{\R/\Z}{\widehat{\lambda(f)}}
$$
or
$$
\pmb{\cN}_d:\faktor{\R/\Z}{\widehat{\lambda(S)}}\longrightarrow \faktor{\R/\Z}{\widehat{\lambda(S)}}.
$$
Further, if 
$$
\left(\pmb{\mathcal{E}}_d\right)_*(\lambda(S))=\lambda(f),
$$
then the above induced maps are topologically conjugate via the homeomorphism
$$
\pmb{\mathcal{E}}_d: \faktor{\R/\Z}{\widehat{\lambda(S)}}\longrightarrow \faktor{\R/\Z}{\widehat{\lambda(f)}}
$$
induced by the conjugacy $\pmb{\mathcal{E}}_d:\mathbb{S}^1\to\mathbb{S}^1$ between $\pmb{\cN}_d$ and $\overline{z}^d$. In particular, local connectedness of the Julia (respectively, limit) set of $f$ (respectively, $S$) and the condition $\left(\pmb{\mathcal{E}}_d\right)_*(\lambda(S))=\lambda(f)$ together imply that $f\vert_{\mathcal{K}(f)}$ and $S_\vert{K(S)}$ are topologically conjugate.

The above discussion motivates the following weaker definition of mating.

\begin{defn}\label{comb_mating_def}
We say that a periodically repelling Schwarz reflection $S\in\mathscr{S}_{\pmb{\cN}_d}$ is a \emph{combinatorial mating} of $\pmb{\cN}_d$ with a periodically repelling anti-polynomial $f\in\mathcal{C}_d^-$ if $\left(\pmb{\mathcal{E}}_d\right)_*(\lambda(S))=\lambda(f)$.
\end{defn}

\section{Geometrically finite polygonal Schwarz reflections}\label{geom_fin_polygonal_schwarz_sec}

\begin{defn}\label{geom_fin_def}
A Schwarz reflection map $S\in\mathscr{S}_d$ is said to be \emph{geometrically finite} if every critical point of $S$ in the
limit set $\Lambda(S)$ is pre-periodic.
\end{defn}

By Propositions~\ref{fatou_comp_prop} and~\ref{fatou_crit_prop}, each periodic Fatou component of a geometrically finite Schwarz reflection is an attracting/parabolic basin, or a basin of attraction of some singular point $x\in\mathfrak{S}$. Moreover, $S$ has no Cremer cycle.

\begin{prop}\label{geom_fin_limit_lc_pop}
Let $S\in\mathscr{S}_{\pmb{\cN}_d,gf}$. Then the following hold true.
\noindent\begin{enumerate}\upshape
\item $\Lambda(S)$ is locally connected.
\item $\Lambda(S)$ has zero area.
\end{enumerate}
\end{prop}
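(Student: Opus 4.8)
The plan is to treat $S\in\mathscr{S}_{\pmb{\cN}_d,gf}$ as a degenerate anti-polynomial-like map and transport the classical arguments for geometrically finite (anti-)polynomials and rational maps to this setting. The key structural input is that every critical point in $\Lambda(S)$ is pre-periodic, so the limit set contains no recurrent critical point; combined with Proposition~\ref{fatou_crit_prop}, $S$ has no Cremer cycle and no Siegel disk (a Siegel boundary would require a recurrent Julia critical point). Thus every Fatou component is eventually an attracting basin, a parabolic basin, or a basin of some singular point $w\in\mathfrak{S}$, and every periodic point on $\Lambda(S)$ not in $\mathfrak{S}$ is either repelling or parabolic.

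\textbf{Local connectivity.} First I would establish local connectivity (statement~(1)). The standard approach is to build, for each point $x\in\Lambda(S)$, a shrinking nested sequence of connected puzzle-type neighborhoods whose diameters tend to zero. Away from the post-critical set and the singular points $\mathfrak{S}$, the map $S^{\circ 2}$ is locally an expanding anti-holomorphic covering, so one gets shrinking neighborhoods by pulling back a fixed round disk along the orbit (using the Landing results of Proposition~\ref{per_rays_land} and ray stability from Proposition~\ref{rep_stab} to control the combinatorial neighborhoods). The delicate points are the parabolic/singular points: near a parabolic cycle in $\cD$ and near a point $w\in\mathfrak{S}$ (where, by Equation~\eqref{asymp_eqn_1}, $S^{\circ 2}$ has a degenerate parabolic germ of the form $\zeta\mapsto\zeta+c_1(\zeta-w)^{n/2}+\cdots$), expansion degenerates and one must replace round disks by horoball-like or sector-shaped neighborhoods adapted to the petal structure. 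The classical mechanism (Tan Lei--Yin, and the rational-map treatment in \cite{Mil06}) covers the plane by finitely many basins plus finitely many parabolic/cusp petals plus a uniformly expanding hyperbolic part; since $S$ is geometrically finite, the post-singular set is finite, so this covering is finite and each piece admits shrinking connected neighborhoods. I would assemble these pieces exactly as in the geometrically finite polynomial case, using the degenerate polynomial-like structure to make the semi-global statements rigorous.

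\textbf{Zero area.} For statement~(2), once $\Lambda(S)$ is locally connected, I would run the standard zero-area argument for geometrically finite maps. The limit set decomposes into the (countable, hence measure-zero) grand orbit of the singular/parabolic cusps and parabolic points, together with the \emph{conical} (radial) part: points $x\in\Lambda(S)$ that return infinitely often to a fixed scale under iteration with bounded distortion. The argument for the conical set is the classical density-point argument: if the conical limit set had positive measure, a Lebesgue density point $x$ would yield, via univalent pullbacks of a fixed-scale disk with bounded distortion (the Koebe distortion theorem), a sequence of nearly-full-density round disks mapping to a fixed disk, forcing $\Lambda(S)$ to have density one on a fixed-size disk in the escaping set---a contradiction with the openness of $T^\infty(S)$ (Proposition~\ref{basic_top_prop}). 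The parabolic and cusp points contribute zero area by a direct petal estimate using the asymptotics~\eqref{asymp_eqn_1}, exactly as the ergodicity/zero-area arguments for parabolic rational maps.

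\textbf{Main obstacle.} The hardest part will be the analysis at the singular points $w\in\mathfrak{S}$, where $S^{\circ 2}$ is not a generic tangent-to-identity parabolic but a \emph{degenerate} parabolic germ with fractional asymptotics~\eqref{asymp_eqn_1} and $n\geq 5$. Both the shrinking-neighborhood construction (for local connectivity) and the petal area estimate (for zero area) require carefully counting attracting/repelling directions and controlling the geometry of the petals in this degenerate regime; I would import the change-of-coordinate $\zeta\mapsto 1/(\zeta-w)^{n/2-1}$ used in Proposition~\ref{fatou_crit_prop} to normalize each sector to a translation on a half-plane, and then transport the standard parabolic estimates through this coordinate, checking that the finitely many such singular points are handled uniformly.
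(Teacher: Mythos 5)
Your proposal follows essentially the same route as the paper: for local connectivity the paper simply invokes the classical geometrically finite machinery (Douady--Hubbard and the earlier Schwarz-reflection treatment in \cite{LLMM1}) whose mechanism you sketch, and for zero area the paper uses exactly your decomposition into the countable backward orbit of $\mathfrak{S}\cup\mathcal{P}\cup C_{\mathrm{preper}}$ plus the radial (conical) set, killed by the bounded-distortion transfer of nowhere-density to small scales and the Lebesgue density theorem. The only cosmetic difference is that your countable part should also explicitly include the grand orbit of the strictly pre-periodic critical points on $\Lambda(S)$, as in the paper.
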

\begin{proof}
1) The proof is analogous to the proof of local connectivity of connected Julia sets of geometrically finite polynomials (cf. \cite[\S X]{DH1}, \cite[Propositions~6.4, 6.9]{LLMM1}).

2) We say that a point $w\in \Gamma(S)$ is a \emph{radial point} if there exists $\delta > 0$, an infinite sequence of positive integers $\lbrace n_k \rbrace$, and well-defined inverse branches of $S^{\circ n_k}$ defined on $B(S^{\circ n_k}(w), \delta)$ sending $S^{\circ n_k}(w)$ to $w$ for all $k$. The set of all radial points of $\Gamma(S)$ is called the \emph{radial limit set} of $S$. Arguments used in \cite[Propositions~6.2,6.7]{LLMM1} show that the radial limit set of $S$ is given by $\displaystyle\Gamma(S)\setminus \bigcup_{j=0}^\infty S^{-j}\left(\mathfrak{S}\cup\mathcal{P}\cup C_{\mathrm{preper}}\right)$, where $\mathcal{P}$ is the union of the parabolic cycles and $C_{\mathrm{preper}}$ is the set of strictly pre-periodic critical points of $S$. Finally, the nowhere dense structure of $\Lambda(S)$ can be transferred from moderate scales to microscopic scales at radial points (using the inverse branches of iterates of $S$), and this implies that no radial limit point is a point of Lebesgue density for $\Gamma(S)$ (cf. \cite[Proposition~25.23]{Lyu23} or \cite[Theorem~3.8]{Urb1}). The above facts, together with the Lebesgue density theorem, imply that $\Lambda(S)$ has zero area. 
\end{proof}

Local connectedness of limit sets of maps in $\mathscr{S}_{\pmb{\cN}_d,gf}$, combined with the arguments of \cite[Lemma~4.17]{Kiw01}, implies that for $S\in\mathscr{S}_{\pmb{\cN}_d,gf}$, the dynamics of $S$ on its limit set $\Lambda(S)$ is topologically conjugate to 
$$
\pmb{\cN}_d: \faktor{(\R/\Z)}{\widehat{\lambda(S)}}\longrightarrow \faktor{(\R/\Z)}{\widehat{\lambda(S)}}\ .
$$
In particular, $\widehat{\lambda(S)}$ is generated by co-landing of the dynamical rays of $S$. A straightforward adaptation of \cite[Theorem~1]{Kiw04} for Schwarz reflections yields the following properties of $\widehat{\lambda(S)}$ for $S\in\mathscr{S}_{\pmb{\cN}_d,gf}$.

\begin{prop}\label{geom_finite_laminations_prop}
Let $S\in\mathscr{S}_{\pmb{\cN}_d,gf}$. Then, $\widehat{\lambda(S)}$ satisfies the following properties.
\begin{enumerate}\upshape
\item $\widehat{\lambda(S)}$ is closed in $\R/\Z\times\R/\Z$.

\item Each equivalence class $A$ of $\widehat{\lambda(S)}$ is a finite subset of $\R/\Z$.

\item If $A$ is a $\widehat{\lambda(S)}-$equivalence class, then $\pmb{\cN}_d(A)$ is also a $\widehat{\lambda(S)}-$equivalence class.

\item If $A$ is a $\widehat{\lambda(S)}-$equivalence class, then $A\mapsto \pmb{\cN}_d(A)$ is \emph{consecutive reversing}; i.e., for every connected component $(s,t)$ of $\R/\Z\setminus A$, we have that $(\pmb{\cN}_d(s),\pmb{\cN}_d(t))$ is a connected component of $\R/\Z\ \setminus\  \pmb{\cN}_d(A)$.

\item $\widehat{\lambda(S)}-$equivalence classes are pairwise \emph{unlinked}; i.e., if $A$ and $B$ are two distinct equivalence classes of $\widehat{\lambda(S)}$, then there exist disjoint intervals $I_A, I_B\subset\R/\Z$ such that $A\subset I_A$ and $B\subset I_B$.

\item If $\gamma\subset\faktor{(\R/\Z)}{\widehat{\lambda(S)}}$ is a periodic simple closed curve, then the corresponding return map is not a homeomorphism.

\item If $c$ is a critical point of $P$ on the limit set $\Lambda(S)$, then $\tau_S^{-1}(c)\subset\mathrm{Per}(\pmb{\cN}_d)$, and $\tau_S^{-1}(c)\mapsto \pmb{\cN}_d(\tau_S^{-1}(c))$ has some degree $\delta>1$. Every other equivalence class of $\widehat{\lambda(S)}$ maps injectively onto its image equivalence class under $\pmb{\cN}_d$. Here, $\tau_S:\D\to T^\infty(S)$ is the normalized conformal map that conjugates $\pmb{\cN}_d$ to $S$.
\end{enumerate}
\end{prop}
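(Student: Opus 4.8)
The plan is to import Kiwi's proof of \cite[Theorem~1]{Kiw04} almost verbatim, replacing the three analytic inputs it uses for polynomials — the continuous boundary extension of the B\"ottcher map, the landing theorem for (pre)periodic external rays, and the classification of periodic Fatou components — with their Schwarz-reflection counterparts established above. First I would promote the conjugacy $\tau_S:\D\to T^\infty(S)$ (equivalently $\psi_S$) to a continuous surjection $\hat\tau_S:\overline\D\to\overline{T^\infty(S)}$: local connectivity of $\Lambda(S)$ (Proposition~\ref{geom_fin_limit_lc_pop}(1)) together with Carath\'eodory's theorem gives this extension, and since $\tau_S$ conjugates $\pmb{\cN}_d$ to $S$ on the open sets, $\hat\tau_S$ restricts on $\mathbb{S}^1$ to a continuous semi-conjugacy $\hat\tau_S\circ\pmb{\cN}_d=S\circ\hat\tau_S$ from $\pmb{\cN}_d\vert_{\mathbb{S}^1}$ onto $S\vert_{\Lambda(S)}$. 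Transporting through the Minkowski homeomorphism $\pmb{\mathcal{E}}_d$, the relation $\widehat{\lambda(S)}$ is exactly the fiber relation of $\hat\tau_S$ (this is the content of the paragraph preceding the proposition, i.e. the adaptation of \cite[Lemma~4.17]{Kiw01}).

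With this dictionary in place, properties (1), (3) and (4) are formal. Closedness of the fiber relation of a continuous map into a Hausdorff space gives (1); the equivariance $S\circ\hat\tau_S=\hat\tau_S\circ\pmb{\cN}_d$ gives (3); and because each local branch of $\pmb{\cN}_d$ is an orientation-reversing homeomorphism onto its image while $\hat\tau_S$ preserves the cyclic order of $\mathbb{S}^1$, the image of a complementary arc of a class is a complementary arc of the image class, which is (4). Unlinkedness (5) is the statement that the fibers of a monotone boundary map of a disk cannot cross, a purely planar fact.

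The technical heart is finiteness of classes (2). Here I would follow Kiwi's argument that a non-degenerate fiber $\hat\tau_S^{-1}(w)$ with infinitely many points would force either a wandering non-trivial continuum in $\Lambda(S)$ or an infinite-to-one identification incompatible with the dynamics; for (pre)periodic points this is already ruled out by Proposition~\ref{per_rays_land} and Proposition~\ref{rep_para_landing_point}, which say each repelling or parabolic (pre)periodic point carries only finitely many rays of a common period, and the general classes are then controlled by pulling back along the semi-conjugacy. \textbf{The main obstacle — and the only place the adaptation is not mechanical — is that $\pmb{\cN}_d$ is not expanding:} it has a parabolic fixed point at each $(d+1)$-st root of unity, corresponding to the singular set $\mathfrak{S}\subset\Lambda(S)$ whose points carry parabolic dynamics. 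Consequently Kiwi's expansion estimates must be replaced by the expansivity of $\pmb{\cN}_d\vert_{\mathbb{S}^1}$ together with the parabolic-germ asymptotics of $S^{\circ 2}$ near points of $\mathfrak{S}$ (Equation~\eqref{asymp_eqn_1} and the attracting/repelling petal analysis used in proving Proposition~\ref{per_rays_land}); these still furnish local inverse branches shrinking diameters, which is precisely what the finiteness and no-wandering-continuum arguments require.

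Finally, properties (6) and (7) are dynamical and follow from the structure theory already in place. A periodic simple closed curve $\gamma$ in $\faktor{(\R/\Z)}{\widehat{\lambda(S)}}$ corresponds via $\hat\tau_S$ to the boundary of a periodic bounded Fatou component $U$ of $S$; by Proposition~\ref{fatou_comp_prop} $U$ is an attracting, parabolic, or $\mathfrak{S}$-basin, and by Proposition~\ref{fatou_crit_prop} the periodic cycle of such components carries a critical point, so the first-return map of $\gamma$ has degree $\geq 2$ and cannot be a homeomorphism (the Siegel case, which would yield a homeomorphic return map, is excluded by geometric finiteness), giving (6). For (7), if $c\in\Lambda(S)$ is a critical point then $\tau_S^{-1}(c)$ consists of (pre)periodic angles by the landing statements, and the local degree $\delta>1$ of $S$ at $c$ forces the class $\tau_S^{-1}(c)$ to wrap $\delta$-to-one onto $\pmb{\cN}_d(\tau_S^{-1}(c))$, while away from the critical set $\hat\tau_S$ is locally injective on each class so every non-critical class maps injectively; this is the degree bookkeeping in Kiwi's final step.
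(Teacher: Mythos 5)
Your proposal follows exactly the route the paper takes: the paper offers no written argument beyond the sentence that the proposition follows by ``a straightforward adaptation of [Kiw04, Theorem~1]'' for Schwarz reflections, and your outline is precisely that adaptation with the correct substitutions (local connectivity of $\Lambda(S)$ from Proposition~\ref{geom_fin_limit_lc_pop} plus Carath\'eodory to realize $\widehat{\lambda(S)}$ as the fiber relation of the boundary extension of $\tau_S$, the landing statements of Propositions~\ref{per_rays_land} and~\ref{rep_para_landing_point}, and the Fatou classification of Propositions~\ref{fatou_comp_prop} and~\ref{fatou_crit_prop}). You also correctly isolate the one genuinely non-mechanical point --- replacing Kiwi's expansion estimates by expansivity of $\pmb{\cN}_d\vert_{\mathbb{S}^1}$ together with the parabolic-germ asymptotics of $S^{\circ 2}$ near $\mathfrak{S}$ --- which is exactly where the ``adaptation'' the paper invokes has content.
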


To prove Theorem~\ref{thm:C}, we need the following technical lemma that guarantees the existence of continuous extensions of circle homeomorphisms as quasiconformal/David homeomorphisms of $\D$.

\begin{prop}\label{extension_david_prop}
\noindent
\begin{enumerate}\upshape
\item The homeomorphism $\pmb{\mathcal{E}}_d^{-1}:\mathbb{S}^1\to\mathbb{S}^1$, that conjugates $\overline{z}^d$ to $\pmb{\cN}_d$, admits a continuous extension to $\D$ as a David homeomorphism.

\item Let $B:\D\to\D$ be a degree $d$ Blaschke (respectively, anti-Blaschke product) with a parabolic fixed point on $\mathbb{S}^1$ that has an attracting direction in $\D$. Then there exists a homeomorphism $H:\mathbb{S}^1\to\mathbb{S}^1$ that conjugates $z^d$ (respectively, $\overline{z}^d$) to $B$ and admits a continuous extension to $\D$ as a David homeomorphism.

\item Let $n\geq 2$, and 
$$
\pmb{B}:\mathbb{S}^1\times\Z/n\Z\longrightarrow\mathbb{S}^1\times\Z/n\Z,\ \pmb{B}(z,i)=(B_i(z),i+1)
$$
be such that each $B_i$ is a degree $d_i$ anti-Blaschke product. Assume further that each 
$$
B_{i+n-1}\circ\cdots\circ B_{i+1}\circ B_i:\mathbb{S}^1\to\mathbb{S}^1
$$ 
is a degree $d$ Blaschke or anti-Blaschke product with a parabolic fixed point on $\mathbb{S}^1$ that has an attracting direction in $\D$, depending on whether $n$ is even or odd, where $d=d_0 d_1\cdots d_{n-1}$, $i\in\Z/n\Z$.
Then, there exists a homeomorphism $\pmb{H}:\mathbb{S}^1\times\Z/n\Z\longrightarrow\mathbb{S}^1\times\Z/n\Z$ conjugating 
$$
\pmb{P}:\mathbb{S}^1\times\Z/n\Z\longrightarrow\mathbb{S}^1\times\Z/n\Z,\ \pmb{P}(z,i)=(\overline{z}^{d_i},i+1)
$$
to $\pmb{B}$.
Moreover, $\pmb{H}$ continuously extend as a David homeomorphism of $\D\times\Z/n\Z$.

\item Let $n\geq 2$, and 
$$
\pmb{B}:\mathbb{S}^1\times\Z/n\Z\longrightarrow\mathbb{S}^1\times\Z/n\Z,\ \pmb{B}(z,i)=(B_i(z),i+1)
$$
be such that each $B_i$ is a degree $d_i$ anti-Blaschke product. Assume further that each 
$$
B_{i+n-1}\circ\cdots\circ B_{i+1}\circ B_i:\mathbb{S}^1\to\mathbb{S}^1
$$ 
is a degree $d$ Blaschke or anti-Blaschke product with an attracting fixed point in $\D$, depending on whether $n$ is even or odd, where $d=d_0 d_1 \cdots d_{n-1}$, $i\in\Z/n\Z$.
Then, there exists a homeomorphism $\pmb{H}:\mathbb{S}^1\times\Z/n\Z\longrightarrow\mathbb{S}^1\times\Z/n\Z$ conjugating 
$$
\pmb{P}:\mathbb{S}^1\times\Z/n\Z\longrightarrow\mathbb{S}^1\times\Z/n\Z,\ \pmb{P}(z,i)=(\overline{z}^{d_i},i+1)
$$
to $\pmb{B}$.
Moreover, $\pmb{H}$ continuously extends as a quasiconformal homeomorphisms of $\D\times\Z/n\Z$.
\end{enumerate}
\end{prop}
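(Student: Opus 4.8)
The plan is to reduce all four parts to a single \emph{local David extension lemma} at a parabolic fixed point, and then to combine it with the standard quasiconformal extension away from the parabolic set and, for the cyclic statements, with a first-return reduction. Passing where necessary to a holomorphic iterate, the common model behind parts (1)--(3) is a circle homeomorphism conjugating an expanding germ $z\mapsto\lambda z$ (with $|\lambda|>1$; the local form of $\overline{z}^d$, of $z^d$, or of an iterate at a repelling fixed point) to a parabolic germ $\zeta\mapsto\zeta+c\,\zeta^{k+1}+\cdots$ (the local form of $\pmb{\cN}_d$, of the parabolic (anti-)Blaschke product $B$, or of a parabolic first-return map). On the expanding side I would pass to a linearizing (Koenigs) coordinate and on the parabolic side to a Fatou coordinate; in these coordinates the prescribed boundary conjugacy becomes an explicit logarithmic-type map, which I extend to one fundamental crescent inside $\D$ abutting the parabolic point and then spread over a horocyclic neighborhood by the two dynamics. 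The Beltrami coefficient of the resulting extension can be computed in Fatou coordinates, and the crux is to verify that the area of the set $\{|\mu|>1-\epsilon\}$ decays like $e^{-\alpha/\epsilon}$ as $\epsilon\to 0$, i.e.\ the \textbf{David condition}. This measure estimate, in which the polynomial degeneracy of the parabolic germ is precisely what forces the exponential decay, is the main obstacle of the entire proposition; I would carry it out following the David-extension estimates developed for parabolic surgery in \cite{LMMN,LMM23}.

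Granting the engine, parts (1) and (2) follow by a global assembly. Away from the finitely many parabolic fixed points the relevant conjugacy ($\pmb{\mathcal{E}}_d^{-1}$ in part (1); the conjugacy between $z^d$ or $\overline{z}^d$ and $B$ in part (2)) is between uniformly expanding germs, hence quasisymmetric, so it extends to the corresponding fundamental domains in $\D$ with uniformly bounded dilatation by the usual dynamical (Beurling--Ahlfors type) extension. Inserting the local David extension near each parabolic point, gluing along boundaries of fundamental domains, and spreading by the dynamics produces a global homeomorphism $H$ of $\overline{\D}$ extending the prescribed boundary map. The bounded-dilatation part satisfies the David condition trivially and the finitely many parabolic pieces satisfy it by the engine; together with the $W^{1,1}_{\mathrm{loc}}$ regularity coming from the construction, this means that $H$ is a David homeomorphism by definition.

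For the cyclic statements (3) and (4) I would run the same construction on the fibered system rather than on a single map. Because $\pmb{P}$ and $\pmb{B}$ preserve the cyclic fiber structure and the $n$-th returns to a fiber are $\overline{z}^{d_{i+n-1}}\circ\cdots\circ\overline{z}^{d_i}$ and $B_{i+n-1}\circ\cdots\circ B_i$, the hypotheses say precisely that each fiber return map is a degree $d$ (anti-)Blaschke product with a parabolic fixed point (part (3)) or an attracting fixed point in $\D$ (part (4)). Choosing a fundamental domain for the fibered dynamics together with an extension of the boundary conjugacy on it, and then spreading over $\D\times\Z/n\Z$ by $\pmb{P}$ and $\pmb{B}$, produces $\pmb{H}$ satisfying the equivariance $\pmb{H}\circ\pmb{P}=\pmb{B}\circ\pmb{H}$ by construction; one full fiber-cycle corresponds to a single application of the parabolic return map, so the David estimate of the engine applies unchanged, and the only passages between fibers compose with the conformal/anticonformal maps $\overline{z}^{d_i}$ and $B_i$, which preserve the David class. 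This yields $\pmb{H}$ in part (3). In part (4) the return maps are hyperbolic (attracting fixed point in $\D$, uniform expansion on $\mathbb{S}^1$), so the fiber conjugacies are quasisymmetric and extend with uniformly bounded dilatation and no parabolic correction; thus $\pmb{H}$ is quasiconformal, which is the stronger conclusion asserted there.
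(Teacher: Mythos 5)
Your overall skeleton matches the paper's: reduce parts (3) and (4) to the fiber return maps, which by hypothesis are parabolic (respectively hyperbolic) degree $d$ (anti-)Blaschke products, and then invoke the single-map extension results of parts (1)--(2); and near parabolic points the extension is governed by a David estimate in Fatou coordinates. One practical difference: the paper does not re-derive that estimate. Part (1) is a direct citation of \cite[Theorem~4.14]{LMMN}, and part (2) is obtained by a factorization trick --- first conjugate $B$ quasisymmetrically to the model parabolic product $B_d^{\pm}$ (via \cite[Proposition~6.8]{McM}), then compose with the known David conjugacy between $\overline{z}^d$ (or $z^d$) and $B_d^{\pm}$ from \cite[Example~4.2, Theorem~4.13]{LMMN}, using that post-composition of a David map with a quasiconformal map stays David (\cite[Proposition~2.5]{LMMN}). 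Your plan to rebuild the crescent-by-crescent estimate is workable but substantially heavier than necessary, and your intermediate claim that the conjugacy is ``quasisymmetric away from the parabolic points'' needs to be phrased on subarcs, since quasisymmetry is a global property that genuinely fails here.

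The genuine gap is in parts (3) and (4): you assert that spreading a return-map conjugacy over the fibers ``produces $\pmb{H}$ satisfying the equivariance $\pmb{H}\circ\pmb{P}=\pmb{B}\circ\pmb{H}$ by construction.'' But the fiber maps $\overline{z}^{d_i}$ and $B_i$ are degree $d_i$ coverings, not homeomorphisms, so ``spreading'' means either \emph{descending} (defining $H_{i+1}$ by $H_{i+1}(\overline{z}^{d_i})=B_i(H_i(z))$, which is well defined only if $H_i$ maps fibers of $\overline{z}^{d_i}$ to fibers of $B_i$ --- not automatic for a conjugacy of the $n$-th return maps) or \emph{lifting} (which is only unique up to deck transformations). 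This is exactly where the paper does real work: it starts from one return-map conjugacy $H_{n-1}$ supplied by part (2), successively lifts it through the coverings $\overline{z}^{d_j}$ and $B_j$ to get $H_{n-2},\dots,H_0,\widetilde{H}_{n-1}$, and then observes that after traversing the cycle the map $\widetilde{H}_{n-1}\circ H_{n-1}^{-1}$ is a deck transformation of the degree $d$ return covering $B_{n-2}\circ\cdots\circ B_0\circ B_{n-1}$; one must then re-choose the intermediate lifts, post-composing with deck transformations of appropriate $B_j$'s, to force $\widetilde{H}_{n-1}=H_{n-1}$ and close the diagram. Without this monodromy correction the family $(H_i)$ need not satisfy the intertwining relations, and no equivariant $\pmb{H}$ results. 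Once the circle conjugacies $H_i$ do exist, your final step --- each $H_i$ conjugates $\overline{z}^d$ or $z^d$ to a parabolic (resp.\ expanding) return map, hence extends as a David (resp.\ quasiconformal) homeomorphism fiberwise --- agrees with the paper.
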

\begin{proof}
1) Follows from \cite[Theorem~4.14]{LMMN}.

2) Note that the parabolic (anti-)Blaschke product $B_d^-(z) :=\frac{(d+1)\overline{z}^d+(d-1)}{(d+1)+(d-1)\overline{z}^d}$ (or the parabolic Blaschke product $B_d^+(z)=\frac{(d+1)z^d+(d-1)}{(d+1)+(d-1)z^d}$) is conjugate to $B$ via a quasisymmetric homeomorphism $H_1:\mathbb{S}^1\to\mathbb{S}^1$ (cf. \cite[Proposition~6.8]{McM}). We extend $H_1$ to a quasiconformal homeomorphism of $\widehat{\C}$ (also called $H_1$).

By \cite[Example~4.2, Theorem~4.13]{LMMN}, there exists a circle homeomorphism $H_2:\mathbb{S}^1\to\mathbb{S}^1$ that conjugates $z^d$ or $\overline{z}^d$ to $B_d^\pm$, such that $H_2$ admits a continuous extension to $\D$ as a David homeomorphism. By \cite[Proposition~2.5]{LMMN}, the desired map $H$ is given by $H_1\circ H_2$.

3) Let us assume that $n$ is odd. The proof of the even case is completely analogous.

By the previous part, there exists a topological conjugacy $H_{n-1}:\mathbb{S}^1\to\mathbb{S}^1$ between $\overline{z}^d$ and $B_{n-2}\circ\cdots\circ B_{0}\circ B_{n-1}$. Since $B_{j}$ and $\overline{z}^{j}$ are circle coverings of the same degree for $j\in\Z/n\Z$, we can succesively lift $H_n$ to obtain circle homeomorphisms $H_{n-2},\cdots,H_0, \widetilde{H}_{n-1}$ such that the following diagram commutes:

\[ \begin{tikzcd}
\mathbb{S}^1 \arrow{r}{\overline{z}^{d_{n-1}}} \arrow[swap]{d}{\widetilde{H}_{n-1}} & \mathbb{S}^1 \arrow{r}{\overline{z}^{d_0}} \arrow[swap]{d}{H_0} & \mathbb{S}^1 \arrow{d}{H_1} \arrow{r}{\overline{z}^{d_{1}}}  & \mathbb{S}^1 \arrow{d}{H_{2}} \arrow{r}{\overline{z}^{d_{2}}} & \cdots \arrow{r}{\overline{z}^{d_{n-3}}} \arrow{d}{} & \mathbb{S}^1 \arrow[swap]{d}{H_{n-2}} \arrow{r}{\overline{z}^{d_{n-2}}} & \mathbb{S}^1 \arrow{d}{H_{n-1}} \\
\mathbb{S}^1 \arrow[swap]{r}{B_{n-1}} & \mathbb{S}^1 \arrow[swap]{r}{B_{0}}& \mathbb{S}^1 \arrow[swap]{r}{B_{1}} & \mathbb{S}^1 \arrow{r}[swap]{B_{2}} & \cdots \arrow[swap]{r}{B_{n-3}} & \mathbb{S}^1 \arrow[swap]{r}{B_{n-2}} & \mathbb{S}^1.
\end{tikzcd}
\]
It now follows that
$$
\left(B_{n-2}\circ\cdots\circ B_{0}\circ B_{n-1}\right) \circ \widetilde{H}_{n-1} = \left(B_{n-2}\circ\cdots\circ B_{0}\circ B_{n-1}\right) \circ H_{n-1},
$$
and hence $\widetilde{H}_{n-1}\circ H_{n-1}^{-1}$ is a deck transformation for the circle covering $B_{n-2}\circ\cdots\circ B_{0}\circ B_{n-1}$. 
Hence, possibly after post-composing $H_{n-2},\cdots,H_0, \widetilde{H}_{n-1}$ with deck transformations of appropriate $B_j$s, we can assume that $\widetilde{H}_{n-1}=H_{n-1}$. Hence, 
$$
\pmb{H}:\mathbb{S}^1\times\Z/n\Z \to\mathbb{S}^1\times\Z/n\Z,\ (z,i)\mapsto (H_i(z), i)
$$
is the desired topological conjugacy between $\pmb{P}$ and $\pmb{B}$.

Finally, as each $H_i$ conjugates $\overline{z}^d$ to a degree $d$ anti-Blaschke product with a parabolic fixed point on $\mathbb{S}^1$ that has an attracting direction in $\D$, the first part (of this proposition) allows us to conclude that $H_i$ can be continuously extended to David homeomorphisms of~$\D$. 

4) The proof, which uses the fact that any expanding, analytic, orientation-reversing covering of $\mathbb{S}^1$ is quasisymmetrically conjugate to $\overline{z}^n$ (where $n$ is the degree of the map), is similar to the previous part.
\end{proof}

\begin{proof}[Proof of Theorem~\ref{thm:C} (bijection part)]
The proof will be divided into three parts.
\smallskip

\noindent\textbf{Constructing a map $\Phi:\mathcal{C}^-_{d,gf}\longrightarrow \mathscr{S}_{\pmb{\cN}_d,gf}$}. Let $f\in\mathcal{C}^-_{d,gf}$ be geometrically finite. Our goal is to construct a Schwarz reflection in $\mathscr{S}_{\pmb{\cN}_d,gf}$ that is a conformal mating of $f$ and $\pmb{\cN}_d$.

By an antiholomorphic version of \cite[Corollary~9.7]{LMMN} (the proof of the orientation-preserving case applies mutatis mutandis to the orientation-reversing case thanks to the orientation-reversing version of Thurston Realization Theorem, cf. \cite[Proposition~6.1]{LLMM4}, \cite[Theorem~3.9]{Gey20}), there exists a postcritically finite map $f_0\in\mathcal{C}^-_d$ such that $f\vert_{\mathcal{J}(f)}$ is topologically conjugate to $f_0\vert_{\mathcal{J}(f_0)}$. We first construct a partially defined continuous map on the sphere from $f_0$ as follows.
\smallskip

\noindent$\bullet$ Let us denote the David extension of $\pmb{\mathcal{E}}_d^{-1}:\mathbb{S}^1\to\mathbb{S}^1$ (that conjugates $\overline{z}^d$ to $\pmb{\cN}_d$) to $\D$ by $\xi$. Further, let $\tau_{f_0}:\D\to\mathcal{B}_\infty(f_0)$ be a conformal map that conjugates $\overline{z}^d$ to $f_0$ (where $\mathcal{B}_\infty(f_0)$ is the basin of infinity of $f_0$).  We replace the action of $f_0$ on $\mathcal{B}_\infty(f_0)$ by $\tau_{f_0}\circ \xi^{-1}\circ\pmb{\cN}_d\circ \xi\circ\tau_{f_0}^{-1}$. 

Note that the map $\tau_{f_0}\circ \xi^{-1}\circ\pmb{\cN}_d\circ \xi\circ\tau_{f_0}^{-1}$ preserves the pullback of the standard complex structure on $\D$ by $\xi\circ\tau_{f_0}^{-1}$.
\smallskip

\noindent$\bullet$ Suppose that $U_0,\cdots,U_{n-1}$ is a cycle of bounded Fatou components of $f$, and $\widehat{U}_0,\cdots,\widehat{U}_{n-1}$ is the corresponding cycle of bounded Fatou components of $f_0$. One can choose conformal maps $\pmb{\kappa}:\D\times\Z/n\Z\to\bigsqcup_{i\in\Z/n\Z} U_i$ and $\widehat{\pmb{\kappa}}:\D\times\Z/n\Z\to\bigsqcup_{i\in\Z/n\Z} \widehat{U}_i$ such that $\pmb{\kappa}^{-1}\circ f\circ\pmb{\kappa}:\D\times\Z/n\Z\to\D\times\Z/n\Z$ is a fiberwise anti-Blaschke product $\pmb{B}$ satisfying the hypotheses of part (3) or part (4) of Proposition~\ref{extension_david_prop}, and $\widehat{\pmb{\kappa}}^{-1}\circ f_0\circ\widehat{\pmb{\kappa}}:\D\times\Z/n\Z\to\D\times\Z/n\Z$ is a fiberwise power map $\pmb{P}$ as in part (3) or part (4) of Proposition~\ref{extension_david_prop}. Hence, there exists a homeomorphism $\pmb{H}:\mathbb{S}^1\times\Z/n\Z\longrightarrow\mathbb{S}^1\times\Z/n\Z$ that conjugates $\pmb{P}$ to $\pmb{B}$, and admits a David/quasiconformal extension $\D\times\Z/n\Z$.
We replace the action of $f_0$ on $\bigsqcup_{i\in\Z/n\Z} \widehat{U}_i$ by the map $\widehat{\pmb{\kappa}}\circ\pmb{H}^{-1}\circ\pmb{B}\circ\pmb{H}\circ\widehat{\pmb{\kappa}}^{-1}$.

Again, the map $\widehat{\pmb{\kappa}}\circ\pmb{H}^{-1}\circ\pmb{B}\circ\pmb{H}\circ\widehat{\pmb{\kappa}}^{-1}$ preserves the pullback of the standard complex structure on $\D\times\Z/n\Z$ by $\pmb{H}\circ\widehat{\pmb{\kappa}}^{-1}$.

\smallskip

\noindent$\bullet$ Finally, let $U$ be a strictly pre-periodic Fatou component of $f$ with $f(U)=V$, and $\widehat{U}, \widehat{V}$ be the corresponding components of $f_0$. The maps $f:U\to V$ and $f_0:\widehat{U}\to\widehat{V}$ induce, via Riemann maps of the Fatou components, anti-Blaschke actions of $\D$, and these anti-Blaschke actions are quasisymmetrically conjugate on $\mathbb{S}^1$ (cf. \cite[Theorem~6.1]{McM}). This allows us to replace the action of $f_0$ on $\widehat{U}$ by the conformal model of the action of $f$ on $U$ as in the previous steps. We note that this step is only necessary for Fatou components $U$ containing a critical point.

The above recipe defines a continuous map $\widecheck{S}$ on a subset of the sphere and an invariant Beltrami coefficient on the union of the periodic Fatou components of $f_0$. We use $\widecheck{S}$ to spread this Beltrami coefficient out to all the pre-periodic Fatou components of $f_0$, and set it equal to zero on the Julia set of $f_0$. Since $f_0$ is subhyperbolic, one can now argue as in \cite[Lemma~7.1]{LMMN} to conclude that this invariant Beltrami coefficient $\mu$ satiesfies the David condition and that the conjugate of $\widecheck{S}$ by the corresponding integrating map $\Psi$ is antiholomorphic. We denote the conjugated map $\Psi\circ\widecheck{S}\circ \Psi^{-1}$ by $S$.

By construction, $S$ is defined on the complement of the simply connected domain $\Psi(\tau_{f_0}(\xi^{-1}(\Int{\Pi})))$, and fixes the boundary of its domain of definition pointwise. Thus, $S$ is a Schwarz reflection map whose escaping and non-escaping sets are given by $\Psi(\mathcal{B}_\infty(f_0)$ and $\Psi(K(f_0))$, respectively. The tiling set dynamics of $S$ is conformally conjugate to $\pmb{\cN}_d$ via $\xi\circ\tau_{f_0}^{-1}\circ\Psi^{-1}$; i.e., $S\in\mathscr{S}_{\pmb{\cN}_d,gf}$. It also follows from the construction of $S$ that its non-escaping set dynamics is topologically conjugate (conformally on the interior) to the filled Julia set dynamics of $f$. It is now easily checked that $S$ is a conformal mating of $f$ and $\pmb{\cN}_d$.

We also note that by \cite[Theorem~2.12]{LMMN} and the fact that $\mathcal{B}_\infty(f_0)$ is  John domain, we have that the limit set of $S$ is conformally removable.

We claim that such an $S\in\mathscr{S}_{\pmb{\cN}_d,gf}$ is unique. Indeed, if $S_1$ were another conformal mating of $f$ and $\pmb{\cN}_d$, then the tiling set dynamics and non-escaping set dynamics of $S$ and $S_1$ are conformally conjugate. Further, these conjugacies match continuously along the limit sets to yield a topological conjugacy between $S$ and $S_1$ that is conformal away from $\Lambda(S)$. By conformal removability of $\Lambda(S)$, this conjugacy is a M{\"o}bius map; i.e., $S$ and $S_1$ are M{\"o}bius conjugate. The normalization of the maps in $\mathscr{S}_{\pmb{\cN}_d}$ now implies that $S=S_1$. 
Alternatively, one can employ classical pullback arguments to deduce uniqueness of the map $S$ (see \cite[\S 8.2]{LLMM2} for an implementation of this principle to prove rigidity of geometrically finite Schwarz reflections in the C\&C family).

Thus, we have a well-defined map $\Phi:\mathcal{C}^-_{d,gf}\longrightarrow\mathscr{S}_{\pmb{\cN}_d,gf}$ that sends $f$ to the unique conformal mating $S$ of $f$ and $\pmb{\cN}_d$.
\smallskip

\noindent\textbf{Injectivity of $\Phi$.} Suppose that $\Phi(f)=\Phi(f_1)=S$. Then, the basin of infinity dynamics and filled Julia set dynamics of $f$ and $f_1$ are conformally conjugate. Further, these conjugacies match continuously along the Julia sets to yield a topological conjugacy between $f$ and $f_1$ that is conformal away from $\mathcal{J}(f)$. By conformal removability of $\mathcal{J}(f)$ (see \cite[Theorem~9.2]{LMMN}), this conjugacy is a M{\"o}bius map. (Equivalently, injectivity of $\Phi$ follows from well-known results about rigidity of geometrically finite polynomials, cf. \cite{McM,SulMc}.) By the normalization of B{\"o}ttcher coordinates for maps in $\mathcal{C}^-_d$, this M{\"o}bius conjugacy between the monic, centered anti-poynomials $f, f_1$ is tangent to the identity at $\infty$. Hence, the M{\"o}bius conjugacy must be the identity map; i.e., $f=f_1$.
\smallskip

\noindent\textbf{Surjectivity of $\Phi$.} Let $S\in\mathscr{S}_{\pmb{\cN}_d,gf}$ and $\lambda(S)$ be the rational lamination of $S$. By Proposition~\ref{geom_finite_laminations_prop} and the orientation-reversing version of \cite[Theorem~9.6]{LMMN}, there exists a postcritically finite anti-polynomial $f_0\in\mathcal{C}^-_d$ with $\widehat{\lambda(f_0)}= (\pmb{\mathcal{E}}_d)_{\ast}(\widehat{\lambda(S))}$. Having $f_0$ at our disposal, we can now produce a geometrically finite anti-polynomial $f\in\mathcal{C}^-_d$ by performing quasiconformal/David surgeries on the bounded Fatou components of $f_0$ (as in the first step of the proof) so that the filled Julia set dynamics of $f$ is topologically conjugate (conformally on the interior) to the non-escaping set dynamics of $S$. Evidently, the map $f$ is the desired preimage of $S$ under $\Phi$.
\end{proof}




\section{Strata of the connectedness locus $\mathscr{S}_{\pmb{\cN}_d}$}

\subsection{Fixed ray lamination and a coarse partition}\label{partition_sec}

In this section, we will explain that the connectedness locus of the space of degree $d$ regular polygonal Schwarz reflections admits a finite partition.
This coarse partition gives a natural stratification of the connectedness locus.

\subsection*{Fixed ray lamination}

The Nielsen map $\pmb{\cN}_d:\mathbb{S}^1\to\mathbb{S}^1$ has exactly $d+1$ fixed points $0,\cdots,\frac{d}{d+1}$. We denote the set of these fixed points by $\mathrm{Fix}(\pmb{\cN}_d)$.


We define a {\em fixed ray lamination} as a geodesic lamination $\mathcal{L} \subset \D$ so that
\begin{itemize}
\item each leaf $\ell = (a,b)$ of $\mathcal{L}$ has end points $a, b \in \mathrm{Fix}(\pmb{\cN}_d) \subset \T \cong \partial \D$;
\item two leaves in $\mathcal{L}$ have disjoint closures in $\overline{\D}$.
\end{itemize}

Given a fixed ray lamination $\mathcal{L}$, we call each complementary component $G \subset \D\setminus \mathcal{L}$ a {\em gap}.
Let $G$ be a gap. Then $\partial G \cap \T$ has length equal to $k \cdot \frac{1}{d+1}$ for some multiple $k$.
We shall call this number $k$ the {\em degree} of the gap, and is denoted by $\deg(G)$.
From the definition, we have
$$
d+1 = \sum_{G} \deg(G),
$$
where the sum is over all gaps of $\mathcal{L}$.

Let us denote the number of fixed points of $\pmb{\cN}_d$ in $\Int(\partial G \cap \T)$ by $\cusp(G)$, and call it the {\em cusp number} of the gap.
We also denote the number of leaves on $\partial G$ by $\tan(G)$, and call it the {\em tangent number} of the gap.
An easy computation shows that
$$
\deg(G) = \cusp(G) + \tan(G).
$$

Let $G_1, G_2$ be two gaps.
We say that they are {\em adjacent} if $\partial G_1 \cap \partial G_2 \neq \emptyset$.
Note that if two gaps are adjacent, then $\partial G_1 \cap \partial G_2$ is a leaf of $\mathcal{L}$.
It is easy to see that there are exactly $\tan(G)$ number of gaps adjacent to $G$.

\subsection*{Fixed ray laminations associated with quadrature domains}

Recall that a polygonal Schwarz reflection map $S\in\mathscr{S}_{\pmb{\cN}_d}$ (normalized as in Section~\ref{subsec:bdp}) induces the action of the Nielsen map $\pmb{\cN}_d$ on the ideal boundary $I(T^\infty(\sigma))\cong \R/\Z$ of the escaping set, and hence has exactly $d+1$ fixed points on its ideal boundary.

Let $\mathcal{L}$ be a fixed ray lamination.
We say it is associated with a marked degree $d$ polygonal Schwarz reflection $S$ if
\begin{itemize}
\item $\ell = (a, b)$ is a leaf of $\mathcal{L}$ if and only $a, b \in I(T^\infty(S))$ correspond to the same point on $\partial T^\infty(S)$.
\end{itemize}

Recall that $\mathscr{S}_{\pmb{\cN}_d}$ is the connectedness locus of degree $d$ regular polygonal Schwarz reflections. As mentioned in Subsection~\ref{subsec:bdp}, the domain of a Schwarz reflection $S\in\mathscr{S}_{\pmb{\cN}_d}$ is a pinched polygon, and each cut-point of this pinched polygon is the touching point of precisely two Jordan quadrature domains defining $S$. 
Thus, we have a coarse partition
$$
\mathscr{S}_{\pmb{\cN}_d} = \bigcup_{\mathcal{L}} \mathscr{S}_{\pmb{\cN}_d, \mathcal{L}},
$$
where $\mathscr{S}_{\pmb{\cN}_d, \mathcal{L}}$ consists of all marked degree $d$ polygonal Schwarz reflections with connected limit set and fixed ray lamination $\mathcal{L}$, and the union is over all possible fixed ray laminations.

The following theorem gives the topological structure of the quadrature domains for $S \in \mathscr{S}_{\pmb{\cN}_d, \mathcal{L}}$.
\begin{theorem}[Topological structure of the quadrature domain]
Let $\mathcal{L}$ be a fixed ray lamination. Let $S : \overline{\cD} \longrightarrow \hat\C \in \mathscr{S}_{\pmb{\cN}_d, \mathcal{L}}$.
Then 
\begin{enumerate}[label*=\arabic*)]
\item the components of $\cD$ are in one-to-one correspondence with the gaps of $\mathcal{L}$:
$$
\cD = \bigcup_{G} \Omega_G;
$$
\item two quadrature domains $\Omega_G, \Omega_{G'}$ share at most one boundary point; moreover, they share a boundary point if and only if $G, G'$ are adjacent;
\item the touching point of two quadrature domains $\Omega_G, \Omega_{G'}$ is a non-singular point for both $\partial\Omega_G$ and $\partial\Omega_{G'}$ (i.e., it is a double point of~$\partial\cD$);
\item  the quadrature domain $\Omega_G$ has $\cusp(G)$ number of cusps on its boundary; and
\item the uniformizing rational map of the quadrature domain $\Omega_G$ has global degree $\deg(G)$.
\end{enumerate}
\end{theorem}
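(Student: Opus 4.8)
The plan is to push every piece of combinatorial data recorded by $\mathcal{L}$ through the conformal conjugacy $\psi_S\colon\D\to T^\infty(S)$ carrying $\pmb{\cN}_d$ to $S$ (Proposition~\ref{conn_locus_prop}(3)), normalized as in Section~\ref{subsec:bdp} so that $\psi_S(0)=\infty$ and the ideal vertices of $\Pi$ are sent to the ideal vertices of the fundamental tile $T^0(S)$. The basic input is the landing of fixed rays. By Proposition~\ref{per_rays_land}(2), the $d+1$ dynamical rays of $S$ at the angles of $\mathrm{Fix}(\pmb{\cN}_d)$ land exactly at the singular set $\mathfrak{S}$, and every point of $\mathfrak{S}$ receives at least one of them. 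Since $T^0(S)$ is homeomorphic to an ideal $(d+1)$-gon, each of its $d+1$ ideal vertices is the landing point of precisely one fixed ray, so the number of fixed rays landing at $w\in\mathfrak{S}$ equals the number of ideal vertices of $T^0(S)$ accessing $w$. At a cusp the droplet has a single wedge, accessed by one ideal vertex; at a double point two quadrature domains touch and the droplet is pinched into two wedges, accessed by two ideal vertices. Hence exactly one fixed ray lands at each cusp and exactly two co-land at each double point, which gives a bijection between the leaves of $\mathcal{L}$ and the double points of $\partial\cD$, and a bijection between the fixed points in $\Int(\partial G\cap\T)$ and the cusps of $\partial\cD$.

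For parts 1) and 2) I would cut the sphere along the fixed rays. These $d+1$ rays all emanate from $\psi_S(0)=\infty$ and divide a neighbourhood of $\infty$ into $d+1$ sectors. A ray landing at a cusp has both of its adjacent sectors lying in the same quadrature domain (the cusp is an inward boundary point of a single $\Omega_j$), so such a ray does not separate domains; a pair of rays $R_a,R_b$ co-landing at a double point $p_\ell$ forms a Jordan curve $\gamma_\ell:=R_a\cup\{\infty\}\cup\{p_\ell\}\cup R_b$ that locally separates the two domains touching at $p_\ell$. Cutting $\widehat{\C}$ along the family $\{\gamma_\ell\}$ — one loop per leaf of $\mathcal{L}$, i.e. per edge of the contact graph of $\cD$ — therefore produces exactly one region $\mathcal{R}_G$ per gap $G$ of $\mathcal{L}$; that this is the correct count $k=(\#\text{leaves})+1$ uses that the contact graph is a tree, so the loops (all based at $\infty$) are unlinked and each genuinely disconnects. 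I would then set $\Omega_G$ to be the unique component of $\cD$ lying in $\mathcal{R}_G$, which establishes part 1) and shows that $\Omega_G,\Omega_{G'}$ touch if and only if a common boundary leaf of $G$ and $G'$ supplies a loop through their shared double point, i.e. if and only if $G,G'$ are adjacent (part 2)). That two domains can share at most one point follows from the simple connectivity of $\overline{\cD}$: a second touching point would separate off a complementary region bounded by two boundary arcs, contradicting the tree structure of the contact graph.

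Parts 3) and 4) are then short. Near a double point $w=\partial\Omega_G\cap\partial\Omega_{G'}$, the second iterate $S^{\circ2}$ is given by the two mutually inverse tangent-to-identity parabolic germs $S_{G'}\circ S_G$ and $S_G\circ S_{G'}$ (where $S_G,S_{G'}$ are the Schwarz reflections of $\Omega_G,\Omega_{G'}$), exactly as in the proof of Proposition~\ref{fatou_crit_prop}(3); this normal form forces each of $\partial\Omega_G$ and $\partial\Omega_{G'}$ to be a smooth real-analytic arc through $w$, so $w$ is a non-singular point of each individual boundary and a double point of $\partial\cD$, giving part 3). Part 4) is immediate from the first paragraph: the cusps on $\partial\Omega_G$ are exactly the landing points of the fixed rays whose angles lie in $\Int(\partial G\cap\T)$, of which there are $\cusp(G)$.

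For part 5) I would count the rank-$1$ tiles contained in $\Omega_G$ in two ways. Combinatorially, the $d+1$ components of $S^{-1}(T^0(S))$ correspond under $\psi_S$ to the reflected copies $\rho_i(\Pi)$, hence to the $d+1$ unit arcs $\arc{\omega^{i-1},\omega^i}$ of $\T$; the ones lying in $\partial G\cap\T$ number $\deg(G)$, and the tiles attached to them are precisely the rank-$1$ tiles inside $\Omega_G$. Dynamically, by Proposition~\ref{simp_conn_quad_prop} the Schwarz reflection $S_G$ of $\Omega_G$ maps $S_G^{-1}(\Int(\Omega_G^c))$ onto $\Int(\Omega_G^c)$ as a branched cover of degree $\deg(\phi_G)$; since the polygonal hypothesis forbids critical values of $S$ in $T^0(S)$ and $\Int T^0(S)$ is a connected, simply connected subset of $\Int(\Omega_G^c)$, its preimage inside $\Omega_G$ has exactly $\deg(\phi_G)$ components, one per rank-$1$ tile. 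Equating the two counts yields $\deg(\phi_G)=\deg(G)$. The step I expect to be the main obstacle is the global bookkeeping of the second paragraph — proving rigorously that the loops $\gamma_\ell$ partition $\widehat{\C}$ into exactly the gap-indexed regions with a single quadrature domain in each, rather than merely matching the numerology $\sum_G\deg(G)=d+1$; this is where the tree structure of the contact graph and the unlinkedness of the leaves must be used carefully. The remaining steps are local normal forms or degree counts.
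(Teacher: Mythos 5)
Your argument for parts 1) and 2) has a genuine gap: the loops $\gamma_\ell$ are not disjoint from $\cD$, so cutting along them cannot produce regions each containing a single quadrature domain. By Definitions~\ref{Gd_rays} and~\ref{dyn_ray_schwarz}, a dynamical ray of $S$ is the $\psi_S$-image of a concatenation of geodesic arcs running through tiles of every rank, and every tile of rank at least $1$ lies in $\overline{\cD}$ (the tiling set is not the complement of $\overline{\cD}$: beyond $T^0(S)$ it fills up the quadrature domains). Whichever of the two rays at each fixed angle you choose, a ray landing at the double point $p_\ell$ approaches it through the tiles of one of the two components touching there, hence passes through that component's interior; so each loop $\gamma_\ell$ cuts at least one $\Omega_G$ into pieces, and the complementary regions of $\bigcup_\ell\gamma_\ell$ do not correspond to components of $\cD$. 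This is exactly the step you flagged as the main obstacle, and it fails as stated. The paper avoids rays altogether: it writes $\cD=\widehat{\C}\setminus\psi_S(\overline{\Pi})$, notes that $\psi_S$ is injective on $\D$ so the only identifications on $\overline{\Pi}$ are between ideal vertices of $\Pi$, and that these identifications are, by definition of the lamination associated with $S$, recorded by the leaves of $\cL$; the component and touching structure of $\cD$ is then read off from the pinched-polygon structure of $\psi_S(\overline{\Pi})$. If you want to keep a cutting argument, replace the dynamical rays by the $\psi_S$-images of the radial segments $\{re^{2\pi i\theta}:0\le r<1\}$ at the fixed angles $\theta$: these lie in the droplet $T=\widehat{\C}\setminus\cD$ (which contains $\infty$ and all of $\mathfrak{S}$) and land at the singular points by the proof of Proposition~\ref{per_rays_land}(2), so the loops they form are honestly disjoint from $\cD$ and do separate the components.

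Part 3) as you argue it is circular. The assertion that $S^{\circ 2}$ near the touching point $w$ is given by the mutually inverse tangent-to-identity parabolic germs $S_{G'}\circ S_G$ and $S_G\circ S_{G'}$ presupposes that the Schwarz reflections $S_G, S_{G'}$ extend anti-conformally across $w$, i.e.\ that $w$ is a non-singular point of both boundaries --- which is the very thing part 3) asserts. If $w$ were a cusp of, say, $\partial\Omega_G$, then $S_G$ would not extend anti-conformally there, and $S^{\circ 2}$ would instead have the fractional-power asymptotics of Equation~\eqref{asymp_eqn_1}; this is precisely the case division in the proof of Proposition~\ref{fatou_crit_prop}(3), and it cannot be run backwards. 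The paper's proof of 3) uses a different, genuinely geometric input: each $\Omega_G$ is a Jordan quadrature domain, so its only boundary singularities are conformal cusps, which are critical values of the uniformizing rational map $\phi_G$ with critical points on $\mathbb{S}^1$; injectivity of $\phi_G$ on $\D$ forces such cusps to be inward pointing (the domain fills out a full angle at the cusp, the complement being a thin wedge), and two disjoint Jordan domains cannot both do this at a common boundary point. Some substitute for this input is required.

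Parts 4) and 5) are fine once 1)--3) are in place: 4) is essentially the paper's argument, and your two-way count of rank-one tiles in 5) is a correct alternative to the paper's computation, which instead derives the degree of $S\vert_{\Omega_G}$ over $\Omega_G$ from the circle dynamics of $\pmb{\cN}_d$ on $\partial G\cap\mathbb{S}^1$; both reduce to Proposition~\ref{simp_conn_quad_prop}.
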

\begin{proof} 
Let $\psi:\D\to T^\infty(S)$ be a conformal conjugacy between $\pmb{\cN}_d$ and $S$. Also recall that $\Pi$ denotes the ideal $(d+1)-$gon in $\D$ with ideal vertices at the $(d+1)-$st roots of unity.
By definition, $\cD=\widehat{\C}\setminus\psi(\overline{\Pi})$, where the closure is taken in $\overline{\D}$. By injectivity of $\psi$ on $\D$, the only points on $\overline{\Pi}$ that may be identified by $\psi$ are the $(d+1)-$st roots of unity. By design, each such identification is recorded by a leaf of the fixed ray lamination of $\mathcal{L}$. 

1) and 2) It follows from the above discussion that the cut-points of the pinched polygon $\overline{\cD}$ correspond bijectively to the leaves of $\cL$ and hence the components of $\cD$ correspond bijectively to the gaps of $\cL$. The touching structure of the components of $\cD$ also follow from above.

3) As each $\Omega_G$ is a Jordan quadrature domain, the only singularities on $\partial\Omega_G$ are conformal cusps. Let $\phi_G:\D\to\Omega_G$ be a uniformizing rational map. Then, each conformal cusp on $\partial\Omega_G$ is a critical value of $\phi_G$ with an associated critical point on $\mathbb{S}^1$. By the injectivity of $\phi_G$ on $\D$, such a cusp is an inward pointing cusp for $\Omega_G$. This shows that the touching point of $\Omega_G$ and $\Omega_{G'}$ cannot be a conformal cusp for the boundary of either domain. The result follows.

4) Note that $\pmb{\cN}_d$ does not admit an anti-conformal extension in a neighborhood of any ideal vertex of $\Pi$, while it does admit such extensions around all other boundary points of $\Pi$. It follows that $S$ does not admit an anti-conformal extension in a neighborhood of any point of $\psi(\omega^j)$ (where $\omega=\exp{(\frac{2\pi i}{d+1})}$, and $j\in\{0,\cdots,d\}$), but it does extend anti-conformally around other boundary points of $\partial\cD$. The only obstructions for an anti-conformal extension of $S$ across a boundary point of $\cD$ is a cusp or a double point. By part 3) of this theorem, the pairs of end-points of the leaves of $\cL$ are in one-to-one correspondence with the double points on $\partial\cD$. Hence, the other ideal vertices of $\Pi$ correspond bijectively to the cusps on $\partial\cD$. The result is now a consequence of the above analysis and part (1) of this theorem.

5) The map $\pmb{\cN}_d$ carries each arc of $\mathbb{S}^1\setminus\mathrm{Fix}(\pmb{\cN}_d)$ to the complement of its closure. Hence, $\pmb{\cN}_d(\partial G\cap\mathbb{S}^1)$ covers $\partial G\cap\mathbb{S}^1$ exactly $(\deg(G)-1)$ times. It follows that $\Lambda(S)\cap\Omega_G$ covers itself $(\deg(G)-1)$ times under the map $S\vert_{\Omega_G}$. Since the limit set is completely invariant, we conclude that $S\vert_{\Omega_G}:\left(S\vert_{\Omega_G}\right)^{-1}(\Omega_G)\to\Omega_G$ is a degree $(\deg(G)-1)$ branched covering. This observation and Proposition~\ref{simp_conn_quad_prop} together imply that the degree of the uniformizing rational map of $\Omega_G$ is equal to $\deg(G)$.
\end{proof}


\subsection*{Coarse partitions in low degree}
There are four fixed ray laminations in degree $2$: the trivial lamination $\cL_0$, and the three symmetric laminations $\cL_1:=\{\{1/3,2/3\}\}$, $\cL_2:=\{\{0,1/3\}\}$, $\cL_3:=\{\{0,2/3\}\}$. The corresponding space $\mathscr{S}_{\pmb{\cN}_2, \mathcal{L}_1}$ (respectively, $\mathscr{S}_{\pmb{\cN}_2, \mathcal{L}_0}$) is given by Schwarz reflections in a fixed cardioid and its circumcircles (respectively, by Schwarz reflections in deltoid-like curves). These families were studied in \cite{LLMM1,LLMM2}.

\subsection{Compactness of connectedness locus $\mathscr{S}_{\pmb{\cN}_d}$}\label{compact_sec}

Recall that $\mathscr{S}_{\pmb{\cN}_d}$ stands for the space of degree $d$ normalized Schwarz reflection maps whose tiling set dynamics are conformally conjugate to the Nielsen map $\pmb{\cN}_d$ of the regular ideal polygon reflection group $\pmb{G}_d$. The domain of definition of such a Schwarz reflection is the closure of a disjoint collection of simply connected quadrature domains $\Omega_1,\cdots, \Omega_k$ such that
$\cup_{i=1}^k \overline{\Omega_i}$ is connected and simply connected.

Since there are only finitely many fixed ray laminations, we can assume (after possibly passing to a subsequence) that any sequence in $\mathscr{S}_{\pmb{\cN}_d}$ entirely lies in $\mathscr{S}_{\pmb{\cN}_d, \cL}$, where $\cL$ is a particular fixed ray lamination. We enumerate the gaps of $\cL$ as $\mathcal{G}_1,\cdots,\mathcal{G}_k$.
For maps $S:\overline{\cD}\to\widehat{\C}$ in $\mathscr{S}_{\pmb{\cN}_d, \cL}$, we denote the components of $\cD$ as $\Omega_1,\cdots,\Omega_k$ such that $\Omega_i$ corresponds to $\mathcal{G}_i$. 

We topologize $\mathscr{S}_{\pmb{\cN}_d}$ as follows.
\begin{defn}\label{topo_def_1}
We say that a sequence $\displaystyle\{S_n:\overline{\cD^n}=\bigcup_{r=1}^k\overline{\Omega_{r}^{n}}\to\widehat{\C}\}\subset\mathscr{S}_{\pmb{\cN}_d, \cL}$ converges to $\displaystyle S:\overline{\cD}=\bigcup_{r=1}^{k}\bigcup_{j=1}^{l_r}\overline{\Omega_{r,j}}\to\widehat{\C}$ if 
\begin{enumerate}
\item $\{\Omega_{r,j}:\ j\in\{1,\cdots,l_r\} \}$ is the collection of all Carath{\'e}odory limits of the sequence of domains $\{\Omega_{r}^{n}\}_n$, $r\in\{1,\cdots,k\}$, and 

\item for $n$ large enough, the antiholomorphic maps $S_n$ converge uniformly to $S$ on compact subsets of $\cD$.
\end{enumerate}
\end{defn}

\begin{remark}
For each Schwarz reflection $\displaystyle S_n:\overline{\cD^n}=\bigcup_{r=1}^k\overline{\Omega_{r}^{n}}\to\widehat{\C}$, there are $k$ Riemann uniformization maps $\phi_{n,r}: \D \longrightarrow \Omega_{r}^{n}, r = 1,..., k$ which extend as rational maps of $\widehat\C$ (see \S \ref{subsec:qmd}).
For $S:\overline{\cD}=\bigcup_{r=1}^{k}\bigcup_{j=1}^{l_r}\overline{\Omega_{r,j}}\to\widehat{\C}$, there are $\sum_{r=1}^k l_r$ Riemann uniformization maps $\phi_{r, j}: \D \longrightarrow \Omega_{r, j}$ which extend as rationals maps of $\widehat\C$.
It follows from the definition of Carath{\'e}odory limits that $S_n \to S$ if and only if there exist $M_{n, r, j} \in \Aut(\D)$ so that
$$
\phi_{n,r} \circ M_{n, r, j} \to \phi_{r, j}
$$
compactly away from finitely many points in $\widehat\C$ (where each map is viewed as maps on $\widehat\C$).
\end{remark}

\begin{theorem}\label{thm:compact}
The connectedness locus $\mathscr{S}_{\pmb{\cN}_d}$ is compact.
\end{theorem}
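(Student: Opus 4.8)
The plan is to prove sequential compactness directly: given any sequence in $\mathscr{S}_{\pmb{\cN}_d}$, I would extract a subsequence converging, in the sense of Definition~\ref{topo_def_1}, to a map that still lies in $\mathscr{S}_{\pmb{\cN}_d}$. Since there are only finitely many fixed ray laminations, I first pass to a subsequence $\{S_n\}$ contained in a single stratum $\mathscr{S}_{\pmb{\cN}_d,\cL}$, with gaps $\mathcal{G}_1,\dots,\mathcal{G}_k$ and corresponding quadrature domains $\Omega^n_1,\dots,\Omega^n_k$. The heart of the argument is a compactness statement for the conformal uniformizations of the escaping sets, which is where the normalization of $\mathscr{S}_d$ becomes decisive.

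For each $n$, let $\psi_n:=\psi_{S_n}:\D\to T^\infty(S_n)$ be the normalized conformal isomorphism conjugating $\pmb{\cN}_d$ to $S_n$ (cf. Proposition~\ref{conn_locus_prop} and Definition~\ref{dyn_ray_schwarz}). By our normalization the center $0$ of $\Pi$ is sent to $\infty$ with Laurent expansion $\psi_n(z)=1/z+O(z)$. Writing $\zeta=1/z$, each $\psi_n$ becomes a univalent map $\zeta\mapsto\zeta+\sum_{m\geq1}c^{(n)}_m\zeta^{-m}$ on $\{|\zeta|>1\}$, i.e. a member of the classical class $\Sigma$ of univalent functions on the exterior disc. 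The area theorem gives $\sum_{m}m\,|c^{(n)}_m|^2\leq1$ uniformly in $n$, so $\{\psi_n\}$ is a normal family on $\D\setminus\{0\}$. Passing to a further subsequence, $\psi_n\to\psi_\infty$ locally uniformly; because the leading coefficient is pinned to $1$ the limit is non-constant, hence univalent by Hurwitz's theorem, and retains the normalization $\psi_\infty(z)=1/z+O(z)$. The point of this step is that the normalization fixes the scale and position of the whole configuration and forces the limit uniformization to be genuinely univalent rather than degenerate.

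Next I would transfer this to the quadrature domains. The convergence $\psi_n\to\psi_\infty$ yields Carath\'eodory convergence of the droplets $T(S_n)=\widehat{\C}\setminus\cD^n$, hence of the individual domains $\Omega^n_r$; a single $\Omega^n_r$ may split into several Carath\'eodory limits $\Omega_{r,j}$, which simply corresponds to the fixed ray lamination refining to some $\cL'\supseteq\cL$ in the limit. As the uniformizing rational maps $\phi_{n,r}$ have fixed degrees $\deg(\mathcal{G}_r)$, after reparametrizing by suitable $M_{n,r,j}\in\Aut(\D)$ they form a normal family whose limits $\phi_{r,j}$ are rational; this is exactly convergence $S_n\to S_\infty$ in the sense of Definition~\ref{topo_def_1}. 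Rationality of the $\phi_{r,j}$ guarantees that the limit domains $\Omega_{r,j}$ are quadrature domains, and uniform convergence of the $S_n$ realizes $S_\infty$ as an anti-meromorphic map on all of $\cD_\infty$ fixing $\partial\cD_\infty$; thus $S_\infty$ is a genuine Schwarz reflection, of degree $d$ since $\psi_\infty$ conjugates the degree $d$ map $\pmb{\cN}_d:\D\setminus\Int\Pi\to\D$ to the escaping-set action of $S_\infty$. Finally $T^0(S_\infty)=\psi_\infty(\Pi)$ is conformally the regular ideal $(d+1)$-gon and carries no critical values (as $\pmb{\cN}_d$ has none near $\Pi$), so $S_\infty\in\mathscr{S}_d$; and since $T^\infty(S_\infty)=\psi_\infty(\D)$ is a univalent image of the disc, it is simply connected, whence $K(S_\infty)$ is connected and $S_\infty\in\mathscr{S}_{\pmb{\cN}_d}$.

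The main obstacle is the second step: securing a non-degenerate, univalent limit. Everything hinges on the fact that membership in $\mathscr{S}_d$ normalizes the escaping-set uniformization to have leading Laurent coefficient $1$ at the tile center $\infty$, which is precisely the hypothesis needed to invoke compactness of the class $\Sigma$ via the area theorem. Once this is in hand, the remaining phenomena --- splitting of quadrature domains and refinement of the lamination --- are harmless book-keeping that keep the limit inside $\mathscr{S}_{\pmb{\cN}_d}$ (possibly in a finer stratum $\mathscr{S}_{\pmb{\cN}_d,\cL'}$).
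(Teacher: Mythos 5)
Your overall strategy coincides with the paper's: reduce to a single stratum $\mathscr{S}_{\pmb{\cN}_d,\cL}$, extract a locally uniform limit $\psi_\infty$ of the normalized uniformizations $\psi_n$ (your area-theorem argument for normality and univalence of the limit is a legitimate substitute for the paper's argument, which instead combines normality of normalized univalent maps with the Koebe one-quarter theorem; note that Koebe is also used there to get the uniform bound $\overline{\cD^n}\subset B(0,R)$, which is needed again later), and then pass to Carath\'eodory limits of the quadrature domains, using kernel convergence and rationality of the uniformizing maps to conclude that the limit domains are again quadrature domains.

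However, there is a genuine gap exactly where you declare the remaining phenomena to be ``harmless book-keeping''. The crux of the paper's proof is to show that only \emph{finitely many} Carath\'eodory limits can occur, equivalently that $\psi_\infty$ extends continuously to $\overline{\Pi}$ and identifies at most the ideal vertices of $\Pi$. A priori, the images $\psi_\infty(\partial\Pi\cap\D)$ of the sides of $\Pi$ are merely bi-infinite curves in $\psi_\infty(\D)$ whose ends could accumulate on nontrivial continua in $\partial\psi_\infty(\D)$ without landing; in that case $\psi_\infty(\overline{\Pi})$ is not a compact set with finitely many complementary components, the limit object is not the Schwarz reflection of a finite quadrature multi-domain, and one cannot even formulate convergence $S_n\to S_\infty$ in the sense of Definition~\ref{topo_def_1}. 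The paper rules this out by observing that $\psi_n(\partial\Pi)=\partial\cD^n$ is a finite union of real-algebraic curves of uniformly bounded degree inside the fixed compact set $\overline{B}(0,R)$, so the limiting boundary curves are again controlled by algebraic curves of bounded degree and their ends land at unique points of $\partial\psi_\infty(\D)$. This landing statement is also what underwrites several of your subsequent claims: that $T^0(S_\infty)=\psi_\infty(\Pi)$ is the fundamental tile of the limit, that $S_\infty$ fixes $\partial\cD_\infty$ pointwise, and that the rational maps $\phi_{n,r}$ admit convergent reparametrizations by $M_{n,r,j}\in\Aut(\D)$ (which you assert rather than prove). Without this algebraicity/landing argument, or some replacement for it, your proof does not close.
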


\begin{proof}
Let $\{S_n:\overline{\cD^n}=\cup_{r=1}^k\overline{\Omega_{r}^{n}}\to\widehat{\C}\}\subset\mathscr{S}_{\pmb{\cN}_d, \cL}$. By definition, there exist conformal maps
$$
\psi_n:\D\rightarrow T^\infty(S_n)
$$
that conjugate $\pmb{\cN}_d$ to $S$ (wherever defined). We can normalize $\psi_n$, possibly after conjugating $S_n$ by M{\"o}bius maps, such that $\psi_n(0)=\infty$, and $\psi_n(z)=\frac1z+O(z)$ near the origin. 

Note that $\psi_n(\Pi)=T^0(S_n)$. Since $\Pi$ contains a round disk around the origin, it follows from the Koebe one-quarter theorem that each $\psi_n(\Pi)$ contains a definite round disk centered at $\infty$. In other words, $\overline{\cD_n}\subset B(0,R)$, for some fixed $R>0$. 
Also note that the maps $\psi_n\vert_{\D}$ form a normal family. After possibly passing to a subsequence, we may assume that $\psi_n$ converges normally to a conformal map $\psi_\infty$ on $\D$. Let $B_s$ be an open ball compactly contained in $\rho_s(\Pi)$ centered at $\rho_s(0)$, for $s\in\{1,\dots,d+1\}$. Normal convergence of $\psi_n$ to $\psi_\infty$ implies that for $n$ large, the conformal disks $\Omega_{r}^{n}$ contain some fixed open set $\psi_\infty(B_s)$ (see Figure~\ref{compactness_fig}). It follows that for each $r\in\{1,\cdots,k\}$, the sequence of conformal disks $\{\Omega_{r}^{n}\}_n$ has at least one non-trivial Carath{\'e}odory limit. We will now argue that there are finitely many such Carath{\'e}odory limits.
\begin{figure}[ht]
\captionsetup{width=0.96\linewidth}
\begin{tikzpicture}
\node[anchor=south west,inner sep=0] at (0,0) {\includegraphics[width=0.975\textwidth]{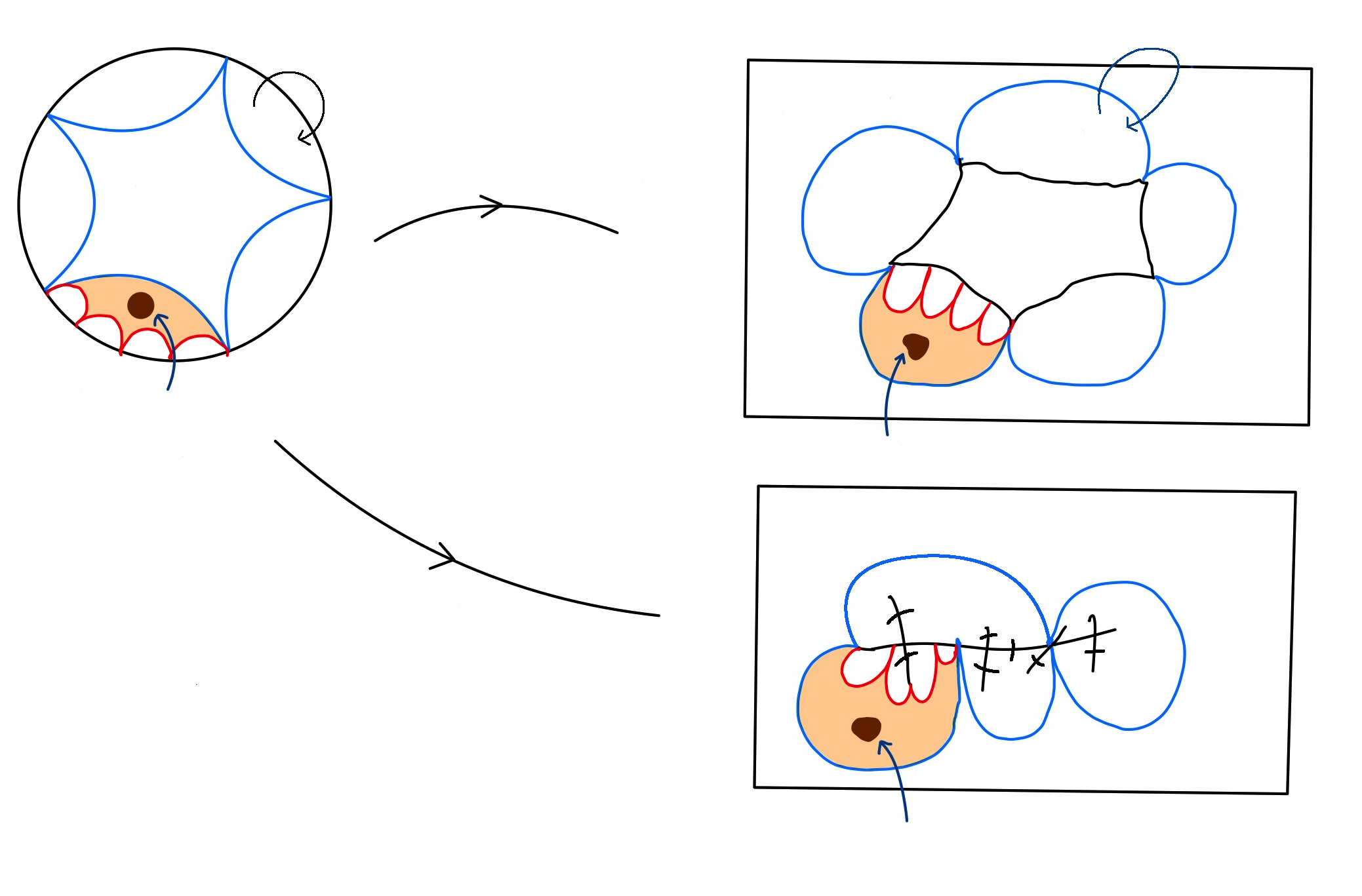}};
\node at (1.5,6.24) {\begin{huge}$\Pi$\end{huge}};
\node at (0.6,4.8) {\begin{Large}$\D$\end{Large}};
\node at (1.6,4.28) {\begin{large}$B_s$\end{large}};
\node at (3.2,7.6) {\begin{large}$\pmb{\cN}_d$\end{large}};
\node at (4,2.64) {\begin{Large}$\psi_\infty$\end{Large}};
\node at (4.4,5.7) {\begin{large}$\psi_n(0)=\infty$\end{large}};
\node at (4.75,6.56) {$\psi_n:\D\to T^\infty(S_n)$};
\node at (8.1,0.4) {\begin{large}$\psi_\infty(B_S)$\end{large}};
\node at (10.8,3.2) {\begin{normalsize}$\psi_\infty(\Pi)$\end{normalsize}};
\node at (10.5,8) {\begin{large}$S_n$\end{large}};
\node at (7.42,7.2) {\begin{normalsize}$\psi_n(\Pi)$\end{normalsize}};
\node at (8.16,4) {\begin{normalsize}$\psi_n(B_s)$\end{normalsize}};
\node at (9.28,5.92) {\begin{large}$K(S_n)$\end{large}};
\end{tikzpicture}
\caption{Illustrated is the proof of Theorem~\ref{thm:compact}.}
\label{compactness_fig}
\end{figure}

By conformality of $\psi_\infty$, the set $\psi_\infty(\partial\Pi\cap\D)$ is a union of $d+1$ disjoint bi-infinite geodesics in the hyperbolic metric of $\psi_\infty(\D)$. Since each $\psi_n(\partial\Pi)=\partial\cD_n$ is a finite union of real-algebraic curves of uniformly bounded degree contained in the fixed compact set $\overline{B}(0,R)$, one concludes that the ends of the above bi-infinite geodesics have unique limit points on $\partial\psi_\infty(\D)$. Thus, $\psi_\infty$ extends continuously to $\partial\Pi$ and $\psi_\infty(\overline{\Pi})$ has only finitely many identifications on its boundary (i.e., it is a topological polygon possibly with some vertices identified). Specifically, only the ideal vertices of $\Pi$ may be identified under $\psi_\infty$.

It follows from the above discussion that $\widehat{\C}\setminus\psi_\infty(\overline{\Pi})$ has finitely many (at least one) components, and these are precisely the Carath{\'e}odory limits of the sequences of domains $\{\Omega_{r}^{n}\}_n$, $r\in\{1,\cdots,k\}$ (with all possible choices of base points). We denote these components as $\Omega_1,\cdots,\Omega_l$.
As the Riemann uniformizations of $\Omega_{r}^{n}$ are rational maps, it follows from the Carath{\'e}odory kernel convergence theorem that the Riemann uniformization of each $\Omega_j$, $j\in\{1,\cdots,l\}$, is also a rational map. Thus, each such limit is a simply connected quadrature domain.

The description of Schwarz reflection maps given in Proposition~\ref{simp_conn_quad_prop} now implies that for $n$ large enough, $S_n$ converges to the Schwarz reflection map $S$ associated with the quadrature domains $\Omega_1,\cdots,\Omega_l$. We also note that 
$T^0(S)$ is equal to $\psi_\infty(\Pi)$. The equivariance property of $\psi_n$ implies that $\psi_\infty$ conjugates $\pmb{\cN}_d$ to $S$, wherever defined. Hence, the escaping set of $S$ is $\psi_\infty(\D)$, and the dynamics of $S$ on its escaping set is conformally conjugate to $\pmb{\cN}_d$ via $\psi_\infty^{-1}$. Hence, $S:\overline{\cD}\to\widehat{\C}$ lies in $\mathscr{S}_{\pmb{\cN}_d}$ and is a limit point of the sequence $\{S_n:\overline{\cD_{n}}\to\widehat{\C}\}$.
\end{proof}


Since fixed ray laminations are equivalence relations on $\mathbb{S}^1$, there is a natural partial order $\geq$ on them. 

\begin{cor}\label{limit_dominates_cor}
If $\{S_n\}\subset\mathscr{S}_{\pmb{\cN}_d, \cL_1}$ converges to some $S\in\mathscr{S}_{\pmb{\cN}_d, \cL_2}$, then $\cL_2\geq \cL_1$.
\end{cor}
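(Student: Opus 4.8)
The plan is to re-express the convergence $S_n\to S$ through the normalized conformal conjugacies produced in the proof of Theorem~\ref{thm:compact}, and then to show that every identification recorded by $\cL_1$ survives in the limit. Recall that for each $n$ there is a conformal map $\psi_n:\D\to T^\infty(S_n)$ conjugating $\pmb{\cN}_d$ to $S_n$, normalized by $\psi_n(0)=\infty$ and $\psi_n(z)=1/z+O(z)$; after passing to a subsequence $\psi_n\to\psi_\infty$ normally on $\D$, where $\psi_\infty$ is the corresponding conjugacy for the limit $S$. As established there, $\psi_\infty$ extends continuously to $\overline{\Pi}$ and identifies only (some of the) ideal vertices of $\Pi$, and these identifications are exactly the leaves of $\cL_2$. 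Since every $S_n$ lies in $\mathscr{S}_{\pmb{\cN}_d,\cL_1}$, the continuous boundary extension $\widehat{\psi_n}$ identifies precisely the pairs of ideal vertices prescribed by $\cL_1$. Thus the corollary reduces to the claim that for each leaf $\ell=(a,b)$ of $\cL_1$ one has $\widehat{\psi_\infty}(a)=\widehat{\psi_\infty}(b)$, i.e. $\ell$ is a leaf of $\cL_2$.

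Fix such a leaf $\ell=(a,b)$ and regard it as the hyperbolic geodesic of $\D$ joining the ideal vertices $a,b\in\mathrm{Fix}(\pmb{\cN}_d)$, parametrized as $\ell(t)$, $t\in\R$, with $\ell(t)\to a$ as $t\to+\infty$ and $\ell(t)\to b$ as $t\to-\infty$. For every $n$ the identification $\widehat{\psi_n}(a)=\widehat{\psi_n}(b)=:p_n$ holds, so the geodesic image $\psi_n(\ell)$ has both of its ends landing at the single double point $p_n\in\partial\cD^n$. Because every $\overline{\cD^n}$ lies in a fixed ball $\overline{B}(0,R)$ (as in the proof of Theorem~\ref{thm:compact}), the $p_n$ are bounded, and after passing to a further subsequence $p_n\to p_\infty$. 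For each fixed $t$ we have $\psi_n(\ell(t))\to\psi_\infty(\ell(t))$, and letting $t\to+\infty$ (resp. $t\to-\infty$) the continuous extension gives $\psi_\infty(\ell(t))\to\widehat{\psi_\infty}(a)$ (resp. $\widehat{\psi_\infty}(b)$). To conclude $\widehat{\psi_\infty}(a)=\widehat{\psi_\infty}(b)=p_\infty$ it suffices to interchange the two limits, which amounts to the uniform tail estimate
\[
\sup_n\ \diam\,\psi_n\big(\ell[t,\infty)\big)\longrightarrow 0\quad\text{and}\quad \sup_n\ \diam\,\psi_n\big(\ell(-\infty,-t]\big)\longrightarrow 0\qquad(t\to+\infty),
\]
valid since each such tail lands at $p_n$.

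The main obstacle is precisely this uniform control of the tails of $\psi_n(\ell)$ as they approach the identified vertices: interior normal convergence alone is insufficient, because the identification is a boundary phenomenon. I would establish it with the same two ingredients that drive Theorem~\ref{thm:compact}. First, the uniformizing maps of the components of $\cD^n$ are rational of uniformly bounded degree with images in the fixed compact $\overline{B}(0,R)$, so the local conformal geometry near the double point $p_n$ is uniformly controlled --- either by a uniform extremal-length/modulus estimate for the crosscuts $\psi_n(\ell(\pm t))$ separating $p_n$, or by applying the Carath\'eodory kernel theorem to the local uniformizers $\phi_{n,G}\circ M_{n,G,j}$ of the two touching domains as in the Remark following Definition~\ref{topo_def_1}. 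Second, each limiting component contains a fixed round disk $\psi_\infty(B_s)$ and hence does not degenerate, so the touching at $p_n$ cannot open up in the limit. Interchanging the limits then yields $\widehat{\psi_\infty}(a)=\widehat{\psi_\infty}(b)=p_\infty$, so $\ell$ is a leaf of $\cL_2$. As $\ell$ was an arbitrary leaf of $\cL_1$, every $\cL_1$-identification persists, which is exactly $\cL_2\geq\cL_1$.
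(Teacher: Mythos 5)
Your proposal is correct and takes essentially the same route as the paper: the paper's entire proof of Corollary~\ref{limit_dominates_cor} is the single assertion that $\psi_n(a)=\psi_n(b)$ for all $n$ forces $\psi_\infty(a)=\psi_\infty(b)$, read off from the normalized conjugacies $\psi_n\to\psi_\infty$ and the boundary framework established in the proof of Theorem~\ref{thm:compact}, which is precisely the reduction you carry out. Your explicit treatment of the limit interchange via a uniform tail-diameter estimate (with the modulus estimates and the non-degeneration of the limiting components supplying the normalization) spells out the boundary-convergence point that the paper treats as immediate.
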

\begin{proof}
If $\psi_n(a)=\psi_n(b)$ for some $a,b\in\partial\Pi\cap\mathbb{S}^1$ and for all $n\in\N$, then $\psi_\infty(a)=\psi_\infty(b)$.
\end{proof}

\begin{remark}
By using quasi-PCF degeneration studied in \cite{L23}, one can show that the converse of Corollary \ref{limit_dominates_cor} is also true.
Suppose $\cL_2\geq \cL_1$, then there exists a sequence $\{S_n\}\subset\mathscr{S}_{\pmb{\cN}_d, \cL_1}$ that converges to some $S\in\mathscr{S}_{\pmb{\cN}_d, \cL_2}$.
Some explicit examples are constructed in section \S \ref{discont_sec}, which are the key ingredients for proving discontinuity.
\end{remark}

\subsection{Discontinuity and change of signature}\label{discont_sec}

Recall from Section~\ref{geom_fin_polygonal_schwarz_sec} that there exists a natural bijective map $\Phi$ from geometrically finite polynomials $\mathcal{C}^-_{d,gf}$ onto geometrically finite Schwarz reflections $\mathscr{S}_{\pmb{\cN}_d,gf}$.

Let $H_0$ and $H_1$ stand for the main hyperbolic component and the period two hyperbolic component contained in the $(0,\frac{1}{d+1})-$limb of the Multicorn $\mathcal{M}_d$, which is the connectedness locus of unicritical anti-polynomials $f_c(z)=\overline{z}^d+c$. Then, $H_1$ bifurcates from $H_0$ along an arc, and barring a unique double parabolic parameter, all points on this arc are simple parabolic parameters (see \cite[Theorem~3.6]{HS}, \cite[\S 5]{MNS}, and \cite[\S 2.2]{IM21}).

We denote the trivial fixed ray lamination by $\cL_0$, and the fixed ray lamination $\{\{0,\frac{1}{d+1}\}\}$ by $\cL_1$.

\begin{lem}\label{discont_lem_1}
Let $\pmb{c_\infty}$ be a simple parabolic parameter on $\partial H_0\cap\partial H_1$. Then $\Phi$ is discontinuous at $\pmb{c_\infty}$.
\end{lem}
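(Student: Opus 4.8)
The strategy is to locate $\Phi(f_{\pmb{c_\infty}})$ and the nearby images $\Phi(f_c)$ in different strata of the partition $\mathscr{S}_{\pmb{\cN}_d}=\bigcup_{\cL}\mathscr{S}_{\pmb{\cN}_d,\cL}$, and then to show that a sequence approaching $\pmb{c_\infty}$ from inside $H_0$ produces a Schwarz reflection that is genuinely different from $\Phi(f_{\pmb{c_\infty}})$. First I would pin down the strata. For $c$ in the interior of $H_0$ the map $f_c$ is quasiconformally conjugate to $\overline z^d$, so all $d+1$ fixed dynamical rays, at angles $\tfrac{j}{d+1}$, land at $d+1$ distinct repelling fixed points; by the lamination--naturality of $\Phi$ (transported through $\pmb{\mathcal{E}}_d$) together with Proposition~\ref{per_rays_land}, the corresponding fixed rays of $\Phi(f_c)$ land at $d+1$ distinct points of $\mathfrak{S}$, so $\Phi(f_c)\in\mathscr{S}_{\pmb{\cN}_d,\cL_0}$. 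At $\pmb{c_\infty}$, by the combinatorial description of the $(0,\tfrac1{d+1})$-limb, the fixed rays at angles $0$ and $\tfrac1{d+1}$, which land separately throughout $H_0$, co-land at the parabolic fixed point $z_0$; transporting this identification through $\Phi$ and invoking the structure theorem for the strata $\mathscr{S}_{\pmb{\cN}_d,\cL}$ shows that the images of these two rays land at a common \emph{double point} $w_0$ of $\partial\cD$, whence $\Phi(f_{\pmb{c_\infty}})\in\mathscr{S}_{\pmb{\cN}_d,\cL_1}$.

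Next I would manufacture the offending sequence. Choose $c_n\in H_0$ with $c_n\to\pmb{c_\infty}$. By Theorem~\ref{thm:compact}, after passing to a subsequence, $S_n:=\Phi(f_{c_n})$ converges to some $S_\infty\in\mathscr{S}_{\pmb{\cN}_d}$, whose fixed-ray lamination dominates $\cL_0$ by Corollary~\ref{limit_dominates_cor}. The geometric mechanism is that the two inward-pointing cusps of the single deltoid-type quadrature domain of $S_n$, located at the landing points of the rays $0$ and $\tfrac1{d+1}$, are forced together as $c_n\to\pmb{c_\infty}$ (a quasi-PCF degeneration, cf.\ the remark following Corollary~\ref{limit_dominates_cor}), pinching the domain into two tangent Jordan domains. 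Thus $S_\infty\in\mathscr{S}_{\pmb{\cN}_d,\cL_1}$ as well, with its own double point $w_0'$ receiving the rays $0,\tfrac1{d+1}$; so $S_\infty$ and $\Phi(f_{\pmb{c_\infty}})$ sit in the \emph{same} stratum and have the same fixed-ray lamination $\cL_1$, hence are not separated by the coarse combinatorics. The entire question is therefore reduced to distinguishing them \emph{conformally}.

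The crux --- the \emph{change of signature} --- is to prove $S_\infty\neq\Phi(f_{\pmb{c_\infty}})$, and this is the step I expect to be the main obstacle. I would analyze the tangent-to-identity parabolic germ of $S^{\circ 2}$ at the double point, exactly as in the proof of Proposition~\ref{fatou_crit_prop}. For $\Phi(f_{\pmb{c_\infty}})$ this germ is conformally conjugate to the genuine parabolic germ of $f_{\pmb{c_\infty}}^{\circ 2}$ at $z_0$, carrying a definite \'Ecalle--Voronin modulus and a definite position of the critical value in the incoming \'Ecalle cylinder. For $S_\infty$, obtained as a limit from the attracting regime $H_0$, the attracting fixed points of $S_n$ run into $w_0'$ and the limiting germ is the one produced by parabolic implosion; the key computation is that its conformal invariant --- equivalently the critical \'Ecalle height, or the coarser datum of whether a point of $\mathfrak{S}$ acquires an attracting direction in $K$ as opposed to only in $T^\infty(S)$ (the defining distinction between $\mathscr{S}_{\pmb{\cN}_d,gf}$ and $\mathscr{S}_{\pmb{\cN}_d,r}$) --- does \emph{not} match that of $\Phi(f_{\pmb{c_\infty}})$. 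Since the common limit set is conformally removable (as established in the proof of Theorem~\ref{thm:C}), any homeomorphism matching both the non-escaping and tiling dynamics would be M\"obius, so a mismatch of the germ invariant forces $S_\infty\neq\Phi(f_{\pmb{c_\infty}})$, i.e.\ $\Phi(f_{c_n})\not\to\Phi(f_{\pmb{c_\infty}})$. The difficulty is precisely in this implosion comparison: one must control the tiling conjugacies $\psi_n$ and the internal B\"ottcher/Fatou coordinates simultaneously near the colliding cusps and extract the limiting parabolic signature, which is where the antiholomorphic parabolic-arc structure of $\partial H_0$ --- in particular the distinction between simple and double parabolic parameters --- enters decisively.
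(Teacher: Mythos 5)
Your proposal contains a fatal combinatorial error that inverts the whole setup. You claim that at a simple parabolic parameter $\pmb{c_\infty}\in\partial H_0\cap\partial H_1$ the fixed rays at angles $0$ and $\tfrac{1}{d+1}$ co-land at the parabolic fixed point, so that $\Phi(f_{\pmb{c_\infty}})\in\mathscr{S}_{\pmb{\cN}_d,\cL_1}$. This is false, and it is exactly the point where antiholomorphic dynamics differs from the holomorphic period-doubling picture you seem to have in mind (where the root of the period-two component does have co-landing rays at the parabolic point). In the unicritical antiholomorphic family, a simple parabolic fixed point has second-iterate multiplier $+1$ with a \emph{single} petal, so $f_{\pmb{c_\infty}}$ has a Jordan curve Julia set; no two rays co-land, and hence $\Phi(f_{\pmb{c_\infty}})\in\mathscr{S}_{\pmb{\cN}_d,\cL_0}$. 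The co-landing of the rays $0$ and $\tfrac{1}{d+1}$ at a (repelling) fixed point occurs only strictly inside the limb, e.g.\ for $c\in H_1$ — the landing pattern jumps discontinuously as one crosses the parabolic arc. This Jordan-curve fact is precisely what the paper's proof exploits, and your proposal contradicts it.

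Because of this inversion, your choice of approaching sequence is also the wrong one, and the rest of the argument collapses or becomes unnecessary hard analysis. With $c_n\in H_0$, both $\Phi(f_{c_n})$ and (correctly identified) $\Phi(f_{\pmb{c_\infty}})$ lie in the stratum $\mathscr{S}_{\pmb{\cN}_d,\cL_0}$, so there is no combinatorial obstruction to $\Phi(f_{c_n})\to\Phi(f_{\pmb{c_\infty}})$, and this sequence is not an obvious witness to discontinuity at all. Your claim that the limit $S_\infty$ nevertheless jumps into $\mathscr{S}_{\pmb{\cN}_d,\cL_1}$ by a cusp-pinching mechanism is merely asserted (Corollary~\ref{limit_dominates_cor} gives only $\cL\geq\cL_0$, which is vacuous), and the ``crux'' you then face — distinguishing $S_\infty$ from $\Phi(f_{\pmb{c_\infty}})$ via \'Ecalle heights and parabolic implosion — is left as an acknowledged, uncompleted gap. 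The paper's route avoids all of this: take $c_n\in H_1$, so that the rays $0,\tfrac{1}{d+1}$ co-land at a repelling fixed point of $f_{c_n}$ and $\Phi(f_{c_n})\in\mathscr{S}_{\pmb{\cN}_d,\cL_1}$; by Corollary~\ref{limit_dominates_cor} together with unicriticality, every subsequential limit of $\Phi(f_{c_n})$ stays in the stratum $\mathscr{S}_{\pmb{\cN}_d,\cL_1}$; since $\Phi(f_{\pmb{c_\infty}})$ lies in the disjoint stratum $\mathscr{S}_{\pmb{\cN}_d,\cL_0}$, discontinuity follows immediately, with no conformal invariants needed.
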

\begin{proof}
Suppose that $\{c_n\}\subset H_1$ converges to $\pmb{c_\infty}\in \partial H_0\cap\partial H_1$.
By our assumption, the dynamical rays at angles $0$ and $\frac{1}{d+1}$ land at a common point of the Julia set of $f_{c_n}$. Hence, $\Phi(f_{c_n})$ lies in $\mathscr{S}_{\pmb{\cN}_d, \cL_1}$. By Corollary~\ref{limit_dominates_cor} and unicriticality of the maps, any subsequential limit of $\{\Phi(f_{c_n})\}$ also lies in~$\mathscr{S}_{\pmb{\cN}_d, \cL_1}$.

On the other hand, as $f_{\pmb{c_\infty}}$ has a simple parabolic fixed point and hence a Jordan curve Julia set, it follows that $\Phi(f_{\pmb{c_\infty}})$ lies in $\mathscr{S}_{\pmb{\cN}_d, \cL_0}$. Hence, the sequence $\{\Phi(f_{c_n})\}$ does not converge to $\Phi(f_{\pmb{c_\infty}})$.
\end{proof}

\begin{figure}[ht]
\captionsetup{width=0.96\linewidth}
\includegraphics[width=0.45\linewidth]{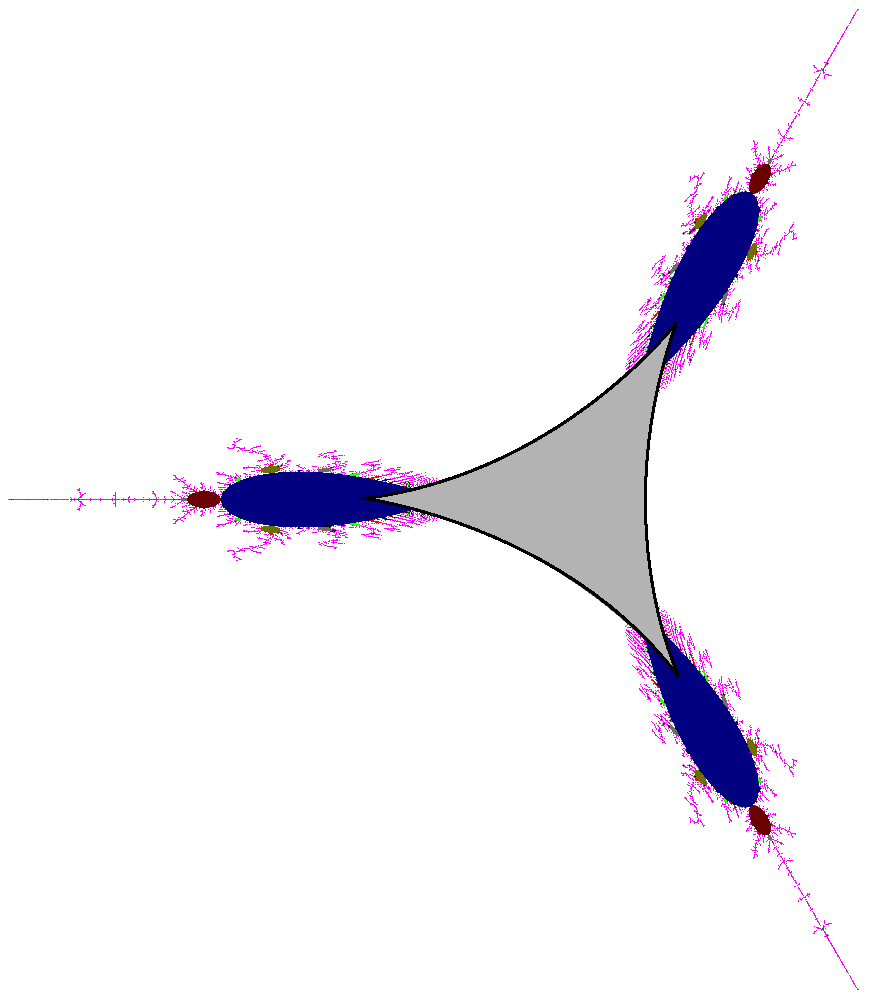}\hspace{8mm} \includegraphics[width=0.45\linewidth]{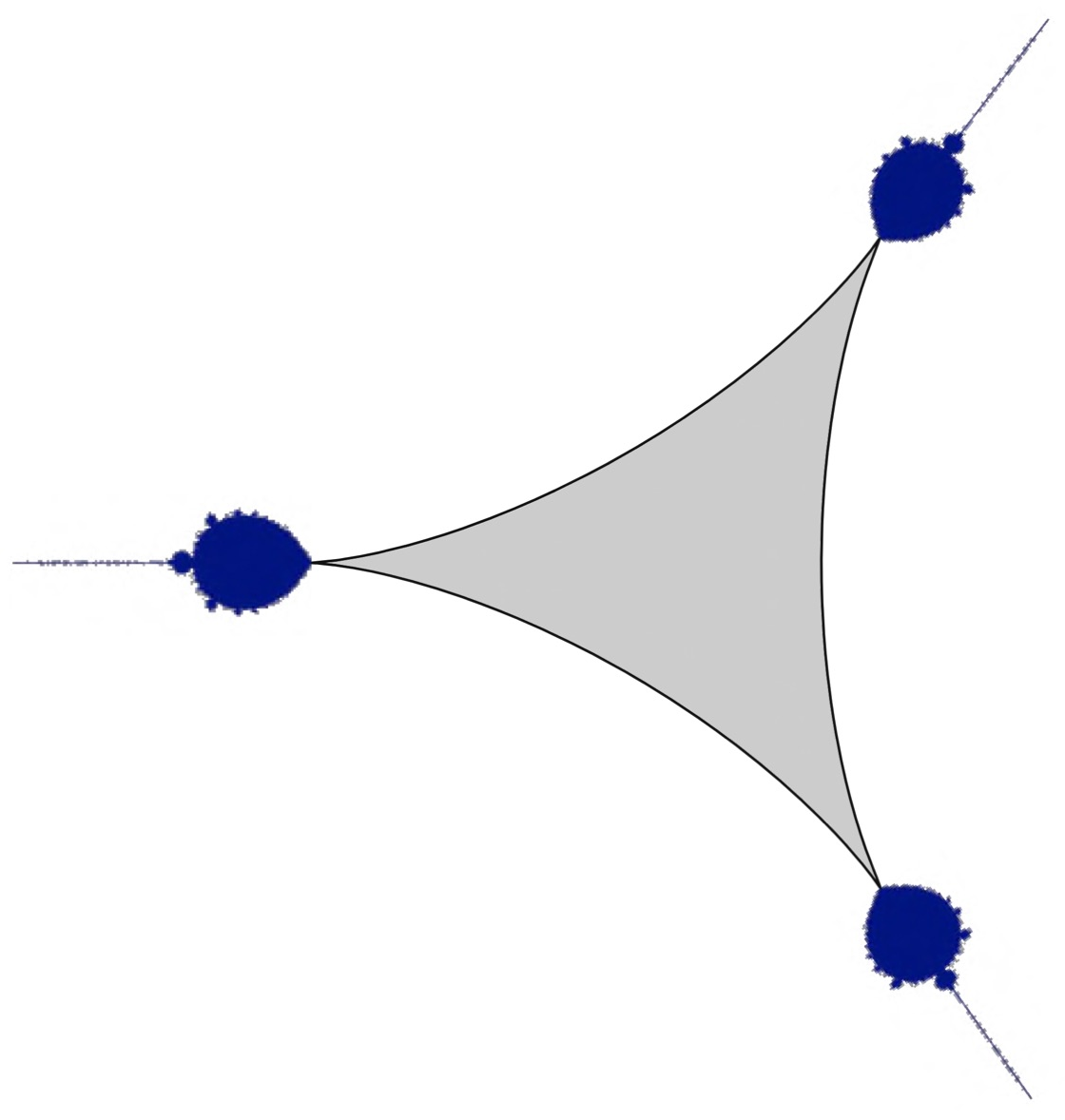}
\caption{The Tricorn $\mathcal{C}_2^-$ (i.e., the connectedness locus of quadratic anti-polynomials) is displayed on the left, and a schematic picture of $\mathscr{S}_{\pmb{\cN}_2}$ (i.e., the connectedness locus of quadratic polygonal Schwarz reflections) is depicted on the right. In both pictures, the central grey region is the period one hyperbolic component and the three symmetric blue bulbs are the period two hyperbolic components. The bifurcation structures of the period two hyperbolic components from the period one component are different in the two connectedness loci, and this causes discontinuity of $\Phi, \Phi^{-1}$.}
\label{discont_fig}
\end{figure}

\begin{lem}\label{discont_lem_2}
Let $\pmb{c_0}$ be the unique double parabolic parameter on $\partial H_0\cap\partial H_1$. Then $\Phi^{-1}$ is discontinuous at $\Phi(f_{\pmb{c_0}})$.
\end{lem}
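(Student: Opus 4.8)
The plan is to reproduce, in mirror image, the mechanism of Lemma~\ref{discont_lem_1}. Write $S_0:=\Phi(f_{\pmb{c_0}})$. First I would locate $S_0$ combinatorially. At the double parabolic parameter $\pmb{c_0}$ the two fixed dynamical rays of $f_{\pmb{c_0}}$ at angles $0$ and $\frac{1}{d+1}$ co-land at its (period one) parabolic fixed point; this is exactly what distinguishes a double parabolic from a simple parabolic, where the Julia set is a Jordan curve and these two rays land at distinct points. Hence the rational lamination of $f_{\pmb{c_0}}$ identifies $0$ with $\frac{1}{d+1}$, and by the dynamical naturality of $\Phi$ (which transports laminations through the conjugacy $\pmb{\mathcal{E}}_d$) we obtain $S_0\in\mathscr{S}_{\pmb{\cN}_d,\cL_1}$. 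Moreover $S_0$ carries a period one parabolic point, so it lies on the locus where the period two component of $\mathscr{S}_{\pmb{\cN}_d}$ is glued to the period one component.

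Next I would fix an arbitrary simple parabolic parameter $\pmb{c_\infty}$ on $\partial H_0\cap\partial H_1$ with $\pmb{c_\infty}\neq\pmb{c_0}$, and choose $c_n\in H_1$ with $c_n\to\pmb{c_\infty}$, so that $f_{c_n}\to f_{\pmb{c_\infty}}$. Since $c_n$ lies in the period two limb, the rays at $0$ and $\frac{1}{d+1}$ co-land for $f_{c_n}$; thus $s_n:=\Phi(f_{c_n})$ lies in the period two bulb of $\mathscr{S}_{\pmb{\cN}_d,\cL_1}$. By Theorem~\ref{thm:compact} the sequence $\{s_n\}$ has subsequential limits, and exactly as in Lemma~\ref{discont_lem_1} (Corollary~\ref{limit_dominates_cor} together with unicriticality) every such limit $S_\ast$ remains in $\mathscr{S}_{\pmb{\cN}_d,\cL_1}$. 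Because $f_{\pmb{c_\infty}}$ has a period one parabolic fixed point (the attracting two-cycle of $f_{c_n}$ collides with the fixed point in the limit), the map $S_\ast$ must likewise carry a period one parabolic point; that is, $S_\ast$ is a parabolic Schwarz reflection sitting on the gluing locus between the period one and period two regions.

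The heart of the argument is the claim that this gluing locus, when approached as a limit of period two maps, consists of the \emph{single} map $S_0$, so that necessarily $S_\ast=S_0$. This is precisely the point at which the two connectedness loci diverge: on the Multicorn the period two component is attached to the period one component along the entire arc $\partial H_0\cap\partial H_1$ of simple parabolics, whereas in $\mathscr{S}_{\pmb{\cN}_d}$ the corresponding attachment degenerates, so that the whole arc collapses, under the boundary limit of $\Phi$ taken from inside $H_1$, onto the single parabolic map $S_0$ (the ``change of signature''). Granting $S_\ast=S_0$, every subsequential limit is $S_0$, hence $s_n\to S_0$, while $\Phi^{-1}(s_n)=f_{c_n}\to f_{\pmb{c_\infty}}\neq f_{\pmb{c_0}}=\Phi^{-1}(S_0)$, which is exactly the asserted discontinuity of $\Phi^{-1}$ at $S_0$.

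The main obstacle is the collapsing claim $S_\ast=S_0$. To establish it I would compare the parabolic germs on the two sides: the period one parabolic fixed point of an anti-polynomial along $\partial H_0\cap\partial H_1$ carries a one-real-parameter modulus, namely its critical Ecalle height, which is what makes the antiholomorphic parabolic locus an arc (cf.\ \cite{HS}), while on the Schwarz side the co-landing of the two fixed rays together with the ideal polygon normalization of $T^0(S)$ leave the limiting parabolic configuration rigid, with no such modulus. Concretely, I would realize the approximants through the explicit quasi-PCF degenerations of \cite{L23} (the converse of Corollary~\ref{limit_dominates_cor}, which produces sequences in $\mathscr{S}_{\pmb{\cN}_d,\cL_1}$ with controlled limits), and verify that, independently of the choice of $\pmb{c_\infty}$, the resulting limit is the unique parabolic map $S_0$. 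Pinning down this limit is the crux; once it is in hand the discontinuity of $\Phi^{-1}$ at $\Phi(f_{\pmb{c_0}})$ follows immediately.
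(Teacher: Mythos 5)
Your skeleton matches the paper's proof: the same approximating sequence $c_n\in H_1\to\pmb{c_\infty}$ with $\pmb{c_\infty}$ a simple parabolic, compactness of $\mathscr{S}_{\pmb{\cN}_d}$ together with Corollary~\ref{limit_dominates_cor} to confine subsequential limits to $\mathscr{S}_{\pmb{\cN}_d,\cL_1}$, and the final contrast $\Phi(f_{c_n})\to\Phi(f_{\pmb{c_0}})$ while $f_{c_n}\to f_{\pmb{c_\infty}}\neq f_{\pmb{c_0}}$. But the step you yourself flag as the crux---that every subsequential limit $S_\ast$ of $\Phi(f_{c_n})$ equals $S_0=\Phi(f_{\pmb{c_0}})$---is left unproven, and this is a genuine gap, not a deferrable detail. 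The paper closes it with a short, concrete mechanism that you never invoke: since the conjugacy built into $\Phi$ is conformal on the interior of the non-escaping set, $\Phi$ preserves multipliers of attracting cycles; hence the attracting $2$-cycle of $\Phi(f_{c_n})$ has multiplier tending to $1$, and within the period-two component of $\mathscr{S}_{\pmb{\cN}_d,\cL_1}$ this forces convergence to its unique root, namely the map in which the $2$-cycle has merged into the droplet's double point, i.e.\ $\Phi(f_{\pmb{c_0}})$. The multiplier is precisely the quantity that transfers information from the anti-polynomial plane to the Schwarz plane \emph{map by map}. You instead try to transfer structure from the limiting anti-polynomial ($f_{\pmb{c_\infty}}$ has a parabolic fixed point, ``so $S_\ast$ must likewise carry a period one parabolic point''); that inference is illegitimate here, because properties of $\lim f_{c_n}$ need not be inherited by $\lim\Phi(f_{c_n})$---the failure of exactly this kind of transfer is the content of the lemma being proved.

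Your rigidity heuristic for the crux is also miscalibrated. \emph{Every} map in $\mathscr{S}_{\pmb{\cN}_d,\cL_1}$, not just $S_0$, has parabolic (tangent-to-identity) germs at the double point of its droplet; this is the change of signature and holds uniformly across the stratum, so ``carrying a period one parabolic point'' distinguishes nothing, and counting Ecalle-height moduli does not by itself collapse the limit to a single map. What singles out $S_0$ is that the attracting directions at the double point lie \emph{inside} $K(S_0)$ (the two petals of the double parabolic are absorbed by the non-escaping set). A complete argument along your lines would have to (i) rule out that $S_\ast$ has a genuine parabolic $2$-cycle away from the double point (e.g.\ via the geometrically finite bijection of Theorem~\ref{thm:C}: such an $S_\ast$ would correspond to a unicritical anti-polynomial having both a multiplier-one parabolic $2$-cycle and a separate fixed point where the rays at angles $0$ and $\tfrac{1}{d+1}$ co-land, and no such parameter exists in the multicorn), and (ii) show that the colliding $2$-cycle lands at the double point with petals in $K$, and then identify $S_\ast=S_0$ by uniqueness of geometrically finite matings. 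The paper's multiplier argument packages both points; your appeal to quasi-PCF degenerations of \cite{L23} does not, as stated, accomplish either.
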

\begin{proof}
Let $\pmb{c_\infty}$ be a simple parabolic parameter on $\partial H_0\cap\partial H_1$, and $\{c_n\}$ be a sequence in $H_1$ converging to $\pmb{c_\infty}$. Then, the multiplier of the unique attracting $2$-cycle of $f_{c_n}$ converges to $1$. By definition of $\Phi$, the multiplier of the unique attracting $2$-cycle of $\Phi(f_{c_n})$ also converges to $1$. This implies that the sequence $\{\Phi(f_{c_n})\}$ converges to $\Phi(f_{\pmb{c_0}})\in\mathscr{S}_{\pmb{\cN}_d}$, while $\{c_n\}$ converges to $\pmb{c_\infty}$. Hence, $\Phi^{-1}$ is discontinuous at $\Phi(f_{\pmb{c_0}})$.
\end{proof}

We are now ready to complete the proof of Theorem~\ref{thm:C}.
\begin{proof}[Proof of Theorem~\ref{thm:C} (discontinuity part)]
The discontinuity of $\Phi$ and $\Phi^{-1}$ follow from Lemmas~\ref{discont_lem_1} and~\ref{discont_lem_2}, respectively. 
\end{proof}

\section{Combinatorial classes for polynomials and anti-polynomials}\label{comb_class_poly_sec}
Let $\mathcal{C}^\pm_{d, r}$ be the space of monic and centered polynomials (or anti-polynomials) with connected Julia set and all cycles repelling (also called `periodically repelling').
In this section, we discuss a natural way to organize the data for renormalizations of polynomials, or anti-polynomials in $\mathcal{C}^\pm_{d, r}$.

We will use puzzle pieces to construct a decomposition of the dynamical plane.
To better organize these combinatorial data, we will inductively construct puzzles depending on their {\em level} of renormalizations and the {\em depth} of the puzzle pieces.

\subsection{Level zero renormalization puzzles}
To start our construction, we consider rays landing at the fixed points.
If the polynomial or the anti-polynomial is periodically repelling, the following lemma guarantees that these rays cut through the Julia set.
\begin{lem}\label{lem:cj}
Let $f \in \mathcal{C}^\pm_{d, r}$.
Then the union of rays landing at all fixed points together with their landing points cut the Julia set of $f$.
\end{lem}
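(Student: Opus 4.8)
The plan is to reduce the lemma to the existence of a \emph{single} fixed point at which at least two dynamical rays land, and then to separate $\mathcal{J}(f)$ by the Jordan curve cut out by two such rays together with their common landing point and $\infty$. So the whole argument splits into a combinatorial counting step (finding the fixed point with $\geq 2$ rays) and a soft topological step (using it to disconnect $\mathcal{J}(f)$).

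\textbf{Setup and reduction.} Since $\mathcal{J}(f)$ is connected, the B{\"o}ttcher coordinate $\psi_f$ conjugates $f$ on the basin of infinity to $z^d$ (polynomial case) or $\overline{z}^d$ (anti-polynomial case) on $\widehat{\C}\setminus\overline{\D}$; write $R_\theta$ for the dynamical ray at angle $\theta$, so that $f(R_\theta)=R_{m_d(\theta)}$ with $m_d(\theta)=d\theta$ in the holomorphic and $m_d(\theta)=-d\theta$ in the antiholomorphic case. I will freely use the classical facts (Douady--Hubbard, Goldberg--Milnor) that every rational ray lands, and that every repelling periodic point is the landing point of finitely many (at least one) periodic rays. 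As $f\in\mathcal{C}^\pm_{d,r}$ is periodically repelling, each of its fixed points is repelling and hence is the landing point of at least one ray. The lemma thus reduces to the \emph{claim}: some fixed point of $f$ is the landing point of at least two rays.

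\textbf{Counting step (the crux).} The angles fixed by $m_d$ are $\tfrac{j}{d-1}$ ($0\le j\le d-2$) in the holomorphic case and $\tfrac{j}{d+1}$ ($0\le j\le d$) in the antiholomorphic case, giving $d-1$ resp.\ $d+1$ \emph{fixed rays}, each landing at a fixed point. Now I count fixed points. For a polynomial, $f(z)-z$ is monic of degree $d$ with only simple zeros (repelling forces multiplier $\neq 1$), so there are exactly $d$ fixed points; since $d>d-1$, pigeonhole produces a fixed point landed by no fixed ray, which being repelling is the landing point of a ray cycle of period $\geq 2$, hence of at least two rays. For an anti-polynomial I apply the argument principle to the real-analytic field $z\mapsto f(z)-z$: near $\infty$ it is dominated by $\overline{z}^d$, of winding number $-d$; writing $f=\overline{Q}$ with $Q$ a polynomial, at a fixed point $z_0$ its linear part is the antilinear map $w\mapsto\overline{\lambda}\,\overline{w}$ with $\lambda=Q'(z_0)$, so $f(z)-z$ has real Jacobian determinant $1-|\lambda|^2<0$ there (as $|\lambda|>1$), giving index $-1$; hence there are exactly $d$ fixed points. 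Since now $d<d+1$, pigeonhole forces two distinct fixed rays to land at a common fixed point. In both cases we obtain a fixed point $z_*$ landed by at least two rays, establishing the claim.

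\textbf{Separation and main obstacle.} Pick two rays $R_\theta,R_{\theta'}$ landing at $z_*$; then $\gamma:=\{\infty\}\cup R_\theta\cup\{z_*\}\cup R_{\theta'}$ is a Jordan curve in $\widehat{\C}$ meeting $\mathcal{J}(f)$ only at $z_*$, since the open rays lie in the basin of infinity. Let $D_1,D_2$ be the two Jordan domains bounded by $\gamma$, indexed by the two open arcs $I_1,I_2$ into which $\theta,\theta'$ divide $\R/\Z$. In each $I_i$ choose a rational angle $\alpha_i$ that is not the angle of any ray landing at any fixed point (possible as only finitely many rational angles land at the finitely many fixed points); then $R_{\alpha_i}\subset D_i$ lands at a point $p_i\in\mathcal{J}(f)\cap D_i$ which is not a fixed point and lies on no ray landing at a fixed point. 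Writing $\Gamma$ for the union of all rays landing at fixed points together with their landing points, we have $p_1,p_2\in\mathcal{J}(f)\setminus\Gamma$ and they are separated by $\gamma\subset\Gamma$, so $\mathcal{J}(f)\setminus\Gamma$ is disconnected, as asserted. The decisive point is the counting step; the only mildly non-standard ingredient is the antiholomorphic index computation, which gives exactly $d$ fixed points --- one \emph{fewer} than the number of fixed rays --- so the pigeonhole there runs oppositely to (and more immediately than) the holomorphic case. I stress that local connectivity of $\mathcal{J}(f)$ is never used: only the landing of individual rational rays, valid whenever $\mathcal{J}(f)$ is connected, enters the argument.
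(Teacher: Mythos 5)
Your proof is correct and takes essentially the same route as the paper's: both count the $d$ fixed points against the $d-1$ (respectively $d+1$) fixed rays and apply the pigeonhole principle to produce a fixed point at which at least two rays land, which then cuts $\mathcal{J}(f)$. The only differences are matters of detail: you verify the antiholomorphic fixed-point count by an explicit index computation where the paper invokes the Lefschetz fixed point theorem (citing \cite[Lemma 6.1]{LM14}), and you spell out the final Jordan-curve separation step that the paper leaves implicit.
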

\begin{proof}
If $f$ is a polynomial, then there are $d$ fixed points in $\C$.
Since $f$ has connected Julia set and is periodically repelling, these fixed points are distinct and all lie on the Julia set.
Since there are $d-1$ fixed rays, at least one fixed point $p$ is the landing point of some periodic rays of period $\geq 2$.
Then these rays landing at $p$ together with $p$ cut the Julia set of $f$.

If $f$ is an anti-polynomial, then there are $d$ fixed points in $\C$ by Lefschetz fixed point theorem, as all fixed points are repelling (see \cite[Lemma 6.1]{LM14}).
Since there are $d+1$ fixed rays, at least two fixed rays must land together at a fixed point $p$.
Then these two rays landing at $p$ together with $p$ cut the Julia set of $f$.
\end{proof}

To define level zero renormalization puzzles, we let $\phi$ be the Riemann mapping from $\C - \overline{\D}$ to $\C - K(f)$ (where $K(f)$ is the filled Julia set of $f$), normalized so $\phi'(\infty) = 1$.
Consider the disk $D$ bounded by the image of the circle $\{\vert z\vert =2\}$ under $\phi$.
This disk encloses $K(f)$ and is bounded by an equipotential of the complement.

By Lemma \ref{lem:cj}, this disk is cut into $q>1$ pieces by the external rays landing at the fixed points of $f$ that are also cut points.
These pieces form the {\em puzzle pieces} of depth zero for level zero renormalization.
Each piece is a closed disk, whose boundary consists of fixed points, segments of external rays landing at those fixed points, and part of $\partial D$.

The puzzle pieces at depth $k+1$ are defined inductively as the closures of the components of $f^{-1}(\Int(P))$, where $P$ ranges over all puzzle pieces at depth~$k$.

The puzzle pieces at depth $k$ have disjoint interiors and cover the Julia set.
As the depth increases, the puzzles become successively finer.
More precisely, a piece at depth $k+1$ is contained in a unique piece at depth $k$.

We will use the notation $P_{k, f}^0(z)$ to denote the puzzle piece at depth $k$ for level zero renormalization, that contains $z$ in its interior.
Sometimes we simply write it as $P_{k}^0(z)$ or $P_k(z)$ if the map $f$ and the level of renormalizations are not ambiguous.

We remark that the puzzle piece $P_{k, f}^0(z)$ is well-defined for $z\in J(f)$ if $f^k(z)$ is not a fixed point.
If $f^k(z)$ is a fixed point, then $z$ might be on the common boundary of multiple puzzle pieces at depth $k$.

\begin{defn}\label{defn:lzr}
Let $c$ be a critical point for $f$.
We say that $c$ is {\em level zero renormalizable} if $c$ is not on the boundary of any puzzle piece and there exists an integer $p \geq 1$ so that 
$$
P_k(c) = P_k(f^p(c)) \text{ for all } k\geq 0.
$$
The smallest such integer $p$ is called the {\em renormalization period}.

We say that $f$ is {\em level zero renormalizable} if some critical point of $f$ is {\em level zero renormalizable}, and {\em level zero non-renormalizable} otherwise.
\end{defn}

\begin{lem}\label{lem:rn}
If $c$ is level zero renormalizable for a map $f \in \mathcal{C}^\pm_{d, r}$, then there exist open sets $U\ni c, V\ni c$ with $U$ compactly contained in $V$ so that 
$$
f^p: U \longrightarrow V
$$
is a polynomial-like map with connected small Julia set $J(c) := \bigcap_k P_k(c)$.
\end{lem}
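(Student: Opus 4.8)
The plan is to first extract the combinatorial polynomial-like structure directly from the puzzle dynamics, and then upgrade it to an honest polynomial-like map by a thickening procedure. Recall that by construction $f:P_{k+1}(z)\to P_k(f(z))$ is a proper branched covering onto the depth-$k$ piece containing $f(z)$, and that each $P_k(c)$ is a closed topological disk. Iterating $p$ times and using the renormalizability hypothesis $P_k(c)=P_k(f^{p}(c))$ from Definition~\ref{defn:lzr}, I obtain for every $k\geq 0$ a proper (anti-)holomorphic branched covering
\[
f^{p}: P_{k+p}(c)\longrightarrow P_k(f^{p}(c))=P_k(c),
\]
together with the nesting $P_{k+p}(c)\subset P_k(c)$ (both pieces contain $c$ in their interior, and a depth-$(k+p)$ piece lies in a unique depth-$k$ piece). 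Since $c$ is a critical point of $f$ lying in $\Int P_{k+p}(c)$, the map $f^{p}$ has a critical point inside its domain, so its degree is at least $2$. This is exactly the combinatorial skeleton of a degree $\geq 2$ polynomial-like map.

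The one property that fails for the naive choice $U=\Int P_{k+p}(c)$, $V=\Int P_k(c)$ is compact containment: the boundaries $\partial P_{k+p}(c)$ and $\partial P_k(c)$ can touch, namely along shared external-ray segments and at common landing points (which are repelling (pre)periodic points, as $f$ is periodically repelling). Removing these contacts is the main obstacle, and I would resolve it by the standard thickening of puzzle pieces. Concretely, enlarge the bounding equipotential $\{\lvert z\rvert=2\}$ to $\{\lvert z\rvert=2+\epsilon\}$, extend all boundary rays out to the new equipotential, and open each boundary landing point into a thin wedge using the local linearizing coordinate at the corresponding repelling cycle. Carrying this out compatibly with $f$ (so that the thickened pieces of depth $k+p$ are full $f^{p}$-preimages of those of depth $k$), and letting the wedge and collar widths shrink geometrically with the depth, produces open topological disks $V\supset P_k(c)$ and $U\supset P_{k+p}(c)$ with $\overline{U}\subset V$, i.e. $U\Subset V$, while $f^{p}:U\to V$ remains proper of the same degree $\geq 2$. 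The delicate points are that the thickening be performed equivariantly, so as not to change the covering degree or create spurious preimage components, and that it be supported in the escaping region, so that it does not affect the non-escaping dynamics; both are arranged by choosing $\epsilon$ and the wedge widths small relative to the finitely many combinatorial data up to depth $k+p$. With $U\ni c$ and $V\ni c$ so obtained, $(f^{p}, U, V)$ is a polynomial-like map.

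It remains to identify the small filled Julia set. Because the thickening is supported away from $\bigcap_k P_k(c)$, the filled Julia set of $f^{p}:U\to V$, namely $\{z\in U:\ (f^{p})^{n}(z)\in U\ \text{for all }n\geq 0\}$, coincides with $J(c)=\bigcap_{k}P_k(c)$: indeed $z\in\bigcap_k P_k(c)$ forces $z\in P_{k+p}(c)$ for all $k$, whence $f^{p}(z)\in P_k(c)$ for all $k$, so $J(c)$ is forward $f^{p}$-invariant and trapped in $U$, while any point leaving some $P_k(c)$ is pushed out of $U$ by a further iterate of $f^{p}$. Finally, $J(c)=\bigcap_k P_k(c)$ is a decreasing intersection of the compact connected sets $P_k(c)$ (each a closed disk, with $P_{k+p}(c)\subset P_k(c)$), and a nested intersection of compact connected subsets of the sphere is connected; thus the small filled Julia set $J(c)$ is connected, which completes the argument.
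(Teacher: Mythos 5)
Your first two steps (the proper maps $f^p\colon P_{k+p}(c)\to P_k(c)$ coming from the renormalizability hypothesis, and the standard thickening at repelling corner points to achieve $U\Subset V$) are exactly what the paper does. The gap is in your final paragraph, where you identify the filled Julia set of $f^p\colon U\to V$ with $J(c)=\bigcap_k P_k(c)$ and then deduce connectedness from the nested intersection.

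Concretely, you work at an \emph{arbitrary} depth $k$, whereas the paper's first step is to choose $N$ so large that every critical point $c'$ of $f$ satisfies either $c'\in P_k(c)$ for all $k\geq N$, or $c'\notin P_k(c)$ for all $k\geq N$, and then to take $k\geq N$ (and to thicken so that a critical point lies in $U$ iff it lies in $P_{k+p}(c)$). This is not a cosmetic normalization: at a shallow depth, $P_{k+p}(c)$ may contain a critical point $c'$ that is absent from deeper central pieces. Such a $c'$ either escapes under $f^p\colon U\to V$, in which case the filled Julia set is \emph{disconnected} (by the standard criterion for polynomial-like maps), or its whole orbit stays in $U$ without lying in $\bigcap_m P_m(c)$, in which case the filled Julia set is strictly larger than $J(c)$. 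Either way the conclusion of the lemma fails for that $k$, so no argument carried out at an arbitrary depth can close.

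Relatedly, your key assertion that ``any point leaving some $P_k(c)$ is pushed out of $U$ by a further iterate of $f^p$'' is precisely the nontrivial inclusion $K(f^p|_U)\subseteq\bigcap_m P_m(c)$, and you offer no proof of it. It is not automatic: $(f^p)^{-1}(P_{k+p}(c))\cap P_{k+p}(c)$ may a priori have components other than the central one $P_{k+2p}(c)$, and a point in such a component is mapped by $f^p$ back into $P_{k+p}(c)$, so nothing you have said forces its orbit ever to leave $U$. Ruling this out needs the critical-point control above: once all critical points of $f^p|_U$ lie in the central nest at every depth (the situation for $k\geq N$), one can either argue that they are non-escaping, conclude connectedness of the filled Julia set by the critical-point criterion, and then use connectedness (a connected, invariant set containing $c$ must lie in the central component of every iterated preimage) to obtain the collapse onto $\bigcap_m P_m(c)$; or run a degree count (Riemann--Hurwitz) showing the central preimage component absorbs the full degree and is therefore the unique component, which gives the collapse directly. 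Your proof instead \emph{derives} connectedness from the collapse, i.e.\ it takes the unproved statement as input; the implication that can actually be established runs the other way, which is how the paper's proof is organized.
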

\begin{proof}
Since $P_k(c)$ is a nested decreasing sequence of closed disks, 
there exists some integer $N$ so that for any critical point $c'$, either
\begin{itemize}
\item $c' \in P_k(c)$ for all $k \geq N$; or
\item $c' \notin P_k(c)$ for all $k\geq N$.
\end{itemize}
Let $k \geq N$.
Consider the map
$$
f^p: P_{k+p}(c) \longrightarrow  P_k(f^p(c)) = P_k(c).
$$
Note that $P_{k+p}(c) \subseteq P_k(c)$.
Since all cycles of $f$ are repelling, we can find a small neighborhood $U, V$ of $P_{k+p}(c)$ and $P_k(c)$ so that
\begin{itemize}
\item a critical point $c' \in U$ if and only if $c' \in P_{k+p}(c)$;
\item $U$ is compactly contained in $V$;
\item $f^p: U \longrightarrow V$ is a branched covering map.
\end{itemize}
Then any point in $J(c) \subseteq U$ is non-escaping for the map $f^p: U \longrightarrow V$.
Let $c' \in U$ be a critical point.
Then $c' \in P_k(c)$ for all $k \geq N$, so $c'$ is non-escaping.
Therefore, $f^p: U \longrightarrow V$ is a polynomial-like map with connected small Julia set $J(c)$.
\end{proof}

\begin{cor}\label{cor:lgeq2}
If $c$ is level zero renormalizable, then its renormalization period $p$ is at least $2$.
\end{cor}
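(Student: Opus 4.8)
The plan is to argue by contradiction: since $c$ is assumed level zero renormalizable, \emph{some} period $p\ge 1$ exists, so it suffices to rule out $p=1$. Thus I assume $P_k(c)=P_k(f(c))$ for all $k\ge 0$ and seek a contradiction. First I would unwind the definition of the puzzle pieces. Because $f$ carries each depth $k+1$ piece onto a depth $k$ piece, we have $f(P_{k+1}(c))=P_k(f(c))=P_k(c)$, and combined with the nesting $P_{k+1}(c)\subseteq P_k(c)$ this yields $P_k(c)\subseteq f(P_k(c))$ for every $k$. Pushing forward once more gives $P_0(f^n(c))=P_0(c)$ for all $n\ge 0$, so the \emph{entire} forward orbit of $c$ is trapped in the single depth zero piece $P_0(c)$, in fact in the small filled Julia set $J(c)=\bigcap_k P_k(c)$.

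\textbf{Local structure.} Next I would record the shape of this hypothetical period one renormalization. By Lemma~\ref{lem:rn} the map $f=f^{1}\colon U\to V$ is polynomial-like with connected small filled Julia set $J(c)\ni c$, and since $c$ lies in the interior of $U$ and is a critical point of $f$, this polynomial-like map has degree $\delta\ge 2$. Hence $J(c)$ is a non-degenerate forward invariant continuum ($f(J(c))=J(c)$), strictly contained in $P_0(c)$ because its outer boundary sits at a strictly smaller equipotential than that of $P_0(c)$. Straightening $f\colon U\to V$ to a degree $\delta$ (anti-)polynomial (equivalently, applying the landing theory for repelling periodic points) produces a repelling fixed point $q$ of $f$ on the small Julia set $\partial J(c)$ at which at least two rays co-land, i.e. a cut point with non-trivial combinatorial rotation number.

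\textbf{The contradiction and the main obstacle.} The crux is to turn this into a contradiction with the level zero construction, which cuts along the rays at \emph{all} fixed points of $f$ (a non-trivial cut by Lemma~\ref{lem:cj}). The step I expect to be hardest is to show that the fixed point $q$ above is genuinely a cut point of the full Julia set $J(f)$ participating in the level zero partition --- that at least two of the fixed external rays used to build the depth zero pieces co-land at $q$ --- rather than merely a cut point of the small Julia set $\partial J(c)$. Once this is established, the fixed rays at $q$ separate $P_0(c)$ into distinct depth zero pieces, while $q\in J(c)\subseteq\bigcap_k P_k(c)$ and the whole forward orbit of $c$ is trapped in the connected set $J(c)$; reconciling these forces the restriction $f\colon P_1(c)\to P_0(c)$ to be injective, contradicting $\delta\ge 2$. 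I would carry out this identification by comparing the dynamical rays of $f$ landing at $q$ against the internal rays of the renormalized dynamics, exploiting that $q$ is fixed (so the relevant rays are fixed, hence among the level zero cutting rays). An equivalent and perhaps cleaner route, which I would try in parallel, is to show directly that $c$ and its image $f(c)$ must lie in \emph{distinct} depth zero pieces whenever $c$ is renormalizable, so that $P_0(c)\neq P_0(f(c))$ and $p=1$ is excluded at the very first depth; the combinatorial content is the same. Either way, ruling out $p=1$ gives that the smallest admissible period is at least $2$.
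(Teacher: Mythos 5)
Your proposal follows essentially the same route as the paper's proof, which is exactly the compressed version of your outline: assume $p=1$, invoke Lemma~\ref{lem:rn} to get a degree $\delta\ge 2$ polynomial-like map $f\colon U\to V$ with connected small Julia set $J(c)=\bigcap_k P_k(c)$, and apply Lemma~\ref{lem:cj} to this polynomial-like map to conclude that external rays landing at its fixed points cut through $J(c)$, contradicting the fact that $J(c)$ sits inside a single puzzle piece at every depth. The ``hardest step'' you flag --- promoting the cut point of the small Julia set, seen through the straightening, to a cut by the \emph{global} external rays that bound the depth zero pieces --- is precisely the content that the paper's one-line invocation of Lemma~\ref{lem:cj} elides, so you have correctly located where the real work lies, even though you (like the paper) do not carry it out in detail.

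Two corrections to your write-up. First, your parenthetical ``$q$ is fixed, so the relevant rays are fixed'' is false: external rays landing at a fixed point need only be periodic, not fixed (the three period-three rays at the $\alpha$-fixed point of the rabbit are the standard example). What saves the argument is that the level zero puzzle is cut along \emph{all} external rays landing at fixed points of $f$ that are cut points, regardless of the period of those rays; so what you must show is only that at least two global rays land at $q$ and separate points of $J(c)$, not that those rays are fixed. Second, the closing detour through injectivity of $f\colon P_1(c)\to P_0(c)$ is unnecessary and somewhat muddled: once at least two level zero cutting rays land at $q$ and separate two points of $J(c)$, those points lie in the interiors of distinct depth zero pieces, which directly contradicts $J(c)\subseteq P_0(c)$; no appeal to the degree $\delta$ is needed at that stage.
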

\begin{proof}
Suppose for contradiction that $p = 1$.
Then we have a polynomial-like map $f:U \longrightarrow V$.
By Lemma \ref{lem:cj}, the external rays landing at the fixed points of $f: U \longrightarrow V$ would cut through the small Julia set $J(c) = \bigcap_k P_k(c)$, which is a contradiction.
\end{proof}

\begin{cor}
If $c$ is level zero renormalizable, then $f^p: U \longrightarrow V$ is a simple renormalization, i.e., the small Julia sets $J(c), J(f(c)), ..., J(f^{p-1}(c))$ intersect possibly only at their tip points.
\end{cor}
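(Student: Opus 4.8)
The plan is to read off simplicity directly from the combinatorics of the puzzle, the only essential input being that the renormalization period $p$ was chosen to be minimal. Write $P_k^i := P_k(f^i(c))$ for the depth-$k$ puzzle piece (for level zero renormalization) containing $f^i(c)$, so that by Lemma~\ref{lem:rn} the small filled Julia set of the $i$-th factor is $J(f^i(c)) = \bigcap_{k\ge 0} P_k^i$. I will lean on two structural facts: puzzle pieces of a fixed depth have pairwise disjoint interiors, and $f$ maps the interior of $P_{k+1}^a$ onto the interior of $P_k^{a+1}$, so that $P_k^{a+1} = f(P_{k+1}^a)$; the defining property of the period is $P_k^0 = P_k^p$ for all $k$.

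First I would show that the pieces $P_k^0, P_k^1, \dots, P_k^{p-1}$ are pairwise distinct for all sufficiently large $k$. Since deeper pieces refine shallower ones, once $P_k^i \ne P_k^j$ the two have disjoint interiors and stay distinct at every larger depth; hence for each pair $i \neq j$ either $P_k^i = P_k^j$ for every $k$, or $P_k^i \neq P_k^j$ for all large $k$. Suppose the first alternative held for some $0 \le i < j < p$. Applying $P_k^{a+1} = f(P_{k+1}^a)$ repeatedly propagates the equality along the orbit, giving $P_k^{i+m} = P_k^{j+m}$ for all $m \ge 0$ and all $k$; taking $m = p-j$ and using $P_k^p = P_k^0$ yields $P_k^{\,i+p-j} = P_k^0$ for all $k$ with $0 < i+p-j < p$, contradicting the minimality of $p$ in Definition~\ref{defn:lzr}. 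Thus the $p$ pieces are eventually pairwise disjoint-interior; fix a depth $N$ at which this holds.

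Next I would confine the pairwise intersections of the small Julia sets to the puzzle boundary. For $i \ne j$ and every $k \ge N$ the pieces $P_k^i$ and $P_k^j$ have disjoint interiors, so $J(f^i(c)) \cap J(f^j(c)) \subseteq P_k^i \cap P_k^j \subseteq \partial P_k^i \cap \partial P_k^j$. This common boundary is built from segments of external rays (iterated preimages of the fixed-point rays of Lemma~\ref{lem:cj}), their landing points, and arcs of equipotentials; since the small Julia sets lie in $J(f)$, which meets neither equipotentials nor open external rays, the intersection is contained in the set of common landing points. Intersecting over all $k \ge N$, these persistent common landing points are precisely the repelling (pre)periodic points at which the separating rays co-land, i.e.\ the tip points of the little Julia sets. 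Hence distinct $J(f^i(c))$ touch at most at their tips, which is exactly the assertion that $f^p \colon U \to V$ is a simple renormalization.

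The main obstacle is the first step: everything rests on converting the minimality of $p$ into eventual distinctness of the cyclic pieces, and one must use the correct direction of the dynamical relation $P_k^{a+1} = f(P_{k+1}^a)$ together with the periodicity $P_k^p = P_k^0$ simultaneously in the propagation. The identification of the surviving common landing points with genuine tip points is then essentially bookkeeping, relying on Lemma~\ref{lem:cj} to ensure that the dividing rays land at repelling cut points of $J(f)$.
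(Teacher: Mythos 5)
Your proof is correct and takes essentially the same approach as the paper's: the paper's (very terse) argument simply asserts that for sufficiently large $k$ the pieces $P_k(c), P_k(f(c)), \dots, P_k(f^{p-1}(c))$ have disjoint interiors and concludes that the small Julia sets $\bigcap_k P_k(f^i(c))$ can therefore meet only at tip points. Your propagation argument (pushing a hypothetical equality $P_k^i = P_k^j$ forward by $f$ and invoking minimality of $p$) is precisely the justification that the paper leaves implicit in the phrase ``we may assume,'' and your second step spells out the same boundary-intersection bookkeeping.
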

\begin{proof}
For sufficiently large $k$, we may assume that the puzzle pieces 
$$
P_k(c), P_k(f(c)), ..., P_k(f^{p-1}(c))
$$ 
have disjoint interior.
Therefore, $J(f^i(c)) = \bigcap_k P_k(f^{i}(c))$ cannot intersect at a cut point.
\end{proof}






\subsection{Level $m$ renormalization puzzles}
Let $f \in \mathcal{C}^\pm_{d, r}$ be a polynomial or anti-polynomial with connected Julia set and all cycles repelling.
Suppose $f$ is level zero renormalizable.
Let $c$ be a critical point that is level zero renormalizable with renormalization period $p$.
By Lemma \ref{lem:rn}, we have a polynomial-like map $f^p: U \longrightarrow V$ with small Julia set $J(c)$. 
The fixed points of $f^p$ in $J(c)$ are periodic points of $f$ with period $p$.
Note that the polynomial-like map $f^p: U \longrightarrow V$ has connected Julia set and is periodically repelling.
Thus, the external rays landing at the fixed points of $f^p$ in $J(c)$ cut the small Julia set.

To define the {\em level one renormalization puzzle}, we consider 
\begin{itemize}
\item the external rays landing at all fixed points of $f$; as well as 
\item the external rays landing at the periodic points of $f$ associated with the fixed points of the polynomial-like maps $f^p: U \longrightarrow V$ for all level zero renormalizable critical points.
\end{itemize}
Then the closed disk $D$ bounded by the image of the circle $\{\vert z\vert =2\}$ under the Riemann mapping $\phi: \C - \overline{\D} \longrightarrow \C - K(f)$ is cut by these external rays into finitely many pieces.
These pieces form the {\em puzzle pieces} of depth zero for level one renormalization.

Similar as before, the puzzle pieces at depth $k+1$ are defined inductively by pulling back the puzzle pieces at depth $k$.
We use the notation $P^1_k(z)$ to denote the puzzle piece at depth $k$ for level one renormalization that contains $z$ in its interior.

Just as in Definition \ref{defn:lzr}, we define a critical point $c$ to be {\em level one renormalizable}
if $c$ is not on the boundary of any puzzle piece and there exists a number $p \geq 1$ so that 
$$
P^1_k(c) = P^1_k(f^p(c)) \text{ for all } k.
$$
If $c$ is level one renormalizable with renormalization period $p_1$, then the same proof of Lemma \ref{lem:rn} shows that there exists a polynomial-like map
$$
f^{p_1}: U^1 \longrightarrow V^1
$$
with small Julia set $J^1(c)= \bigcap_k P^1_k(c)$.

Inductively, if $f$ is level $m$ renormalizable, then we can define the level $m+1$ renormalization puzzle for $f$.
We remark that the choice of depth zero puzzle pieces for level $n$ renormalization depends on the polynomial or the anti-polynomial $f$.

To make our notations uniform in the future, if $f$ is not level $m$ renormalizable, then we define level $m+1$ renormalization puzzle for $f$ equal to the level $m$ renormalization puzzle.

In this way, for a periodically repelling polynomial or an anti-polynomial $f$ with connected Julia set, we can associate an infinite sequence of puzzles
$$
\mathfrak{P}^m(f) = \{P^m_k\}, m=0, 1, \cdots.
$$
We say that $f$ is {\em finitely renormalizable} if this sequence of puzzle $\mathfrak{P}^m(f)$  is eventually constant, and {\em infinitely renormalizable} otherwise.

It is clear that we have the following basic nesting property.
\begin{lem}
If $c$ is level $m$ renormalizable, then $c$ is level $j$ renormalizable for all $j \leq m$.
The corresponding renormalization periods satisfy
$$
p_0 \mid p_1 \mid ... \mid p_m.
$$
Further, the small Julia sets are nested; i.e.,
$$
J^m(c) \subseteq J^{m-1}(c) \subseteq ... \subseteq J^0(c).
$$
\end{lem}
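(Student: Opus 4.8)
The plan is to isolate a single structural consequence of the construction — that each higher-level puzzle refines the previous one — and then to derive all three assertions from it by elementary bookkeeping of puzzle-piece itineraries.

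First I would record the \emph{refinement property}: for every $m\ge 0$ and every $z$ one has $P^{m+1}_k(z)\subseteq P^m_k(z)$ at every depth $k$ (wherever both are defined). At depth zero this is immediate from the definition of the level $m+1$ puzzle, whose depth-zero pieces are cut out by the external rays already used at level $m$ together with the additional rays landing at the periodic points associated to the level $m$ polynomial-like maps; adjoining cuts can only subdivide the partition, with equality exactly when $f$ is not level $m$ renormalizable (the uniform-notation convention). Passing from depth zero to depth $k$ is done by pulling back under $f^k$, and the $f^k$-preimage of a finer partition is finer, so the refinement persists at all depths. A first consequence is that the $1$-skeleton of the level $j$ puzzle is contained in that of the level $m$ puzzle whenever $j\le m$; hence if $c$ is level $m$ renormalizable — so that $c$ lies on no boundary of a level $m$ piece — then $c$ lies on no boundary of a level $j$ piece either, and all the pieces $P^j_k(c)$ are genuinely well defined.

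Next I would transfer the periodicity condition downward. If $c$ is level $m$ renormalizable with period $p_m$, then $P^m_k(c)=P^m_k(f^{p_m}(c))$ for all $k$, so in particular $f^{p_m}(c)\in\Int P^m_k(c)$. Since $A\subseteq B$ forces $\Int A\subseteq\Int B$, the refinement gives $f^{p_m}(c)\in\Int P^m_k(c)\subseteq\Int P^j_k(c)$, whence $P^j_k(f^{p_m}(c))=P^j_k(c)$ for all $k$. Combined with the non-boundary remark, this shows $p_m$ is an admissible return time at level $j$, so the minimal admissible time $p_j$ exists and $c$ is level $j$ renormalizable for every $j\le m$, proving the first assertion. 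Running the identical argument between two consecutive levels shows that the level $j+1$ period $p_{j+1}$ is an admissible return time at level $j$.

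To upgrade admissibility to divisibility I would invoke the shift relation $f\big(P^j_{k+1}(z)\big)=P^j_k(f(z))$, immediate from the inductive pullback definition of depth. This relation shows that the property of sharing the full nested itinerary with $c$ is $f$-invariant, so the set of admissible return times at level $j$ is exactly the set of positive multiples of the minimal one $p_j$ (the standard minimal-period argument: reduce any admissible $p$ modulo $p_j$ and use $f$-invariance of the itinerary to contradict minimality unless the remainder vanishes). Applying this to $p_{j+1}$ yields $p_j\mid p_{j+1}$, hence the chain $p_0\mid p_1\mid\cdots\mid p_m$. Finally, intersecting $P^{j+1}_k(c)\subseteq P^j_k(c)$ over all $k$ gives $J^{j+1}(c)=\bigcap_k P^{j+1}_k(c)\subseteq\bigcap_k P^j_k(c)=J^j(c)$, the asserted nesting. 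The step I expect to be the main obstacle is the divisibility: one must check carefully, via the shift relation and the minimality of $p_j$, that the minimal admissible return time generates all of them, while staying attentive to boundary points so that every $P^j_k(\cdot)$ appearing is well defined — a control that is supplied here by the non-boundary condition built into renormalizability.
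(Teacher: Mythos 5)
Your proof is correct. There is, strictly speaking, nothing in the paper to compare it against: the authors state this lemma without proof, introducing it with ``It is clear that we have the following basic nesting property.'' Your writeup supplies exactly the verification they leave to the reader, and it is the natural one: the cumulative definition of the level $m+1$ puzzle gives the refinement $P^{m+1}_k(z)\subseteq P^m_k(z)$ at depth zero, pullback under $f^k$ propagates it to all depths, and the nesting of small Julia sets and the downward transfer of renormalizability follow immediately. You also correctly identified and handled the two points that genuinely need care: first, that ``$c$ lies on no boundary of a level $m$ piece'' passes down to lower levels because the level-$j$ skeleton is contained in the level-$m$ skeleton (without this, the pieces $P^j_k(c)$ would not even be well defined); second, that the divisibility $p_j\mid p_{j+1}$ is stronger than the inequality $p_j\le p_{j+1}$ one gets for free, and requires the shift relation $f\bigl(P^j_{k+1}(z)\bigr)=P^j_k\bigl(f(z)\bigr)$ to make ``same nested itinerary as $c$'' an $f$-invariant equivalence, after which the standard reduce-mod-$p_j$ argument closes the gap. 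One small remark: in that last step you should note (as your shift relation in fact guarantees) that the itinerary of $f^r(c)$ is well defined for every $r$, since the image under $f$ of a point interior to pieces at all depths is again interior to pieces at all depths; with that observation the contradiction with minimality of $p_j$ is airtight.
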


\subsection{Combinatorial distance}
\begin{defn}
    Let $f, g \in \mathcal{C}^\pm_{d, r}$. 
    We say that $g$ is {\em aligned} with $f$ at depth $k$ of level $m$ if for each point $p \in \partial P^m_{k,f} \cap J(f)$, there exists a point $q \in J(g)$ so that the angles of the external rays landing at $p$ are the same as the angles of those landing at $q$.
\end{defn}
From the definition, it is clear that if $g$ is aligned with $f$ at depth $k$ of level $m$, then $g$ is aligned with $f$ at depth $k'$ of level $m'$ for all $k' \leq k$ and $m' \leq m$.

Given a puzzle piece $P^m_{k,f}$ for $f$, its boundary consists of segments of external rays, their landing points and segments of an equipotential.
If $g$ is aligned with $f$ at depth $k$ of level $m$, we can construct a puzzle piece for $g$ bounded by the corresponding segments of external rays (with the same angle), their landing points and segments of the equipotential.
We call this the {\em induced puzzle piece} for $g$.
We use the notation $P^m_{k, g|f}(z)$ to represent the induced puzzle piece for $g$ at depth $k$ of level $m$ that contains $z$.


We remark that these induced puzzle pieces for $g$ may be different from the level $m$ renormalization puzzle $\mathfrak{P}^m(g)$ for $g$.
Indeed, it is possible for a non-renormalizable polynomial $g$ to align with a renormalizable polynomial $f$ at some depth $k$ and level $m$ with $m \geq 1$.
Then there are more induced puzzle pieces at depth zero of level $1$ than those in $\mathfrak{P}^1(g)$. { As an example, one can take $f$ to be the quadratic Feigenbaum polynomial and pick any non-renormalizable $g$ in the limb of the Mandelbrot set cut out by the parameter rays at angles $2/5$ and $3/5$.}

We define level $m$ renormalization distance from $g$ to $f$ as
$$
d_m(f, g) = e^{-(k_m(f, g)+1)\cdot (m+1)},
$$
where 
$$
k_m(f,g) = \inf \{j: g \text{ is not aligned with $f$ at depth $j$ of level $m$}\}.
$$
We define
$$
d(f, g) = \sum_{m\geq 0} d_m(f, g).
$$
We remark that $d$ is not a distance function as $d(f, g)$ may be different from~$d(g, f)$.

Recall that two periodically repelling polynomials or anti-polynomials are in the same combinatorial class if they have the same rational lamination, and $\widehat{\mathcal{C}^\pm_{d, r}}$ consists of combinatorial classes of periodically repelling polynomials or anti-polynomials.
\begin{lem}\label{lem:rl}
Let $f, g \in\mathcal{C}^\pm_{d, r}$. Then $d(f, g) = 0$ if and only if $f, g$ have the same rational lamination; i.e., $[f] = [g] \in \widehat{\mathcal{C}^\pm_{d, r}}$.
\end{lem}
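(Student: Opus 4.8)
The plan is to first unwind the definition of $d$ and reduce the claim to a comparison of puzzle combinatorics. Since $d_m(f,g)=e^{-(k_m(f,g)+1)(m+1)}$ is positive exactly when $k_m(f,g)<\infty$, we have $d(f,g)=0$ if and only if $k_m(f,g)=\infty$ for every $m\ge 0$; that is, if and only if $g$ is aligned with $f$ at every depth $k$ of every level $m$. So it suffices to show that this total alignment holds precisely when $\lambda(f)=\lambda(g)$.

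For the implication $\lambda(f)=\lambda(g)\Rightarrow d(f,g)=0$ I would argue that the entire inductive construction of the renormalization puzzles is a function of the rational lamination alone. Which fixed points are cut points, and the full co-landing class of the rays landing at each of them, is read directly from $\lambda$ (cf. Lemma~\ref{lem:cj}); whether a critical point is level $m$ renormalizable and with what period is, by Lemma~\ref{lem:rn} and Definition~\ref{defn:lzr}, a combinatorial condition on the nested puzzle pieces; and the periodic points used as higher-level seeds are landing points of rays whose angles are determined by $\lambda$ together with the common external-angle map $m_{\pm d}$. Hence the angles bounding every $P^m_{k,f}$, and the full co-landing class of each $p\in\partial P^m_{k,f}\cap J(f)$, depend only on $\lambda(f)$. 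If $\lambda(f)=\lambda(g)$, then this same class is realized at a point $q\in J(g)$, so $g$ is aligned with $f$ at all $(m,k)$.

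The reverse implication is the substantive one. The crucial observation is that alignment demands the \emph{entire} set of angles landing at $p$ to coincide with the entire set landing at $q$; since each rational ray has a single landing point, this forces the $\lambda(f)$-class and the $\lambda(g)$-class of every angle occurring on some puzzle boundary to be \emph{equal}. To promote this to $\lambda(f)=\lambda(g)$, I would use that co-landing is detected combinatorially: two rational rays co-land for $f$ if and only if they are never strictly separated by a puzzle piece at any level and depth. The forward direction of this equivalence is immediate; the reverse rests on the shrinking of puzzle pieces, via the rigidity machinery of \cite{Kiw01,Kiw04} (in the non-renormalizable range the puzzle pieces shrink to points, while within each renormalizable small Julia set the separation is resolved by passing to the next level, the sum over all $m\ge 0$ accounting for the possibly infinitely renormalizable case). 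Because alignment makes the $f$-puzzle and the induced $g$-puzzle combinatorics agree at every $(m,k)$, the separation relation is the same for $f$ and for the induced puzzles of $g$. Running this with $f$'s own shrinking puzzles yields $\lambda(g)\subseteq\lambda(f)$; this inclusion in turn shows that $g$ inherits the (non-)renormalization structure of $f$, so that the induced $g$-puzzles also shrink, and the reconstruction applied to them gives the reverse inclusion $\lambda(f)\subseteq\lambda(g)$.

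The main obstacle is exactly this reconstruction step: establishing that the rational lamination is completely recovered from the combinatorics of all puzzle pieces at all levels. This requires the shrinking of puzzle pieces together with careful bookkeeping of the passage between successive renormalization levels, and it is what forces the a priori one-directional alignment of $g$ with $f$ to upgrade to the symmetric conclusion $\lambda(f)=\lambda(g)$.
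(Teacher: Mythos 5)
Your proposal has the same skeleton as the paper's proof (the rational lamination determines the puzzle hierarchy, hence the forward implication; shrinking of puzzle pieces plus alignment for the converse), but the two places where you defer or summarize are exactly where the content of the lemma lives, and both are genuine gaps. The first is your separation criterion: ``two rational rays co-land for $f$ iff no puzzle piece of any level and depth strictly separates them.'' You attribute its hard direction to ``the rigidity machinery of \cite{Kiw01,Kiw04}'' and to ``passing to the next level'' inside renormalizable small Julia sets, but that machinery concerns other constructions, not this paper's hierarchy $\mathfrak{P}^m(f)$, and the phrase hides the real issue: for a given pair of rays one needs a \emph{fixed finite} level whose puzzle pieces separate their (distinct, pre-periodic) landing points, and in the infinitely renormalizable case it is not obvious that any finite level suffices. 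The paper's mechanism is the missing idea: if $y$ is periodic of period $p$, choose $m$ so large that every level-$m$ renormalization period exceeds $p$ (possible because the renormalization periods grow without bound for infinitely renormalizable critical points, each level's period being a proper multiple of the previous one, cf.\ Corollary~\ref{cor:lgeq2}); then for all large $k$ the pieces along the orbit of $y$ contain no critical points, so $f^p\colon \Int(P^m_{k+p,f}(y))\to \Int(P^m_{k,f}(y))$ has degree one, whence $\bigcap_k P^m_{k,f}(y)=\{y\}$. This explicit coupling of the level $m$ to the period of the landing point is what your sketch never supplies.

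The second gap is your symmetrization. For the inclusion $\lambda(f)\subseteq\lambda(g)$ you invoke shrinking of the \emph{induced} $g$-puzzles, justified by ``$g$ inherits the (non-)renormalization structure of $f$'' from the already-proved inclusion $\lambda(g)\subseteq\lambda(f)$. This does not follow: alignment and lamination inclusion constrain co-landing patterns on puzzle boundaries but say nothing about where the critical points of $g$ sit relative to the induced pieces, which is precisely the input the shrinking argument needs; the paper explicitly warns that the induced puzzles for $g$ need not coincide with $g$'s own renormalization puzzles $\mathfrak{P}^m(g)$, and since $d$ is not symmetric you cannot appeal to $g$'s own hierarchy either. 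The paper avoids any $g$-side shrinking: having shown $\bigcap_k P^m_{k,f}(x)=\{x\}$, it approximates the leaf $(s,t)$ of $\lambda(f)$ by leaves $(s_k,t_k)$ lying on puzzle-piece boundaries, transfers these to leaves of $\lambda(g)$ by alignment, and concludes $(s,t)\in\lambda(g)$ because rational laminations are closed. Your other inclusion (the contrapositive/separation argument giving $\lambda(g)\subseteq\lambda(f)$) is sound modulo the first gap and is essentially the paper's ``the other inclusion can be proved similarly''; replacing your inheritance claim by the approximation-plus-closedness argument would complete the proof.
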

\begin{proof}
It is clear that if $f, g$ have the same rational lamination, then the induced puzzle for $g$ agrees with the renormalization puzzle for $f$ at all levels and depths. Thus, $d(f, g) = 0$.

Conversely, suppose that $s, t \in \Q$ form a leaf of the rational lamination for $f$; i.e., they coland at some pre-periodic point $x$ for $f$.
If $x$ is on the boundary of some puzzle piece, then $s, t$ also coland for $g$ as $d(f, g) = 0$.
So $s, t$ form a leaf for the rational lamination for $g$ as well.

Thus, we assume that $x$ is not on the boundary of any puzzle piece.
Assume that $x$ has pre-period $l$ and period $p$, and $y = f^l(x)$.
Then $f^p: \Int(P^m_{k+p, f}(y)) \longrightarrow \Int(P^m_{k, f}(y))$ is a proper map for all $m$ and $k$.
We can choose $m$ large enough so that the level $m$ renormalization period for any critical point is larger than $p$.
(Here we use the convention that the renormalization period is $\infty$ if the critical point is not renormalizable.)
Thus, we can then choose $N$ large enough so that $\Int(P^m_{k+p, f}(y))$ contains no critical points for $k \geq N$.
So $f^p: \Int(P^m_{k+p, f}(y)) \longrightarrow \Int(P^m_{k, f}(y))$ has degree $1$ for all $k \geq N$.
Therefore, $\bigcap_k P^m_{k, f}(y) = \{ y\}$, so $\bigcap_k P^m_{k, f}(x) = \{ x\}$.
Hence, we have a sequence of angles $s_k, t_k$ colanding at boundary points of some puzzle pieces with $s_k \to s$ and $t_k \to t$.
Since $d(f, g) = 0$, the angles $s_k, t_k$ define a leaf for the rational lamination of $g$, for all $k$.
As rational laminations are closed, the angles $s, t$ also define a leaf of the rational lamination for $g$.
Therefore, the rational lamination of $f$ is contained in the rational lamination of $g$.
The other inclusion can be proved similarly.
\end{proof}

\subsection{Combinatorial continuity}
In this section, we will prove a combinatorial continuity statement for maps in $\mathcal{C}^\pm_{d, r}$.
The proof is divided into two cases depending on whether there are critical points on boundaries of puzzle pieces or not.

\subsubsection{No boundary critical points}
We first present the proof where no critical points are on the boundaries of puzzle pieces.
\begin{theorem}\label{thm:ncp}
Let $f \in \mathcal{C}^\pm_{d, r}$. Suppose that no critical point is on the boundary of any puzzle piece. 
\begin{enumerate}
\item If $f_n \to f$ in $\mathcal{C}^\pm_{d, r}$, then $d(f, f_n) \to 0$.
\item If $d(f, f_n) \to 0$, then for any accumulation point $g\in \mathcal{C}^\pm_{d, r}$ of $f_n$, we have that $d(f, g) = 0$.
\end{enumerate}
\end{theorem}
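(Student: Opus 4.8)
The plan is to handle the two assertions separately, both built on the stability of rays landing at repelling (pre)periodic points whose forward orbits avoid critical points (the polynomial analogue of Proposition~\ref{rep_stab}, cf. \cite[Appendix~B]{GM93}). First I would record a reduction: since $0\le d_m(f,\cdot)\le e^{-(m+1)}$ and the bounding sequence is summable, the series $\sum_m d_m(f,f_n)$ tends to $0$ if and only if each term does. In turn $d_m(f,f_n)\to 0$ is equivalent to $k_m(f,f_n)\to\infty$, i.e.\ to the statement that for every depth $j$, the map $f_n$ is eventually aligned with $f$ at depth $j$ of level $m$. This converts both parts into statements about alignment at each fixed $(m,j)$.

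For part (1), fix $m$ and $j$. The boundary $\partial P^m_{j,f}\cap J(f)$ consists of finitely many vertices, each a repelling (pre)periodic point of $f$: the depth-zero defining points are repelling because $f\in\mathcal{C}^\pm_{d,r}$, and the deeper vertices are their iterated preimages. The hypothesis that no critical point lies on any puzzle boundary guarantees that the forward orbit of each such vertex avoids $\mathrm{crit}(f)$, so ray stability provides a neighborhood of $f$ on which the same collection of rays lands together at the analytic continuation of each vertex. Hence for $n$ large enough that $f_n$ lies in this neighborhood, $f_n$ is aligned with $f$ at depth $j$ of level $m$. Letting $j\to\infty$ for each fixed $m$ yields $k_m(f,f_n)\to\infty$, and by the reduction above $d(f,f_n)\to 0$.

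Part (2) is the substantive half. Passing to a convergent subsequence $f_n\to g$, the hypothesis $d(f,f_n)\to 0$ gives, for every fixed $(m,k)$ and all large $n$, that each vertex $p$ of $\partial P^m_{k,f}$ has its full set of landing angles $\Theta_p$ colanding for $f_n$ at some repelling (pre)periodic point $q_n$. The goal is to show the same angles coland for $g$, since this establishes alignment of $g$ with $f$ at every depth and level, whence $k_m(f,g)=\infty$ for all $m$ and $d(f,g)=0$ (consistent with Lemma~\ref{lem:rl}). I would first pin down the limiting landing point: the preperiod and period of $q_n$ are fixed by the angles in $\Theta_p$, so $q_n$ satisfies a preperiodicity equation $f_n^{\circ(l+p)}(q_n)=f_n^{\circ l}(q_n)$ whose coefficients converge to those of $g$. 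Being uniformly bounded, the $q_n$ subconverge to a point $q$ with $g^{\circ(l+p)}(q)=g^{\circ l}(q)$; since $g\in\mathcal{C}^\pm_{d,r}$ is periodically repelling, $q$ is a repelling (pre)periodic point, in particular $q\in J(g)$. It then remains to show that each ray $R_\theta^g$, $\theta\in\Theta_p$, lands at $q$: here I would use that the Riemann/Böttcher maps converge, $\phi_{f_n}\to\phi_g$ locally uniformly on $\C\setminus\overline{\D}$, so $R_\theta^{f_n}\to R_\theta^g$ uniformly on each compact range of potential, and then combine $q_n\to q$ with a uniform estimate on the ray tails near $q$ to force $R_\theta^g$ to land at $q$ rather than elsewhere.

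The main obstacle is exactly this last transfer step. Uniform convergence of the rays holds only away from their landing points, and the naive stability of landing can fail whenever the forward orbit of the landing point meets a critical point --- precisely the situation one cannot rule out for the uncontrolled limit map $g$. The feature I would exploit to bypass this is that, however badly $g$ behaves elsewhere, the limit landing point $q$ is \emph{itself} repelling; this is where the standing assumption $g\in\mathcal{C}^\pm_{d,r}$ is essential. The persistent repelling cycle under $q$ furnishes linearizing coordinates that are uniform along the whole tail $f_n\to g$, yielding equicontinuity of the ray tails near $q$ and thereby preventing the colanding rays from splitting in the limit. For the antiholomorphic case the same scheme applies after replacing the relevant map by its second iterate, so as to work with a genuinely holomorphic local model near the repelling cycle.
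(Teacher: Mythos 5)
Your part (1) is correct and is essentially the paper's argument: by the standing hypothesis every vertex of every puzzle piece is pre-repelling with forward orbit avoiding the critical points, so stability of landing rays \cite{GM93} gives a neighborhood of $f$ on which alignment at any fixed depth and level holds. The problems are in part (2), where your route diverges from the paper's and develops genuine gaps.

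First, your persistence-of-colanding step is not justified. The limit point $q$ satisfies $g^{\circ(l+p)}(q)=g^{\circ l}(q)$, so $g^{\circ l}(q)$ lies on a repelling cycle, but $q$ itself is only \emph{pre}-repelling: nothing prevents the orbit segment $q, g(q),\dots,g^{\circ (l-1)}(q)$ from passing through a critical point of $g$. In that case $g^{\circ l}$ has no single-valued inverse branch along the orbit, so the claimed ``linearizing coordinates uniform along the tail'' simply do not exist near $q$; the repelling structure lives at the cycle and cannot be pulled back to $q$. Merging of rays at a precritical point of the limit map is precisely the phenomenon the theorem must confront, so this is not a removable technicality.

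Second, and fatally: even if every $\theta\in\Theta_p$ did land at $q$ for $g$, that does not give alignment, which demands \emph{equality} of angle sets --- no additional angles may land at $q$ for $g$. Equivalently, distinct vertices of the puzzle for $f$ (and $f_n$) must not merge in the limit. Your argument is local around a single vertex and uses only alignment of $f_n$ with $f$ at the one level and depth under consideration, together with $f_n\to g$; with only that much information the desired conclusion is \emph{false}. The paper's remark following the theorem gives the counterexample: $f$ the Feigenbaum polynomial, $f_n$ the basilica tuned with real polynomials tending to Chebyshev, $g$ the basilica tuned with Chebyshev; then $d_0(f,f_n)=0$ for all $n$ and $f_n\to g$, yet $d_0(f,g)\neq 0$ because two rays landing at distinct Julia points for every $f_n$ coland at the critical point of $g$. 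Excluding such mergers is exactly where the full hypothesis $d(f,f_n)\to 0$ (alignment at \emph{all} renormalization levels) must be used, and your proposal never uses it. The paper's proof does: assuming alignment fails at a first depth $k>0$ of a minimal level $m$, it forces two angles $\alpha,\beta$ landing at distinct points $a,b\in\partial P^m_{k,f}$ to coland at a critical point $c_g$ of $g$; the Separation Lemma~\ref{lem:sl} (which relies on the higher-level puzzles) then yields nested puzzle pieces around the corresponding critical point of $f$ and hence infinitely many pairs of angles $(t_i,s_i)$, colanding for $f$ and for $f_n$, whose rays separate $a$ from $b$; since these pairs are linked with $\{\alpha,\beta\}$, all the rays $\mathcal{R}_g(t_i),\mathcal{R}_g(s_i)$ are forced to land at $c_g$, contradicting the fact that only finitely many rays land at a preperiodic point. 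Your proposal contains no substitute for this global, multi-level step.
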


\begin{remark}
Suppose that for some fixed level $m$, the distance $d_m(f, f_n) \to 0$ as $n\to\infty$. We remark that it is not true that any accumulation point $g\in \mathcal{C}^\pm_{d, r}$ of $f_n$ satisfies $d_m(f, g) = 0$.

For example, let $f$ be the quadratic Feigenbaum polynomial, and $f_n$ be the tuning of the basilica polynomial with a sequence of real polynomials $g_n$ that goes to the Chebyshev polynomial.
The limit $g = \lim_n f_n$ is the basilica polynomial tuned with the Chebyshev polynomial.
Then $d_0(f, f_n) = 0$ for all $n$, but $d_0(f, g) \neq 0$ since two rays (lying on the boundary of level zero puzzles) that land at distinct Julia points for $f$ coland at the critical point of $g$.

\end{remark}

\begin{lem}[Separation lemma]\label{lem:sl}
Let $f \in \mathcal{C}^\pm_{d, r}$. Suppose that no critical point is on the boundary of any puzzle piece.
Let $c$ be a critical point for $f$, and $P^m_k(c)$ be the level $m$ puzzle piece at depth $k$ that contains $c$.
Then there exist $l, j$ so that $P^{l}_{j}(c)$ is compactly contained in $\Int(P^m_k(c))$.
\end{lem}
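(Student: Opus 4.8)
The plan is to reduce the assertion to a separation statement about the finitely many \emph{vertices} of $P^m_k(c)$, and then to realize that separation by ascending at most one renormalization level. First I would record the nesting: for $l\geq m$ and $j\geq k$ the level-$l$ puzzle refines the level-$m$ puzzle and deeper pieces refine shallower ones, so $P^l_j(c)\subseteq P^m_k(c)$; hence compact containment is equivalent to $P^l_j(c)\cap\partial P^m_k(c)=\emptyset$. Now $\partial P^m_k(c)$ consists of external-ray segments, equipotential arcs, and finitely many landing points $z_1,\dots,z_s$ (the vertices), which are the only points of $\partial P^m_k(c)$ lying in $J(f)$. Choosing $j>k$ pushes the outer equipotential of $P^l_j(c)$ strictly inside $P^m_k(c)$, and a ray segment of $\partial P^m_k(c)$ can be met by $P^l_j(c)$ only through a common vertex $z_i\in\partial P^l_j(c)$. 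Thus it suffices to find $l,j$ with $z_i\notin P^l_j(c)$ for every $i$; as there are finitely many vertices, it is enough to separate $c$ from each $z_i$ individually and take the maximum of the resulting levels and depths.

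For the core separation I would fix a vertex $z_i$, noting it is a (pre)periodic point with $z_i\neq c$ (since $c$ lies on no puzzle boundary), whose eventual period is bounded by the periods of the level-$m$ cut points, all of which are strictly smaller than the level-$m$ renormalization period $p_m$ (the periods $p_0\mid p_1\mid\dots$ grow by Corollary~\ref{cor:lgeq2}). If $c$ is not level-$m$ renormalizable, the pieces $P^m_j(c)$ do not stabilize combinatorially and the piece around $c$ recedes from any fixed (pre)periodic point to which it is not renormalizably attached, so $z_i\notin P^m_j(c)$ for large $j$. If $c$ is level-$m$ renormalizable, Lemma~\ref{lem:rn} furnishes a polynomial-like restriction $f^{p_m}\colon U\to V$ with connected small Julia set $J^m(c)=\bigcap_j P^m_j(c)$, itself periodically repelling. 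When $z_i\notin J^m(c)$, deepening at level $m$ already separates it. When $z_i\in J^m(c)$, its eventual period divides $p_m$, so $z_i$ is a fixed (or preperiodic) point of the renormalization distinct from the central critical point $c$; applying Lemma~\ref{lem:cj} to $f^{p_m}\colon U\to V$, the external rays landing at the fixed points of this polynomial-like map cut $J^m(c)$. These are exactly the additional cut rays defining the level-$(m+1)$ puzzle, so $c$ and $z_i$ land in different level-$(m+1)$ pieces and $z_i\notin P^{m+1}_j(c)$ for large $j$. In every case $z_i$ is separated from $c$ by level at most $m+1$.

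Combining, with $l=m+1$ and $j$ chosen larger than $k$ and larger than all depths produced above, $P^{l}_{j}(c)$ avoids every vertex $z_i$ and the outer equipotential, whence $P^{l}_{j}(c)\Subset\Int(P^m_k(c))$. I expect the main obstacle to be the renormalizable case: one must verify that the newly introduced level-$(m+1)$ cut rays genuinely place the central critical point $c$ and a lower-period boundary vertex $z_i$ on opposite sides of the cut locus. This is precisely where the structure is used — $c$ is the recurrent critical point in the interior of its small-Julia-set piece, whereas $z_i$, having period strictly below $p_m$, is a non-central fixed point of $f^{p_m}$ lying on the cutting rays, and Lemma~\ref{lem:cj} for the periodically repelling map $f^{p_m}\colon U\to V$ is the tool that forces the separation. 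A secondary technical point, to be handled with care, is the non-renormalizable exclusion of persistent vertices together with the bookkeeping in the reduction step (shared ray segments and the equipotential), which I would dispatch using the hypothesis that no critical point lies on a puzzle boundary.
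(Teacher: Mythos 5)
Your reduction to the finitely many Julia points (vertices) on $\partial P^m_k(c)$ is fine, but both branches of your core separation argument have genuine gaps, and your headline claim --- that ascending a single renormalization level always suffices --- is strictly stronger than what the paper proves and is nowhere justified in your proposal. Take first your non-renormalizable case: from ``$c$ is not level-$m$ renormalizable'' you conclude that $P^m_j(c)$ eventually excludes $z_i$. This implication is false. If $z_i\in P^m_j(c)$ for all $j$, what follows (by pushing forward by $f^{q}$, where $q$ is the preperiod of $z_i$, $p$ its period, and $r$ the valence of $f^q(z_i)$) is that $P^m_j(f^{q}(c)) = P^m_j(f^{q+rp}(c))$ for all $j$: it is the pieces around the \emph{forward image} $f^q(c)$ that are combinatorially periodic, not those around $c$. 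This is perfectly compatible with $c$ being non-renormalizable --- for instance, $c$ may map after $q$ steps into the small Julia set of a \emph{different} renormalizable critical point $c'$, in which case $P^m_j(c)$ contains a fixed $f^{-q}$-preimage of a tip of $J^m(c')$ for every $j$. This is exactly why the paper's proof (which is a proof by contradiction) passes to $f^q(c)$ and then runs a dichotomy on whether the pieces $P^l_j(f^q(c))$ contain critical points, rather than your dichotomy on renormalizability of $c$.

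Your renormalizable case has the same defect at its crux. Lemma~\ref{lem:cj}, applied to $f^{p_m}\colon U\to V$, gives that the rays at the fixed points of the renormalization cut $J^m(c)$; it does \emph{not} give that ``$c$ and $z_i$ land in different level-$(m+1)$ pieces.'' Indeed $z_i$ is itself one of the cut points, so it sits on the boundary of the new pieces, and a closed piece $P^{m+1}_j(c)$ may contain $z_i$ on its boundary for every $j$. What must be excluded is $z_i\in\bigcap_j P^{m+1}_j(c)$ --- for example, that the level-$(m+1)$ renormalization is of satellite type attached at $z_i$ --- and this needs a real argument: either that a tip of the level-$m$ small Julia sets corresponds to a non-dividing fixed point of the straightened renormalization, or (as in the paper) that a point of fixed period cannot lie in small Julia sets of \emph{all} levels, because the number of level-$l$ small Julia sets through such a point grows like $p_l/p\to\infty$ while only finitely many rays land at it. Nothing in your proposal supplies this, and note that the paper's contradiction genuinely uses $l\to\infty$ (the periods $p_l$ diverge while the period of $f^q(z_i)$ is fixed); it yields no effective bound such as ``level $m+1$,'' so your stronger assertion would require a separate proof. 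The same gap propagates to your treatment of strictly preperiodic vertices, where the level-$(m+1)$ cut points are not $z_i$ at all and the claimed separation is again only asserted.
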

\begin{proof}
Suppose for contradiction that there exists a Julia point $x$ on the boundary of $P^m_k(c)$ so that $x \in P^l_j(c)$ for all $j, l$.
{Let $q$ be the pre-period and $p$ be the period of the pre-periodic point $x$. The assumption $x \in P^l_j(c)$ for all $j, l$ implies that $f^{q+np}(x)=f^q(x)\in P^l_j(f^{q+np}(c))$ for all $j, l$, and $n$. It follows that $P^l_j(f^{q}(c)) = P^l_j(f^{q+rp}(c))$, where $r$ is the valence of $f^q(x)$ as a branch point of $J(f)$.
}
If for some $l$, $P^l_j(f^{q}(c))$ contains no critical points for all large $j$, then $\bigcap_j P^l_j(f^{q}(c)) = f^{q}(x)$; {i.e., $f^q(c)=f^q(x)$}.
This is a contradiction to the fact that $c$ is not on the boundary of any puzzle piece.
Otherwise, there exists some critical point $c'$ (potentially equal to $c$) which is infinitely renormalizable, and $f^q(x)$ is in the small Julia set $J^l(c')$ for all $l$. But this is not possible as the period of $f^q(x)$ is a fixed integer.
\end{proof}

We are now ready to prove Theorem \ref{thm:ncp}.
\begin{proof}[Proof of Theorem \ref{thm:ncp}]
To prove the first statement, it suffices to show $d_m(f, f_n) \to 0$ for all $m$.
Let $P$ be a puzzle piece of level $m$ and depth $k$.
Since there are no critical points of $f$ on the boundaries of any puzzle pieces, any points in $\partial P$ is eventually repelling and its forward iterate avoids any critical points.
Thus, by stability of landing external rays (see \cite{GM93, GT21}), there exists a small neighborhood $U_{m,k}$ of $f$ so that any map $g \in U_{m,k}$ is aligned with $f$ at depth $k$ of level $m$.
Therefore $d_m(f, f_n) \to 0$ as $n \to \infty$.

For the second statement, by passing to a subsequence, we may assume that $f_n \to g \in \mathcal{C}^\pm_{d, r}$. 
It suffices to show that $d_m(f, g) = 0$ for all $m$.
Let $m$ be the minimal level where $d_m(f, g) \neq 0$, and let $k:= k_m(f, g)$.

We claim that $k > 0$.
By way of contradiction, suppose that $k=0$.
Let $\{t_i\}$ be the set of angles of external rays landing at boundaries of the puzzle pieces at depth $0$ of $\mathfrak{P}^m(f)$.
Note that the Julia points on boundaries of puzzle pieces at depth $0$ for $f$ are periodic points, so $t_i$ are periodic under the map $m_d(t) = dt$ { or $m_{-d}(t)=-dt$}.
Since $d(f, f_n) \to 0$, the external rays with angles in $\{t_i\}$ have the same landing patterns in the dynamical planes of $f_n$ and $f$ for all sufficiently large $n$.
By our assumption, the landing patterns of the external rays with angles in $\{t_i\}$ in the dynamical plane of $g$ are different from the common landing patterns of these rays in the dynamical planes of $f$ and $f_n$ (with sufficiently large $n$).
Thus, by stability of landing external rays, some angle must land at critical points.
But this is not possible as each $t_i$ is periodic.

Let $\widetilde{P}_{k, f}^m$ be a puzzle piece for $f$ which does not correspond to an induced puzzle piece for $g$.
Since $k > 0$, $\widetilde{P}_{k-1, f}^m:=f(\widetilde{P}_{k, f}^m)$ is a puzzle piece at depth $k-1$.
It corresponds to some induced puzzle piece $\widetilde{P}_{k-1, g|f}^m$ for $g$.
We claim that some critical value $v$ of $g$ must lie on $\partial \widetilde{P}_{k-1, g|f}^m$.
Indeed, otherwise, the Julia points on the boundary of $g^{-1}(\widetilde{P}_{k-1, g|f}^m)$ are pre-repelling and avoid critical points. {Since $\widetilde{P}_{k, f}^m$ does not correspond to a puzzle piece for $g$,} this is a contradiction by stability of landing rays.

Let $c_g$ be the critical point of $g$ that is mapped to $v$.
Since $d(f, f_n) \to 0$, the puzzle pieces of $f_n$ at depth $k$ and level $m$ align with $f$ for all large $n$.
Since no critical point of $f$ is on the boundary of puzzle pieces, for all large $n$, no critical point of $f_n$ is on the boundary of puzzle pieces of $f_n$ at depth $k$ and level $m$.
Since $f_n \to g$, there exists a sequence of critical points $c_n$ for $f_n$ with $c_n \to c_g$.
Let $P_{k, f_n}^m(c_n)$ be the puzzle piece that contains $c_n$, which corresponds to some puzzle piece $P_{k, f}^m(n)$ for $f$.
Since there are only finitely many puzzle pieces at depth $k$ and level $m$, we may assume that $P_{k, f}^m(n)$ is eventually constant.
Denote this puzzle piece by $P_{k, f}^m$. Note that it contains some critical point $c$ of $f$.
One can verify that $P_{k, f}^m$ does not correspond to an induced puzzle piece for $g$.
Since $f(P_{k, f}^m)$ corresponds to an induced puzzle piece for $g$, there exist two angles $\alpha, \beta$ landing at distinct points $a, b \in \partial P_{k, f}^m$ for $f$, but they land at a critical point for $g$.
By Lemma \ref{lem:sl}, there exists some puzzle piece $P^{m_1}_{k_1, f}(c)$ that is compactly contained in $\Int(P_{k, f}^m) = \Int(P_{k, f}^m(c))$.
Inductively, we obtain a sequence 
$$
\cdots \Subset \Int(P^{m_2}_{k_2, f}(c)) \Subset \Int(P^{m_1}_{k_1, f}(c)) \Subset \Int(P^{m}_{k, f}(c)).
$$
Therefore, we can find a sequence of pairs of angles $t_i, s_i$ so that 
\begin{itemize}
\item $t_i, s_i$ coland at a point on $\partial P^{m_i}_{k_i, f}(c)$;
\item the union of external rays $\overline{\mathcal{R}_f(t_i)} \cup \overline{\mathcal{R}_f(s_i)}$ separates the two points $a, b \in \partial P_{k, f}^m(c)$.
\end{itemize}
Therefore, $\mathcal{R}_{f_n}(t_i)$ and $\mathcal{R}_{f_n}(s_i)$ land at the same point for all sufficiently large $n$. 
On the other hand, since $\alpha, \beta$ coland for $g$, the union $\overline{\mathcal{R}_g(\alpha)} \cup \overline{\mathcal{R}_g(\beta)}$ separates the external rays $\mathcal{R}_g(t_i)$ and $\mathcal{R}_g(s_i)$.
It follows that both $\mathcal{R}_g(t_i)$ and $\mathcal{R}_g(s_i)$ land at $c_g$ (see \cite[Lemma 9.9]{GT21}).
This is a contradiction, as there can be at most finitely many external rays landing at a pre-periodic~point.
\end{proof}



\subsubsection{With boundary critical points}
To deal with the situation where critical points lie on boundaries of puzzle pieces, we need to modify the definition of combinatorial distances.

Let $f \in \mathcal{C}^\pm_{d, r}$ with puzzles $\mathfrak{P}^m$.
Let $c$ be a critical point on the boundary of some puzzle piece.
Then $c$ is pre-periodic.
Denote by $\mathcal{O}$ the periodic cycle associated to $c$.
We define a modification of $\mathfrak{P}^m$ by removing all external rays landing on $\mathcal{O}$ and their iterated preimages in the definition of puzzle pieces.

As the first step of the modification, we denote by $\widetilde{\mathfrak{P}}^m$ the modified puzzle after removing all external rays associated with all critical points on the boundaries of puzzle pieces.


\begin{defn}\label{pers_renorm_def}
We say that $c$ is {\em statically renormalizable} with respect to the modified puzzle $\widetilde{\mathfrak{P}}^{m_0}$ if $c$ is renormalizable with respect to the modified puzzle $\widetilde{\mathfrak{P}}^{m_0}$ and
$$
\bigcap_k \widetilde{P}^m_k(c) = \bigcap_k \widetilde{P}^{m_0}_k(c)
$$ 
for all $m \geq m_0$.
We denote by
$$
J^\infty(c) = J^{m_0}(c) := \bigcap_k \widetilde{P}^{m_0}_k(c) = \bigcap_m \bigcap_k \widetilde{P}^m_k(c)
$$
the {\em static small Julia set} for $c$.
Similarly, we say that a critical point $c$ is {\em statically pre-renormalizable} if $c$ is eventually mapped to some static small Julia set.
\end{defn}

    We remark that if $c$ is statically renormalizable, then $c$ is renormalizable with respect to $\widetilde{\mathfrak{P}}^{m}$ for all $m$.
    However, for all $m \geq m_0$, the renormalization with respect to $\widetilde{\mathfrak{P}}^{m}$ stays the same, as the condition $\bigcap_k \widetilde{P}^m_k(c) = \bigcap_k \widetilde{P}^{m_0}_k(c)$ implies that all higher level puzzle pieces do not cut through the small Julia set $J^{m_0}(c)$.
    This is a consequence of the fact we generate the puzzle by rays landing at the fixed points of renormalizations.

\begin{lem}\label{lem:fr}
Let $f \in \mathcal{C}^\pm_{d, r}$.
Let $c$ be a statically renormalizable critical point with respect to $\widetilde{\mathfrak{P}}^{m_0}$.
Then the corresponding polynomial-like map
$$
f^{p_{m_0}}: U^{m_0} \longrightarrow V^{m_0}
$$
is finitely renormalizable.
\end{lem}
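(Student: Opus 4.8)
The plan is to relate the renormalization structure of the polynomial-like map $R := f^{p_{m_0}}\colon U^{m_0}\to V^{m_0}$ to the higher-level modified renormalization puzzles of $f$, and then to exploit the stabilization built into the definition of static renormalizability to bound the renormalization depth of $R$. First I would record the correspondence between puzzles. By Lemma~\ref{lem:rn} the small Julia set $J^{m_0}(c)=\bigcap_k\widetilde{P}^{m_0}_k(c)$ is exactly the Julia set $J(R)$ of $R$, and $R|_{J(R)}$ is a periodically repelling polynomial-like restriction with connected Julia set. The level-$m$ modified puzzles $\widetilde{\mathfrak{P}}^{m}$ of $f$ for $m>m_0$ are generated precisely by the rays landing at fixed points of the renormalizations occurring inside $J(R)$, which are exactly the data building the renormalization puzzle of $R$. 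Thus $\widetilde{P}^{m}_{k}(z)\cap J(R)$ agrees with a puzzle piece of $R$ for every $z\in J(R)$, and in particular the small Julia sets match level by level.

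Next I would argue by contradiction: suppose $R$ is infinitely renormalizable. Then some critical point $c^\ast\in J(R)$ of $R$ is renormalizable at arbitrarily high levels, so for infinitely many $m>m_0$ the puzzle $\widetilde{\mathfrak{P}}^{m}$ strictly refines the previous one inside $J(R)$; concretely, a new ray lands at a periodic point $x\in J(R)$ (a fixed point of a renormalization of $R$) and, together with its $R$-preimages, cuts $J(R)$. The essential mechanism is that $R|_{J(R)}$ is topologically conjugate to a polynomial with connected Julia set, so the backward orbit $\bigcup_n R^{-n}(x)$ is dense in $J(R)$; hence some iterated preimage of this cut separates $c$ from a nonempty portion of $J(R)$. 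Pulling this back through the identification $R=f^{p_{m_0}}$, the corresponding ray of $\widetilde{\mathfrak{P}}^{m}$ (for a suitable $m>m_0$) cuts $\widetilde{P}^{m_0}_{k}(c)$, forcing $\bigcap_k\widetilde{P}^{m}_k(c)\subsetneq\bigcap_k\widetilde{P}^{m_0}_k(c)=J^{m_0}(c)$. This contradicts the defining stabilization $\bigcap_k\widetilde{P}^{m}_k(c)=\bigcap_k\widetilde{P}^{m_0}_k(c)$ of static renormalizability; it is exactly the content of the remark following Definition~\ref{pers_renorm_def} that no higher-level puzzle cuts through $J^{m_0}(c)$, and the density-of-preimages argument is the justification I would supply.

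Having ruled out infinite renormalizability, I would conclude that the renormalization puzzle sequence $\mathfrak{P}^j(R)$ is eventually constant, i.e.\ $R$ is finitely renormalizable. Here Lemma~\ref{lem:rn} guarantees each renormalization of $R$ genuinely produces a polynomial-like map with connected small Julia set, and Corollary~\ref{cor:lgeq2} ensures renormalization periods are at least $2$, so that "renormalizable" always corresponds to a proper refinement and the termination is meaningful.

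The main obstacle is the passage from the stabilization of the single small Julia set $J^{m_0}(c)$—a condition imposed on $c$ alone—to the absence of \emph{any} further cutting anywhere in $J(R)$, which must simultaneously control all critical points of $R$, including $c^\ast\neq c$. The crux is the propagation step: because $R$ acts on its Julia set as a periodically repelling polynomial-like map with connected Julia set, any single cut is spread densely under taking preimages, so a cut near $c^\ast$ forces a cut near $c$. Making this precise—verifying that the preimage cuts genuinely appear as rays of $\widetilde{\mathfrak{P}}^{m}$ at a controlled level $m$, and that they strictly shrink the piece containing $c$ rather than merely refining pieces disjoint from it—is the technical heart of the argument, and is where I expect the careful bookkeeping of the modified puzzle (with rays at boundary-critical cycles removed) to be indispensable.
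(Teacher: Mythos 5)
Your overall strategy is the same as the paper's: derive a contradiction between infinite renormalizability of $R=f^{p_{m_0}}$, which keeps producing new cut cycles inside $J^\infty(c)=J(R)$, and the stabilization $\bigcap_k\widetilde{P}^m_k(c)=\bigcap_k\widetilde{P}^{m_0}_k(c)$ built into static renormalizability. However, the step you single out as the technical heart --- propagating a cut from an arbitrary critical point $c^\ast$ of $R$ to a cut near $c$ via density of backward orbits --- is unnecessary. Since the level-$m$ modified puzzle refines the level-$m_0$ one, we have $\bigcap_k\widetilde{P}^m_k(c)\subseteq J^\infty(c)$; and if any level-$m$ ray configuration cuts $J^\infty(c)$ \emph{anywhere}, then $J^\infty(c)$ meets at least two complementary components of that configuration, so the depth-zero piece of $\widetilde{\mathfrak{P}}^m$ containing $c$ already fails to contain all of $J^\infty(c)$, giving $\bigcap_k\widetilde{P}^m_k(c)\subsetneq J^\infty(c)$ at once. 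In other words, your worry that a cut might ``merely refine pieces disjoint from'' the piece containing $c$ is vacuous: the existence of pieces disjoint from $c$'s piece but meeting $J^\infty(c)$ is itself the violation of stabilization. This is exactly how the paper's proof runs, with no propagation step at all.

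What your write-up defers --- and what is the actual content of the proof --- is the bookkeeping for the modification. You assert that for infinitely many $m>m_0$ a new ray of the \emph{modified} puzzle $\widetilde{\mathfrak{P}}^m$ lands at a periodic point of $J(R)$ and cuts it, but the modification deletes precisely certain rays landing at periodic cycles, so one must rule out that all the new cut cycles produced by the renormalizations of $R$ lie among the deleted ones. The paper disposes of this with a counting argument: the modification removes rays landing at only finitely many cycles (one cycle per critical point of $f$ on a puzzle boundary), whereas infinite renormalizability of $R$ produces infinitely many \emph{distinct} cut cycles inside $J^\infty(c)$ (the renormalization periods strictly increase, by the argument of Corollary~\ref{cor:lgeq2} applied at each level), so all but finitely many of these cuts survive in $\widetilde{\mathfrak{P}}^m$. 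Once you add this one-line count and delete the density-of-preimages detour, your argument coincides with the paper's proof.
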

\begin{proof}
If $f^{p_{m_0}}: U^{m_0} \longrightarrow V^{m_0}$ is infinitely renormalizable, then as $m \to \infty$, there are infinitely many puzzle pieces of $\mathfrak{P}^m$ at depth $0$ that cut through the small Julia set.
The modified puzzles can only omit finitely many of them.
Therefore, as $m \to \infty$, there exist an unbounded number of modified puzzle pieces of $\widetilde{\mathfrak{P}}^m$ at depth $0$ that cut through the small Julia set $J^\infty(c)$. 
Since $c$ is a statically renormalizable, there are no modified puzzle pieces of $\widetilde{\mathfrak{P}}^m$ at depth $0$ that cut through $J^\infty(c)$.
This is a contradiction.
\end{proof}

The second step of the modification is not canonical.
Let $c$ be a statically renormalizable critical point for $\widetilde{\mathfrak{P}}^{m_0}$, with static small Julia set $J^\infty(c)$.
We choose periodic cut cycles in $J^\infty(c)$ disjoint from the critical orbit of $f$.
We add puzzle pieces induced by the external rays landing at these cut points as depth zero puzzles in $\widetilde{\mathfrak{P}}^{m}$ for all $m \geq m_0$.
Pulling back by dynamics, we add depth $k$ puzzles in $\widetilde{\mathfrak{P}}^{m}$.
By Lemma \ref{lem:fr}, we can choose these periodic cut cycles so that no critical point in $J^\infty(c)$ is renormalizable with respect to the new modified puzzle.
By making such modifications for each static renormalization critical point, we obtain a new puzzle system.
We denote it by $\widehat{\mathfrak{P}}$.
We denote by $\widehat{d}(f, g)$ the distance from $g$ to $f$ with respect to the modified puzzle $\widehat{\mathfrak{P}}$.

By our construction, any static critical point is only finitely renormalizable with respect to the modified puzzle $\widehat{\mathfrak{P}}$. 
A similar argument as Lemma \ref{lem:rl} gives
\begin{lem}\label{lem:rl2}
Let $f, g\in \mathcal{C}^\pm_{d, r}$. 
Then $\widehat{d}(f, g) = 0$ if and only if $f, g$ have the same rational lamination.
\end{lem}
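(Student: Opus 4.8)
The plan is to follow the proof of Lemma~\ref{lem:rl} almost verbatim, with the modified puzzle system $\widehat{\mathfrak{P}}$ replacing $\mathfrak{P}^m$ throughout, and with the property established just above the lemma (that every static critical point is only finitely renormalizable with respect to $\widehat{\mathfrak{P}}$) supplying the input that previously came from the unmodified renormalization periods. For the easy implication, suppose $f$ and $g$ have the same rational lamination. Every ray used to build $\widehat{\mathfrak{P}}(f)$ — the fixed-point rays, the rays landing on the periodic cycles $\mathcal{O}$ associated with boundary critical points that are \emph{removed}, and the rays landing at the chosen cut cycles inside each static small Julia set $J^\infty(c)$ that are \emph{added} — has rational angle and lands at a (pre)periodic point. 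Since the rational lamination records exactly the colanding patterns of all such rays, the point $x\in J(g)$ carrying the same ray angles exists for each relevant landing point of $f$, and the combinatorics of which rays bound which modified puzzle piece is identical for $g$. Hence $g$ is aligned with $f$ at every depth of every level, so $k_m(f,g)=\infty$ for all $m$ and $\widehat{d}(f,g)=0$.

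For the converse, assume $\widehat{d}(f,g)=0$ and let $\{s,t\}$ be a leaf of $\lambda(f)$, i.e.\ two rational angles colanding at a pre-periodic point $x$. If $x$ lies on the boundary of some modified puzzle piece of $\widehat{\mathfrak{P}}(f)$, then alignment at the corresponding depth and level immediately yields a point $q\in J(g)$ at which the rays of angles $s,t$ coland, so $\{s,t\}$ is a leaf for $g$. Otherwise $x$ is interior to the puzzles at every depth (this includes the case where $x$ sits on a removed critical cycle), and I would argue as in Lemma~\ref{lem:rl} that $\bigcap_k \widehat{P}^m_k(x)=\{x\}$ for a suitable level $m$. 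Granting this, there is a sequence of colanding rational pairs $\{s_k,t_k\}$, anchored at genuine boundary points of the modified puzzle pieces, with $s_k\to s$ and $t_k\to t$; each such pair is a leaf for $g$ by alignment, and closedness of rational laminations promotes $\{s,t\}$ to a leaf of $\lambda(g)$. This gives $\lambda(f)\subseteq\lambda(g)$, and the reverse inclusion is symmetric.

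The heart of the matter is the shrinking statement $\bigcap_k \widehat{P}^m_k(x)=\{x\}$. After applying a suitable iterate and passing to $y=f^l(x)$, where $x$ has pre-period $l$ and period $p$, the only way the modified puzzle pieces fail to collapse to a point is if $y$ is trapped in a small Julia set of an infinitely renormalizable critical point of $\widehat{\mathfrak{P}}$. There are two types of offending critical points, and the construction of $\widehat{\mathfrak{P}}$ neutralizes both: a non-static critical point that is infinitely renormalizable has renormalization periods tending to infinity, so choosing $m$ with the level-$m$ renormalization period exceeding $p$ forces $y$ out of its small Julia sets exactly as in Lemma~\ref{lem:rl}; a static critical point $c$ is, by construction (using Lemma~\ref{lem:fr} together with the cut cycles added inside $J^\infty(c)$ so as to avoid the critical orbit and kill renormalizability), only finitely renormalizable with respect to $\widehat{\mathfrak{P}}$, so the added cut structure subdivides $J^\infty(c)$ finely enough that no modified puzzle piece inside it persists. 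Combining the two cases shows that no pre-periodic $x$ off the modified boundaries can be trapped, giving the desired collapse.

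I expect the main obstacle to be the bookkeeping around the two non-canonical modification steps — the removal of the rays on $\mathcal{O}$ and the re-insertion of cut cycles inside each static small Julia set — and verifying that, despite removing rays that genuinely carry rational-lamination leaves, no colanding information is lost. The resolution is that the removed leaves are never needed \emph{directly}: they are recovered as limits of leaves anchored at retained boundary points via the collapse statement and the closedness of $\lambda(\cdot)$, precisely as in the non-boundary case of Lemma~\ref{lem:rl}. Making this limiting argument work uniformly for points on the removed critical cycles, and checking that the added cut cycles can always be chosen inside $J^\infty(c)$ disjoint from the critical orbit (so that the puzzle pieces inside $J^\infty(c)$ honestly shrink), is the delicate part that must be carried out with care.
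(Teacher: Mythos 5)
Your proposal is correct and takes essentially the same route as the paper: the paper's own proof of Lemma~\ref{lem:rl2} is a one-line reference back to the argument of Lemma~\ref{lem:rl}, resting on exactly the two facts you isolate — that by construction of $\widehat{\mathfrak{P}}$ no critical point lies on a puzzle boundary and every static critical point is only finitely renormalizable, while non-static infinitely renormalizable critical points have renormalization periods tending to infinity. Your expanded treatment of the shrinking statement $\bigcap_k \widehat{P}^m_k(x)=\{x\}$ (including points on removed critical cycles) and the recovery of removed leaves via approximating colanding pairs and closedness of the rational lamination is a faithful filling-in of the details the paper leaves implicit.
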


By construction, no critical point is on the boundary of puzzle pieces in $\widehat{\mathfrak{P}}$. Thus, the proof of Theorem~\ref{thm:ncp} applies to this setting as well, and yields:
\begin{theorem}\label{thm:ccf}
Let $f, f_n \in \mathcal{C}^\pm_{d, r}$.
\begin{enumerate}
\item If $f_n \to f$ in $\mathcal{C}^\pm_{d, r}$, then $\widehat{d}(f, f_n) \to 0$.
\item If $\widehat{d}(f, f_n) \to 0$, 
then for any accumulation point $g\in \mathcal{C}^\pm_{d, r}$ of $f_n$, we have that $\widehat{d}(f, g) = 0$.
\end{enumerate}
\end{theorem}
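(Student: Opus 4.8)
The plan is to reduce Theorem~\ref{thm:ccf} to the already-established Theorem~\ref{thm:ncp} by exploiting the defining feature of the modified puzzle system $\widehat{\mathfrak{P}}$: by construction, no critical point of $f$ lies on the boundary of any puzzle piece of $\widehat{\mathfrak{P}}$. Indeed, every critical point sitting on the boundary of a piece of the original puzzle $\mathfrak{P}^m$ is pre-periodic, and in passing to $\widetilde{\mathfrak{P}}^m$ we deleted all external rays landing on the associated periodic cycles together with their iterated preimages; the second-step modification only adjoins rays landing at cut cycles chosen to be disjoint from the critical orbit. Thus the hypothesis of Theorem~\ref{thm:ncp} is satisfied once $\mathfrak{P}^m$ is replaced by $\widehat{\mathfrak{P}}$ and $d$ by $\widehat{d}$, and the strategy is to check that each ingredient of the proof of Theorem~\ref{thm:ncp} survives this substitution, with Lemma~\ref{lem:rl2} taking the role of Lemma~\ref{lem:rl}.

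For statement (1) the argument is essentially unchanged. Fix a piece $P$ of $\widehat{\mathfrak{P}}$ at depth $k$ and level $m$. Since no critical point lies on $\partial P$, every Julia point of $\partial P$ is eventually repelling and its forward orbit avoids the critical set, so by stability of landing external rays (\cite{GM93,GT21}) there is a neighbourhood of $f$ on which every map is aligned with $f$ at depth $k$ and level $m$. Hence the level-$m$ contribution $\widehat{d}_m(f,f_n)\to 0$ for every $m$, and therefore $\widehat{d}(f,f_n)\to 0$.

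For statement (2) I would replay the proof of Theorem~\ref{thm:ncp} line by line. The only non-formal input is the Separation Lemma, so the real work is to establish the analogue of Lemma~\ref{lem:sl} for $\widehat{\mathfrak{P}}$: for each critical point $c$ and each piece $\widehat{P}^m_k(c)$ there are $l,j$ with $\widehat{P}^l_j(c)\Subset\Int(\widehat{P}^m_k(c))$. The proof of Lemma~\ref{lem:sl} rests on a dichotomy: either some modified piece around $f^q(c)$ eventually contains no critical point (contradicting that $c$ is not on a boundary), or there is a critical point $c'$ that is infinitely renormalizable with $f^q(x)\in J^l(c')$ for all $l$. The second alternative is exactly what the second-step modification was designed to rule out: by Lemma~\ref{lem:fr} the polynomial-like restriction at a statically renormalizable critical point is only finitely renormalizable, so we can (and did) add finitely many cut cycles inside each static small Julia set $J^\infty(c)$ so that no critical point remains renormalizable with respect to $\widehat{\mathfrak{P}}$. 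With both branches excluded, the separation statement follows, and the remainder of the proof of Theorem~\ref{thm:ncp} --- the induction showing $k_m(f,g)>0$ and the construction of a nested sequence of pieces forcing two rays to coland at a critical point of $g$, in contradiction with the finiteness of rays landing at a pre-periodic point --- carries over verbatim.

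I expect the main obstacle to be precisely the verification that $\widehat{\mathfrak{P}}$ supports the Separation Lemma, i.e.\ confirming that eliminating boundary critical points and breaking static renormalizations does not secretly create a new infinitely renormalizable critical point relative to $\widehat{\mathfrak{P}}$. This is where Lemma~\ref{lem:fr} is indispensable, since it guarantees that the renormalization one must destroy is finite and hence can be destroyed by adjoining finitely many periodic cut cycles disjoint from the critical orbit. Once this separation is in hand, everything else is a transcription of the no-boundary-critical-point case, and the two statements follow as in Theorem~\ref{thm:ncp}.
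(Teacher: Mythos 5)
Your proposal is correct and follows the paper's proof exactly: the paper likewise observes that, by construction, no critical point lies on the boundary of any puzzle piece of $\widehat{\mathfrak{P}}$, and then simply invokes the proof of Theorem~\ref{thm:ncp} (with Lemma~\ref{lem:rl2} playing the role of Lemma~\ref{lem:rl}). One caveat on your treatment of the Separation Lemma: the second branch is not ruled out solely by the second-step modification --- a genuinely infinitely renormalizable critical point (e.g.\ Feigenbaum-like, where $\widehat{\mathfrak{P}}$ coincides with $\mathfrak{P}$) can perfectly well remain infinitely renormalizable with respect to $\widehat{\mathfrak{P}}$; for such $c'$ the renormalization periods tend to infinity and the contradiction is, as in the original Lemma~\ref{lem:sl}, that $f^q(x)$ has fixed period, while your construction-based argument only excludes the static (frozen-period) renormalizations, so the claim ``no critical point remains renormalizable with respect to $\widehat{\mathfrak{P}}$'' should be weakened accordingly.
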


\subsection{Approximation by post-critically finite maps}
In this subsection, we will prove the following theorem about approximating any combinatorial class using post-critically finite maps.
\begin{theorem}\label{thn:approxbypcf}
Let $f\in \mathcal{C}^\pm_{d, r}$. Then there exists a sequence of post-critically finite polynomials $f_n \in \mathcal{C}^\pm_{d, r}$ so that $\widehat{d}(f, f_n) \to 0$.
\end{theorem}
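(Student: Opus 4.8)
The plan is to produce, for each $n$, a post-critically finite map $f_n\in\mathcal{C}^\pm_{d,r}$ whose rational lamination reproduces the combinatorics of $f$ up to level $n$ and depth $n$ of the modified puzzle $\widehat{\mathfrak{P}}$, and then to exploit the exponential weighting in the definition of $\widehat{d}$. First I would note that alignment of a map $g$ with $f$ at depth $k$ of level $m$ depends only on a finite amount of data: the angles of the finitely many external rays landing at the boundary Julia points of the puzzle pieces $\widehat{P}^m_{k,f}$, together with their colanding pattern. Let $\lambda_n\subseteq\widehat{\lambda(f)}$ be the finite sub-relation generated by these colandings over all pairs $(m,k)$ with $m,k\le n$; these are periodic or pre-periodic angles, closed under $m_{\pm d}$ up to the relevant depth. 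Since $d_m(f,g)=e^{-(k_m+1)(m+1)}\le e^{-(m+1)}$ always, and since alignment of $f_n$ with $f$ at depth $n$ of level $n$ forces $k_m(f,f_n)\ge n+1$ for every $m\le n$, a direct estimate gives $\widehat{d}(f,f_n)\to 0$ as $n\to\infty$. Thus it suffices to construct, for each $n$, such an aligned post-critically finite $f_n\in\mathcal{C}^\pm_{d,r}$.

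The substance is to realize $\lambda_n$ by a Misiurewicz map. I would complete $\lambda_n$ to a full rational lamination $\Lambda_n\supseteq\lambda_n$ (as equivalence relations) that simultaneously satisfies the Kiwi-type admissibility conditions of the sort recorded in Proposition~\ref{geom_finite_laminations_prop}, and corresponds to a map all of whose critical points are strictly pre-periodic. The mechanism for the latter is to reroute critical orbits: after passing to $\widehat{\mathfrak{P}}$, no critical point lies on the boundary of a puzzle piece and every static renormalization is finite, so by the Separation Lemma~\ref{lem:sl} the deep puzzle pieces $\widehat{P}^m_k(c)$ around each critical point $c$ are compactly nested. For $n$ large I would select a repelling periodic cut cycle inside a puzzle piece carrying the tail of the critical orbit beyond depth and level $n$, and declare the critical orbit to fall onto this cycle. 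This modification leaves the combinatorics at depths and levels $\le n$ untouched while forcing all critical points to become strictly pre-periodic, producing an abstract post-critically finite combinatorial model agreeing with $f$ up to depth $n$ of level $n$.

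Finally I would realize $\Lambda_n$ dynamically. Applying the realization theorem \cite[Theorem~9.6]{LMMN} in the holomorphic case, and its orientation-reversing counterpart (built on the anti-holomorphic Thurston realization theorem, cf. \cite[Proposition~6.1]{LLMM4}, \cite[Theorem~3.9]{Gey20}) in the anti-holomorphic case, produces a post-critically finite (anti-)polynomial $f_n$ with $\widehat{\lambda(f_n)}=\Lambda_n$. Because every critical point of $f_n$ is strictly pre-periodic, $f_n$ is a Misiurewicz map: its Julia set is connected and all its cycles are repelling, so indeed $f_n\in\mathcal{C}^\pm_{d,r}$. Since $\Lambda_n\supseteq\lambda_n$ as equivalence relations, every set of rays colanding in $f$ at a boundary point of a puzzle piece of depth/level $\le n$ also colands for $f_n$, so $f_n$ is aligned with $f$ at depth $n$ of level $n$, giving $\widehat{d}(f,f_n)\to 0$; combined with Lemma~\ref{lem:rl2} and Theorem~\ref{thm:ccf} this pins down the combinatorial class of any accumulation point.

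The main obstacle is the closing-up step of the second paragraph: one must build $\Lambda_n$ so that it simultaneously contains the prescribed finite data, verifies every admissibility condition needed to apply the realization theorem, and has only strictly pre-periodic critical points, so that the realized map is genuinely periodically repelling rather than carrying a super-attracting cycle. The delicate point is that rerouting a critical orbit onto a repelling cycle must not create any new colanding relation that contradicts the controlled combinatorics of $f$ at depths and levels $\le n$; this is exactly where the compact nesting of deep puzzle pieces from Lemma~\ref{lem:sl}, together with the stability of landing rays invoked in the proof of Theorem~\ref{thm:ccf}, does the essential work.
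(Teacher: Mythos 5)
Your reduction in the first paragraph (alignment at depth $n$ of level $n$ plus the exponential weights forces $\widehat{d}(f,f_n)\to 0$) is correct, and your overall strategy---truncate the combinatorics, complete the truncation to a post-critically finite lamination, realize it---is different from the paper's and could in principle work. But as written there is a genuine gap, in two places. First, the inference in your third paragraph, ``since $\Lambda_n\supseteq\lambda_n$, every set of rays colanding for $f$ also colands for $f_n$, hence $f_n$ is aligned with $f$,'' is wrong: alignment in this paper requires that the set of angles landing at the corresponding point $q\in J(f_n)$ be \emph{equal} to the set of angles landing at $p\in J(f)$, not merely that it contain it. If your completion $\Lambda_n$ merges two classes of $\lambda_n$, or adds a new angle into one of them (which is exactly what rerouting a critical orbit can do), the old colandings persist but alignment fails. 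The paper itself records this phenomenon in the remark following Theorem~\ref{thm:ncp}: with $f$ the Feigenbaum polynomial and $g$ the basilica tuned with the Chebyshev polynomial, all the relevant colandings of $f$ persist for $g$, yet $d_0(f,g)\neq 0$ because two rays landing at distinct Julia points for $f$ coland at the critical point of $g$. So what you actually need is that $\Lambda_n$, restricted to the finitely many angles of $\lambda_n$, coincides with $\lambda_n$---and ensuring this is precisely the delicate point you defer.

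Second, the step you defer---completing $\lambda_n$ to an invariant, admissible rational lamination all of whose critical classes are strictly pre-periodic, realizable by a polynomial, and creating no new identifications among the angles of $\lambda_n$---is the entire technical substance of the theorem, and it is only sketched. ``Declaring the critical orbit to fall onto a repelling cut cycle'' is a combinatorial surgery whose output must be verified to be an $m_{\pm d}$-invariant equivalence relation satisfying the hypotheses of whichever realization theorem you invoke; moreover \cite[Theorem~9.6]{LMMN} is used in this paper for laminations of geometrically finite maps, so its applicability to your abstractly constructed $\Lambda_n$ itself needs an argument. The paper's proof avoids all of this by quoting Kiwi's approximation theorem \cite[\S 6.8]{Kiw04} wholesale: it supplies post-critically finite maps $f_n$ \emph{converging} to a periodically repelling $g$ with $\lambda(g)=\lambda(f)$, and alignment then comes from Theorem~\ref{thm:ccf}(1), i.e.\ from actual convergence $f_n\to g$ together with stability of landing rays, plus the fact that the puzzles are recovered from the lamination, so $\widehat{d}(f,\cdot)=\widehat{d}(g,\cdot)$---never from any containment of laminations. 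If you want to keep your constructive route, the missing work amounts to reproving Kiwi's theorem; citing it directly is the short path.
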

\begin{proof}
    Let $\lambda(f)$ be the rational lamination of $f$.
    By \cite[\S 6.8]{Kiw04}, there exists a sequence of post-critically finite $f_n$ converging to {a periodically repelling map} $g$ so that $\lambda(g) = \lambda(f)$.
    Since $f$ is periodically repelling, $f_n$ can be chosen to be periodically repelling as well.
    Then by Theorem \ref{thm:ccf}, we have  {$\widehat{d}(g, f_n) \to 0$. Since $\lambda(g) = \lambda(f)$ (and the construction of the puzzles can be uniquely recovered from the lamination), it follows that $\widehat{d}(f, f_n) \to 0$.}
\end{proof}

\section{Combinatorial classes for polygonal Schwarz reflections}\label{comb_class_schwarz_sec}

Recall that for a Schwarz reflection map $S:\overline{\cD}\to\widehat{\C}$, the set $\mathfrak{S}$ stands for the singular (fixed) points on the boundary $\partial\cD$. The local dynamics near a singular point is parabolic in nature. In fact, for $S\in\mathscr{S}_{\pmb{\cN}_d}$, these singular fixed points have attracting directions in the tiling set. Indeed, the points in $\mathfrak{S}$ correspond, under the conformal map $\psi:\D\to T^\infty(S)$ that conjugates $\pmb{\cN}_d$ to $S$, to the (parabolic) fixed points of $\pmb{\cN}_d$. The set of (parabolic) fixed points of $\pmb{\cN}_d$, denoted by $\mathrm{Fix}(\pmb{\cN}_d)$, consists of the $(d+1)-$st roots of unity. It is easy to see that points in $\mathrm{Fix}(\pmb{\cN}_d)$ attract nearby points in $\D$ under iterations of $\pmb{\cN}_d$.

We define the space $\mathscr{S}_{\pmb{\cN}_d,r}\subset\mathscr{S}_{\pmb{\cN}_d}$ as the collection of those maps $S$ for which 
\begin{enumerate}
\item no point in $\mathfrak{S}$ has an attracting direction in $K(S)$, and 
\item each periodic point of $S$ not in $\mathfrak{S}$ is repelling.
\end{enumerate}

\subsection{Renormalization puzzles}
{Recall that $T^0(S)$ is the fundamental tile for the Schwarz reflection map $S$, and connected components of $S^{-n}(T^0(S))$ are called tiles of rank $n$. As in Proposition~\ref{basic_top_prop}, we denote the union of the tiles of rank $0$ through $k$ by $E^k$. Clearly, $E^0=T^0(S)$.
We define the {\em depth zero equipotential} for $S$ as $\partial E^0$.
Similarly, the {\em depth $k$ equipotential} is defined as $\partial E^k$.}
Note that these equipotentials enclose $K(S)$; however, unlike the polynomial or anti-polynomial setting, an equipotential for a Schwarz reflection always intersects the limit set $\Lambda(S)$.
We also remark that any intersection point of an equipotential of depth zero with the limit set $\Lambda(S)$ is a fixed point of $S$.

To define level zero renormalization puzzles for a Schwarz reflection $S \in \mathscr{S}_{\pmb{\cN}_d,r}$, we consider the union $\bigcup \overline{\pmb{\cR}}$ of closures of all dynamical rays landing at all fixed points of $S$ (see Section~\ref{subsec:dl} for the definition of dynamical rays for Schwarz reflections in $\mathscr{S}_{\pmb{\cN}_d}$).
A similar counting argument as in Lemma \ref{lem:cj} shows that this closure cuts the limit set $\Lambda(S)$.
{We define puzzle pieces at depth zero for level zero renormalization as closures of the connected components of 
$$
\widehat\C - \overline{E^0} - \bigcup \overline{\pmb{\cR}}
$$
intersecting $\Lambda(S)$.}
Similarly as in the polynomial case, the puzzle pieces at depth $k+1$ are defined inductively as closures of the components of $S^{-1}(\Int(P))$, where $P$ ranges over all puzzle pieces at depth $k$.
We will use a similar notation as in the polynomial case, where $P_k(z)$ represents the depth $k$ puzzle piece that contains $z$ in its interior.

Renormalization can be defined similarly.
\begin{defn}\label{defn:lzr1}
Let $c$ be a critical point for $S$.
We say that $c$ is {\em level zero renormalizable} if there exists an integer $p \geq 1$ so that 
$$
P_k(c) = P_k(S^p(c)) \text{ for all } k\geq 0.
$$
The smallest such integer $p$ is called the {\em renormalization period}.

We say that $S$ is {\em level zero renormalizable} if \st{any} some critical point of $S$ is {\em level zero renormalizable}, and {\em level zero non-renormalizable} otherwise.
\end{defn}

Similar to the polynomial setting, we have the following lemma (c.f. Lemma \ref{lem:rn}).
\begin{lem}\label{lem:rns}
If $c$ is level zero renormalizable with renormalization period $p>2$, then there exist open sets $U\ni c, V\ni c$ with $U$ compactly contained in $V$ so that 
$$
S^p: U \longrightarrow V
$$
is a polynomial/anti-polynomial-like map with small Julia set $J(c) := \bigcap_k P_k(c)$.
\end{lem}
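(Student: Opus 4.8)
The plan is to follow the proof of Lemma~\ref{lem:rn} from the (anti-)polynomial setting, adapting it to the geometry of Schwarz reflections, where the genuinely new feature is that equipotentials and puzzle-piece boundaries meet the limit set $\Lambda(S)$ only at (pre-)periodic points, among which the parabolic singular points of $\mathfrak{S}$ require special care. First I would exploit the nested structure of the level zero puzzle: the pieces $\{P_k(c)\}_{k\geq 0}$ form a decreasing sequence of closed sets with $\bigcap_k P_k(c)=J(c)$, so there is an integer $N$ such that every critical point $c'$ of $S$ satisfies either $c'\in P_k(c)$ for all $k\geq N$, or $c'\notin P_k(c)$ for all $k\geq N$. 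Fixing $k\geq N$, the renormalizability relation $P_k(c)=P_k(S^p(c))$ together with the inductive (pull-back) definition of the puzzle pieces shows that
$$
S^p:P_{k+p}(c)\longrightarrow P_k(S^p(c))=P_k(c)
$$
is a proper branched covering with $P_{k+p}(c)\subseteq P_k(c)$.

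The heart of the argument is to thicken these pieces into open sets $U\Subset V$ on which $S^p$ becomes a genuine (anti-)polynomial-like map. I would obtain $V$ by pushing the equipotential arcs bounding $P_k(c)$ slightly outward into tiles of higher rank, and take $U$ to be the component of $S^{-p}(V)$ containing $c$. The inclusion $P_{k+p}(c)\subseteq P_k(c)$ upgrades to compact containment $U\Subset V$ exactly when the points of $\Lambda(S)$ shared by $\partial P_{k+p}(c)$ and $\partial P_k(c)$ are repelling. These shared points are landing points of the finitely many dynamical rays bounding the pieces, hence (pre-)periodic by Propositions~\ref{per_rays_land} and~\ref{rep_para_landing_point}; by the defining conditions of $\mathscr{S}_{\pmb{\cN}_d,r}$ every periodic point outside $\mathfrak{S}$ is repelling, and the hypothesis $p>2$, combined with the fact (from the proof of Proposition~\ref{per_rays_land}) that every ray incident to a point of $\mathfrak{S}$ is fixed by $S^{\circ 2}$, is what forces the relevant boundary cycle to avoid the parabolic set $\mathfrak{S}$. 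The local repelling behaviour at these boundary points, together with the stability of rays landing at (pre-)repelling points (Proposition~\ref{rep_stab}), then allows me to thicken the ray-bounded sides while preserving properness of $S^p$.

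Finally I would verify that $S^p:U\to V$ is a branched covering of degree at least two. By the choice of $N$, a critical point $c'$ lies in $U$ precisely when $c'\in P_{k+p}(c)$, and any such $c'$ belongs to $P_k(c)$ for all $k$ and is therefore non-escaping; in particular $c$ itself is a critical point of $S^p|_U$. Consequently the filled Julia set of $S^p:U\to V$ equals $\bigcap_{m\geq 0}(S^p)^{-m}(V)=\bigcap_k P_k(c)=J(c)$, and the parity of $p$ determines whether $S^p$ is holomorphic ($p$ even) or anti-holomorphic ($p$ odd), yielding a polynomial-like or anti-polynomial-like map as claimed.

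The main obstacle is precisely the thickening step. Unlike the (anti-)polynomial case, where puzzle pieces meet the Julia set only at finitely many repelling landing points, here equipotentials genuinely intersect $\Lambda(S)$, so one must argue that compact containment $U\Subset V$ can be realized across the limit set. This reduces to excluding parabolic boundary points from the shared boundary of $P_{k+p}(c)$ and $P_k(c)$, and it is exactly at this juncture that both the hypothesis $p>2$ and the repelling conditions in the definition of $\mathscr{S}_{\pmb{\cN}_d,r}$ are indispensable.
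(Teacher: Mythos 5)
Your proposal is correct and follows essentially the same route as the paper: reduce to the argument of Lemma~\ref{lem:rn}, observe that the only obstruction to thickening the containment $P_{k+p}(c)\subseteq P_k(c)$ into a compact containment $U\Subset V$ is a singular point of $\mathfrak{S}$ on the puzzle boundary, and use the hypothesis $p>2$ together with periodic repulsion to rule this out. The paper excludes singular points by noting that at most two quadrature domains touch at a point, so the nested sequences of puzzle pieces at a singular point recur with period at most two under $S$; your variant --- using that every ray landing at a point of $\mathfrak{S}$ is fixed by $S^{\circ 2}$ --- yields the same period-at-most-two conclusion and is an equivalent justification.
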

\begin{proof}
We comment on the nuance of the assumption that the renormalization period $p$ is larger than two.
Indeed, the same argument as in Lemma \ref{lem:rn} gives a branched covering between puzzles $P_{k+p}(c) \subseteq P_k(S^p(c)) = P_k(c)$,
$$
S^p: P_{k+p}(c) \longrightarrow  P_k(c).
$$
The subtlety here is to thicken the puzzle pieces slightly to ensure compact containment between them.
Since $S$ is assumed to be periodically repelling, this can be done if there are no singular points on the boundaries of the puzzle pieces.

Now let $x$ be a singular point. Then there are at most two different nested sequences of puzzle pieces $P_n$ with $x \in \partial P_n$ {(recall that at most two quadrature domains can touch at a point)}.
Thus, we have $S^2: P_{n+2} \longrightarrow P_{n}$.
Therefore, if the renormalization period $p$ is larger than two, no singular point can lie on the boundary, and the lemma follows.
\end{proof}

A modification of the proof of Corollary \ref{cor:lgeq2} also gives the following lemma.
\begin{lem}\label{lem:lgeq2S}
If $c$ is level zero renormalizable, then its renormalization period $p$ is at least $2$.
\end{lem}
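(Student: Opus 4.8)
The plan is to argue by contradiction and mirror the proof of Corollary~\ref{cor:lgeq2}, inserting the modifications needed to accommodate the parabolic singular points $\mathfrak{S}$. Suppose $c$ is level zero renormalizable with period $p=1$. Then $P_k(c)=P_k(S(c))$ for every $k\geq 0$, so $c$ lies in the small Julia set $J(c)=\bigcap_k P_k(c)$, and $S\colon P_{k+1}(c)\to P_k(c)$ is a proper map carrying the critical point $c$; in particular it has some degree $\delta\geq 2$. Thus $J(c)$ is an $S$-invariant continuum on which $S$ acts as a degree $\delta\geq 2$ anti-holomorphic self-map.

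First I would upgrade this to an honest anti-polynomial-like restriction. The only obstruction identified in the proof of Lemma~\ref{lem:rns} is the presence of a singular point $x\in\mathfrak{S}$ on the boundaries of the nested pieces $P_k(c)$, i.e.\ a point $x\in\mathfrak{S}\cap J(c)$. Near such an $x$ the two local nested families of puzzle pieces are interchanged by $S$, so these pieces satisfy the period-two relation $S^{\circ 2}\colon P_{n+2}\to P_n$ while $S$ swaps the two families. Since $\{P_k(c)\}$ is a nested decreasing sequence, it follows one of the two local families; then $S(P_{k+1}(c))$ lies in the \emph{other} family and cannot equal $P_k(S(c))=P_k(c)$. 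This contradicts $p=1$, so no singular point lies on the boundaries of the $P_k(c)$, and the thickening argument of Lemma~\ref{lem:rns} produces open sets $U\Subset V$ with $S\colon U\to V$ an anti-polynomial-like map whose connected small Julia set is $J(c)$, and which is periodically repelling because $S$ is.

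I would then invoke the counting argument of Lemma~\ref{lem:cj} in the anti-holomorphic case: since $S\colon U\to V$ is periodically repelling with connected Julia set $J(c)$, there are $\delta$ fixed points but $\delta+1$ fixed rays, so at least two of these rays land at a common fixed point $q\in J(c)$ and, together with $q$, cut $J(c)$. The crucial point is that $q$ is genuinely a fixed point of $S$ (because $p=1$), so by Propositions~\ref{per_rays_land} and~\ref{rep_para_landing_point} the dynamical rays of $S$ landing at $q$ are among those used to build the depth-zero puzzle. Consequently $J(c)$ is separated by depth-zero puzzle boundaries, contradicting $J(c)=\bigcap_k P_k(c)\subseteq P_0(c)$, which completes the proof.

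The step I expect to be the main obstacle is the reduction to an anti-polynomial-like map: one must verify that the period-one nesting genuinely excludes singular points from the relevant puzzle boundaries (so the renormalization is not merely a degenerate/pinched object), and that the cut point $q$ furnished by the internal external rays of $S\colon U\to V$ is indeed separated by the \emph{global} dynamical rays of $S$, via the correspondence between cut points of the limit set and co-landing dynamical rays. If one only obtains a degenerate anti-polynomial-like map, the same counting and separation argument still applies to $J(c)$ directly, and the contradiction persists.
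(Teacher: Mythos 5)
Your reduction fails exactly at the step you yourself flagged as the main obstacle: the exclusion of singular points. The claim that at a singular point $x\in\mathfrak{S}$ there are \emph{two} local nested families of puzzle pieces that are \emph{interchanged} by $S$ is not justified, and it is false in general; Lemma~\ref{lem:rns} only asserts that there are \emph{at most} two such families. At a double point your picture is correct: the two families live in the two touching quadrature domains, four rays (two at each of the two identified fixed angles) separate them locally, and the local reflections swap them. But at a cusp there is only one quadrature domain, and the only rays guaranteed to land at $x$ are the two rays at the single fixed angle corresponding to $x$ (Proposition~\ref{per_rays_land}); together with $x$ they bound the corridor of tiles winding into the cusp, leaving \emph{both} local branches of $\Lambda(S)$ at $x$ on the \emph{same} side. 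Unless additional ($2$-periodic) rays happen to co-land at $x$, the puzzle pieces attached at $x$ form a \emph{single} nested family, which $S$ maps to itself. In that situation the relation $S(P_{k+1}(c))=P_k(S(c))=P_k(c)$ with $x\in\partial P_k(c)$ is perfectly consistent, your contradiction evaporates, and the thickening to an honest anti-polynomial-like map is unavailable --- so the case of a singular point on $J(c)$ cannot be dismissed.

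This residual case is precisely what the paper's proof confronts: assuming $x\in\mathfrak{S}\cap J(c)$ and $p=1$, it extends $S\colon P_{n+1}(c)\to P_n(c)$ to a topological degree $e$ branched covering of the sphere arranged so that $x$ has Lefschetz index $-1$ (this uses periodic repulsion, i.e.\ that $x$ has no attracting direction in $K(S)$) and the unique fixed point in $\widehat{\C}\setminus P_n(c)$ has index $+1$; the Lefschetz--Hopf theorem then forces a second fixed point $z\neq x$ inside $P_n(c)$, and only at that stage can the ray-counting argument of Corollary~\ref{cor:lgeq2} be run to cut $J(c)$. Your closing fallback --- that ``the same counting and separation argument still applies'' to the degenerate object --- is exactly the assertion that requires this index computation: Lemma~\ref{lem:cj} is proved for genuine (anti-)polynomials (and transfers to polynomial-like maps via straightening, with the fixed-point count coming for free), but it does not transfer to a pinched map whose small Julia set carries a parabolic boundary fixed point without an argument of the Lefschetz type. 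The remainder of your proposal (reduction to Corollary~\ref{cor:lgeq2} when $\mathfrak{S}\cap J(c)=\emptyset$, and the identification of the cutting rays with depth-zero puzzle boundaries via Propositions~\ref{per_rays_land} and~\ref{rep_para_landing_point}) does match the paper.
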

\begin{proof}
By way of contradiction, let us assume that the renormalization period $p$ is equal to $1$.
The proof is identical to Corollary \ref{cor:lgeq2} if there are no singular points on $J(c) := \bigcap_k P_k(c)$.
Thus, we may assume that there is a singular point $x \in J(c)$.
Then for all large $n$, we have a map $S: P_{n+1}(c) \longrightarrow P_n(c)$ of degree {$e>1$}.
Since we assume that the map is periodically repelling, we can extend the map $S: P_{n+1}(c) \longrightarrow P_n(c)$ to a topological degree $e$ branched covering $F$ of the sphere so that $x$ has (Lefschetz fixed point) index $-1$ and there is a unique fixed point in $\widehat\C - P_n(c)$ with index $+1$.
Thus, by Lefschetz–Hopf fixed point theorem, there exists another fixed point $z \neq x$ in $P_n(c)$.
{The arguments used in the proof of Corollary~\ref{cor:lgeq2} now show that the dynamical rays landing at the fixed
points of $S: P_{n+1}(c) \longrightarrow P_n(c)$ cut through the small Julia set $J(c)$.}
\end{proof}

We can define level $m$ renormalization in this setting as in the polynomial or anti-polynomial case.
The following lemma says that except possibly level zero renormalizations, all subsequent renormalizations are actually polynomial/anti-polynomial-like maps.
\begin{lem}\label{lem:rnsS}
If $c$ is level $m$ renormalizable with $m \geq 1$, then there exist open sets $U\ni c, V\ni c$ with $U$ compactly contained in $V$ so that 
$$
S^{p_0p_1...p_m}: U \longrightarrow V
$$
is a polynomial/anti-polynomial-like map with small Julia set $J^m(c):=~\bigcap_k P^m_k(c)$.
\end{lem}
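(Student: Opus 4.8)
The plan is to reduce the statement to the situation already treated in Lemma~\ref{lem:rns}: the essential point is that for $m\geq 1$ the return period always exceeds two, so the degenerate (parabolic) behaviour that can occur at a singular point is automatically excluded.

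First I would verify that the level $m$ renormalization period $P:=p_0p_1\cdots p_m$ satisfies $P>2$ whenever $m\geq 1$. Each factor $p_i$ is the period of a renormalization of a periodically repelling (anti-)polynomial-like map with connected small Julia set; for $i\geq 1$ this is the renormalization of the level $(i-1)$ renormalized map. Hence Lemma~\ref{lem:lgeq2S}, whose Lefschetz--Hopf argument applies to any such map, gives $p_i\geq 2$. Since $m\geq 1$, the product $P=p_0p_1\cdots p_m$ contains at least two factors each $\geq 2$, so $P\geq 4>2$.

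With $P>2$ in hand, I would transcribe the proof of Lemma~\ref{lem:rns} to the level $m$ puzzle system. Choosing $k$ large enough that $P^m_{k+P}(c)$ contains no critical points other than those in $J^m(c)=\bigcap_j P^m_j(c)$, the return map
$$
S^P:P^m_{k+P}(c)\longrightarrow P^m_k(c)
$$
is a proper branched covering on interiors, every point of $J^m(c)$ is non-escaping, and every critical point of $S^P$ in the domain is non-escaping. It then remains only to produce neighbourhoods $U\ni c$ and $V\ni c$ of $P^m_{k+P}(c)$ and $P^m_k(c)$ with $U\Subset V$. Exactly as in Lemma~\ref{lem:rns}, at each boundary point of the puzzle pieces that is not a singular point the periodically repelling hypothesis allows a slight thickening preserving the covering, so the only possible obstruction is a point $x\in\mathfrak{S}$ lying on $\partial P^m_k(c)$. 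But such an $x$ carries the local puzzle structure $S^2:P_{n+2}\to P_n$, since at most two quadrature domains meet at $x$ and the rays landing at $x$ are fixed by $S^{\circ 2}$ (Proposition~\ref{per_rays_land}); consequently $x$ can lie on the boundary of a renormalization only if its period equals $2$. As $P>2$, no singular point occurs on the boundary, the compact containment $U\Subset V$ holds, and $S^P:U\to V$ is a genuine (anti-)polynomial-like map with filled Julia set $J^m(c)$.

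The main obstacle is the first step: showing that the level $m$ return period is strictly larger than two for $m\geq 1$. This is precisely what excludes the parabolic degeneration that the period-two level-zero renormalizations in Lemma~\ref{lem:rns} are forced to display. Once it is secured, the construction of the (anti-)polynomial-like map is a line-by-line repetition of Lemmas~\ref{lem:rn} and~\ref{lem:rns}.
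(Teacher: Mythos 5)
Your proposal is correct and follows essentially the same route as the paper: the paper's proof likewise observes that the arguments of Lemma~\ref{lem:lgeq2S} give $p_0, p_1 \geq 2$, hence $p_0p_1\cdots p_m > 2$, and then invokes the thickening and singular-point-exclusion argument of Lemma~\ref{lem:rns}. Your write-up simply fills in those two steps in more detail.
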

\begin{proof}
Note that the arguments of Lemma \ref{lem:lgeq2S} imply that $p_0, p_1 \geq 2$, so ${p_0p_1...p_m}>2$.
Thus the lemma follows from arguments used in the proof of Lemma \ref{lem:rns}.
\end{proof}

Recall that $\widehat{\mathscr{S}_{\pmb{\cN}_d, r}}$ consists of combinatorial classes of periodically repelling Schwarz reflections in $\mathscr{S}_{\pmb{\cN}_d}$.
The same proof as Lemma \ref{lem:rl} also gives the following lemma.
\begin{lem}\label{lem:rl3}
Let $S, R \in\mathscr{S}_{\pmb{\cN}_d,r}$. Then $d(S, R) = 0$ if and only if $S, R$ have the same rational lamination; i.e., $[S] = [R] \in \widehat{\mathscr{S}_{\pmb{\cN}_d, r}}$.
\end{lem}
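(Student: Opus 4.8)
The plan is to mimic the proof of Lemma~\ref{lem:rl} verbatim, replacing the anti-polynomial $f$ (resp.\ $g$) by the Schwarz reflection $S$ (resp.\ $R$), the filled Julia set by the non-escaping set $K(S)$, the external rays by the dynamical rays of Definition~\ref{dyn_ray_schwarz}, and the multiplier map $m_{-d}$ by the Nielsen map $\pmb{\cN}_d$ acting on angles. The key structural inputs that make this transfer work are already in place: Proposition~\ref{per_rays_land} guarantees that (pre)periodic dynamical rays land, Proposition~\ref{rep_para_landing_point} guarantees that each repelling/parabolic periodic point is the landing point of finitely many rays of the same period under $S^{\circ 2}$, and Proposition~\ref{rep_stab} provides the stability of rays landing at pre-repelling points whose orbits avoid critical points. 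The renormalization machinery of Lemmas~\ref{lem:rns}, \ref{lem:lgeq2S}, \ref{lem:rnsS} supplies the analog of the polynomial-like restriction $S^{p}:U\to V$ needed when a critical point is renormalizable.

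First I would dispose of the easy direction: if $S$ and $R$ have the same rational lamination $\lambda(S)=\lambda(R)$, then the co-landing patterns of all (pre)periodic dynamical rays coincide, so the puzzle pieces constructed by pulling back the depth-zero pieces (cut by rays landing at fixed points, as in Section~\ref{comb_class_schwarz_sec}) align at every depth and every level; hence $d(S,R)=0$ by the definition of the combinatorial distance. For the converse, suppose $d(S,R)=0$ and that $s,t\in\mathrm{Per}(\pmb{\cN}_d)$ form a leaf of $\lambda(S)$, i.e.\ the dynamical rays at angles $s,t$ co-land at a (pre)periodic point $x$ of $S$. If $x$ lies on the boundary of some puzzle piece, then alignment ($d(S,R)=0$) immediately forces the $s$- and $t$-rays of $R$ to co-land, giving a leaf of $\lambda(R)$.

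The substantive case is when $x$ is not on the boundary of any puzzle piece. Here I would reproduce the nesting argument of Lemma~\ref{lem:rl}: writing $x$ with pre-period $l$ and period $p$ and setting $y=S^{\circ l}(x)$, the map $S^{\circ p}:\mathrm{Int}(P^m_{k+p}(y))\to\mathrm{Int}(P^m_k(y))$ is proper, and by choosing the level $m$ large enough that every renormalization period exceeds $p$ (invoking Lemmas~\ref{lem:rnsS} and~\ref{lem:lgeq2S} to control when renormalizations occur), one arranges that the relevant pieces contain no critical points at large depth. Thus $S^{\circ p}$ has local degree one on the nested pieces, so $\bigcap_k P^m_k(x)=\{x\}$; this yields sequences of angles $s_k,t_k$ co-landing at boundary points of puzzle pieces with $s_k\to s$, $t_k\to t$. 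Since $d(S,R)=0$, each pair $s_k,t_k$ defines a leaf of $\lambda(R)$, and closedness of the rational lamination (Proposition~\ref{geom_finite_laminations_prop}(1), or directly closedness of $\overline{\lambda(R)}$) passes to the limit, showing $s,t$ form a leaf of $\lambda(R)$. This gives $\lambda(S)\subseteq\lambda(R)$, and the reverse inclusion follows by symmetry, establishing $[S]=[R]$.

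The main obstacle I anticipate is the presence of singular points of $\mathfrak{S}$ on the boundaries of the puzzle pieces, which have no analog in the polynomial case and which obstruct naive compact containment of nested pieces; this is precisely the issue handled in Lemmas~\ref{lem:rns} and~\ref{lem:lgeq2S}, where the parabolic local model of $S^{\circ 2}$ near singular points (Equation~\eqref{asymp_eqn_1}) forces renormalization periods to be at least two and lets one thicken the pieces away from $\mathfrak{S}$. Provided the level is chosen large enough that the relevant small Julia set avoids these singular cut points (as in the polynomial argument, where one shrinks to a point $\{x\}$ via degree-one iteration), the parabolic points cause no further trouble, and the proof proceeds exactly as in the polynomial case.
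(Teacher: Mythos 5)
Your proposal is correct and takes essentially the same route as the paper: the paper's entire proof of Lemma~\ref{lem:rl3} is the one-line remark that ``the same proof as Lemma~\ref{lem:rl} also gives the following lemma,'' and your write-up is precisely that transfer, with the adaptations (dynamical rays via Definition~\ref{dyn_ray_schwarz}, landing via Proposition~\ref{per_rays_land}, stability via Proposition~\ref{rep_stab}, and the singular-point/renormalization-period issues handled by Lemmas~\ref{lem:rns}--\ref{lem:rnsS}) spelled out explicitly rather than left implicit.
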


\begin{theorem}\label{thm:ncpS}
Let $S \in \mathscr{S}_{\pmb{\cN}_d,r}$. Suppose that no critical point is on the boundary of any puzzle piece. 
\begin{enumerate}
\item If $S_n \to S$ in $\mathscr{S}_{\pmb{\cN}_d,r}$, then $d(S, S_n) \to 0$.
\item If $d(S, S_n) \to 0$, then for any accumulation point $R\in \mathscr{S}_{\pmb{\cN}_d,r}$ of $S_n$, we have that $d(S, R) = 0$.
\end{enumerate}
\end{theorem}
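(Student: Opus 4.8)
The plan is to transport the proof of Theorem~\ref{thm:ncp} to the Schwarz setting, replacing external rays by the dynamical rays of Section~\ref{subsec:dl}, the filled Julia set by $K(S)$, and the Julia set by the limit set $\Lambda(S)$. The structural inputs that drive the polynomial argument all have exact counterparts here: the landing of (pre-)periodic rays (Proposition~\ref{per_rays_land}), the finiteness of rays at a (pre-)periodic point (Proposition~\ref{rep_para_landing_point}), and, most importantly, the stability of rays landing at a pre-repelling point whose orbit avoids the critical set, which is Proposition~\ref{rep_stab}. As a preliminary I would record the Schwarz analog of the Separation Lemma~\ref{lem:sl}: for $S\in\mathscr{S}_{\pmb{\cN}_d,r}$ with no critical point on a puzzle boundary and any critical point $c$, some $P^{l}_{j}(c)$ is compactly contained in $\Int P^m_k(c)$. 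Its proof copies that of Lemma~\ref{lem:sl} once one uses Lemmas~\ref{lem:rns} and~\ref{lem:rnsS} to recognize the relevant renormalizations as genuine (anti-)polynomial-like maps with connected small Julia sets.

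For part~(1) it suffices to show $d_m(S,S_n)\to0$ for each fixed $m$. Fix a puzzle piece $P$ of level $m$ and depth $k$; its boundary meets $\Lambda(S)$ in finitely many points. Each such point is either a landing point of fixed dynamical rays --- then it lies in $\mathfrak{S}$, and by Proposition~\ref{per_rays_land}(2) the same fixed-angle rays land there for every map in $\mathscr{S}_{\pmb{\cN}_d}$ --- or a pre-repelling point whose forward orbit avoids $\mathrm{crit}(S)$; the latter uses precisely that no critical point lies on a puzzle boundary and that $S\in\mathscr{S}_{\pmb{\cN}_d,r}$, so that every periodic point off $\mathfrak{S}$ is repelling. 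For these points Proposition~\ref{rep_stab} supplies a neighborhood of $S$ on which the same dynamical rays land at the real-analytic continuation of the point. Intersecting the finitely many neighborhoods yields a neighborhood of $S$ in which every map is aligned with $S$ at depth $k$ of level $m$, so $d_m(S,S_n)\to0$.

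For part~(2) pass to a subsequence with $S_n\to R$ and let $m$ be minimal with $d_m(S,R)\neq0$, and set $k:=k_m(S,R)$. One shows $k>0$ as in Theorem~\ref{thm:ncp}: the angles bounding depth-zero pieces are periodic under $\pmb{\cN}_d$, so by stability any change of landing pattern would force a periodic angle to land at a critical point, which is impossible. Choosing a piece $\widetilde P^m_{k,S}$ with no induced counterpart for $R$ but whose image $\widetilde P^m_{k-1,S}$ does have one, a critical value $v$ of $R$ must lie on $\partial\widetilde P^m_{k-1,R|S}$ (otherwise every boundary point is pre-repelling and avoids $\mathrm{crit}(R)$, contradicting Proposition~\ref{rep_stab}). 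Tracking critical points $c_n\to c_R$ of $S_n$ lying over $v$, together with the eventually constant puzzle piece $P^m_{k,S}(c)$ about a critical point $c$ of $S$, the Separation Lemma produces a nested sequence $\cdots\Subset\Int P^{m_1}_{k_1,S}(c)\Subset\Int P^m_{k,S}(c)$ and hence colanding pairs $t_i,s_i$ whose rays separate the two angles $\alpha,\beta$ landing at distinct points $a,b\in\partial P^m_{k,S}(c)$ for $S$ but colanding at $c_R$ for $R$. As in the polynomial case this forces $\pmb{\cR}_R(t_i)$ and $\pmb{\cR}_R(s_i)$ to land together at $c_R$ for every $i$, contradicting finiteness of rays at a pre-periodic point (Proposition~\ref{rep_para_landing_point}, cf.\ \cite[Lemma~9.9]{GT21}).

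The genuinely new difficulty, which I expect to demand the most care, is the presence of the parabolic singular points $\mathfrak{S}$ on puzzle-piece boundaries: there the repelling stability of Proposition~\ref{rep_stab} does not apply, and the equipotentials unavoidably meet $\Lambda(S)$, always at points of $\mathfrak{S}$. The resolution is that $\mathfrak{S}$-points occur only as landing points of fixed rays (Proposition~\ref{per_rays_land}(2)), so their effect on alignment is encoded entirely in the discrete fixed ray lamination; invoking Corollary~\ref{limit_dominates_cor} to pass to a subsequence of the $S_n$ lying in a single fixed-ray stratum, one checks that the only fixed-ray identifications not already shared with $S$ are those forced in the limit, and these do not obstruct alignment at the finite depths under consideration. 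The hypothesis $S\in\mathscr{S}_{\pmb{\cN}_d,r}$ --- that no $\mathfrak{S}$-point has an attracting direction in $K(S)$ --- is precisely what keeps the boundary critical points out of parabolic basins and allows the repelling-polynomial machinery to run unchanged everywhere else.
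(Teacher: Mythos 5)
Your core argument is the same as the paper's: part (1) is the argument of Theorem~\ref{thm:ncp} together with the stability statement of Proposition~\ref{rep_stab}, and part (2) rests on Lemma~\ref{lem:rnsS} (all renormalizations of level $m\geq 1$ are genuine (anti-)polynomial-like maps), so that the Separation Lemma~\ref{lem:sl} and then the argument of Theorem~\ref{thm:ncp} apply. Up to that point your write-up and the paper's proof agree essentially step by step.

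The gap is in your final paragraph, the one you yourself single out as the genuinely new difficulty; the resolution you give does not work. Corollary~\ref{limit_dominates_cor} controls the wrong direction of domination: it says that if $S_n\to S$ then the fixed-ray lamination of the limit contains those of the $S_n$, i.e., identifications cannot be \emph{destroyed} in the limit. The obstruction to alignment of $S_n$ with $S$ is the opposite one: identifications that $S$ has but $S_n$ does not. Concretely, if $S$ has a double point $x\in\mathfrak{S}$ at which the fixed rays at angles $\theta_1,\theta_2$ co-land, while the $S_n$ lie in a strictly smaller fixed-ray stratum (so that for $S_n$ these two rays land at two distinct cusps), then no point of $\Lambda(S_n)$ carries the angle set $\{\theta_1,\theta_2\}$, and $S_n$ fails to be aligned with $S$ already at depth zero of level zero, giving $d(S,S_n)\geq e^{-1}$ for every $n$. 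So identifications ``forced in the limit'' obstruct alignment at depth zero, not merely at large depth, and passing to a subsequence inside one stratum does not help. Note also that Proposition~\ref{per_rays_land}(2), which you invoke in part (1), only says that each fixed ray of $S_n$ lands at \emph{some} point of $\mathfrak{S}(S_n)$; it says nothing about which fixed rays co-land, and that is exactly the stratum datum at stake. What is actually needed---and what neither your proposal nor, for that matter, the paper's own very terse proof supplies---is the statement that along a convergent sequence in $\mathscr{S}_{\pmb{\cN}_d,r}$ the fixed-ray lamination eventually coincides with that of the limit; equivalently, that a strictly lower stratum cannot accumulate on a higher one inside the periodically repelling locus (for instance, because the pinching neck of the non-escaping sets would force an attracting direction in $K$ at the newly created double point, expelling the limit from $\mathscr{S}_{\pmb{\cN}_d,r}$). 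The same issue resurfaces in part (2): in the step ``$k>0$'' one cannot rule out, by Proposition~\ref{rep_stab} alone, that two fixed rays landing at distinct cusps for all $S_n$ merge at a double point of the limit $R$, since the points of $\mathfrak{S}$ are parabolic rather than repelling and no critical point is involved in such a merger. Your instinct that this configuration is the crux of the theorem is correct; the argument you propose for it is not.
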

\begin{proof}
    The first statement follows from the same argument as in Theorem \ref{thm:ncp}, as an external ray landing at a repelling periodic point, or a repelling pre-periodic point whose orbit does not contain a critical point is stable under perturbation (see Proposition \ref{rep_stab}).

    By Lemma \ref{lem:rnsS}, all renormalization of level $m \geq 1$ are (anti-)polynomial-like maps.
    Thus, the argument of the Separation Lemma~\ref{lem:sl} applies.
    Now the same argument in Theorem \ref{thm:ncp} applies to show the second statement.
\end{proof}

If there are critical points on the boundary of a puzzle piece, then we can define modified puzzles $\widehat{\mathfrak{P}}^m$ for $S$ exactly as in the (anti-)polynomial setting.
Then, no critical points lie on the boundary of puzzle pieces for $\widehat{\mathfrak{P}}$. {This gives the following combinatorial continuity result, where $\widehat{d}(S_1, S_2)$ stands for the distance from $S_2$ to $S_1$ with respect to the modified puzzle $\widehat{\mathfrak{P}}$.}
\begin{theorem}\label{thm:scc}
Let $S, S_n \in \mathscr{S}_{\pmb{\cN}_d,r}$.
\begin{enumerate}
\item If $S_n \to S$ in $\mathscr{S}_{\pmb{\cN}_d,r}$, then $\widehat{d}(S, S_n) \to 0$.
\item If $\widehat{d}(S, S_n) \to 0$ 
then for any accumulation point $R\in \mathscr{S}_{\pmb{\cN}_d,r}$ of $S_n$, we have that $\widehat{d}(S, R) = 0$.
\end{enumerate}
\end{theorem}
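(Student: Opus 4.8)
The plan is to reduce the statement immediately to the case already settled in Theorem~\ref{thm:ncpS}. The modified puzzle $\widehat{\mathfrak{P}}$ was built precisely so that no critical point of $S$ lies on the boundary of any of its pieces: for each (necessarily pre-periodic) critical point on a puzzle boundary one first deletes the rays landing on its cycle together with their preimages, and then, using Lemma~\ref{lem:fr} to control the static renormalizations, reinserts rays landing at periodic cut cycles disjoint from the critical orbit so that no critical point stays renormalizable through its small Julia set. Since the combinatorial distance $\widehat{d}$ is by definition computed with respect to $\widehat{\mathfrak{P}}$, and since the proof of Theorem~\ref{thm:ncpS} nowhere uses that one works with the canonical renormalization puzzle — only that no critical point lies on a puzzle boundary — I would simply run that proof verbatim with $\widehat{\mathfrak{P}}$ in place of the renormalization puzzle and $\widehat{d}$ in place of $d$. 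This is exactly the mechanism by which Theorem~\ref{thm:ccf} was deduced from Theorem~\ref{thm:ncp} in the polynomial setting.

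For statement (1) I would invoke stability of dynamical rays (Proposition~\ref{rep_stab}). Because no critical point of $S$ lies on a boundary piece of $\widehat{\mathfrak{P}}$, every point of $\partial P\cap\Lambda(S)$, for a piece $P$ at fixed depth and level, is a repelling (pre-)periodic point whose forward orbit avoids the critical set; hence the landing pattern of the finitely many rays bounding these pieces is stable under perturbation. Thus for $S_n$ sufficiently close to $S$ the map $S_n$ is aligned with $S$ up to that depth and level, and letting depth and level grow yields $\widehat{d}(S,S_n)\to 0$.

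For statement (2) I would pass to a subsequence with $S_n\to R$ and run the separation argument. If $\widehat{d}(S,R)\neq 0$, pick the minimal level $m$ and depth $k=k_m(S,R)$ of failure. As in Theorem~\ref{thm:ncp}, first show $k>0$ (depth-zero boundary points are periodic, so a failure at depth zero would force a ray to land at a critical point, impossible for periodic angles), then show some critical value of $R$ sits on the boundary of an induced piece at depth $k-1$. Using that every level-$\geq 1$ renormalization is a genuine (anti-)polynomial-like map (Lemma~\ref{lem:rnsS}), the Separation Lemma~\ref{lem:sl} produces an infinite nest of pieces around the relevant critical point of $S$, hence a sequence of colanding angle pairs $t_i,s_i$ whose rays separate two boundary points $a,b$ of a piece. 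Stability transports these colandings to $R$, which, combined with the colanding of the two distinguished angles at the critical point of $R$, forces infinitely many rays to land at a single pre-periodic point of $R$ — the desired contradiction.

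The genuinely new ingredients over the polynomial case, and hence the main obstacle, are the replacements that keep the separation machinery running in the presence of the parabolic singular points $\mathfrak{S}$. One needs Proposition~\ref{rep_stab} (ray stability at pre-repelling points whose orbits avoid critical points) in place of the classical stability statement, and Lemmas~\ref{lem:rns} and~\ref{lem:rnsS} to guarantee that every renormalization beyond level zero is a bona fide (anti-)polynomial-like map, so that Lemma~\ref{lem:sl} applies unchanged. Once these are available, the combinatorial bookkeeping is identical to that of Theorem~\ref{thm:ncp}, and the theorem follows.
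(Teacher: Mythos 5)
Your proposal follows essentially the same route as the paper: the paper likewise obtains Theorem~\ref{thm:scc} by passing to the modified puzzle $\widehat{\mathfrak{P}}$ (constructed exactly so that no critical point lies on a puzzle boundary) and then running the proof of Theorem~\ref{thm:ncpS}, whose two parts rest, just as you say, on ray stability (Proposition~\ref{rep_stab}) for part (1) and on Lemma~\ref{lem:rnsS} making the Separation Lemma~\ref{lem:sl} and the Theorem~\ref{thm:ncp} machinery applicable for part (2). This mirrors the deduction of Theorem~\ref{thm:ccf} from Theorem~\ref{thm:ncp} in the (anti-)polynomial setting, which is precisely the paper's argument.
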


\section{Combinatorial straightening}\label{comb_homeo_sec}
In this section, we shall assemble all the ingredients to prove Theorem~\ref{thm:CL}.
\begin{proof}[Proof of Theorem \ref{thm:CL}]
To construct the map $\Phi$, we first note that if $f$ is post-critically finite, then there exists unique conformal mating $S \in \mathscr{S}_{\pmb{\cN}_d,r}$ of $f$ with $\pmb{\cN}_d$ (by Theorem~\ref{thm:C}).
Let $[f]$ and $[S]$ be the combinatorial class of $f$ and $S$ respectively.
We define $\Phi([f]) = [S]$.

Let $f \in \mathcal{C}^-_{d, r}$ be a periodically repelling anti-polynomial.
By Theorem \ref{thn:approxbypcf}, there exists a sequence of post-critically finite anti-polynomials $f_n$ with $\widehat{d}(f, f_n) \to 0$. Let  $S_n \in \mathscr{S}_{\pmb{\cN}_d,r}$ be the unique conformal mating of $f_n$ with $\pmb{\cN}_d$.
By the compactness result Theorem \ref{thm:compact}, after passing to a subsequence, we may assume that $S_n$ converges.
Denote this limit by $S$.
Using a similar argument in \cite[Lemma 6.34]{Kiw04}, one can show that $S$ cannot have parabolic cycles or irrationally neutral cycles.
Thus $S \in \mathscr{S}_{\pmb{\cN}_d,r}$.
We define $\Phi([f]) = [S]$.
Since $S_n \to S$, by Theorem \ref{thm:scc}, $\widehat{d}(S, S_n) \to 0$.

We claim that the rational lamination $\left(\pmb{\mathcal{E}}_d\right)_*(\lambda(S)) = \lambda(f)$.
Indeed, since $\left(\pmb{\mathcal{E}}_d\right)_*(\lambda(S))$ is $m_{-d}$-invariant, there exists a periodically repelling anti-polynomial $g$ with $\lambda(g) = \left(\pmb{\mathcal{E}}_d\right)_*(\lambda(S))$ (see \cite{Kiw04}).
Since $\left(\pmb{\mathcal{E}}_d\right)_*(\lambda(S_n)) = \lambda(f_n)$, and the construction of the puzzles can be uniquely recovered from the lamination, we conclude that $\widehat{d}(g, f_n) \to 0$.
Therefore, by Theorem \ref{thm:ccf}, $\widehat{d}(f, g) = 0$, and hence $\lambda(f) = \lambda(g)$ by Lemma \ref{lem:rl2}.
This also shows that the map $\Phi$ is well-defined.


We now show that $\Phi$ is continuous.
Suppose $f_n \to f$. 
Then $\widehat{d}(f, f_n) \to 0$.
Let $S_n \in \Phi([f_n])$ and let $S\in \mathscr{S}_{\pmb{\cN}_d,r}$ be an accumulation point of $S_n$.
By Theorem \ref{thm:scc}, $\widehat{d}(S, S_n) \to 0$.
The same argument as above shows that $\left(\pmb{\mathcal{E}}_d\right)_*(\lambda(S)) = \lambda(f)$.
Therefore $S \in \Phi([f])$, proving continuity of $\Phi$.

A similar argument can be applied to construct a well-defined continuous inverse $\Phi^{-1}$, and the theorem follows.
\end{proof}

\section{Rigidity of at most finitely renormalizable Schwarz reflections}\label{fin_renorm_rigid_sec}
In this section, we will show that a periodically repelling, finitely renormalizable Schwarz reflection is rigid, and deduce Theorem~\ref{thm:A}.
More precisely, we have
\begin{theorem}\label{thm:rs}
Let $S \in \mathscr{S}_{\pmb{\cN}_d,r}$ be finitely renormalizable.
Then
\begin{enumerate}
\item the limit set $\Lambda(S)$ is locally connected;
\item $S$ has no invariant line field on $\Lambda(S)$;
\item if $\widetilde{S} \in \mathscr{S}_{\pmb{\cN}_d,r}$ has the same rational lamination as $S$, then $\widetilde{S} = S$. In particular, the combinatorial class of $S$ is a singleton.
\end{enumerate}
\end{theorem}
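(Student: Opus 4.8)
The plan is to establish all three assertions through the puzzle and renormalization machinery developed in Sections~\ref{comb_class_poly_sec} and~\ref{comb_class_schwarz_sec}, following the template of Yoccoz-type rigidity for non-renormalizable polynomials together with complex-bounds rigidity for the finitely many genuine renormalizations. The three statements are logically intertwined: parts (1) and (2) both follow from shrinking of the nest of puzzle pieces, while part (3) is obtained from a pullback argument that upgrades a combinatorial conjugacy to a conformal one, using (2) and conformal removability of $\Lambda(S)$. Throughout I would work with the modified puzzles $\widehat{\mathfrak{P}}$ of Section~\ref{comb_class_schwarz_sec}, so that no critical point lies on the boundary of a puzzle piece.

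For part (1), I would show that for every $z \in \Lambda(S)$ the puzzle pieces $P_k(z)$ shrink to $z$ as $k \to \infty$. By Lemma~\ref{lem:rnsS}, every renormalization of level $m \geq 1$ is a genuine (anti-)polynomial-like map, so along any renormalizable critical orbit the small Julia sets $J^m(c)$ are those of finitely renormalizable polynomial-like germs; since $S$ is at most finitely renormalizable, after finitely many levels the deepest renormalization is non-renormalizable and its local connectivity follows from classical complex-bounds arguments for (anti-)polynomial-like maps. For the non-renormalizable scales I would adapt Yoccoz's argument on the growth of moduli in the principal nest, using the Separation Lemma~\ref{lem:sl} to place a definite nest of annuli around each critical point. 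The feature new to this setting is that the depth-zero equipotentials meet $\Lambda(S)$ at the fixed points of $S$, and the points of $\mathfrak{S}$ carry parabolic local dynamics; here the hypotheses defining $\mathscr{S}_{\pmb{\cN}_d,r}$ (no attracting direction in $K(S)$ along $\mathfrak{S}$, and all other cycles repelling) guarantee that no critical orbit accumulates parabolically, while Proposition~\ref{per_rays_land} controls the landing of the rays bounding the puzzle pieces.

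For part (2), an $S$-invariant line field supported on $\Lambda(S)$ would yield a nontrivial Beltrami deformation fixed by the dynamics; using the shrinking from (1) at a point of almost-continuity of the line field, the univalent pullbacks furnished by the principal nest produce arbitrarily large distortion of the line field, which is impossible, and on the renormalizable levels one invokes the absence of invariant line fields on the Julia sets of the associated finitely renormalizable (anti-)polynomial-like maps. For part (3), suppose $\widetilde S \in \mathscr{S}_{\pmb{\cN}_d,r}$ has the same rational lamination as $S$; by Lemma~\ref{lem:rl3} this gives $d(S,\widetilde S)=0$, so the puzzles of $S$ and $\widetilde S$ match combinatorially at every depth and level. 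Using (1), I would build a puzzle-respecting conjugacy between the limit sets, conformal on the escaping sets (both conjugate to $\pmb{\cN}_d$) and on the Fatou components, promote it to a global quasiconformal conjugacy, invoke (2) to force its Beltrami coefficient to vanish on $\Lambda(S)$, and then use conformal removability of $\Lambda(S)$ (as in the proof of Theorem~\ref{thm:C}, via the John property of the escaping set) to conclude the conjugacy is M\"obius; the normalization of maps in $\mathscr{S}_{\pmb{\cN}_d}$ then forces $\widetilde S = S$.

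The main obstacle I expect is the degeneracy of the external dynamics: the Nielsen map is only expansive, not expanding, and the equipotentials bounding the puzzle pieces touch $\Lambda(S)$ precisely at the parabolic points of $\mathfrak{S}$. This breaks the uniform thickening of puzzle pieces that underlies the classical modulus estimates, so the heart of the argument is to show that, away from the controlled parabolic fixed points, one still recovers definite moduli and genuine compact containment of successive puzzle pieces (the subtlety already flagged in Lemmas~\ref{lem:rns} and~\ref{lem:lgeq2S}).
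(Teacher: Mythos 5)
Your proposal reproduces the paper's overall skeleton (reduce to non-renormalizable dynamics after finitely many renormalization levels, show puzzle pieces shrink, then rigidify a quasiconformal conjugacy using absence of invariant line fields), but the analytic engine you put at its core does not work in this generality. For part (1) you propose ``Yoccoz's argument on the growth of moduli in the principal nest''; this is a unicritical tool. For degree $d>2$ maps with several interacting critical points, divergence of moduli in the principal nest is precisely what fails (or is at least unavailable), which is why the paper instead constructs a first-return \emph{complex box mapping} $F:U\to V$ around the critical set and invokes the Kozlovski--van Strien rigidity theorem for complex box mappings \cite[Theorem 1.5]{KvS09}, a result designed for multicritical non-renormalizable dynamics. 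The Separation Lemma~\ref{lem:sl} only gives compact containment of some deeper puzzle piece inside a given one; it yields no definite (let alone divergent) moduli, so your nest of annuli has no lower bound and neither the shrinking argument nor the distortion argument for part (2) closes. The same problem infects your treatment of the deepest renormalization: ``classical complex bounds'' do not give local connectivity of a \emph{non}-renormalizable multicritical (anti-)polynomial-like map --- that is again a box-mapping theorem, not a classical one.

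Second, the difficulty you flag in your closing paragraph (the equipotentials touch $\Lambda(S)$ at the parabolic points of $\mathfrak{S}$) is acknowledged but never resolved, whereas the paper resolves it concretely: at each pinching point $a_i$ of $\widehat{\C}\setminus\overline{\cD}$ (a fixed point of $S$), non-renormalizability forces the puzzle pieces attached at $a_i$ to be critical-point-free at all large depths, and since the local dynamics on $\Lambda(S)$ there is parabolic \emph{repelling}, those pieces shrink to points with no modulus estimate needed; one then adjoins their union $W$ to the box-mapping range $V$ and observes that orbits never entering $V\cup W$ form a hyperbolic set, giving shrinking everywhere and zero area. Finally, your part (3) is essentially the paper's argument (topological conjugacy from matching laminations and local connectivity, promoted to a global quasiconformal conjugacy, then forced conformal by (2)), except that your concluding appeal to conformal removability is both unnecessary --- a global quasiconformal conjugacy whose Beltrami coefficient vanishes a.e.\ is already M\"obius, since $\Lambda(S)$ has zero area and supports no invariant line field --- and unjustified, because removability of these limit sets is not established here: in Theorem~\ref{thm:C} it came from the John property of the postcritically finite model, which is not available for general finitely renormalizable maps. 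Also note that the promotion step itself is where the work lies (the paper uses the Spreading Principle of \cite{KvS09} applied to the quasiconformally conjugate box mappings); in your write-up it is asserted rather than carried out.
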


The proof uses rigidity of {\em complex box mappings}, which is a refinement of the notion of \emph{generalized polynomial-like maps} (cf. \cite{LM93}, \cite{Lyu97}). Although the proof is almost identical to the polynomial setting (see \cite{KvS09}, cf. \cite{CDKvS22}), there is a subtle difference from the polynomial setting, as the complement $\widehat\C - \overline{\mathcal{D}}$ of the domain of a Schwarz reflection $S: \overline{\mathcal{D}} \longrightarrow \widehat\C$ may be pinched (i.e., not Jordan), which requires some additional treatment.
\begin{defn}(\cite[Definition 1.2]{KvS09})
A holomorphic (or anti-holomorphic) map $F: U \longrightarrow V$ between open sets $U \subseteq V$ in $\C$ is a complex box mapping if
\begin{enumerate}
\item $F$ has finitely many critical points;
\item $V$ is a union of finitely many pairwise disjoint Jordan disks;
\item let $V'$ be a component of $V$, then either $V'$ is a component of $U$ or $V' \cap U$ is a union of Jordan disks with pairwise disjoint closures which are compactly contained in $V'$;
\item let $U'$ be a component of $U$, then $F(U')$ is a component of $V$.
\end{enumerate}
\end{defn}

\begin{proof}[Proof of Theorem \ref{thm:rs}]
Since $S$ is finitely renormalizable, we can choose the level $m$ puzzle $\mathfrak{P} = \mathfrak{P}^m$ so that $S$ is non-renormalizable with respect to $\mathfrak{P}$.
The same argument as in \cite[\S 2.2]{KvS09} allows us to construct a finite collection of open puzzle pieces $V$ that contains all critical points, such that the first return map into $V$ defines a complex box mapping $F:U \longrightarrow V$.

Let $\{a_1,..., a_r\}$ be the collection of pinching points for $\widehat\C - \overline{\mathcal{D}}$ where $\overline{\mathcal{D}}$ is the domain for $S$.
Note that each $a_i$ is a fixed point for $S$.
We may assume that $\{a_1,..., a_r\} \cap \overline{V} = \emptyset$.

Note that for each depth $k$, there are {at most} two puzzle pieces attached at $a_i$.
Since $S$ is non-renormalizable with respect to $\mathfrak{P}$, the puzzle pieces attached at $a_i$ contain no critical points at all sufficiently large depth $k$.
Since the local dynamics {on $\Lambda(S)$ near} these fixed points are parabolic repelling, these puzzle pieces shrink to points as the depth $k \to \infty$.

Now choose $k_0$ large enough so that these puzzle pieces have pairwise disjoint closure except at $a_i$ and their closure are also disjoint from $\overline{V}$.
Let $W$ be the union of all depth $k_0$ puzzle pieces attached to $a_i$ for some $i$. 

Then rigidity of complex box mappings \cite[Theorem 1.5]{KvS09} and the above definition of $W$ imply that all puzzle pieces in $V\cup W$ shrink to points, and it carries no measurable invariant line field on its Julia set.
Note that the set of points which are never mapped into $V \cup W$ is hyperbolic, so all puzzle pieces shrink to points and its area is $0$.
This proves $(1)$ and $(2)$ in Theorem \ref{thm:rs}.

Let $\widetilde{S} \in \mathscr{S}_{\pmb{\cN}_d,r}$ be a normalized polygonal Schwarz reflection that has the same lamination as $S$.
Then $\widetilde{S}$ is also finitely renormalizable.
Since the limit sets $\Lambda(S)$ and $\Lambda(\widetilde{S})$ are locally connected, and all cycles of $S, \widetilde{S}$ are repelling, {the maps $S\vert_{K(S)}$ and $\widetilde{S}\vert_{K(\widetilde{S})}$} are topologically conjugate.
By the same argument as in \cite[Proposition 4.1]{KvS09}, the corresponding complex box mappings $F:U \longrightarrow V$ and $\widetilde{F}: \widetilde{U} \longrightarrow \widetilde{V}$ are quasiconformally conjugate.
By spreading around the conjugacy (see the Spreading Principle \cite[\S 4]{KvS09}), we can promote this {local quasiconformal} conjugacy to a quasiconformal map $\Psi: \widehat\C \longrightarrow \widehat\C$ that conjugates the dynamics of $S$ and $\widetilde{S}$.
Since the external dynamics of $S, \widetilde{S}$ are conformally conjugate, and the Julia set $\Lambda(S)$ is a dendrite that does not support any invariant line field, $\Psi$ is a conformal map.
This proves $(3)$ in Theorem \ref{thm:rs}.
\end{proof}

\begin{proof}[Proof of Theorem \ref{thm:A}]
The theorem now follows from Theorem \ref{thm:CL} and Theorem \ref{thm:rs}.
\end{proof}


\section{Correspondences associated with polygonal Schwarz reflections}\label{polygonal_corr_sec}

Let $S:\overline{\mathcal{D}}\to\widehat{\C}$ be a conformal mating of a degree $d$ anti-polynomial $f$ with connected Julia set and the Nielsen map $\pmb{\cN}_d$. Recall that 
$$
\cD= \bigcup_{i=1}^k \Omega_i,
$$
where $\Omega_i$ are Jordan quadrature domains, and $S\vert_{\overline{\Omega_i}}$ is the Schwarz reflection map of $\Omega_i$.
In this section, we will associate anti-holomorphic correspondences with such matings, and show that they can also be interpreted as matings of anti-polynomials and ideal $(d+1)-$gon reflection groups. 

Let $R_i:\overline{\D}\to\overline{\Omega_i}$ be uniformizing rational maps. To make book-keeping easier, we denote the domain of $R_i$ (which is a copy of the Riemann sphere) by $\widehat{\C}_i$ and the unit disk in $\widehat{\C}_i$ by $\D_i$. We also set $\mathcal{I}:=\{1,\cdots,k\}$.

We recall that $\partial\cD$ meets the limit set $\Lambda(S)$ in the finite set $\mathfrak{S}$, which is the set of singular points on $\partial\cD$.
We define 
$$
\widetilde{\mathfrak{S}}_i\ := (R_i\vert_{\partial\D_i})^{-1}\left(\mathfrak{S} \cap \partial\Omega_i\right).
$$

\subsection{Construction of the correspondence}\label{corr_construct_subsec}

Let us now consider the disjoint union 
$$
\mathfrak{U}\ :=\ \bigsqcup_{i\in\mathcal{I}} \widehat{\C}_{i}\ \cong\ \widehat{\C}\times\mathcal{I},
$$
and define the maps
$$
\pmb{R}:\ \mathfrak{U}\longrightarrow \widehat{\C},\quad (z,i)\mapsto R_{i}(z),
$$
and
$$
\pmb{\eta^-}\ : \mathfrak{U}\longrightarrow \mathfrak{U},\quad (z,i)\mapsto (\eta^-(z),i).
$$
Note that $\pmb{R}$ is a branched covering of degree $d+1$, and $\pmb{\eta^-}$ is a homeomorphism.

The conformal mating $S$ gives rise to an anti-holomorphic correspondence on $\mathfrak{U}$ as follows. For $i,j\in\mathcal{I}$, define
$$
\mathfrak{C}_{i,j}:=\ \{(z,w)\in\widehat{\C}_i\times\widehat{\C}_j: \frac{R_j(w)-R_{i}(\eta^-(z))}{w-\eta^-(z)}=0\},\qquad \mathrm{if}\ i=j,
$$
and
$$
\mathfrak{C}_{i,j}:=\ \{(z,w)\in\widehat{\C}_i\times\widehat{\C}_j: R_j(w)-R_{i}(\eta^-(z))=0\},\qquad \mathrm{if}\ i\neq j.
$$
The union of the real-algebraic curves $\mathfrak{C}_{i,j}$ can be written succinctly as
\begin{equation}
\{(\mathfrak{u}_1,\mathfrak{u}_2)\in\mathfrak{U}\times\mathfrak{U}: \frac{\pmb{R}(\mathfrak{u}_2)-\pmb{R}(\pmb{\eta^-}(\mathfrak{u}_1))}{\mathfrak{u}_2-\pmb{\eta^-}(\mathfrak{u}_1)}=0\}.
\label{corr_gen_eqn}
\end{equation}
(The division in Equation~\eqref{corr_gen_eqn} makes sense since the numerator and the denominator can be viewed as points of $\widehat{\C}$.)
The first and second coordinate projection maps $\pi_1^i$ and $\pi_2^j$ from $\mathfrak{C}_{i,j}$ onto $\widehat{\C}_i$ and $\widehat{\C}_j$ define an anti-holomorphic correspondence (i.e., a multi-valued map with anti-holomorphic local branches) from $\widehat{\C}_i$ onto $\widehat{\C}_j$ (cf. \cite{DS06}):
\[
\begin{tikzcd}
 & \mathfrak{C}_{i,j} \arrow{dl}[swap]{\pi_1^i} \arrow{dr}{\pi_2^j} \\
\widehat{\C}_i && \widehat{\C}_j.
\end{tikzcd}
\]
Combining all these anti-holomorphic correspondences for various $i,j\in\mathcal{I}$, we obtain an anti-holomorphic correspondence on $\mathfrak{U}$ defined by the (in general reducible) curve $\sum_{i,j}\mathfrak{C}_{i,j}$. We denote this correspondence by $\mathfrak{C}^\circledast$.

The following observations show that $\mathfrak{C}^\circledast$ is obtained by lifting $S$ and its appropriate backward branches via the rational maps $R_i$. 
\smallskip

\noindent$\bullet$ Fix $z\in\overline{\D_i}$. Then, $S(R_i(z))=R_i(\eta^-(z))$, and hence,
\begin{equation}
((z,,i),(w,j))\in\mathfrak{C}^\circledast \iff R_j(w)=R_i(\eta^-(z))=S(R_i(z)),\ (w,j)\neq \pmb{\eta^-}(z,i).
\label{f_lift_eqn}
\end{equation}
\smallskip

\noindent$\bullet$ Now fix $z\in\widehat{\C}_i\setminus\overline{\D_i}$. Then, $S(R_i(\eta^-(z)))=R_i(z)$; i.e.,
\begin{equation}
((z,,i),(w,j))\in\mathfrak{C}^\circledast \iff R_j(w)=R_i(\eta^-(z))=S^{-1}(R_i(z)),\ (w,j)\neq \pmb{\eta^-}(z,i),
\label{f_inv_lift_eqn}
\end{equation}
\noindent where $S^{-1}$ is a suitable backward branch of $S$.

To capture our desired mating phenomenon, we need to pass to a correspondence induced by $\mathfrak{C}^\circledast$ on a quotient of $\mathfrak{U}$. To this end, we endow $\mathfrak{U}$ with the following finite equivalence relation:
\begin{center}
For $z\in\widetilde{\mathfrak{S}}_i\subset\widehat{\C}_i$ and $w\in\widetilde{\mathfrak{S}}_j\subset\widehat{\C}_j$,
\smallskip

$(z,i)\sim_{\mathrm{w}} (w,j)\iff R_i(z)=R_j(w)$.
\end{center}
\smallskip

The fact that $\overline{\cD}$ is the quotient of $\overline{\D}$ by a finite lamination and that $R_i\vert_{\partial\D_i}$ is injective (for all $i\in\mathcal{I}$) imply that 
$$
\mathfrak{W}\ :=\ \faktor{\mathfrak{U}}{\sim_{\mathrm{w}}}
$$
has a trivial fundamental group (it can be thought of as a compact, simply connected, noded Riemann surface). 
By definition, the map $\pmb{R}$ descends to a map from $\mathfrak{W}\longrightarrow\widehat{\C}$. This induced map, which we denote by $\widecheck{R}$, is also a branched covering. Abusing notation, we denote the image of a set $X\subset\mathfrak{U}$ (respectively, a point $p\in\mathfrak{U}$) under the quotient map $\mathfrak{U}\longrightarrow\mathfrak{W}$ by $X$ (respectively, $p$). We further set 
$$
\mathfrak{D}:=\bigcup_{i\in\mathcal{I}} \D_i,\quad \widetilde{\mathfrak{S}}:=\bigcup_{i\in\mathcal{I}}\widetilde{\mathfrak{S}}_{i}.
$$

\begin{lem}\label{eta_descends_lem}
The homeomorphism $\pmb{\eta^-}:\mathfrak{U}\to\mathfrak{U}$ descends to a homeomorphism $\widecheck{\eta}:\mathfrak{W}\longrightarrow\mathfrak{W}$.
\end{lem}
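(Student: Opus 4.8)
The plan is to show that $\pmb{\eta^-}$ respects the welding relation $\sim_{\mathrm{w}}$ and then to invoke the universal property of the quotient topology. The single observation that drives everything is that $\eta^-(z)=1/\overline{z}$ fixes the unit circle pointwise: if $z\in\mathbb{S}^1$ then $|z|=1$, so $1/\overline{z}=z$. First I would record this, and note as an immediate consequence that $\eta^-$ is an involution (one checks $\eta^-\circ\eta^-=\mathrm{id}$ directly), so $\pmb{\eta^-}:\mathfrak{U}\to\mathfrak{U}$ is a self-inverse homeomorphism of the disjoint union of spheres.

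Next I would examine where the nontrivial identifications of $\sim_{\mathrm{w}}$ live. By definition the relation only glues points of $\widetilde{\mathfrak{S}}$, and for each $i\in\mathcal{I}$ we have $\widetilde{\mathfrak{S}}_i=(R_i|_{\partial\D_i})^{-1}(\mathfrak{S}\cap\partial\Omega_i)\subset\partial\D_i=\mathbb{S}^1$. Hence every point appearing in a nontrivial $\sim_{\mathrm{w}}$-class lies on one of the circles $\partial\D_i$, where $\eta^-$ is the identity; all remaining points of $\mathfrak{U}$ form singleton classes. Therefore $\pmb{\eta^-}$ fixes $\widetilde{\mathfrak{S}}$ pointwise, and in particular it maps each equivalence class onto itself. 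This shows $\pmb{\eta^-}$ preserves the fibers of the quotient map $q:\mathfrak{U}\to\mathfrak{W}$, so $q\circ\pmb{\eta^-}$ is constant on $\sim_{\mathrm{w}}$-classes and factors through $q$.

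Finally I would assemble the conclusion: the universal property of the quotient gives a unique continuous map $\widecheck{\eta}:\mathfrak{W}\to\mathfrak{W}$ with $\widecheck{\eta}\circ q=q\circ\pmb{\eta^-}$. Running the same argument with the involution $\pmb{\eta^-}=(\pmb{\eta^-})^{-1}$ produces a continuous inverse (indeed $\widecheck{\eta}\circ\widecheck{\eta}$ and the identity both descend from $\pmb{\eta^-}\circ\pmb{\eta^-}=\mathrm{id}$, hence coincide by uniqueness), so $\widecheck{\eta}$ is a homeomorphism of $\mathfrak{W}$.

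Honestly I do not expect a serious obstacle here: the statement reduces entirely to the elementary fact that $\eta^-$ is the identity on $\mathbb{S}^1$ together with the fact that the welding identifications are supported on the circles $\partial\D_i$. The only point that needs care is bookkeeping — making explicit that $\widetilde{\mathfrak{S}}$ sits inside $\bigcup_i\partial\D_i$ and that no identification occurs off these circles — after which the descent and the homeomorphism property are formal consequences of the quotient construction.
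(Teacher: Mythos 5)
Your proof is correct and rests on exactly the same observation as the paper's: the identifications defining $\sim_{\mathrm{w}}$ are supported on $\widetilde{\mathfrak{S}}\subset\bigcup_i\partial\D_i$, where $\eta^-$ acts as the identity, so $\pmb{\eta^-}$ fixes every nontrivial equivalence class pointwise. The extra quotient-topology bookkeeping you include (universal property, descending the inverse) is a harmless elaboration of what the paper leaves implicit.
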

\begin{proof}
Let us suppose that $(z,i)\sim_{\mathrm{w}} (w,j)$; i.e., $z\in\widetilde{\mathfrak{S}}_i, w\in\widetilde{\mathfrak{S}}_j,$ and $R_i(z)=R_j(w)$. We need to show that $\pmb{\eta^-}(z,i)\sim_{\mathrm{w}} \pmb{\eta^-}(w,j)$. However, this is obvious since $\pmb{\eta^-}(z,i)=(\eta^-(z),i)=(z,i)\in\widetilde{\mathfrak{S}}_i$ and $\pmb{\eta^-}(w,j)=(\eta^-(w),j)=(w,j)~\in~\widetilde{\mathfrak{S}}_j$. 
\end{proof} 

Thus, the correspondence $\mathfrak{C}^\circledast$ on $\mathfrak{U}$ also descends to a correspondence on $\mathfrak{W}$. We denote this correspondence by $\mathfrak{C}$.

\subsection{Dynamical partition for $\mathfrak{C}$}\label{inv_partition_corr_subsec}

The invariant partition of the dynamical plane of $S$, given by $K(S)$ and $T^\infty(S)$, can be pulled back by $\widecheck{R}$ to produce an invariant partition of the dynamical plane of the correspondence $\mathfrak{C}$.
We define
$$
\mathcal{K}:=\widecheck{R}^{-1}(K(S)),\quad \mathcal{T}:= \widecheck{R}^{-1}(T^\infty(S)).
$$

\begin{prop}\label{corr_partition_prop}
\noindent\begin{enumerate}\upshape
\item $\widecheck{\eta}(\mathcal{T})=\mathcal{T}$, and $\widecheck{\eta}(\mathcal{K})=\mathcal{K}$.

\item Let $(\mathfrak{u}_1,\mathfrak{u}_2)\in\mathfrak{C}$. Then $\mathfrak{u}_1\in\mathcal{T}$ (respectively, $\mathfrak{u}_1\in\mathcal{K}$) if and only if $\mathfrak{u}_2\in\mathcal{T}$ (respectively, $\mathfrak{u}_2\in\mathcal{K}$).
\end{enumerate}
\end{prop}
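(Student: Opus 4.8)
The plan is to deduce both statements from a single semi-conjugacy relation between the descended involution $\widecheck{\eta}$ and the Schwarz reflection $S$, combined with the complete invariance of the partition $\widehat{\C}=K(S)\sqcup T^\infty(S)$ recorded in Proposition~\ref{basic_top_prop}. The first step is to make explicit how $\pmb{R}$ intertwines $\pmb{\eta^-}$ and $S$, splitting into the ``inner'' and ``outer'' cases already isolated in the construction. For $\mathfrak{u}=(z,i)$ with $z\in\overline{\D_i}$, the formula for the Schwarz reflection in Proposition~\ref{simp_conn_quad_prop} (recorded as Equation~\eqref{f_lift_eqn}) gives $\pmb{R}(\pmb{\eta^-}(\mathfrak{u}))=S(\pmb{R}(\mathfrak{u}))$, while for $z\in\widehat{\C}_i\setminus\overline{\D_i}$ Equation~\eqref{f_inv_lift_eqn} gives $\pmb{R}(\mathfrak{u})=S(\pmb{R}(\pmb{\eta^-}(\mathfrak{u})))$. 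Both identities descend to $\mathfrak{W}$ because $\widecheck{R}$ and $\widecheck{\eta}$ are by definition the quotients of $\pmb{R}$ and $\pmb{\eta^-}$ (the latter being well defined by Lemma~\ref{eta_descends_lem}).

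For part~(1), I would note that in either case the two points $\widecheck{R}(\mathfrak{u})$ and $\widecheck{R}(\widecheck{\eta}(\mathfrak{u}))$ differ by a single application of $S$, in one direction or the other, and that both lie in the domain $\overline{\cD}$ of $S$. Since $T^\infty(S)$ and $K(S)$ are completely invariant under $S$ by Proposition~\ref{basic_top_prop}, membership in $T^\infty(S)$ is preserved and reflected by $S$; hence $\widecheck{R}(\mathfrak{u})\in T^\infty(S)$ if and only if $\widecheck{R}(\widecheck{\eta}(\mathfrak{u}))\in T^\infty(S)$, that is, $\mathfrak{u}\in\mathcal{T}$ if and only if $\widecheck{\eta}(\mathfrak{u})\in\mathcal{T}$. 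This yields $\widecheck{\eta}(\mathcal{T})=\mathcal{T}$. Because $\mathcal{K}=\widecheck{R}^{-1}(K(S))=\mathfrak{W}\setminus\mathcal{T}$ and $\widecheck{\eta}$ is a homeomorphism, $\widecheck{\eta}(\mathcal{K})=\mathcal{K}$ follows at once.

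For part~(2), the defining equation of the correspondence, Equation~\eqref{corr_gen_eqn}, forces $\pmb{R}(\mathfrak{u}_2)=\pmb{R}(\pmb{\eta^-}(\mathfrak{u}_1))$ for every $(\mathfrak{u}_1,\mathfrak{u}_2)\in\mathfrak{C}^\circledast$, so after passing to the quotient $\widecheck{R}(\mathfrak{u}_2)=\widecheck{R}(\widecheck{\eta}(\mathfrak{u}_1))$. Consequently $\mathfrak{u}_2\in\mathcal{T}$ if and only if $\widecheck{\eta}(\mathfrak{u}_1)\in\mathcal{T}$, which by part~(1) holds if and only if $\mathfrak{u}_1\in\mathcal{T}$; the identical chain with $\mathcal{K}$ in place of $\mathcal{T}$ completes the argument. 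I would emphasize that this reasoning is insensitive to the exclusion of the diagonal point $\pmb{\eta^-}(\mathfrak{u}_1)$ in the definition of $\mathfrak{C}$, since only the equality $\widecheck{R}(\mathfrak{u}_2)=\widecheck{R}(\widecheck{\eta}(\mathfrak{u}_1))$ is used.

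The proof is essentially bookkeeping, so I do not expect a serious obstacle; the only point demanding care is the interior/exterior case split when invoking the intertwining relation, together with a check that everything is compatible with the welding relation $\sim_{\mathrm{w}}$. This last point is harmless, because $\sim_{\mathrm{w}}$ identifies only points of $\widetilde{\mathfrak{S}}$, which lie on the unit circles $\partial\D_i$ (where $\eta^-$ acts trivially) and map under $\widecheck{R}$ into $\mathfrak{S}\subset\Lambda(S)\subset K(S)$; thus the descended identities and the partition $\mathcal{K}\sqcup\mathcal{T}$ remain consistent across the identifications.
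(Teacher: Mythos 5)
Your proof is correct and follows essentially the paper's own argument: both rest on the inner/outer intertwining relations between $\pmb{\eta^-}$, $\pmb{R}$ and $S$ (Equations~\eqref{f_lift_eqn},~\eqref{f_inv_lift_eqn}), together with the complete invariance of $K(S)$ and $T^\infty(S)$ from Proposition~\ref{basic_top_prop}. The only differences are cosmetic — you deduce part~(2) formally from part~(1) via $\widecheck{R}(\mathfrak{u}_2)=\widecheck{R}(\widecheck{\eta}(\mathfrak{u}_1))$ where the paper simply repeats the case analysis, and your aside that both points lie in $\overline{\cD}$ is inaccurate (only the point to which $S$ is applied need lie in the domain), but this claim is never actually used in your argument.
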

\begin{proof}
1) It suffices to show that $\widecheck{\eta}(\mathcal{K})=\mathcal{K}$. 

Let us first assume that $(z,i)\in\overline{\D_i}\cap\mathcal{K}$. Then, $R_i(z)\in K(S)$ and $R_i(\eta^-(z))=S(R_i(z))$. As $K(S)$ is invariant under $S$, it follows that $R_i(\eta^-(z))\in K(S)$. We conclude that $\widecheck{\eta}(z,i)=(\eta^-(z),i)\in\mathcal{K}$.

Next let $(z,i)\in\mathcal{K}\setminus\overline{\D_i}$. Then, $R_i(z)\in K(S)$ and $S(R_i(\eta^-(z)))=R_i(z)$. As $K(S)$ is also backward invariant under $S$, it follows that $R_i(\eta^-(z))\in K(S)$, and hence $\widecheck{\eta}(z,i)=(\eta^-(z),i)\in\mathcal{K}$.

2) Let $(\mathfrak{u}_1,\mathfrak{u}_2)\in\mathfrak{C}$, where $\mathfrak{u}_1=(z,i)$ and $\mathfrak{u}_2=(w,j)$. It suffices to show that if $(z,i)\in\mathcal{T}$ (respectively, if $(z,i)\in\mathcal{K}$), then $(w,j)\in\mathcal{T}$ (respectively, $(w,j)\in\mathcal{K}$). 

To this end, first suppose that $z\in\overline{\D_i}$, which implies that $R_j(w)=R_i(\eta^-(z))=S(R_i(z))$. Now let $(z,i)\in\mathcal{T}$ (respectively, $(z,i)\in\mathcal{K}$). The $S$-invariance of $T^\infty(S)$ (respectively, of $K(S)$) implies that $\widecheck{R}(w,j)=R_j(w)\in T^\infty(S)$ (respectively, $\widecheck{R}(w,j)=R_j(w)\in K(S)$). Hence, $(w,j)\in\mathcal{T}$ (respectively, $(w,j)\in\mathcal{K}$). 

Next suppose that $z\in\widehat{\C}_i\setminus\overline{\D_i}$, which implies that $S(R_j(w))=R_i(z)$. Now let $(z,i)\in\mathcal{T}$ (respectively, $(z,i)\in\mathcal{K}$). The backward invariance of $T^\infty(S)$ (respectively, of $K(S)$) under $S$ implies that $\widecheck{R}(w,j)=R_j(w)\in T^\infty(S)$ (respectively, $\widecheck{R}(w,j)=R_j(w)\in K(S)$). Hence, $(w,j)\in\mathcal{T}$ (respectively, $(w,j)\in\mathcal{K}$). 
\end{proof}

\subsection{Polynomial structure of $\mathfrak{C}$}\label{corr_poly_subsec}

Note that $\widecheck{\eta}$ maps $\overline{\mathfrak{D}}$ onto $\mathfrak{W} \setminus \mathfrak{D}$, and preserves $\widetilde{\mathfrak{S}}$. 
We set $\mathcal{K}_1:=\mathcal{K}\cap \overline{\mathfrak{D}}$ and $\mathcal{K}_2:=\mathcal{K} \setminus \mathfrak{D}$.

\begin{lem}\label{lifted_ne_top_lem}
We have that $\mathcal{K}_2=\widecheck{\eta}(\mathcal{K}_1)$, and $\mathcal{K}_1\cap\mathcal{K}_2=\widetilde{\mathfrak{S}}$.
\end{lem}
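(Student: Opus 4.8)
The plan is to derive both identities directly from the two structural facts already in hand: that $\widecheck{\eta}$ is a homeomorphism satisfying $\widecheck{\eta}(\mathcal{K})=\mathcal{K}$ (Proposition~\ref{corr_partition_prop}(1)) and mapping $\overline{\mathfrak{D}}$ onto $\mathfrak{W}\setminus\mathfrak{D}$, together with a precise description of where $\partial\cD$ sits relative to the invariant partition $K(S)\sqcup T^\infty(S)$. First I would establish $\mathcal{K}_2=\widecheck{\eta}(\mathcal{K}_1)$. Since $\widecheck{\eta}$ is a bijection, it commutes with intersections, so
\begin{equation*}
\widecheck{\eta}(\mathcal{K}_1)=\widecheck{\eta}(\mathcal{K}\cap\overline{\mathfrak{D}})=\widecheck{\eta}(\mathcal{K})\cap\widecheck{\eta}(\overline{\mathfrak{D}})=\mathcal{K}\cap(\mathfrak{W}\setminus\mathfrak{D})=\mathcal{K}_2,
\end{equation*}
where I use $\widecheck{\eta}(\mathcal{K})=\mathcal{K}$ and $\widecheck{\eta}(\overline{\mathfrak{D}})=\mathfrak{W}\setminus\mathfrak{D}$.

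For the second identity I would expand
\begin{equation*}
\mathcal{K}_1\cap\mathcal{K}_2=(\mathcal{K}\cap\overline{\mathfrak{D}})\cap(\mathcal{K}\setminus\mathfrak{D})=\mathcal{K}\cap\left(\overline{\mathfrak{D}}\cap(\mathfrak{W}\setminus\mathfrak{D})\right).
\end{equation*}
Because the welding relation $\sim_{\mathrm{w}}$ identifies only points lying on the boundary circles $\partial\D_i$, interior points of $\mathfrak{D}$ survive the quotient untouched, so $\overline{\mathfrak{D}}\cap(\mathfrak{W}\setminus\mathfrak{D})=\partial\mathfrak{D}=\bigcup_{i}\partial\D_i$. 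Hence $\mathcal{K}_1\cap\mathcal{K}_2=\mathcal{K}\cap\bigcup_i\partial\D_i$, and the task reduces to identifying this set with $\widetilde{\mathfrak{S}}$.

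The crux is the claim $\partial\cD\cap K(S)=\mathfrak{S}$. I would argue this from the definitions: the droplet $T(S)=\widehat{\C}\setminus\cD$ is closed with $\partial\cD\subset T(S)$, and the fundamental tile $T^0(S)=T(S)\setminus\mathfrak{S}$ is contained in the tiling set $T^\infty(S)$; therefore every \emph{non-singular} boundary point of $\cD$ lies in $T^\infty(S)$ and so not in $K(S)$. Conversely, by Proposition~\ref{per_rays_land}(2) each point of $\mathfrak{S}$ lies on $\Lambda(S)\subset K(S)$. This yields $\partial\cD\cap K(S)=\mathfrak{S}$. Now for $z\in\partial\D_i$ the point $(z,i)$ lies in $\mathcal{K}$ exactly when $R_i(z)\in\partial\Omega_i\cap K(S)=\partial\Omega_i\cap\mathfrak{S}$, i.e.\ when $z\in(R_i\vert_{\partial\D_i})^{-1}(\mathfrak{S}\cap\partial\Omega_i)=\widetilde{\mathfrak{S}}_i$. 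Taking the union over $i$ gives $\mathcal{K}\cap\bigcup_i\partial\D_i=\widetilde{\mathfrak{S}}$, which finishes the argument.

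The main obstacle I anticipate is the verification of the structural identity $\partial\cD\cap K(S)=\mathfrak{S}$: one must check carefully that the smooth boundary arcs of $\partial\cD$ genuinely belong to the desingularized droplet $T^0(S)\subset T^\infty(S)$ rather than to $K(S)$, which rests on unwinding the precise definitions of the tiling set and the singular set $\mathfrak{S}$. The remaining quotient-topology bookkeeping—confirming that $\overline{\mathfrak{D}}\cap(\mathfrak{W}\setminus\mathfrak{D})$ collapses to $\partial\mathfrak{D}$ on the noded surface $\mathfrak{W}$, using that the only identifications occur along $\widetilde{\mathfrak{S}}$—is routine but deserves an explicit sentence.
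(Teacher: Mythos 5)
Your proof is correct and follows essentially the same route as the paper: the first identity is deduced from the $\widecheck{\eta}$-invariance of $\mathcal{K}$ (equivalently of $\mathcal{T}$) together with $\widecheck{\eta}(\overline{\mathfrak{D}})=\mathfrak{W}\setminus\mathfrak{D}$, and the second reduces to points of $\partial\mathfrak{D}$ mapped by $\widecheck{R}$ into $K(S)$, settled by the inclusions $\mathfrak{S}\subset K(S)$ and $\partial\cD\setminus\mathfrak{S}\subset T^0(S)\subset T^\infty(S)$. The paper states these two inclusions without proof, whereas you justify them (via Proposition~\ref{per_rays_land}(2) and the definition of the fundamental tile), which is a harmless elaboration rather than a different argument.
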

\begin{proof}
The first statement follows from the fact that $\widecheck{\eta}(\mathcal{T})=\mathcal{T}$. 

For the second statement, first observe that
$$
\mathcal{K}_1\cap\mathcal{K}_2=\{\mathfrak{u}\in\partial \mathfrak{D}: \widecheck{R}(\mathfrak{u})\in K(S)\}.
$$
The result is now a consequence of the fact that $\widecheck{R}(\widetilde{\mathfrak{S}})=\mathfrak{S}\subset K(S)$ and $\widecheck{R}(\partial \mathfrak{D} \setminus\ \widetilde{\mathfrak{S}})=\partial\cD \setminus \mathfrak{S}\subset T^\infty(S)$.
\end{proof}

\begin{prop}\label{lifted_ne_dyn_gen_prop}
\noindent\begin{enumerate}\upshape
\item $\mathcal{K}_2$ is forward invariant, and hence, $\mathcal{K}_1$ is backward invariant under~$\mathfrak{C}$.

\item $\mathfrak{C}$ has a forward branch carrying $\mathcal{K}_1$ onto itself with degree $d$, and this branch is conformally conjugate to $S:K(S)\to K(S)$. 

\item $\mathfrak{C}$ has a backward branch carrying $\mathcal{K}_2$ onto itself with degree $d$, and this branch is topologically conjugate to $S:K(S)\to K(S)$.
\end{enumerate}
\end{prop}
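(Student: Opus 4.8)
The plan is to realize all three branches by lifting the dynamics of $S$ through the branched covering $\widecheck{R}$ and to exploit a reversibility symmetry of $\mathfrak{C}$. The first thing I would record is that $\mathfrak{C}^\circledast$, and hence $\mathfrak{C}$, is invariant under the involution $\iota(\mathfrak{u}_1,\mathfrak{u}_2)=(\pmb{\eta^-}(\mathfrak{u}_2),\pmb{\eta^-}(\mathfrak{u}_1))$. This is immediate from \eqref{corr_gen_eqn}: the relation $\pmb{R}(\mathfrak{u}_2)=\pmb{R}(\pmb{\eta^-}(\mathfrak{u}_1))$ becomes $\pmb{R}(\pmb{\eta^-}(\mathfrak{u}_1))=\pmb{R}(\mathfrak{u}_2)$ after applying $\iota$ (using that $\pmb{\eta^-}$ is an involution), and the excluded diagonal condition $\mathfrak{u}_2\neq\pmb{\eta^-}(\mathfrak{u}_1)$ is carried to the identical condition. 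By Lemma~\ref{eta_descends_lem} this descends to the reversibility statement $(\mathfrak{u}_1,\mathfrak{u}_2)\in\mathfrak{C}\iff(\widecheck{\eta}(\mathfrak{u}_2),\widecheck{\eta}(\mathfrak{u}_1))\in\mathfrak{C}$. Together with $\mathcal{K}_2=\widecheck{\eta}(\mathcal{K}_1)$ from Lemma~\ref{lifted_ne_top_lem}, this will let me deduce every backward statement on $\mathcal{K}_2$ from the corresponding forward statement on $\mathcal{K}_1$.

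For part (1), Proposition~\ref{corr_partition_prop}(2) already sends $\mathfrak{C}$-images of $\mathcal{K}$-points into $\mathcal{K}$, so only membership in $\mathfrak{D}$ must be controlled. I would argue that if $(z,i)\in\mathcal{K}_2$ with $z$ in the exterior disk $\D_i^*:=\widehat{\C}_i\setminus\overline{\D_i}$, then $\eta^-(z)\in\D_i$ and $\widecheck{R}(\pmb{\eta^-}(z,i))=R_i(\eta^-(z))\in\Omega_i$. Because the $\Omega_j$ are pairwise disjoint Jordan quadrature domains and each $R_j\vert_{\overline{\D_j}}$ is a bijection onto $\overline{\Omega_j}$, the only $\pmb{R}$-preimage of this point lying in $\overline{\mathfrak{D}}$ is $\pmb{\eta^-}(z,i)$ itself — which is precisely the diagonal point deleted from $\mathfrak{C}^\circledast$. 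Hence every genuine forward image of $(z,i)$ lies outside $\mathfrak{D}$, i.e. in $\mathcal{K}_2$. The finitely many boundary points $z\in\mathbb{S}^1$ in $\mathcal{K}_2$ are exactly $\widetilde{\mathfrak{S}}=\mathcal{K}_1\cap\mathcal{K}_2$, and I would dispose of them directly using the $\sim_{\mathrm{w}}$-identifications (each is a fixed point of $S$). Backward invariance of $\mathcal{K}_1$ is then formal: if $(\mathfrak{u}_1,\mathfrak{u}_2)\in\mathfrak{C}$ with $\mathfrak{u}_2\in\mathcal{K}_1$, reversibility gives $(\widecheck{\eta}(\mathfrak{u}_2),\widecheck{\eta}(\mathfrak{u}_1))\in\mathfrak{C}$ with $\widecheck{\eta}(\mathfrak{u}_2)\in\mathcal{K}_2$, so forward invariance forces $\widecheck{\eta}(\mathfrak{u}_1)\in\mathcal{K}_2$ and thus $\mathfrak{u}_1\in\widecheck{\eta}(\mathcal{K}_2)=\mathcal{K}_1$.

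For part (2), the crucial observation is that $\widecheck{R}$ restricts to a homeomorphism $\mathcal{K}_1\to K(S)$ that is conformal on the interior: injectivity follows from the univalence of each $R_i\vert_{\overline{\D_i}}$ together with the gluing $\sim_{\mathrm{w}}$ that collapses the coincidences over $\mathfrak{S}$, and surjectivity from $K(S)\subset\overline{\cD}$. For $(z,i)\in\mathcal{K}_1$ one has, by Proposition~\ref{simp_conn_quad_prop} and \eqref{f_lift_eqn}, $R_i(\eta^-(z))=S(R_i(z))\in K(S)\subset\overline{\cD}$, which therefore possesses a unique $\pmb{R}$-preimage $\mathfrak{g}(z,i)$ in $\overline{\mathfrak{D}}$ (the remaining $d-1$ non-diagonal preimages falling into $\mathcal{K}_2$). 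This single-valued branch $\mathfrak{g}:\mathcal{K}_1\to\mathcal{K}_1$ of $\mathfrak{C}$ satisfies $\widecheck{R}\circ\mathfrak{g}=S\circ\widecheck{R}$, so $\mathfrak{g}=(\widecheck{R}\vert_{\mathcal{K}_1})^{-1}\circ S\circ\widecheck{R}\vert_{\mathcal{K}_1}$ is conformally conjugate to $S\vert_{K(S)}$ (both maps are anti-holomorphic and $\widecheck{R}$ is holomorphic), hence surjective and of degree $d$.

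Finally, for part (3) I would set $\mathfrak{h}:=\widecheck{\eta}\circ\mathfrak{g}\circ\widecheck{\eta}:\mathcal{K}_2\to\mathcal{K}_2$; the reversibility involution shows that the graph of $\mathfrak{h}$ is a backward branch of $\mathfrak{C}$. Conjugating by the homeomorphism $\psi:=\widecheck{R}\circ\widecheck{\eta}:\mathcal{K}_2\to K(S)$ yields $\psi\circ\mathfrak{h}=\widecheck{R}\circ\mathfrak{g}\circ\widecheck{\eta}=S\circ\psi$, so $\mathfrak{h}$ is conjugate to $S\vert_{K(S)}$ and of degree $d$; since $\widecheck{\eta}$ is anti-holomorphic, $\psi$ is anti-conformal rather than holomorphic, which is exactly why the conjugacy is only topological. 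The step I expect to be the genuine obstacle is the bookkeeping along the singular set $\widetilde{\mathfrak{S}}=\mathcal{K}_1\cap\mathcal{K}_2$: there the deleted diagonal point of $\mathfrak{C}^\circledast$ becomes $\sim_{\mathrm{w}}$-identified with an honest $\pmb{R}$-preimage and the two quadrature domains touch, so one must verify that $\mathfrak{g}$ and $\mathfrak{h}$ extend continuously across these finitely many glued fixed points and that the invariance claims are not spoiled there. Away from $\widetilde{\mathfrak{S}}$ everything reduces cleanly to the preimage count furnished by the univalence of $R_i\vert_{\overline{\D_i}}$ and to the reversibility symmetry.
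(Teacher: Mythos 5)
Your proposal is correct and follows essentially the same route as the paper: you construct the identical forward branch $(\widecheck{R}\vert_{\overline{\mathfrak{D}}})^{-1}\circ\widecheck{R}\circ\widecheck{\eta}$ on $\mathcal{K}_1$, use the homeomorphism $\widecheck{R}\vert_{\mathcal{K}_1}:\mathcal{K}_1\to K(S)$ as the conjugacy, and obtain the backward branch on $\mathcal{K}_2$ by conjugating with $\widecheck{\eta}$. Your explicit reversibility involution $\iota(\mathfrak{u}_1,\mathfrak{u}_2)=(\pmb{\eta^-}(\mathfrak{u}_2),\pmb{\eta^-}(\mathfrak{u}_1))$ is simply a formalization of the paper's observation that $\widecheck{\eta}$ conjugates forward branches to backward branches, and your extra care at the glued singular set $\widetilde{\mathfrak{S}}$ (where the deleted diagonal point gets $\sim_{\mathrm{w}}$-identified with a genuine preimage) is a valid and welcome elaboration of details the paper leaves implicit.
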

\begin{proof}
1) This follows immediately from the definition of $\mathfrak{C}$ and Lemma~\ref{lifted_ne_top_lem}. 
 
2) The forward branch of $\mathfrak{C}$ carrying $\mathcal{K}_1$ onto itself (with degree $d$) acts as 
$$
\mathfrak{b}:\ \left(z,i\right)\mapsto \left(\widecheck{R}\vert_{\overline{\mathfrak{D}}}\right)^{-1}(\widecheck{R}(\widecheck{\eta}(z,i)))= \left(\left(R_{j}\vert_{\overline{\D_{j}}}\right)^{-1}\left(R_{i}(\eta^-(z))\right),j\right),
$$
where $(z,i)\in\mathcal{K}_1$, and $R_{i}(\eta^-(z))\in\overline{\Omega_j}$.
It is easy to see from the construction that $\widecheck{R}:\mathcal{K}_1\longrightarrow K(S)$ is a homeomorphism. We claim that $\widecheck{R}\vert_{\mathcal{K}_1}$ is the desired conjugating map. To this end, note that 
$$
S(\widecheck{R}(z,i))=S(R_i(z))=R_{i}(\eta^-(z)),
$$
and hence
$$
\widecheck{R}\left(\mathfrak{b}(z,i)\right)=\widecheck{R}\left(\left(\widecheck{R}\vert_{\overline{\mathfrak{D}}}\right)^{-1}(\widecheck{R}(\widecheck{\eta}(z,i)))\right)=R_{i}(\eta^-(z))=S(\widecheck{R}(z,i)).
$$

It follows that $\widecheck{R}\vert_{\mathcal{K}_1}\circ\mathfrak{b}=S\circ\widecheck{R}\vert_{\mathcal{K}_1}$.

3) This follows from the previous part and the fact that $\widecheck{\eta}$ conjugates a forward branch of the correspondence on $\mathcal{K}_1$ to a backward branch of the correspondence on $\mathcal{K}_2$.
\end{proof}

\subsection{Group structure of $\mathfrak{C}$}\label{corr_group_subsec}

The fact that $S: T^\infty(S)\cap\overline{\cD}\longrightarrow T^\infty(S)$ is conformally conjugate to the Nielsen map $\pmb{\cN}_d$ implies that $S$ has no critical value in $T^\infty(S)$. Hence, in light of the description of $S$ in terms of $\eta^-$ and $R_i$, we conclude that no $R_i$ has a critical value in $\mathcal{T}$.
Thus, $\widecheck{R}:\mathcal{T}\to T^\infty(S)$ is a covering map. Simple connectedness of $T^\infty(S)$ now implies that $\mathcal{T}$ is the union of $d+1$ many disjoint open topological disks $U_0,\cdots,U_d$.

We define a conformal automorphism
$$
\widecheck{\tau}:\mathcal{T}\to\mathcal{T}
$$ 
satisfying the conditions
\begin{enumerate}
\item $\widecheck{\tau}(U_j)=U_{j+1},\ j\in\Z/(d+1)\Z$,

\item $\widecheck{R}\circ\widecheck{\tau}=\widecheck{R}$, and

\item $\widecheck{\tau}^{d+1}=\mathrm{id}$,

\item $\widecheck{R}^{-1}(\widecheck{R}(z))=\{z,\widecheck{\tau}(z),\cdots,\widecheck{\tau}^d(z)\}\ \forall\ z\in \mathcal{T}$.
\end{enumerate}
By construction, the forward branches of $\mathfrak{C}$ on $\mathcal{T}$ are given by the anti-conformal automorphisms $\widecheck{\tau}\circ\widecheck{\eta},\cdots,\widecheck{\tau}^d\circ\widecheck{\eta}$. 

Recall the notation $T^0\equiv T^0(S)=\widehat{\C}\setminus\left(\cD\cup \mathfrak{S}\right)$. We call $T^0$ the rank zero tile of $T^\infty(S)$, and connected components of $S^{-n}(T^0)$ tiles of rank $n$. This yields a natural tessellation of the lifted tiling set $\mathcal{T}$.  A connected component of the pre-image of a rank $n$ tile of $T^\infty(S)$ under $\widecheck{R}$ is called a rank $n$ tile of~$\mathcal{T}$.

\begin{prop}\label{grand_orbit_group_gen_prop}
\noindent\begin{enumerate}\upshape
\item The grand orbits of the correspondence $\mathfrak{C}$ on $\mathcal{T}$ are equal to the orbits of $\langle\widecheck{\eta}\rangle\ast\langle\widecheck{\tau}\rangle$. 
\item The group $\langle\widecheck{\eta}\rangle\ast\langle\widecheck{\tau}\rangle$ acts properly discontinuously on $\cT$, and $\cT/\mathfrak{C}$ is biholomorphic to $\D/\pmb{G}_d$ (where $\pmb{G}_d$ is the regular ideal $(d+1)-$gon reflection group).
\end{enumerate}
\end{prop}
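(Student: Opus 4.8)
The plan is to reduce everything happening on $\cT$ to the already-understood Nielsen dynamics on $\D$ through the covering $\widecheck{R}$, and to treat the two parts in turn.

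For part (1) I would begin from the description of the branches recorded just above the proposition. By Proposition~\ref{corr_partition_prop}(2) the correspondence preserves $\cT$, and there the forward branches are the anti-conformal maps $\widecheck{\tau}^j\circ\widecheck{\eta}$, $j=1,\dots,d$. Since $\widecheck{\eta}$ descends from the involution $\pmb{\eta^-}$ (as $\eta^-(z)=1/\overline{z}$ satisfies $\eta^-\circ\eta^-=\mathrm{id}$), we have $\widecheck{\eta}^2=\mathrm{id}$, so the backward branches are the inverses $\widecheck{\eta}\circ\widecheck{\tau}^{-j}$. By definition the grand orbit relation of $\mathfrak{C}$ on $\cT$ is the orbit equivalence of the group $\Gamma\leqslant\mathrm{Homeo}(\cT)$ generated by all these branches. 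Each branch lies in $\langle\widecheck{\eta},\widecheck{\tau}\rangle$, so $\Gamma\leqslant\langle\widecheck{\eta},\widecheck{\tau}\rangle$; conversely, since $d\geq 2$ the branches for $j=1,2$ are both present and the identities $\widecheck{\tau}=(\widecheck{\tau}^2\widecheck{\eta})(\widecheck{\tau}\widecheck{\eta})^{-1}$ and $\widecheck{\eta}=\widecheck{\tau}^{-1}(\widecheck{\tau}\widecheck{\eta})$ show $\widecheck{\tau},\widecheck{\eta}\in\Gamma$. Hence $\Gamma=\langle\widecheck{\eta},\widecheck{\tau}\rangle$ and the grand orbits are exactly its orbits; the free-product structure $\langle\widecheck{\eta}\rangle\ast\langle\widecheck{\tau}\rangle$ is deferred to part (2).

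For part (2) the central object is $\Theta:=\psi^{-1}\circ\widecheck{R}:\cT\to\D$, where $\psi:\D\to T^\infty(S)$ is the conformal conjugacy between $\pmb{\cN}_d$ and $S$. Because $S$ has no critical value in $T^\infty(S)$, the map $\widecheck{R}$ restricts to a covering $\cT\to T^\infty(S)$, so $\Theta$ is a covering of degree $d+1$, and properties (1)--(4) of $\widecheck{\tau}$ say precisely that $\langle\widecheck{\tau}\rangle\cong\Z/(d+1)\Z$ is its deck group acting simply transitively on fibers; thus $\cT/\langle\widecheck{\tau}\rangle\cong\D$. Using $S=\phi\circ\eta^-\circ\phi^{-1}$ (Proposition~\ref{simp_conn_quad_prop}) together with Equations~\eqref{f_lift_eqn}--\eqref{f_inv_lift_eqn}, one checks that $\Theta\circ\widecheck{\eta}=\pmb{\cN}_d\circ\Theta$ on $\cT\cap\overline{\mathfrak{D}}$ and $\Theta\circ\widecheck{\eta}=\pmb{\cN}_d^{-1}\circ\Theta$ on $\cT\setminus\mathfrak{D}$; that is, $\widecheck{\eta}$ is a $\Theta$-lift of the two-valued Nielsen correspondence. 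Proper discontinuity and the free-product presentation I would then obtain by transferring the $\pmb{G}_d$-tessellation of $\D$ by copies of $\Pi$ to a $\Gamma$-tessellation of $\cT$ through $\Theta$, with a single copy of $\Pi$ inside one sheet of $\cT$ as fundamental domain (a covolume count confirms this is correct, since $\mathbbm{G}_d$ has index $d+1$ over $\pmb{G}_d$). This simultaneously matches $\Gamma$ with the anti-Hecke group $\mathbbm{G}_d\cong\Z/2\Z\ast\Z/(d+1)\Z$ of Section~\ref{nielsen_farey_sec} and yields the free-factor assertion $\Gamma=\langle\widecheck{\eta}\rangle\ast\langle\widecheck{\tau}\rangle$.

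To identify the quotient I would show that $\pi\circ\Theta$, with $\pi:\D\to\D/\pmb{G}_d$ the orbifold projection, descends to a biholomorphism $\cT/\mathfrak{C}\to\D/\pmb{G}_d$. Recall that $\pmb{\cN}_d$ is orbit equivalent to $\pmb{G}_d$, so the grand orbits of $\pmb{\cN}_d$ on $\D$ coincide with the $\pmb{G}_d$-orbits. The map $\pi\circ\Theta$ is $\Gamma$-invariant and surjective because $\Theta$ is surjective and carries $\Gamma$-orbits into $\pmb{\cN}_d$-grand-orbits. For injectivity, if $\Theta(p)$ and $\Theta(p')$ lie in a common $\pmb{G}_d$-orbit I would lift a connecting chain of $\pmb{\cN}_d^{\pm1}$-steps one at a time through $\widecheck{\eta}$, arriving at some $p''\in\Theta^{-1}(\Theta(p'))$ with $p''$ in the $\Gamma$-orbit of $p$, and then move $p''$ to $p'$ by a power of $\widecheck{\tau}$ using simple transitivity of $\langle\widecheck{\tau}\rangle$ on the fiber. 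Conformality of $\Theta$, $\psi$ and $\pi$ upgrades the resulting bijection to a biholomorphism.

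The main obstacle is exactly this sheet-tracking in the injectivity (and proper-discontinuity) step. Since $\langle\widecheck{\tau}\rangle$ is a free \emph{factor} of $\Gamma$, not a normal subgroup, the involution $\widecheck{\eta}$ does not descend to a single-valued map on $\D=\cT/\langle\widecheck{\tau}\rangle$ but only to the non-invertible Nielsen correspondence; hence one cannot quotient in two clean stages. The real content lies in controlling which of the $d+1$ sheets a lifted grand-orbit chain lands on, governed by the deck action of $\langle\widecheck{\tau}\rangle$ and by the orbit equivalence of $\pmb{\cN}_d$ with $\pmb{G}_d$, and this same bookkeeping is what makes the transfer of proper discontinuity through the finite covering $\Theta$ delicate.
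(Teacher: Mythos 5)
Your part (1) matches the paper (identifying the forward branches $\widecheck{\tau}^j\circ\widecheck{\eta}$ and deducing that they generate $\langle\widecheck{\eta},\widecheck{\tau}\rangle$), and your quotient identification in part (2) is a legitimate alternative to the paper's: the lifting relations $\Theta\circ\widecheck{\eta}=\pmb{\cN}_d\circ\Theta$ on $\cT\cap\overline{\mathfrak{D}}$ and $\Theta\circ\widecheck{\eta}=\pmb{\cN}_d^{-1}\circ\Theta$ on $\cT\setminus\mathfrak{D}$ are correct, every point of $\D\setminus\Int{\Pi}$ has at least one $\Theta$-lift in $\overline{\mathfrak{D}}$, and together with simple transitivity of $\langle\widecheck{\tau}\rangle$ on fibers this makes your chain-lifting argument for the bijectivity of the induced map $\cT/\mathfrak{C}\to\D/\pmb{G}_d$ completable. (The paper proceeds differently: it restricts to the stabilizer of a single component $U_0$ of $\cT$ and shows that $\mathrm{Stab}(U_0)$, generated by the anti-conformal involutions $\widecheck{\tau}^j\circ\widecheck{\eta}\circ\widecheck{\tau}^{-j}$ fixing pointwise the sides of the ideal $(d+1)$-gon $\widecheck{R}^{-1}(T^0(S))\cap U_0$, acts on $U_0$ conformally conjugately to $\pmb{G}_d$ on $\D$.) However, there is a genuine gap: the remaining two assertions of the proposition, proper discontinuity and the presentation $\langle\widecheck{\eta},\widecheck{\tau}\rangle=\langle\widecheck{\eta}\rangle\ast\langle\widecheck{\tau}\rangle$, are never actually proved. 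Your proposed mechanism --- transferring the tessellation and a covolume count based on $[\mathbbm{G}_d:\pmb{G}_d]=d+1$ --- cannot do this job: a covolume computation presupposes the discreteness/proper discontinuity you are trying to establish, and matching cardinalities or indices of tile sets certifies neither a fundamental domain nor a presentation (any two groups of the same cardinality act simply transitively on the same set). Moreover, $\Gamma=\langle\widecheck{\eta},\widecheck{\tau}\rangle$ acts on the disconnected surface $\cT$, which has $d+1$ components, while $\mathbbm{G}_d$ acts on $\D$; so the two actions are not conjugate, no literal transfer of the $\mathbbm{G}_d$-tessellation exists, and only the abstract isomorphism $\Gamma\cong\Z/2\Z\ast\Z/(d+1)\Z$ could hold --- which is precisely what remains to be proven. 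You flag this ``sheet-tracking'' yourself as the main obstacle, but flagging it does not resolve it.

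Both missing pieces can be filled with facts you already have on the table. For the free product, apply the ping-pong lemma to the disjoint nonempty sets $\cT\cap\mathfrak{D}$ and $\cT\setminus\overline{\mathfrak{D}}$: every nontrivial power $\widecheck{\tau}^j$ maps $\cT\cap\mathfrak{D}$ into $\cT\setminus\overline{\mathfrak{D}}$, because a point of $\cD$ has exactly one $\widecheck{R}$-preimage in $\mathfrak{D}$ (namely the one in the corresponding $\D_i$, by injectivity of $R_i\vert_{\D_i}$ and disjointness of the $\Omega_i$), while $\widecheck{\eta}$ maps $\cT\setminus\overline{\mathfrak{D}}$ onto $\cT\cap\mathfrak{D}$; since $\langle\widecheck{\tau}\rangle$ has order $d+1\geq 3$, the lemma gives $\Gamma=\langle\widecheck{\eta}\rangle\ast\langle\widecheck{\tau}\rangle$. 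This is, in essence, the paper's own two-case argument: a nontrivial alternating word sends the rank zero tile to a tile of strictly higher rank (so it is not the identity), and it preserves $\cT\setminus\mathfrak{D}$ (so it cannot equal $\widecheck{\eta}$). For proper discontinuity, the paper's componentwise route is the efficient one: once $\mathrm{Stab}(U_0)\curvearrowright U_0$ is identified with $\pmb{G}_d\curvearrowright\D$, proper discontinuity holds on each component, and hence on $\cT$, since a compact subset of $\cT$ meets only finitely many components and the elements moving one prescribed component to another form finitely many cosets of a component stabilizer.
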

\begin{proof}
1) We have already observed that the forward branches of $\mathfrak{C}$ on $\mathcal{T}$ are given by $\widecheck{\tau}\circ\widecheck{\eta},\cdots,\widecheck{\tau}^d\circ\widecheck{\eta}$. Furthermore, $\widecheck{\tau}=(\widecheck{\tau}^{2}\circ\widecheck{\eta})\circ(\widecheck{\tau}\circ\widecheck{\eta})^{-1}$, and hence $\langle\widecheck{\tau}\circ\widecheck{\eta},\cdots,\widecheck{\tau}^d\circ\widecheck{\eta}\rangle=\langle\widecheck{\eta},\widecheck{\tau}\rangle$. It now remains to justify that $\langle\widecheck{\eta},\widecheck{\tau}\rangle$ is the free product of the cyclic groups $\langle\widecheck{\eta}\rangle$ and $\langle\widecheck{\tau}\rangle$.

To this end, first note that any relation in $\langle\widecheck{\eta},\widecheck{\tau}\rangle$ other than $\widecheck{\eta}^{2}=\mathrm{id}$ and $\widecheck{\tau}^{d+1}=\mathrm{id}$ can be reduced to one of the form 
\begin{equation}
(\widecheck{\tau}^{k_1}\circ\widecheck{\eta})\circ\cdots\circ(\widecheck{\tau}^{k_r}\circ\widecheck{\eta})=\mathrm{id}
\label{group_relation_1}
\end{equation} 
or 
\begin{equation}
(\widecheck{\tau}^{k_1}\circ\widecheck{\eta})\circ\cdots\circ(\widecheck{\tau}^{k_r}\circ\widecheck{\eta})=\widecheck{\eta},
\label{group_relation_2}
\end{equation}
where $k_1,\cdots,k_r\in\{1,\cdots,n\}$.
\smallskip

\noindent\textbf{Case 1:} Let us first assume that there exists a relation of the form (\ref{group_relation_1}) in $\langle\widecheck{\eta},\widecheck{\tau}\rangle$. It is readily seen using Relation~\eqref{f_inv_lift_eqn} that $(\widecheck{\tau}^{k_p}\circ\widecheck{\eta})$ maps the interior of a tile of rank $s$ in $\mathcal{T}\setminus \mathfrak{D}$ to the interior of a tile of rank $(s+1)$ in $\mathcal{T}\setminus \mathfrak{D}$. Hence, the group element on the left of Relation~(\ref{group_relation_1}) maps the tile of rank $0$ to a tile of rank $r$. Clearly, such an element cannot be the identity map.
\smallskip

\noindent\textbf{Case 2:} Now we consider a relation of the form (\ref{group_relation_2}) in $\langle\widecheck{\tau},\widecheck{\eta}\rangle$. Each $(\widecheck{\tau}^{k_p}\circ\widecheck{\eta})$ maps $\mathcal{T}\setminus \mathfrak{D}$ to itself. Hence, the group element on the left of Relation~(\ref{group_relation_2}) maps $\mathcal{T}\setminus \mathfrak{D}$ to itself, while $\widecheck{\eta}$ maps $\mathcal{T}\setminus \mathfrak{D}$ to $\mathcal{T}\cap \overline{\mathfrak{D}}$. This shows that there cannot exist a relation of the form (\ref{group_relation_2}) in $\langle\widecheck{\tau},\widecheck{\eta}\rangle$.

We conclude that $\widecheck{\eta}^{2}=\mathrm{id}$ and $\widecheck{\tau}^{d+1}=\mathrm{id}$ are the only relations in $\langle\widecheck{\eta},\widecheck{\tau}\rangle$, and hence $\langle\widecheck{\eta},\widecheck{\tau}\rangle=\langle\widecheck{\eta}\rangle\ast\langle\widecheck{\tau}\rangle\cong\Z/2\Z\ast\Z/(d+1)\Z$.

2) Recall that $\langle\widecheck{\tau}\rangle\leqslant\langle\widecheck{\eta}\rangle\ast\langle\widecheck{\tau}\rangle$ acts transitively on the components $U_0,\cdots,U_d$ of $\cT$. Hence it suffices to study the action of the stabilizer subgroup
$$
\mathrm{Stab}(U_0):=\{\gamma\in\langle\widecheck{\eta}\rangle\ast\langle\widecheck{\tau}\rangle: \gamma(U_0)=U_0\}
$$
of $U_0$ in $\langle\widecheck{\eta}\rangle\ast\langle\widecheck{\tau}\rangle$. It is easily seen that $\mathrm{Stab}(U_0)$ is generated by $\{\widecheck{\tau}^j\circ\widecheck{\eta}\circ \widecheck{\tau}^{-j},\ j\in\{0,\cdots,d\}\}$. 

We denote the rank zero tile $\widecheck{R}^{-1}(T^0)\cap U_0$ of $\cT$ in the component $U_0$ by $\widetilde{T^0_0}$. As $\widecheck{R}:U_0\to T^\infty(S)$ is a conformal isomorphism, it follows that $\widetilde{T^0_0}$ is a regular ideal $(d+1)-$gon in the conformal disk $U_0$. Further, each of the above generators of $\mathrm{Stab}(U_0)$ is an anti-conformal involution of $U_0$ which fixes a side of $\widetilde{T^0_0}$ pointwise. This implies that the $\mathrm{Stab}(U_0)-$action on $U_0$ is conformally conjugate to the $\pmb{G}_d-$action on $\D$. The result now follows.
\end{proof}

We are now ready to prove a part of Theorem~\ref{all_corr_thm}.

\begin{proof}[Proof of Theorem~\ref{all_corr_thm} (ideal polygon reflection group case)]
Let $f$ be a degree $d$ geometrically finite or finitely renormalizable, periodically repelling anti-polynomial with connected Julia set. By Theorem~\ref{conf_mating_polygonal_thm}, there exists a polygonal Schwarz reflection $S:\overline{\cD}\to\widehat{\C}$ that is a conformal mating of $f$ with the Nielsen map $\pmb{\cN}_d$. 

We claim that the desired correspondence $\mathfrak{C}$ on $\mathfrak{W}$ is the one constructed in Section~\ref{corr_construct_subsec}. To this end, note that the required $\mathfrak{C}-$invariant partition of $\mathfrak{W}$ is given in Proposition~\ref{corr_partition_prop}. Finally, the desired dynamical properties of $\mathfrak{C}$ on $\cT$ follow from Proposition~\ref{grand_orbit_group_gen_prop}, and those on $\cK$ follow from Lemma~\ref{lifted_ne_top_lem}, Proposition~\ref{lifted_ne_dyn_gen_prop}, and the fact that $S\vert_{K(S)}$ is topologically conjugate (conformally on the interior) to $f\vert_{\cK(f)}$.
\end{proof}

\section{Anti-Farey Schwarz reflections}\label{schwarz_anti_farey_sec}

We now look at degenerate anti-polynomial-like maps having the anti-Farey map $\pmb{\cF}_d$ (see Section~\ref{nielsen_farey_sec}) as their external map. Since $\pmb{\cF}_d:\overline{\D}\setminus\Int{\mathcal{H}}\to\overline{\D}$ fixes $\partial\mathcal{H}$ pointwise and since $\partial\mathcal{H}$ is a Jordan curve touching the unit circle at exactly one point, it follows that any such degenerate anti-polynomial-like map is the restriction of a Schwarz reflection map $S:\overline{\cD}\to\widehat{\C}$ of a simply connected quadrature domain $\cD$. Moreover, the tiling set dynamics of this Schwarz reflection map is conformally conjugate to $\pmb{\cF}_d$. 

Following \cite{LMM23}, we will denote by $\mathscr{S}_{\pmb{\cF}_d}$ the space of Schwarz reflection maps $S:\overline{\cD}\to\widehat{\C}$ such that $\cD$ is a Jordan quadrature domain and the tiling set dynamics of $S$ is conformally conjugate to $\pmb{\cF}_d$. We denote by $\mathfrak{U}_{d+1}$ the space of degree $d+1$ polynomials $p$ such that $p\vert_{\overline{\D}}$ is injective and $p$ has a unique (non-degenerate) critical point on $\mathbb{S}^1$.

\begin{prop}\cite[Proposition 3.3]{LMM23}\label{mating_equiv_cond_prop}
Let $R$ be a rational map of degree $d+1$ that is injective on $\overline{\D}$, $\cD:=R(\D)$, and $S$ the Schwarz reflection map associated with $\cD$. Then the following are equivalent.
\begin{enumerate}\upshape
\item There exists a conformal conjugacy $\psi$ between 
$$
\quad \pmb{\cF}_d:\overline{\D}\setminus \Int{\mathcal{H}}\longrightarrow \overline{\D}\quad \mathrm{and}\quad S:T^\infty(S)\setminus\Int{T^0(S)}\longrightarrow T^\infty(S).
$$
In particular, $T^\infty(S)$ is simply connected.

\item After possibly conjugating $S$ by a M{\"o}bius map and pre-composing $R$ with an element of $\mathrm{Aut}(\D)$, the uniformizing map $R$ can be chosen to be a polynomial with a unique critical point on $\mathbb{S}^1$; i.e., $R$ can be chosen to be a member of $\mathfrak{U}_{d+1}$. Moreover, $K(S)$ is connected. 

\item $\cD$ is a Jordan domain with a unique conformal cusp on its boundary. Moreover, $S$ has a unique critical point in its tiling set $T^\infty(S)$, and this critical point maps to $\Int{T^0(S)}$ with local degree $d+1$.
\end{enumerate}
\end{prop}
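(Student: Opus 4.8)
The plan is to prove the cyclic chain $(2)\Rightarrow(3)\Rightarrow(1)\Rightarrow(2)$, using throughout the structure of the anti-Farey map recorded in Section~\ref{nielsen_farey_sec} together with the description of Schwarz reflections in Proposition~\ref{simp_conn_quad_prop}. Recall that $\mathcal{H}$ is a ``monogon'' meeting $\mathbb{S}^1$ only at the unique parabolic fixed point $1$, and that $\pmb{\cF}_d$ carries $\pmb{\cF}_d^{-1}(\mathcal{H})$ onto $\mathcal{H}$ as a degree $d+1$ cover that is totally ramified at its single critical point, whose critical value $0$ lies in $\Int{\mathcal{H}}$. Since $S=R\circ\eta^-\circ(R\vert_\D)^{-1}$, the map $S:S^{-1}(T^0(S))\to T^0(S)$ has degree $d+1$, and a point $w=R(z)\in\cD$ (with $z\in\D$) is a critical point of $S$ if and only if $\eta^-(z)\in\D^*$ is a critical point of $R$. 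This dictionary between critical points of $S$ inside $\cD$ and critical points of $R$ in $\D^*$ is the main computational device.

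Assume $(2)$, so that after the stated normalization $R$ is a degree $d+1$ polynomial in $\mathfrak{U}_{d+1}$ with connected $K(S)$. Then $\infty$ is a totally ramified fixed point of $R$, i.e.\ a critical point of multiplicity $d$ in $\D^*$ with $R(\infty)=\infty$; by the dictionary this produces a single critical point $R(0)\in\cD$ of $S$ that is mapped to $\infty\in\Int{T^0(S)}$ with local degree $d+1$. Injectivity of $R$ on $\overline\D$ makes $\cD$ a Jordan domain, and the unique nondegenerate critical point of $R$ on $\mathbb{S}^1$ produces the unique conformal cusp on $\partial\cD$. Connectivity of $K(S)$ then forces $R(0)$ to be the \emph{only} critical point of $S$ in $T^\infty(S)$: an escaping critical point, or a second critical point inside a tile, would disconnect $K(S)$ exactly as in the implication $(1)\Rightarrow(2)$ of Proposition~\ref{conn_locus_prop}. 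This gives~$(3)$.

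For $(3)\Rightarrow(1)$ I would construct $\psi$ by lifting through the dynamics, mirroring Lemma~\ref{polygonal_dpl_lem} and the implication $(2)\Rightarrow(3)$ of Proposition~\ref{conn_locus_prop}, now accommodating one branched tile. Start with a conformal isomorphism $\psi_0:\mathcal{H}\to T^0(S)$ carrying the ideal vertex to the unique cusp and the critical value $0$ to that of $S$. As both $\pmb{\cF}_d:\pmb{\cF}_d^{-1}(\mathcal{H})\to\mathcal{H}$ and $S:S^{-1}(T^0(S))\to T^0(S)$ are degree $d+1$ covers totally ramified at a single point (the latter by~$(3)$), $\psi_0$ lifts to an isomorphism between the rank one tiles; and because $\pmb{\cF}_d$ and $S$ fix $\partial\mathcal{H}$ and $\partial T^0(S)$ pointwise, consecutive lifts agree along tile boundaries. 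Hypothesis~$(3)$ ensures that no tile of higher rank carries a critical point, so every further lift is an unramified isomorphism of disks, and the lifts assemble into a conformal isomorphism $\psi:\D\to T^\infty(S)$ conjugating $\pmb{\cF}_d$ to $S$; in particular $T^\infty(S)$ is simply connected. For $(1)\Rightarrow(2)$, the conjugacy sends the unique critical point of $\pmb{\cF}_d$ to a unique critical point of $S$ in $T^\infty(S)\cap\cD$ of local degree $d+1$, hence by the dictionary to a multiplicity-$d$ critical point $c^*\in\D^*$ of $R$; total ramification gives $R^{-1}(R(c^*))=\{c^*\}$. Choosing $M\in\mathrm{Aut}(\D)$ with $M(\infty)=c^*$ (possible since $\mathrm{Aut}(\D)$ acts transitively on $\D^*$) and a M{\"o}bius map $N$ with $N(R(c^*))=\infty$, the normalized uniformizer $N\circ R\circ M$ has a single pole of order $d+1$ at $\infty$ and is therefore a polynomial; the cusp of $\partial\cD$ supplies its unique critical point on $\mathbb{S}^1$, so $N\circ R\circ M\in\mathfrak{U}_{d+1}$, while simple connectivity of $T^\infty(S)$ yields connectedness of $K(S)$. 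This closes the cycle.

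The main obstacle is the inductive construction of $\psi$ in $(3)\Rightarrow(1)$. Unlike the Nielsen case, where the external map has no critical points and every tile cover is a homeomorphism, here one must control the lift across the single totally ramified tile and, more delicately, verify that the pinching of $T^\infty(S)$ against $K(S)$ at the parabolic cusp matches the pinching of $\overline\D$ at the fixed point $1$. Establishing that the glued lifts yield a genuine \emph{global} conformal isomorphism onto the simply connected set $T^\infty(S)$, rather than a merely local conjugacy with uncontrolled boundary identifications, is the crux of the argument.
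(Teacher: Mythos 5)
Two preliminary remarks. First, this paper contains no proof of Proposition~\ref{mating_equiv_cond_prop}: it is quoted from \cite[Proposition~3.3]{LMM23}, so the only in-paper material your argument can be measured against is the polygonal analogue (Lemma~\ref{polygonal_dpl_lem} and Proposition~\ref{conn_locus_prop}), which is exactly what you lean on. Second, two of your three arrows are essentially sound. In $(1)\Rightarrow(2)$, extracting a totally ramified point $c^*\in\D^*$ of $R$ from the conjugacy and normalizing by M{\"o}bius maps so that $R$ acquires a single pole of order $d+1$ at $\infty$ (hence is a polynomial, the unique cusp supplying the unique non-degenerate critical point on $\mathbb{S}^1$) is the standard route. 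In $(3)\Rightarrow(1)$, your tile-by-tile lifting --- starting from the observation that, under $(3)$, $S^{-1}(\Int{T^0(S)})$ is a single disk mapped onto $\Int{T^0(S)}$ with degree $d+1$ and total ramification --- is the correct adaptation of the proof of Proposition~\ref{conn_locus_prop}, $(2)\Rightarrow(3)$, and you rightly flag the boundary-matching of consecutive lifts as the delicate point.

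The genuine gap is in $(2)\Rightarrow(3)$, precisely where you defer to the polygonal argument. You assert that a second critical point of $S$ in $T^\infty(S)$ ``would disconnect $K(S)$ exactly as in the implication $(1)\Rightarrow(2)$ of Proposition~\ref{conn_locus_prop}.'' That implication cannot be invoked exactly, because its literal transfer is false in the family $\mathscr{S}_{\pmb{\cF}_d}$: the distinguished critical point $R(0)$ always lies in the interior of the rank-one tile, which maps onto $T^0(S)$ with degree $d+1>1$, and yet $K(S)$ is connected for every map of this family. The polygonal mechanism breaks here because the closure of a branched tile is a pinched disk one of whose complementary components is $\Int{T^0(S)}$ (or a preimage of it), and that component contains \emph{no} point of $K(S)$; so the dichotomy ``degree $>1$ on a tile implies at least two complementary components, each meeting $K(S)$'' is not available. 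Repairing the step requires two separate arguments: (i) no additional critical point can lie in the rank-one tile, since the global degree $d+1$ of $S:S^{-1}(\Int{T^0(S)})\to\Int{T^0(S)}$ is already exhausted by the local degree at $R(0)$, so extra branching there would force the rank-one tile to be multiply connected; and (ii) for branching in a tile of rank at least two (or for the multiple connectivity arising in (i)), one must exhibit a complementary region of the branched tile, equivalently a second attachment arc to its parent tile, whose closure contains a preimage of the cusp. Such preimages lie on tile boundaries and belong to $K(S)$, since their forward orbits land on the cusp, which never enters $T^0(S)$; a loop in $T^\infty(S)$ around such a region then separates points of $K(S)$, contradicting the equivalence between connectivity of $K(S)$ and simple connectivity of $T^\infty(S)$. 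This attachment-arc/cusp-preimage analysis, absent from your write-up and not contained in the polygonal citation, is the actual content of $(2)\Rightarrow(3)$.
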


\begin{remark}
That fact that rational uniformizing map $R$ of the quadrature domain of a Schwarz reflection $S\in\mathscr{S}_{\pmb{\cF}_d}$ can be chosen to be a polynomial follows from the existence of a multiplicity $d$ critical point of $S$ in its tiling set (recall that $\pmb{\cF}_d$ has such a critical point). 
\end{remark}

\begin{prop}\cite[Proposition~3.10]{LMM23}\label{anti_farey_slice_compact_prop}

\noindent The moduli space $\left[\mathscr{S}_{\pmb{\cF}_d}\right]:=\  \faktor{\mathscr{S}_{\pmb{\cF}_d}}{\mathrm{PSL}_2(\C)}$ is compact.
\end{prop}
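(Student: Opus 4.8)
The plan is to mirror the proof of Theorem~\ref{thm:compact}, exploiting the fact that every $S\in\mathscr{S}_{\pmb{\cF}_d}$ carries a conformal conjugacy $\psi\colon\D\to T^\infty(S)$ between the anti-Farey map $\pmb{\cF}_d\colon\overline{\D}\setminus\Int{\mathcal{H}}\to\overline{\D}$ and $S\colon T^\infty(S)\setminus\Int{T^0(S)}\to T^\infty(S)$; this is exactly condition~(1) of Proposition~\ref{mating_equiv_cond_prop}. Since the group $\mathrm{PSL}_2(\C)$ by which we quotient is precisely the ambiguity in this conjugacy, sequential compactness of the moduli space amounts to the following: given any sequence $\{S_n\}\subset\mathscr{S}_{\pmb{\cF}_d}$, we may conjugate by Möbius maps so that a subsequence converges to some $S_\infty$, and then verify $S_\infty\in\mathscr{S}_{\pmb{\cF}_d}$. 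A structural simplification over the polygonal setting is that here $\cD$ is a single Jordan quadrature domain, so the fixed-ray-lamination bookkeeping of Section~\ref{compact_sec} is absent and there is a single Carath\'eodory limit to control.

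First I would normalize. Because the critical value $0$ of $\pmb{\cF}_d$ lies in $\Int{\mathcal{H}}$, the fundamental monogon $\mathcal{H}$ contains a fixed round disk $B(0,r_0)$. Post-composing each $\psi_n$ with a Möbius map (which conjugates $S_n$ by an element of $\mathrm{PSL}_2(\C)$), I arrange $\psi_n(0)=\infty$ and $\psi_n(z)=1/z+O(z)$ near the origin, exactly as in Theorem~\ref{thm:compact}. The Koebe one-quarter theorem applied to $1/\psi_n$ then forces $T^0(S_n)=\psi_n(\mathcal{H})$ to contain a definite round disk about $\infty$, so that $\cD_n\subset B(0,R)$ for a uniform $R>0$. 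Consequently $\{\psi_n\vert_{\D}\}$ is a normal family, and after passing to a subsequence $\psi_n\to\psi_\infty$ locally uniformly on $\D$, with $\psi_\infty$ univalent (non-constant, thanks to the prescribed pole at $0$). Since each $\partial\cD_n=\psi_n(\partial\mathcal{H})$ is a real-algebraic curve of uniformly bounded degree lying in the fixed ball $\overline{B}(0,R)$, the map $\psi_\infty$ extends continuously to $\overline{\mathcal{H}}$, its image $\psi_\infty(\overline{\mathcal{H}})$ is a topological monogon (the single boundary point $1\in\mathbb{S}^1$ mapping to the cusp), and $\cD_\infty:=\widehat{\C}\setminus\psi_\infty(\overline{\mathcal{H}})$ is a single Jordan domain. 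By the Carath\'eodory kernel theorem and the bounded degree of the uniformizing rational maps, the Riemann map of $\cD_\infty$ is again rational, so $\cD_\infty$ is a quadrature domain (Proposition~\ref{simp_conn_quad_prop}).

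Next I would pass to the limit in the conjugacy relation $\psi_n\circ\pmb{\cF}_d=S_n\circ\psi_n$, valid wherever defined. Using the local uniform convergence $\psi_n\to\psi_\infty$ together with the convergence $S_n\to S_\infty$ of the associated Schwarz reflections (which follows from Carath\'eodory convergence of the $\cD_n$ and the explicit description in Proposition~\ref{simp_conn_quad_prop}), I obtain that $\psi_\infty$ conjugates $\pmb{\cF}_d$ to the Schwarz reflection $S_\infty$ of $\cD_\infty$ on the escaping set. This is precisely statement~(1) of Proposition~\ref{mating_equiv_cond_prop} for $S_\infty$; the equivalence (1)$\Leftrightarrow$(3) there then guarantees that $\cD_\infty$ is a Jordan quadrature domain with a unique conformal cusp and that $S_\infty$ has the correct critical structure in its tiling set. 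Hence $S_\infty\in\mathscr{S}_{\pmb{\cF}_d}$, and it is the desired subsequential limit, completing the argument.

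The main obstacle is confirming that the limit does not degenerate out of the slice, i.e.\ that the unique non-degenerate critical point of the uniformizing polynomial persists on $\mathbb{S}^1$ in the limit (equivalently, that the single cusp of $\cD_\infty$ neither smooths out nor collides with an exterior critical point, and that the degree remains $d+1$). Phrased dynamically, one must rule out the escape, in the limit, of the critical value from the fundamental tile, which would destroy the simple connectivity of $T^\infty(S_\infty)$. The key leverage is that the conformal conjugacy with $\pmb{\cF}_d$ is preserved under the above convergence: once $\psi_\infty$ is shown to intertwine $\pmb{\cF}_d$ with $S_\infty$ on the escaping set, the rigidity packaged in the equivalences of Proposition~\ref{mating_equiv_cond_prop} forces the cusp and degree data to survive. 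Thus the remaining analytic content is exactly the verification that this limiting intertwining holds on the full escaping set rather than merely on compact subsets of $\D$.
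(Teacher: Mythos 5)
Your skeleton is the right one, and it is the same strategy this paper uses for the polygonal analogue (Theorem~\ref{thm:compact}); note that for the present proposition the paper does not give a proof at all but cites \cite[Proposition~3.10]{LMM23}. The steps you take from that template are fine: the normalization of $\psi_n$, the Koebe argument giving $\cD_n\subset B(0,R)$, the normal-family extraction of a univalent limit $\psi_\infty$, the bounded-degree algebraic-curve control of $\partial\cD_n$, the Carath\'eodory kernel argument showing the limit domain is a simply connected quadrature domain, and the passage to the limit in the conjugacy ``wherever defined''.

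The gap is in the final third, and it sits exactly where the anti-Farey slice differs from the polygonal locus you are mirroring. In Theorem~\ref{thm:compact} nothing has to be \emph{excluded} in the limit: $\mathscr{S}_{\pmb{\cN}_d}$ is by construction the union over all fixed ray laminations, so it is closed under collisions of ideal vertices (Corollary~\ref{limit_dominates_cor}), and the proof only needs to exhibit the conjugacy. By contrast, $\mathscr{S}_{\pmb{\cF}_d}$ is a single stratum: membership requires $\cD_\infty$ to be a \emph{Jordan} quadrature domain of degree $d+1$ with a unique cusp. Since $\psi_\infty$ is injective on $\D$ and $\partial\mathcal{H}\cap\D$ lies in $\D$, the only degeneration available is that the landing point $v$ of the monogon vertex coincides with $\psi_\infty(x)$ for some $x\in\partial\mathcal{H}\cap\D$; this would pinch the droplet and make $\cD_\infty$ non-Jordan. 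Your proposed fix --- invoking ``the rigidity packaged in the equivalences of Proposition~\ref{mating_equiv_cond_prop}'' --- is circular: those equivalences are stated under the \emph{standing hypotheses} that the uniformizing rational map has degree $d+1$ and is injective on $\overline{\D}$, i.e.\ precisely the Jordan-ness and degree statements at issue, and a conformal conjugacy on escaping sets is a priori compatible with a pinched droplet. What actually closes the gap is an extra argument, for instance: (i) the relation $\psi_\infty\circ\pmb{\cF}_d=S_\infty\circ\psi_\infty$ shows that $S_\infty$ extends anti-holomorphically to a full neighborhood of each point of $\psi_\infty(\partial\mathcal{H}\cap\D)$, as the reflection fixing that non-singular branch; if $v=\psi_\infty(x)$, this extension would also have to fix pointwise the cusp branches landing at $v$ (since $S_\infty=\mathrm{id}$ on all of $\partial\cD_\infty$), which is impossible because the fixed-point set of an anti-holomorphic reflection is a single non-singular real-analytic arc. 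Alternatively, one can use Sakai's theorem \cite{Sak91}: the cusp cannot unfold in the limit (the critical point of the uniformizing polynomials on $\mathbb{S}^1$ persists), and a cusp lying on another boundary branch is not an admissible quadrature-domain singularity. The same extension argument rules out extra cusps on $\partial\cD_\infty$, and the critical point of multiplicity $d$ that $S_\infty$ inherits from $\pmb{\cF}_d$ through $\psi_\infty$ pins the degree at $d+1$. Only with these non-degeneration facts in hand does your last step (identifying $\psi_\infty(\D)$ with the full escaping set and concluding $S_\infty\in\mathscr{S}_{\pmb{\cF}_d}$) go through.
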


We normalize $S\in\mathscr{S}_{\pmb{\cF}_d}$ so that the conformal map $\psi:\D\to T^\infty(S)$ that conjugates $\pmb{\cF}_d$ to $S$ sends $0$ to $\infty$, and has asymptotics $z\mapsto 1/z+O(z)$ as $z\to 0$.

We say two maps $S_1, S_2\in\mathscr{S}_{\pmb{\cF}_d}$ are \emph{combinatorially equivalent}, denoted by $S_1\sim S_2$, if they have the same rational lamination (cf. Definition~\ref{def_preper_lami}). As in the case for the space $\mathscr{S}_{\pmb{\cN}_d}$, we denote by $\mathscr{S}_{\pmb{\cF}_d, r}\subset\mathscr{S}_{\pmb{\cF}_d}$ the subset of periodically repelling Schwarz reflections, and by $\widehat{\mathscr{S}_{\pmb{\cF}_d, r}} := \mathscr{S}_{\pmb{\cF}_d, r}/\sim$ the space of combinatorial classes.
Let $\mathscr{S}_{\pmb{\cF}_d, fr}\subset\mathscr{S}_{\pmb{\cF}_d}$ (respectively, $\mathscr{S}_{\pmb{\cF}_d, gf}\subset\mathscr{S}_{\pmb{\cF}_d}$) be the subset of at most finitely renormalizable periodically repelling Schwarz reflections (respectively, geometrically finite Schwarz reflections).

The proofs of Theorems~\ref{thm:CL},~\ref{thm:A}, and~\ref{thm:C} work mutatis mutandis for the space $\mathscr{S}_{\pmb{\cF}_d}$ of Schwarz reflections and yield the following result.

\begin{theorem}\label{farey_antipoly_thm}
\noindent\begin{enumerate}
\item There is a dynamically natural homeomorphism 
$$
\Phi: \widehat{\mathcal{C}^-_{d, r}} \longrightarrow \widehat{\mathscr{S}_{\pmb{\cF}_d, r}},
$$
where `dynamically natural' means that the circle homeomorphism conjugating $\pmb{\cF}_d$ to $\overline{z}^d$ carries the rational lamination of an anti-polynomial to the rational lamination of a Schwarz reflection.

\item There is a dynamically natural homeomorphism 
$$
\Phi: \mathcal{C}^-_{d, fr}  \longrightarrow \mathscr{S}_{\pmb{\cF}_d, fr}.
$$
Here, dynamically natural means that for any $f\in \mathcal{C}^-_{d, fr}$, $\Phi(f)$ is the unique conformal mating of $f$ with the anti-Farey map $\pmb{\cF}_d$.

\item There is a dynamically natural bijection 
$$
\Phi: \mathcal{C}^-_{d, gf} \longrightarrow \mathscr{S}_{\pmb{\cF}_d, gf}.
$$ 
Here, dynamically natural has the same meaning as in item (2) of this theorem.
\end{enumerate}
\end{theorem}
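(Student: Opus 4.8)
The plan is to transport the entire three-part argument developed for the polygonal family $\mathscr{S}_{\pmb{\cN}_d}$ (Theorems~\ref{thm:CL},~\ref{thm:A},~\ref{thm:C}) to the anti-Farey family $\mathscr{S}_{\pmb{\cF}_d}$, replacing each structural input about $\pmb{\cN}_d$ by its anti-Farey counterpart. The backbone of all three statements is the construction of a \emph{unique} conformal mating of a postcritically finite (and then geometrically finite) anti-polynomial $f$ with $\pmb{\cF}_d$, exactly as in the bijection part of Theorem~\ref{thm:C}. Concretely, I would pass to a PCF model $f_0$ whose lamination matches the push-forward of $\lambda(S)$ under the circle conjugacy between $\pmb{\cF}_d$ and $\overline{z}^d$ (via the orientation-reversing Thurston realization theorem used in the proof of Theorem~\ref{thm:C}); replace the basin-of-infinity dynamics of $f_0$ by $\pmb{\cF}_d$ using the David extension to $\D$ of that circle conjugacy; perform the same David/quasiconformal surgeries on the bounded Fatou components (these are internal to $f$, hence identical to the Nielsen case); and integrate the resulting $S$-invariant David--Beltrami coefficient. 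The only genuinely new analytic input is the David regularity of the circle conjugacy for $\pmb{\cF}_d$; since $\pmb{\cF}_d$ carries a \emph{single} parabolic fixed point on $\mathbb{S}^1$ (at $1$), this follows from the parabolic David-extension results already invoked in Proposition~\ref{extension_david_prop} and from \cite{LMM23} (cf. Theorem~\ref{nielsen_antipoly_conf_mat_thm}), while conformal removability of $\Lambda(S)$ follows from the John property of the tiling set furnished by Proposition~\ref{mating_equiv_cond_prop}.

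With PCF matings in hand, parts (1) and (2) follow the scheme of Theorems~\ref{thm:CL} and~\ref{thm:A}: approximate an arbitrary $f\in\mathcal{C}^-_{d,r}$ by PCF maps with $\widehat{d}(f,f_n)\to 0$ (Theorem~\ref{thn:approxbypcf}); take a subsequential limit $S$ of the matings $S_n$, where I invoke compactness of $\mathscr{S}_{\pmb{\cF}_d}$ (Proposition~\ref{anti_farey_slice_compact_prop}) \emph{in place of} Theorem~\ref{thm:compact}; and verify that the rational lamination of $S$ pushes forward to $\lambda(f)$ by combinatorial continuity. This requires re-running the puzzle/renormalization construction of Section~\ref{comb_class_schwarz_sec} for anti-Farey Schwarz reflections: the dynamical rays are defined through the conjugacy $\psi:\D\to T^\infty(S)$ with $\pmb{\cF}_d$ (as in Definition~\ref{dyn_ray_schwarz}), the ray-counting cut argument (the analog of Lemma~\ref{lem:cj}) uses the $d+1$ fixed rays of $\pmb{\cF}_d\vert_{\mathbb{S}^1}\cong\overline{z}^d$, and the Separation Lemma~\ref{lem:sl} together with the continuity Theorems~\ref{thm:ncpS},~\ref{thm:scc} transcribe once the renormalizations of positive level are known to be (anti-)polynomial-like. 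Part (2) is then deduced from part (1) together with the rigidity statement of Theorem~\ref{thm:rs}, whose complex-box-mapping proof applies after replacing the several pinching fixed points of the polygonal domain by the single cusp fixed point of the anti-Farey domain---so the ``pinched complement'' treatment in Theorem~\ref{thm:rs} collapses to a one-point consideration. For part (3) I would build the bijection between $\mathcal{C}^-_{d,gf}$ and $\mathscr{S}_{\pmb{\cF}_d,gf}$ exactly as in the bijection portion of Theorem~\ref{thm:C}: injectivity from rigidity of geometrically finite anti-polynomials, and surjectivity by realizing the push-forward lamination of a given $S$ by a PCF anti-polynomial (orientation-reversing version of \cite[Theorem~9.6]{LMMN}) and reconstructing $f$ by surgery on its Fatou components.

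The main obstacle is that, unlike $\pmb{\cN}_d$, the anti-Farey map $\pmb{\cF}_d$ carries a critical point (of multiplicity $d$) in its external dynamics, so the associated degenerate anti-polynomial-like map has a critical point in its tiling set. I would need to confirm that this external critical point does not disturb the limit-set puzzle machinery---which it should not, since the puzzles are generated by rays landing at fixed points of $S$ lying on $\Lambda(S)$, whereas the external critical point escapes into $T^\infty(S)$---and, more delicately, that the Carathéodory-limit control underlying compactness is compatible with the single-cusp normalization: here one uses that the uniformizing maps lie in $\mathfrak{U}_{d+1}$ (Proposition~\ref{mating_equiv_cond_prop}), so Proposition~\ref{anti_farey_slice_compact_prop} already supplies the needed compactness. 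A convenient simplification relative to the polygonal case is that $\cD$ is always a single Jordan quadrature domain with one cusp, so the fixed-ray stratification that drove the analysis for $\mathscr{S}_{\pmb{\cN}_d}$ is trivial here; accordingly the bijection $\Phi$ of part (3) is asserted as a bijection only, and no discontinuity statement needs to be reproduced.
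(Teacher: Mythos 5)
Your proposal is correct and follows essentially the same route as the paper: the paper's own proof of this theorem consists of the single observation that the arguments for Theorems~\ref{thm:CL},~\ref{thm:A}, and~\ref{thm:C} carry over mutatis mutandis to $\mathscr{S}_{\pmb{\cF}_d}$, with compactness supplied by Proposition~\ref{anti_farey_slice_compact_prop}, which is precisely the substitution scheme you spell out. Your identification of the genuinely new points (the external critical point of $\pmb{\cF}_d$, the single parabolic point on $\mathbb{S}^1$, the trivial fixed-ray stratification, and the absence of a discontinuity claim in part (3)) matches the paper's treatment, including its remark that the polygonal discontinuity argument does not transfer to this family.
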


{
\begin{remark}
We believe that the bijection $\Phi: \mathcal{C}^-_{d, gf} \longrightarrow \mathscr{S}_{\pmb{\cF}_d, gf}$ is not continuous in general. Although the proof of discontinuity given in Subsection~\ref{discont_fig} does not apply to the family $\mathscr{S}_{\pmb{\cF}_d}$ (essentially because all Schwarz reflections in $\mathscr{S}_{\pmb{\cF}_d}$ are defined by a single quadrature domain), one possible way of proving discontinuity in the current setting is to use the arguments of \cite[Lemma~8.9]{LLMM3}.
\end{remark}
}

We end this section with the proof of Theorem~\ref{all_corr_thm} in the case of the anti-Hecke group.

\begin{proof}[Proof of Theorem~\ref{all_corr_thm} (anti-Hecke group case)]
Let $f$ a degree $d$ anti-polynomial as in the statement of the theorem. By Theorem~\ref{conf_mating_antiFarey_thm}, there exists a Schwarz reflection $S=\Phi(f):\overline{\cD}\to\widehat{\C}$ in $\mathscr{S}_{\pmb{\cF}_d}$ such that the non-escaping set dynamics of $S$ is topologically conjugate (conformally on the interior) to $f\vert_{\cK(f)}$ and the tiling set dynamics of $S$ is conformally conjugate to $\pmb{\cF}_d$.

By Propositions~\ref{simp_conn_quad_prop} and~\ref{mating_equiv_cond_prop}, after possibly M{\"o}bius conjugating $S$ (which amounts to replacing $\cD$ by a M{\"o}bius image of it), we can assume that $p(\D)=\cD$, for some polynomial $p\in\mathfrak{U}_{d+1}$. We note that $S\equiv p\circ \eta^-\circ (p\vert_{\overline{\D}})^{-1}$.

We now define the antiholomorphic correspondence $\mathfrak{C}$ on $\widehat{\C}$ by the relation
\begin{equation*}
(z,w)\in\mathfrak{C}\iff \frac{p(w)-p(\eta^-(z))}{w-\eta^-(z)}=0,
\end{equation*} 
and set $\cK:=p^{-1}(K(S))$, $\cT:=p^{-1}(T^\infty(S))$. By \cite[Proposition~2.5]{LMM23}, the sets $\cK$ and $\cT$ are completely invariant under the action of the correspondence. 

Note that by construction of $S$, the tiling set $T^\infty(S)$ is simply connected, and it contains exactly one critical point of $S$. More precisely, this critical point lies at $p(0)$, and it maps to $\infty$ with local degree $d+1$ under $S$. Thus, by \cite[Proposition~2.18]{LMM23}, the dynamics of $\mathfrak{C}$ on $\cT$ is conformally orbit-equivalent to the action of the anti-Hecke group $\mathbbm{G}_d$ on $\D$.

Next, we set $\cK_1:=\cK\cap\overline{\D}$ and $\cK_2:=\cK\setminus\D$. Since $p$ carries $\mathbb{S}^1\setminus\{1\}$ onto $\partial\cD\setminus\{\pmb{s}\}\subset T^\infty(S)$ (where $\pmb{s}$ is the unique cusp of $\partial\cD$), it follows that $\mathbb{S}^1\setminus\{1\}$ is disjoint from $\cK$. As $\pmb{s}\in K(S)$, it follows that $(p\vert_{\mathbb{S}^1})^{-1}(\pmb{s})$ is the unique point of intersection of $\cK_1$ and $\cK_2$. 

By \cite[Proposition~2.6]{LMM23}, $\mathfrak{C}$ has a forward branch carrying $\cK_1$ onto itself with degree $d$, and this branch is topologically conjugate to $S\vert_{K(S)}$ such that the conjugacy is conformal on the interior. The result now follows from the fact that $S\vert_{K(S)}$ is topologically conjugate (conformally on the interior) to $f\vert_{\cK(f)}$.
\end{proof}

\begin{LARGE}\part{Holomorphic world}\end{LARGE}\label{part_two}
\bigskip

\section{Factor Bowen-Series maps for genus zero orbifolds}\label{fbs_sec}

We will now describe a collection of circle coverings that were introduced in \cite{MM2}.
Let us first define a class of orbifolds that give rise to these external maps:\\
 \noindent $\mathfrak{F}:=$ hyperbolic orbifolds $\Sigma$ of genus zero with
	\begin{enumerate}
	\item at least one puncture,
	\item at most one order two orbifold point,
	\item at most one order $\nu\geq 3$ orbifold point.
	\end{enumerate}
We set
\begin{align*}
n=
\begin{cases}
\nu \quad \textrm{if } \Sigma\in\mathfrak{F}\ \textrm{ has an order } \nu\geq 3 \textrm{ orbifold point},\\
1 \quad \mathrm{otherwise}.
\end{cases}
\end{align*}
Each $\Sigma\in\mathfrak{F}$ has an $n-$fold cyclic cover $\widetilde{\Sigma}$. Specifically, if $\Sigma$ has $\delta_1\geq1$ punctures and $\delta_2\in\{0,1\}$ order two orbifold points, then $\widetilde{\Sigma}$ is a genus zero orbifold with $n(\delta_1-1)+1$ punctures and $n\delta_2$ order two orbifold points. Topologically, $\widetilde{\Sigma}$ is obtained by skewering the surface $\Sigma$ along an infinite geodesic (in $\Sigma$) connecting the order $\nu$ orbifold point and a cusp, and gluing $n$ copies of it cyclically.

The surface $\widetilde{\Sigma}$ is uniformized by a Fuchsian group $\Gamma$ which admits a (closed) ideal $m-$gon $\Pi$ as a fundamental domain. This fundamental domain $\Pi$ can be chosen so that it has ideal vertices at $1$ and $\exp(2\pi i/n)$ which are identified, and such that $\Pi$ is symmetric under rotation by $2\pi/n$ around the origin. Here, 
\begin{align*}
m=
\begin{cases}
2n(\delta_1-1) \quad \textrm{ when }\ \delta_2=0,\\ 
n(2\delta_1-1) \quad \textrm{ when }\ \delta_2=1.
\end{cases}
\end{align*}

\begin{figure}[ht]
\captionsetup{width=0.96\linewidth}
	\begin{tikzpicture}
		\node[anchor=south west,inner sep=0] at (0.5,0) {\includegraphics[width=0.45\linewidth]{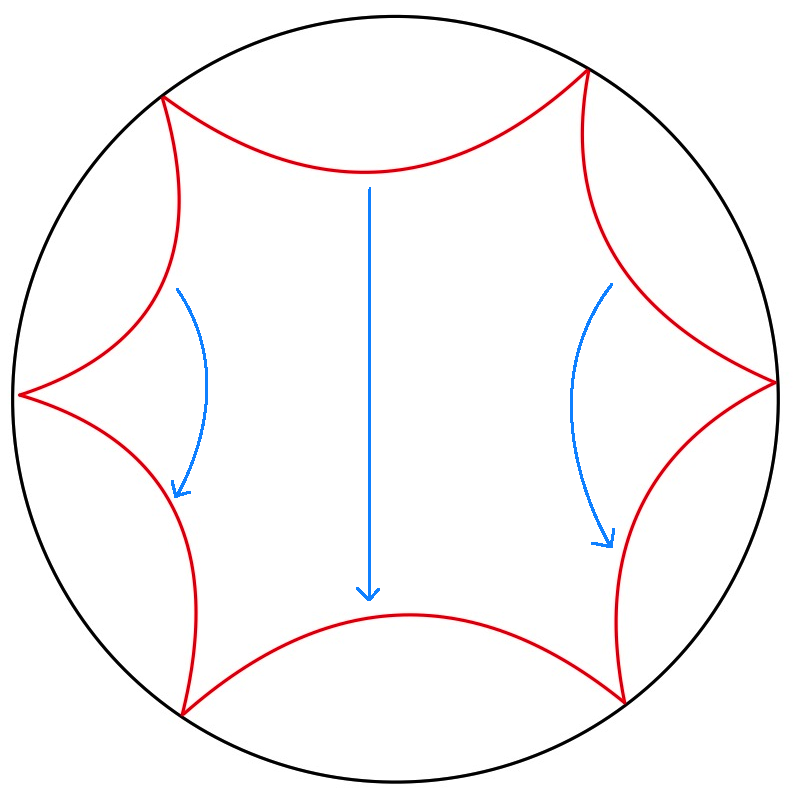}};
		\node[anchor=south west,inner sep=0] at (7,0) {\includegraphics[width=0.45\linewidth]{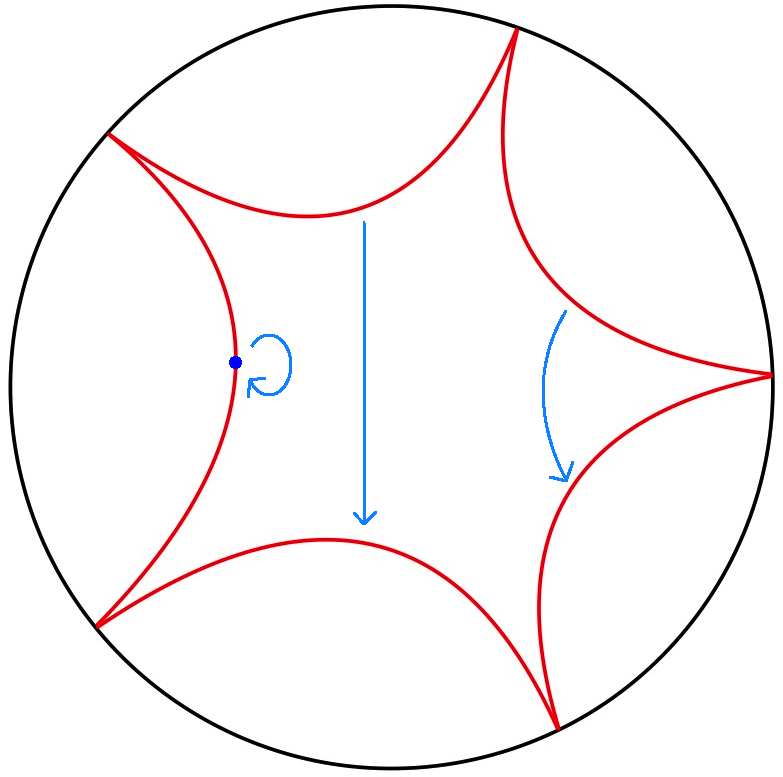}};
		\node at (3.8,3.6) {\begin{Large}$\Pi$\end{Large}};
		\node at (4.9,2.8) {$g_1$};
		\node at (2.84,2.8) {$g_2$};
		\node at (1.66,2.8) {$g_3$};
		\node at (5.4,4.2) {\begin{small}$g_{1}$\end{small}};
		\node at (5.4,1.6) {\begin{small}$g_{1}^{-1}$\end{small}};
		\node at (3.3,5.25) {\begin{small}$g_{2}$\end{small}};
		\node at (3.5,0.6) {\begin{small}$g_{2}^{-1}$\end{small}};
		\node at (1.24,4) {\begin{small}$g_{3}$\end{small}};
		\node at (1.32,1.8) {\begin{small}$g_{3}^{-1}$\end{small}};	
			
		\node at (10.25,3.6) {\begin{Large}$\Pi$\end{Large}};
		\node at (11.28,2.8) {$g_1$};
		\node at (9.9,2.8) {$g_2$};
		\node at (9.06,2.55) {$g_3$};
		\node at (11.5,4.32) {\begin{small}$g_{1}$\end{small}};
		\node at (11.6,1.36) {\begin{small}$g_{1}^{-1}$\end{small}};
		\node at (9.36,4.8) {\begin{small}$g_{2}$\end{small}};
		\node at (9.36,0.8) {\begin{small}$g_{2}^{-1}$\end{small}};
		\node at (7.8,2.8) {\begin{small}$g_{3}$\end{small}};
		
	\end{tikzpicture}
	\caption{Pictured are the preferred fundamental domains $\Pi$ and the actions of the associated Bowen-Series maps $A^{\textrm{BS}}_{\Sigma}:\overline{\D}\setminus\Int{\Pi}\to\overline{\D}$ for a four times punctured sphere $\Sigma$ (left) and a thrice punctured sphere $\Sigma$ with an order two orbifold point (right). Note that $\Sigma=\widetilde{\Sigma}$ in these cases.}
	\label{punc_sphere_orbifold_bs_fig}
\end{figure}

\begin{figure}[ht]
\captionsetup{width=0.98\linewidth}
\begin{tikzpicture}
\node[anchor=south west,inner sep=0] at (0.5,0) {\includegraphics[width=0.45\linewidth]{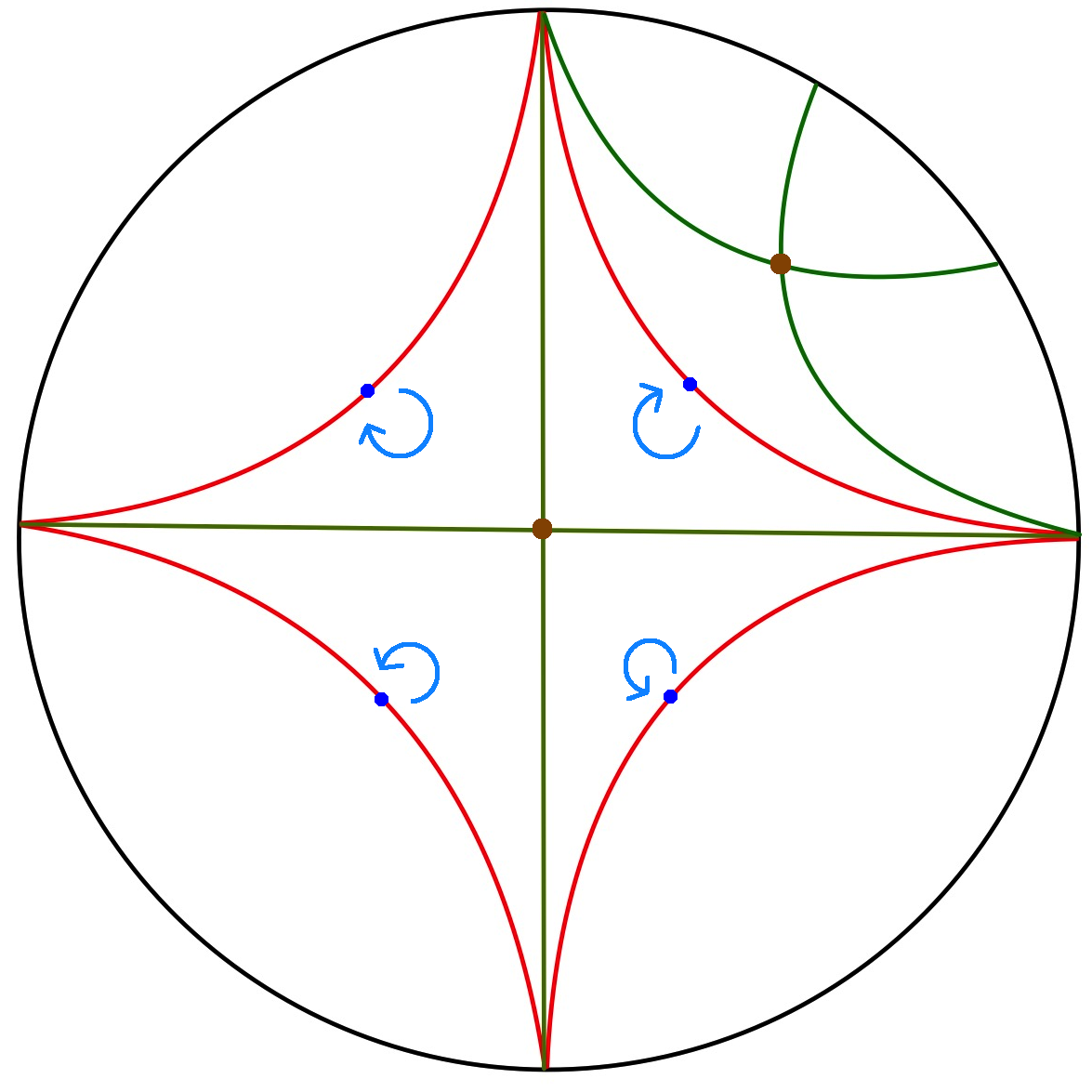}};
\node[anchor=south west,inner sep=0] at (7,0) {\includegraphics[width=0.42\linewidth]{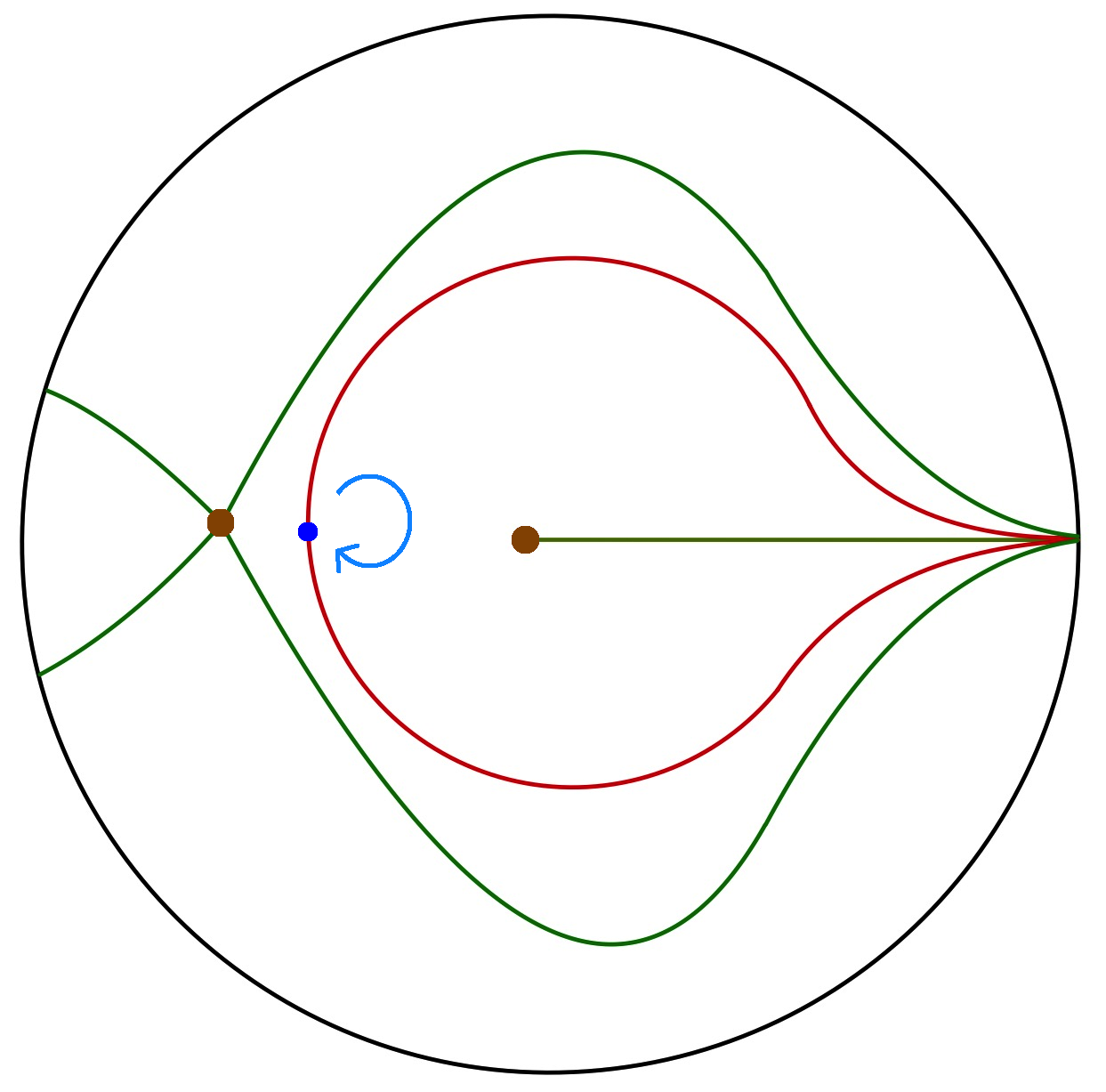}};
\node at (5.08,4.66) {\begin{small}$g_{1}$\end{small}};
\node at (2,4.5) {\begin{small}$g_{2}$\end{small}};
\node at (2,1.5) {\begin{small}$g_{3}$\end{small}};
\node at (4.8,1.5) {\begin{small}$g_{4}$\end{small}};
\node at (3.72,3.2) {\begin{small}$g_{1}$\end{small}};
\node at (2.9,3.2) {\begin{small}$g_{2}$\end{small}};
\node at (2.9,2.5) {\begin{small}$g_{3}$\end{small}};
\node at (3.72,2.5) {\begin{small}$g_{4}$\end{small}};
\node at (3.54,4) {$\Pi$};
\node at (10,2.2) {$\mathcal{H}$};
\end{tikzpicture}
\caption{For the Hecke surface $\Sigma$ with an order $4$ orbifold point, the cyclic cover $\widetilde{\Sigma}$ is a sphere with one puncture and four order two orbifold points. Left: The preferred fundamental domain $\Pi$ and the action of the associated Bowen-Series map $A^{\textrm{BS}}_{\widetilde{\Sigma}}$ for $\widetilde{\Sigma}$.
The Bowen-Series map $A^{\textrm{BS}}_{\widetilde{\Sigma}}$ commutes with rotation by $\pi/2$.
 The vertical and horizontal radial lines in $\D$ and their pre-images under $g_{1}$ are displayed in green. 
 Right: The factor Bowen-Series map $A_{\Sigma}^{\mathrm{f}}:\overline{\D}\setminus\Int{\mathcal{H}}\to\overline{\D}$, where $\mathcal{H}:=\Pi/\langle M_i\rangle$ is an ideal monogon with its vertex at $1$, restricts to a degree three covering of $\mathbb{S}^1$. It has a unique critical point of multiplicity three at the valence four vertex of the green graph.}
\label{hecke_factor_bs_fig}
\end{figure}

The Bowen-Series map $A^{\textrm{BS}}_{\widetilde{\Sigma}}\equiv A^{\textrm{BS}}_\Gamma:\overline{\D}\setminus\Int{\Pi}\to\overline{\D}$ of $\Gamma$ equipped with the fundamental domain $\Pi$ has jump discontinuities at the $n-$th roots of unity. However, the $2\pi/n$ rotation symmetry of $\Pi$ and of the associated side-pairing transformations imply that $A^{\textrm{BS}}_{\widetilde{\Sigma}}$ commutes with rotation by $2\pi/n$. This allows one to pass to a factor of the above Bowen-Series map on the quotient cone $\D/\langle M_\omega\rangle$ (where $M_\omega(z)=\omega z$, and $\omega=e^{2\pi i/n}$). The resulting map is denoted by 
$$
\widehat{A}^{\textrm{BS}}_{\widetilde{\Sigma}}: \left(\overline{\D}\setminus\Int{\Pi}\right)/\langle M_\omega\rangle\to\overline{\D}/\langle M_\omega\rangle.
$$ 
We uniformize the cone $\overline{\D}/\langle M_\omega\rangle$ by the closed disk $\overline{\D}$ and call this uniformizing map $\xi:\overline{\D}/\langle M_\omega\rangle\to\overline{\D}$ (note that $\xi$ is induced by $z\mapsto z^n$). Finally, we define the \emph{factor Bowen-Series map} associated with $\Sigma$ as
$$
A^{\textrm{f}}_{\Sigma}:=\xi\circ \widehat{A}^{\textrm{BS}}_{\widetilde{\Sigma}} \circ\xi^{-1}: \overline{\D}\setminus\Int{\mathcal{H}}\to\overline{\D},
$$ 
where $\mathcal{H}:=\xi(\Pi/\langle M_\omega\rangle)$. The set $\mathcal{H}$ has $p:=m/n$ ideal boundary points on~$\mathbb{S}^1$, and the map $A_\Sigma^{\mathrm{f}}$ restricts to a self-homeomorphism of order two on $\partial\mathcal{H}$.

\begin{remark}\label{canonical_dom_rem}
1) In the anti-holomorphic world, a Nielsen map or an anti-Farey map has a unique domain of definition which is bounded by the unit circle and curves comprising fixed points of the map. However, this is not the case for factor Bowen-Series maps since there is no unique collection of curves in the disk on which a factor Bowen-series map acts as an involution. As a convention, we will fix $\overline{\D}\setminus\Int{\mathcal{H}}$ as the \emph{canonical domain of definition} of the factor Bowen-Series map $A_\Sigma^{\mathrm{f}}$.

2) {The superscript `f' in $A^{\textrm{f}}_{\Sigma}$ stands for `factor'.}
\end{remark}

The factor Bowen-Series map $A^{\textrm{f}}_{\Sigma}$ turns out to be an expansive covering map of $\mathbb{S}^1$ of degree  
\begin{align*}
d\equiv d(\Sigma)= m-1 = \hspace{2mm}
\begin{cases}
2n(\delta_1-1)-1 \quad \mathrm{when}\quad  \delta_2=0,\\
n(2\delta_1-1)-1 \quad \mathrm{when}\quad \delta_2=1.
\end{cases}
\end{align*} 
Moreover, when $n\geq 3$, the map $A_\Sigma^{\mathrm{f}}$ has $p=m/n$ critical points, each of multiplicity $n-1$.
All these critical points are mapped to the same critical value.

Two special classes of examples of factor Bowen-Series maps are given by the cases 
$$
n=1,\ \delta_1\geq 2,\ \delta_2\in\{0,1\},
$$ and 
$$
n\geq 3,\ \delta_1=1,\ \delta_2=1.
$$ 
In the former case, $\Sigma$ is a punctured sphere with possibly an order two orbifold point, and $\widetilde{\Sigma}=\Sigma$. In particular, the factor Bowen-Series map of $\Sigma$ is simply the Bowen-Series map of $\Sigma$ (see Figure~\ref{punc_sphere_orbifold_bs_fig}). In the latter case, $\Sigma$ is a Hecke surface, and $\widetilde{\Sigma}$ is a sphere with one puncture and $\nu\geq 3$ order two orbifold points (see Figure~\ref{hecke_factor_bs_fig}).

\subsection{Conformal mating of factor Bowen-Series maps with polynomials}\label{fbs_poly_mating_subsec}

We continue to use the notation used above.
Let $f$ be a monic, centered complex polynomial of degree $d$ with a connected and locally connected Julia set. We denote the \emph{B{\"o}ttcher coordinate} of $f$ by
$$
\psi_f:\widehat{\C}\setminus\overline{\D}\to\mathcal{B}_\infty(f):=\widehat{\C}\setminus\mathcal{K}(f).
$$ 
This map, which conjugates $z^{d}$ to $f$, is normalized to be tangent to the identity map near infinity.
As $\mathcal{J}(f)$ is locally connected, $\psi_f$ extends continuously to $\mathbb{S}^1$ to yield a semi-conjugacy between $z^d\vert_{\mathbb{S}^1}$ and $f\vert_{\mathcal{J}(f)}$. 
By \cite[Proposition~2.5]{MM2}, there exists a homeomorphism $\mathfrak{h}:\mathbb{S}^1\to\mathbb{S}^1$ that conjugates $z^d$ to $A_{\Sigma}^{\mathrm{f}}$. We normalize $\mathfrak{h}$ so that it sends the fixed point $1$ of $z^d$ to the fixed point $1$ of $A_{\Sigma}^{\mathrm{f}}$.

We define an equivalence relation $\sim$ on $\mathcal{K}(f)\bigsqcup \overline{\D}$ generated by 
\begin{equation}
\psi_{f}(\zeta)\sim \mathfrak{h}(\overline{\zeta}),\ \textrm{for all}\ z\in\mathbb{S}^1.
\label{conf_mat_equiv_rel}
\end{equation}

\begin{defn}\label{conf_mat_def}
The maps $f$ and $A_{\Sigma}^{\mathrm{f}}$ are said to be \emph{conformally mateable} if there exist a continuous map $S: \mathrm{Dom}(S)\subsetneq\widehat{\C}\to\widehat{\C}$ (called a \emph{conformal mating} of $A_{\Sigma}^{\mathrm{f}}$ and $f$) that is complex-analytic in the interior of $\mathrm{Dom}(S)$ and continuous maps 
	$$
	\mathfrak{X}_f:\mathcal{K}(f)\to\widehat{\C}\ \textrm{and}\ \mathfrak{X}_\Sigma: \overline{\D}\to\widehat{\C},
	$$
	conformal on $\Int{\mathcal{K}(f)}$ and $\D$ (respectively), satisfying
	\begin{enumerate}
		\item\label{topo_cond} $\mathfrak{X}_f\left(\mathcal{K}(f)\right)\cup \mathfrak{X}_\Sigma\left(\overline{\D}\right) = \widehat{\C}$,
		
		\item\label{dom_cond} $\mathrm{Dom}(S)= \mathfrak{X}_f(\mathcal{K}(f))\cup\mathfrak{X}_\Sigma(\overline{\D}\setminus\Int{\mathcal{H}})$,
		
		\item $\mathfrak{X}_f\circ f(z) = S\circ \mathfrak{X}_f(z),\quad \mathrm{for}\ z\in\mathcal{K}(f)$,
		
		\item $\mathfrak{X}_\Sigma\circ A_{\Sigma}^{\mathrm{f}}(w) = F\circ \mathfrak{X}_\Sigma(w),\quad \mathrm{for}\ w\in
		\overline{\D}\setminus\Int{\mathcal{H}}$,\quad and

		\item\label{identifications} $\mathfrak{X}_f(z)=\mathfrak{X}_\Sigma(w)$ if and only if $z\sim w$ where $\sim$ is the equivalence relation on $\mathcal{K}(f)\sqcup \overline{\D}$ defined by Relation~\eqref{conf_mat_equiv_rel}.
	\end{enumerate}
\end{defn}

\begin{theorem}\cite[Theorem~3.2]{MM2}\label{fbs_poly_conf_mat_thm}
Let $f$ be a hyperbolic complex polynomial of degree $d$ with connected Julia set.
Then, there exists a conformal mating $S$ of $f:\mathcal{K}(f)\to\mathcal{K}(f)$ and $A_{\Sigma}^{\mathrm{f}}:\overline{\D}\setminus\Int{\mathcal{H}}\to \overline{\D}$. Moreover, $S$ is a unique up to M{\"o}bius conjugacy.
\end{theorem}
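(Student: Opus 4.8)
The plan is to construct $S$ by a \emph{David surgery} that leaves the dynamics of $f$ on $\mathcal{K}(f)$ untouched and replaces the external ($z^d$) dynamics on the basin of infinity by the dynamics of $A_{\Sigma}^{\mathrm{f}}$; this is the holomorphic counterpart of the antiholomorphic construction behind Theorem~\ref{nielsen_antipoly_conf_mat_thm} (cf.\ \cite[Theorem~10.21]{LMMN}, \cite[Proposition~3.7]{LMM23}). Since $f$ is hyperbolic with connected Julia set, $\mathcal{J}(f)$ is locally connected, so the B\"ottcher map $\psi_f:\D^*\to\mathcal{B}_\infty(f)$ extends continuously to $\mathbb{S}^1$ and semi-conjugates $z^d$ to $f\vert_{\mathcal{J}(f)}$; keeping $f$ on $\mathcal{K}(f)$ will make the non-escaping dynamics of the mating automatically $f\vert_{\mathcal{K}(f)}$, as required. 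The disk $\overline{\D}$ carrying $A_{\Sigma}^{\mathrm{f}}$ is to be glued to $\mathcal{K}(f)$ along $\mathbb{S}^1$ via the orientation-reversing relation $\psi_f(\zeta)\sim\mathfrak{h}(\overline{\zeta})$ of Equation~\eqref{conf_mat_equiv_rel}.

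The key technical input I would establish first is that the circle homeomorphism $\mathfrak{h}$ conjugating $z^d$ to $A_{\Sigma}^{\mathrm{f}}$ admits a continuous extension to $\overline{\D}$ as a \emph{David homeomorphism} $H:\overline{\D}\to\overline{\D}$. This is the analog for factor Bowen-Series maps of Proposition~\ref{extension_david_prop}(1). Because $A_{\Sigma}^{\mathrm{f}}$ is an expansive, piecewise M\"obius covering whose only non-expanding behaviour occurs at its finitely many parabolic fixed points on $\mathbb{S}^1$, each of which has an attracting direction in $\D$, I would reduce to a parabolic Blaschke-product model: away from its parabolic cycles, $A_{\Sigma}^{\mathrm{f}}$ is quasisymmetrically conjugate to a Blaschke product with matching parabolic data (cf.\ \cite[Proposition~6.8]{McM}), and the Blaschke model is in turn conjugate to $z^d$ by a circle homeomorphism that extends to a David homeomorphism of the disk (cf.\ \cite[Example~4.2, Theorem~4.13]{LMMN}). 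Composing these extensions yields $H$, whose David distortion is concentrated near the parabolic points on $\mathbb{S}^1$.

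With $H$ in hand I would perform the surgery. Using $\eta^+(z)=1/z$, which maps $\D$ onto $\D^*$ and restricts to the orientation-reversing map $\zeta\mapsto\overline{\zeta}$ on $\mathbb{S}^1$, set the model conjugacy
$$
\mathfrak{X}_\Sigma^{\,0}\;=\;\psi_f\circ\eta^+\circ H^{-1}:\ \overline{\D}\longrightarrow\widehat{\C}.
$$
Define $\widecheck{S}$ to equal $f$ on $\mathcal{K}(f)$ and $\mathfrak{X}_\Sigma^{\,0}\circ A_{\Sigma}^{\mathrm{f}}\circ(\mathfrak{X}_\Sigma^{\,0})^{-1}$ on the corresponding part of $\mathcal{B}_\infty(f)$; on $\mathbb{S}^1$ these agree by the choice of $\mathfrak{h}$ and the semi-conjugacy properties, so $\widecheck{S}$ is continuous on $\widehat{\C}$. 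Let $\mu$ be the invariant Beltrami coefficient obtained by transporting the standard complex structure of $\overline{\D}$ into $\mathcal{B}_\infty(f)$ through $\mathfrak{X}_\Sigma^{\,0}$ and setting $\mu\equiv 0$ on $\mathcal{K}(f)$; then $\mu$ is $\widecheck{S}$-invariant. Hyperbolicity of $f$ ensures that $\mathcal{B}_\infty(f)$ is a \emph{John} domain, and together with the David regularity of $H$ near $\mathbb{S}^1$ this lets me verify, as in \cite[Lemma~7.1]{LMMN}, that $\mu$ satisfies the David condition. Integrating $\mu$ by the David integrability theorem gives a David homeomorphism $\Psi$ of $\widehat{\C}$, and I would set $S:=\Psi\circ\widecheck{S}\circ\Psi^{-1}$, with $\mathfrak{X}_f:=\Psi\vert_{\mathcal{K}(f)}$ and $\mathfrak{X}_\Sigma:=\Psi\circ\mathfrak{X}_\Sigma^{\,0}$; by invariance of $\mu$ the map $\mathfrak{X}_\Sigma$ is conformal on $\D$, $\mathfrak{X}_f$ is conformal on $\Int\mathcal{K}(f)$, and $S$ is complex-analytic on the interior of its domain. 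Checking conditions \eqref{topo_cond}--\eqref{identifications} of Definition~\ref{conf_mat_def} is then formal.

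Finally, for uniqueness I would argue exactly as in the proof of Theorem~\ref{thm:C}: given two conformal matings $S,S_1$, the conformal models of their escaping dynamics ($A_{\Sigma}^{\mathrm{f}}$) and of their non-escaping dynamics ($f\vert_{\mathcal{K}(f)}$) coincide, so the conjugacies on the two invariant pieces match continuously along the common limit set and assemble into a global topological conjugacy that is conformal off that set. Since the escaping set is the image of the John domain $\mathcal{B}_\infty(f)$ under a map conformal there, its boundary (the limit set) is conformally removable (cf.\ \cite[Theorem~2.12, Theorem~9.2]{LMMN}); hence the conjugacy is M\"obius. I expect the \textbf{main obstacle} to be the verification of the David condition for $\mu$: this is precisely where hyperbolicity (the John property of $\mathcal{B}_\infty(f)$) and the delicate control of the David extension $H$ near the several parabolic points of $A_{\Sigma}^{\mathrm{f}}$ must be combined, everything else being an adaptation of the Schwarz-reflection arguments already developed in the antiholomorphic part of the paper.
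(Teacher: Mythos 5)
Your overall architecture is the correct one and matches the actual proof: the paper does not reprove this theorem (it is quoted from \cite{MM2}), but the argument there — a David extension of the circle conjugacy $\mathfrak{h}$, a David surgery replacing the external $z^d$-dynamics of $f$ on its basin of infinity, verification of the David condition via the John property of $\mathcal{B}_\infty(f)$ as in \cite[Lemma~7.1]{LMMN}, and uniqueness via conformal removability of the limit set — is exactly the scheme you describe, and it is the same scheme this paper implements in the antiholomorphic setting (Proposition~\ref{extension_david_prop} and the proof of Theorem~\ref{thm:C}). Performing the surgery directly on $f$ rather than on a postcritically finite model is legitimate here precisely because $f$ is hyperbolic, so $\mathcal{B}_\infty(f)$ is a John domain.

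There is, however, a genuine gap in your proof of the key technical input. You derive the David extension of $\mathfrak{h}$ by claiming that $A_{\Sigma}^{\mathrm{f}}$ is quasisymmetrically conjugate to a parabolic Blaschke product (citing \cite[Proposition~6.8]{McM} and \cite[Example~4.2, Theorem~4.13]{LMMN}). This fails for every $\Sigma\in\mathfrak{F}$ with $\delta_1\geq 2$ punctures: such an $A_{\Sigma}^{\mathrm{f}}$ has at least two distinct parabolic cycles on $\mathbb{S}^1$ (the points of $S_\Sigma$ already contain two parabolic cycles once $p\geq 2$), whereas a Blaschke product of any degree has at most one parabolic cycle on the circle — a parabolic periodic point on $\mathbb{S}^1$ of a Blaschke product necessarily attracts nearby orbits in $\D$, hence is the unique Denjoy--Wolff point of the corresponding iterate. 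Since quasisymmetric homeomorphisms are bi-H{\"o}lder and therefore cannot convert the polynomial approach rates at a parabolic cycle into the exponential rates at a repelling one, no quasisymmetric conjugacy between $A_{\Sigma}^{\mathrm{f}}$ and any Blaschke product can exist. This is exactly the point made in Section~\ref{hyp_vs_para_subsubsec}: the Blaschke-product reduction is available only for (anti-)Hecke groups, which have a unique conjugacy class of parabolics; the references you invoke all concern maps with a single parabolic point. The repair is to drop the Blaschke intermediary and apply the general David extension theorem of \cite{LMMN} directly — the route taken in Proposition~\ref{extension_david_prop}(1) for Nielsen maps, and in \cite{MM2} for factor Bowen-Series maps — which applies to expansive, piecewise-analytic circle coverings all of whose periodic cycles are repelling except finitely many symmetric parabolic ones, a class containing every $A_{\Sigma}^{\mathrm{f}}$. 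With that substitution, the rest of your argument goes through.
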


\section{B-involutions}\label{b_inv_sec}

We now introduce a holomorphic analogue of Schwarz reflections.

\begin{defn}\label{b_inv_def}
Let $\{\Omega_1,\cdots,\Omega_k\}$ be a disjoint collection of proper simply connected sub-domains of $\widehat{\C}$ such that $\Int{\overline{\Omega_j}}=\Omega_j$, $j\in\{1,\cdots, k\}$, and let $\cD:=\displaystyle\bigsqcup_{j=1}^k\Omega_j$. Further, let $\mathfrak{S}\subset\partial\cD$ be a (possibly empty) finite set such that $\partial^0\cD:=\partial\cD\setminus\mathfrak{S}$ is a finite union of disjoint non-singular real-analytic curves.

\noindent The set $\cD$ is called an \emph{inversive multi-domain} if it admits a continuous map $S:\overline{\cD}\to\widehat{\C}$ satisfying the properties:
\begin{enumerate}
\item\label{mero_cond} $S$ is meromorphic on $\cD$,
\item\label{permute_cond} $S(\partial\Omega_j)=\partial\Omega_{j'}$, for some $j'\in\{1,\cdots,k\}$, and
\item\label{inv_cond} $S:\partial\cD\to\partial\cD$ is an orientation-reversing involution preserving $\mathfrak{S}$; i.e., $S(\mathfrak{S})=\mathfrak{S}$.
\end{enumerate}
The map $S$ is called a \emph{B-involution} of the inversive multi-domain $\cD$.

\noindent When $k=1$, the domain $\cD$ is called an \emph{inversive domain}.
\end{defn}

For a B-involution $S:\overline{\cD}\to\widehat{\C}$, we set $T:=\widehat{\C}\setminus\cD$, and $T^0:= T\setminus\mathfrak{S}$.

\begin{remark}\label{jordan_rem}
1) If each $\Omega_j$ is a Jordan domain, then Condition~\ref{permute_cond} of Definiion~\ref{b_inv_def} follows from Condition~\ref{inv_cond}.

2) The nomenclature `B-involution' is chosen to emphasize the property that such a map induces an involution on the boundary of an inversive multi-domain $\cD$.
\end{remark}

The main result of this section is an explicit algebraic description of B-involutions, which can be thought of as a holomorphic counterpart of Proposition~\ref{simp_conn_quad_prop}. We first prove this characterization for an inversive domain.
Recall that $\eta^+(z)=1/z$.

\begin{lem}\label{b_inv_lem_1}
Let $\cD$ be an inversive domain with associated B-involution $S:\overline{\cD}\to\widehat{\C}$. 
Then, there exists a rational map $R$ of degree $d+1$ and a Jordan disk $\mathfrak{D}$ such that the following hold.
\begin{enumerate}\upshape
\item $1, -1\in\partial\mathfrak{D}$.
\item $\eta^+:\mathfrak{D}\to\widehat{\C}\setminus\overline{\mathfrak{D}}$ is a homeomorphism.
\item $\partial\mathfrak{D}$ is a piecewise non-singular real-analytic curve.
\item $R:\mathfrak{D}\to\cD$ is a conformal isomorphism.
\item $S\vert_{\cD}:= R\circ\eta^+\circ (R\vert_{\mathfrak{D}})^{-1}$.
\item $S:S^{-1}(\cD)\to\cD$ is a proper map of degree $d$.
\item $S:S^{-1}(\Int{T})\to \Int{T}$ is a proper map of degree $d+1$.
\end{enumerate}
\end{lem}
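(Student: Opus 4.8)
The plan is to mimic the proof of Proposition~\ref{simp_conn_quad_prop} (the Schwarz reflection case), but with the anti-conformal involution $\eta^-$ replaced by the conformal involution $\eta^+(z)=1/z$, and with the identity-on-the-boundary condition replaced by the weaker requirement that $S$ acts as an orientation-reversing involution on $\partial\cD$. The central tool will be a conformal welding argument: I want to manufacture a Jordan disk $\mathfrak{D}$ together with a global rational map $R$ uniformizing $\cD$ so that the given involution $S$ on $\partial\cD$ is the $R$-transport of the involution $\eta^+$ acting between $\mathfrak{D}$ and its exterior.

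\textbf{Construction of $R$ and $\mathfrak{D}$.} First I would take a Riemann map $\phi:\D\to\Omega_1=\cD$. The boundary involution $\iota:=(\phi|_{\mathbb{S}^1})^{-1}\circ S\circ (\phi|_{\mathbb{S}^1}):\mathbb{S}^1\to\mathbb{S}^1$ is then an orientation-reversing real-analytic involution of the circle (real-analyticity coming from the non-singular real-analytic structure of $\partial^0\cD$ away from $\mathfrak{S}$, plus the fact that $S$ is a boundary-regular anti-involution). Every orientation-reversing circle involution is conjugate to complex conjugation; so there is an orientation-reversing involution to match against $\eta^+$. The key point is that $\eta^+$ interchanges $\mathfrak{D}$ with $\widehat{\C}\setminus\overline{\mathfrak{D}}$ and fixes $\partial\mathfrak{D}$ setwise with an orientation-reversing involution having two fixed points, namely $1$ and $-1$. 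I therefore want to normalize $\phi$ (post-composing $\iota$ to a standard model and choosing a welding) so that the two fixed points of $\iota$ are carried to the prescribed points where $\partial\mathfrak{D}$ meets $\{1,-1\}$. Concretely, one glues two copies of $\overline{\mathfrak{D}}$ along $\partial\mathfrak{D}$ via the involution $\eta^+|_{\partial\mathfrak{D}}$, producing a sphere, and uses the welding realized by $\phi$ to transport the complex structure; the resulting uniformization yields a rational map $R$ of some degree $d+1$ with $R|_{\mathfrak{D}}=\phi$ (up to the normalizing automorphism) and $R\circ\eta^+=S\circ R$ on $\mathfrak{D}$. Items (1)--(5) then follow directly by construction: $\mathfrak{D}$ is a Jordan disk with piecewise real-analytic boundary passing through $\pm1$, $\eta^+$ swaps $\mathfrak{D}$ and its exterior, $R|_{\mathfrak{D}}$ is a conformal isomorphism onto $\cD$, and $S=R\circ\eta^+\circ(R|_{\mathfrak{D}})^{-1}$ is forced by the functional equation.

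\textbf{Degree counts (6)--(7).} Once $R$ is a global rational map of degree $d+1$ and $S=R\circ\eta^+\circ(R|_{\mathfrak{D}})^{-1}$ on $\cD$, the degree statements follow from a standard counting identical in spirit to the last paragraph of Proposition~\ref{simp_conn_quad_prop}. Writing $\Psi:=R\circ\eta^+$, which is a degree $d+1$ rational self-map of $\widehat{\C}$ satisfying $S=\Psi\circ(R|_{\mathfrak{D}})^{-1}$, one observes that for a generic target value $q\in\cD$ the equation $S(z)=q$ is equivalent to $\eta^+(w)\in (R|_{\mathfrak{D}})^{-1}(q)$ with $R(w)=z$; since $(R|_{\mathfrak{D}})^{-1}(q)$ is a single point $w_0\in\mathfrak{D}$, the solutions $z$ are the $R$-preimages in $\overline{\cD}$ of $R(\eta^+(w_0))$, minus the one solution lying on the uniformizing copy (i.e.\ the $\eta^+$-image of $\mathfrak{D}$ which $R$ sends conformally). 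This removes exactly one preimage, giving degree $(d+1)-1=d$ for $S:S^{-1}(\cD)\to\cD$ and establishing (6). For (7), the target lies in $\Int{T}=\Int{\widehat{\C}\setminus\cD}$, and no preimage is subtracted because $R(\eta^+(\mathfrak{D}))=\cD$ misses $\Int{T}$, so the full degree $d+1$ of $R$ is recovered, proving (7). I would make this precise by tracking multiplicities at critical values and checking properness via the behavior of $R$ and $\eta^+$ near $\partial\mathfrak{D}$.

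\textbf{Main obstacle.} The genuinely delicate step is the welding/normalization: producing a \emph{global} rational $R$ (rather than merely a holomorphic map on $\mathfrak{D}$) whose boundary transport matches the given involution $S|_{\partial\cD}$, and simultaneously arranging $1,-1\in\partial\mathfrak{D}$ with the correct fixed-point data. This is where the real-analyticity of $\partial^0\cD$ and the involutive structure on $\mathfrak{S}$ must be used to guarantee that the welded surface is a rational (not merely transcendental) sphere and that the two fixed points of the boundary involution can be simultaneously normalized to $\pm1$; I expect the argument to parallel the rational-map characterization of quadrature domains (Theorem~\ref{b_inv_thm}), with the conformal welding furnishing the global algebraic extension. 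The degree bookkeeping in (6)--(7), by contrast, is routine once the uniformization is in hand.
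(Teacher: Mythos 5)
Your overall strategy---weld two copies of $\cD$ along the boundary involution $S\vert_{\partial^0\cD}$, uniformize the resulting surface, read off $R$, and then count degrees---is the same as the paper's, and your degree count for items (6)--(7) is essentially the paper's argument (modulo two slips: the solutions of $S(z)=q$ correspond to the points of the \emph{full} rational preimage $R^{-1}(q)$ lying in $\widehat{\C}\setminus\overline{\mathfrak{D}}$, not to $(R\vert_{\mathfrak{D}})^{-1}(q)$; and for (7) what matters is that $R(\overline{\mathfrak{D}})=\overline{\cD}$ misses $\Int{T}$, not that ``$R(\eta^+(\mathfrak{D}))=\cD$'', which is false since $R$ maps the exterior of $\overline{\mathfrak{D}}$ over essentially the whole sphere). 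However, the step you label the ``main obstacle'' and defer is precisely the mathematical content of the lemma, and you have not supplied it. After welding, one obtains a genus-zero Riemann surface $\Sigma$ with finitely many ends coming from the deleted singular set $\mathfrak{S}$ (the welding is a legitimate Riemann surface because $S$ first extends, via the Schwarz reflection principle and the identity principle, to a \emph{conformal involution} of a neighborhood of $\partial^0\cD$---a preliminary step you also omit). A priori each of these ends could be a hole (an annular, hyperbolic end) rather than a puncture; if any end were a hole, $\Sigma$ would not be a punctured sphere, no rational extension of the uniformizing map would exist, and the construction would collapse. Your appeal to Theorem~\ref{b_inv_thm} to close this gap is circular: that theorem is deduced from the present lemma (the genuine analogue you could cite, Proposition~\ref{simp_conn_quad_prop}, concerns the reflection $\eta^-$ and identity boundary values, and its proof does not transfer verbatim).

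The paper closes this gap dynamically rather than by abstract welding theory: it defines a holomorphic map $R$ on $\Sigma$ minus finitely many points, piecewise as $\psi_1^{-1}$ on one copy of $\cD$ and as $S\circ\psi_2^{-1}$ on the other (the definitions match along the seam because $\psi_1=\psi_2\circ S$ there and $S$ is an involution on $\partial^0\cD$), and then observes that $R$ is \emph{proper} onto $\widehat{\C}\setminus\mathfrak{S}$, exactly because $S$ is continuous up to $\partial\cD$ and $S(\mathfrak{S})=\mathfrak{S}$. A proper holomorphic map is a finite branched cover, and a finite branched cover of the finite-type surface $\widehat{\C}\setminus\mathfrak{S}$ forces the source to be of finite type; hence every end is a puncture, $\Sigma$ is a punctured sphere, $R$ extends to a rational map, and the deck involution extends to a M{\"o}bius involution that can be normalized to $\eta^+$. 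This normalization also delivers your item (1) cleanly: an orientation-reversing involution of a Jordan curve has exactly two fixed points, and the fixed points of $\eta^+$ are $\pm 1$, so both lie on $\partial\mathfrak{D}$---whereas your normalization discussion only gestures at arranging this. Without this properness argument (or an equivalent finiteness argument at the singular points), your proposal does not produce a rational $R$, so the proof is incomplete at its central step.
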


\begin{proof}
\textbf{Step 1: Extending $S$ to a conformal involution around $\partial^0\cD$.}
By assumption, $S:\partial\cD\to\partial\cD$ is an order two homeomorphism preserving $\mathfrak{S}$. Hence, it carries each component of $\partial^0\cD$ (which is a non-singular real-analytic curve) onto some component of $\partial^0\cD$. Depending on the parity of the number of components of $\partial^0\cD$, no component or exactly one component of $\partial^0\cD$ is mapped onto itself by $S$. Since $S$ is meromorphic on $\cD$ and continuous up to $\partial\cD$, it now follows by the Schwarz Reflection Principle that $S$ extends to a holomorphic map (which we call $S$) in a neighborhood of $\partial^0\cD$. Since $S$ carries $\partial^0\cD$ onto itself, the map $S^{\circ 2}$ is also well-defined in a neighborhood of $\partial^0\cD$. Moreover, as $S^{\circ 2}=\mathrm{id}$ on $\partial^0\cD$, the Identity Principle for holomorphic maps implies that $S$ is a conformal involution in a neighborhood of~$\partial^0\cD$.
\smallskip

\begin{figure}[ht]
\begin{tikzpicture}
\node[anchor=south west,inner sep=0] at (7.2,0) {\includegraphics[width=0.45\textwidth]{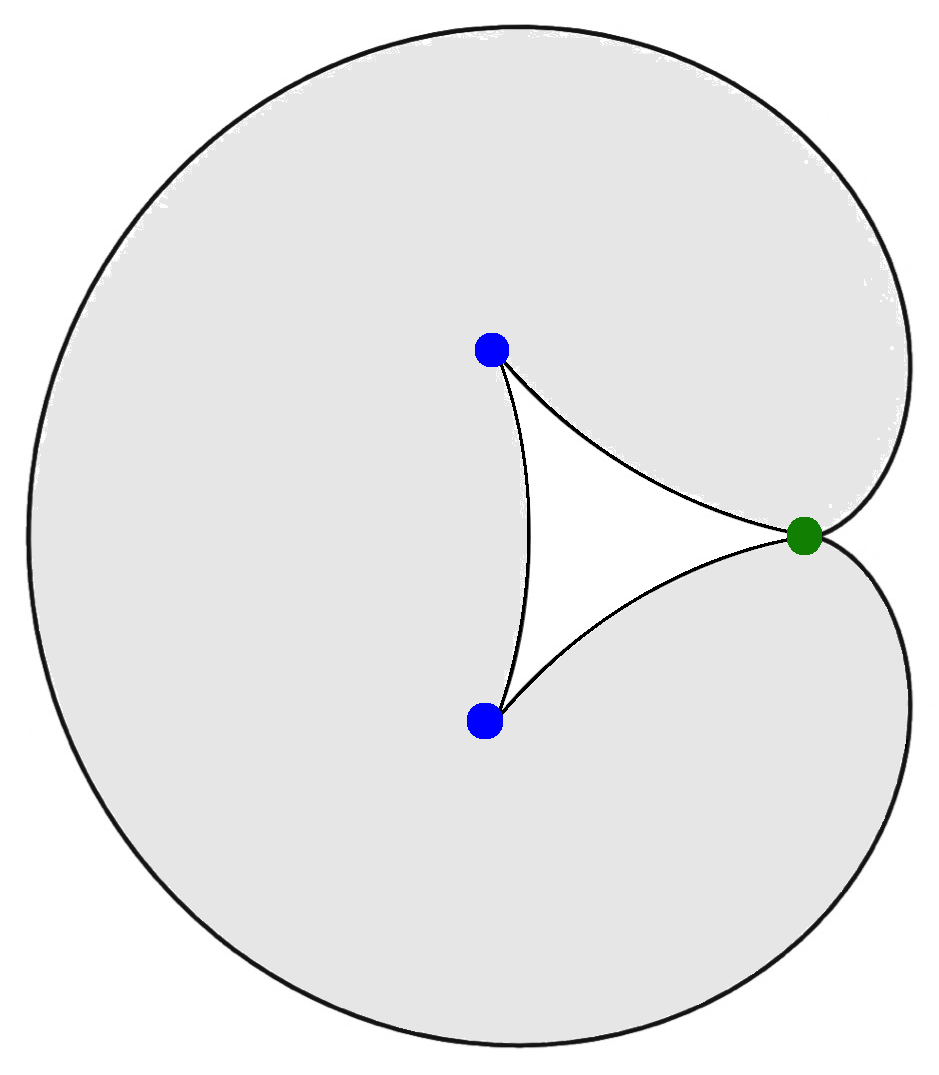}}; 
\node[anchor=south west,inner sep=0] at (0,0) {\includegraphics[width=0.54\textwidth]{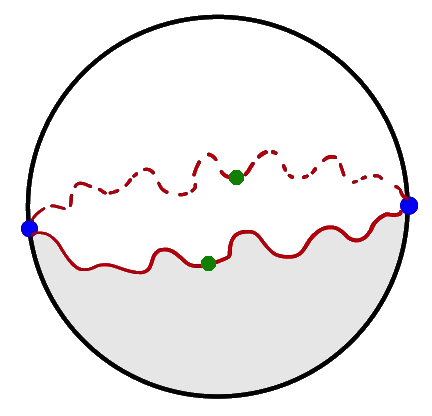}}; 
\node at (9.6,3.2) {\begin{Large}$\cD$\end{Large}};
\draw [->, line width=1pt] (8.8,2.4) to [out=225,in=-5] (3.6,1);
\draw [->, line width=1pt] (8.8,3.6) to [out=135,in=5] (3.6,5.6);
\draw [->, line width=1pt] (-0.2,3) to [out=90,in=180] (2,5);
\draw [->, line width=1pt] (-0.2,3) to [out=270,in=180] (2,1.2);
\node at (6.2,5.8) {\begin{Large}$\psi_2$\end{Large}};
\node at (6.2,0.66) {\begin{Large}$\psi_1$\end{Large}};
\node at (1.2,0.6) {\begin{Large}$\Sigma$\end{Large}};
\node at (-0.5,3) {\begin{Large}$M$\end{Large}};
\node at (4.8,2.2) {\begin{Large}$\mathfrak{J}$\end{Large}};
\node at (2.8,1.2) {\begin{Large}$\mathfrak{D}$\end{Large}};
\end{tikzpicture}
\caption{Illustrated is the proof of Lemma~\ref{b_inv_lem_1}.}
\label{inversive_domain_1_fig}
\end{figure}

\noindent\textbf{Step 2: Constructing a welded Riemann surface $\Sigma$ with $S$ as the welding map.}
We now weld the bordered Riemann surface $\cD\cup\partial^0\cD$ with itself using the conformal involution $S:\partial^0\cD\to\partial^0\cD$ as the boundary identification map. The resulting (non-compact) Riemann surface is denoted by $\Sigma$. Note that $\Sigma$ is a genus zero surface with finitely many boundary components.

By definition of $\Sigma$, there exist homeomorphisms $\psi_1,\psi_2:\cD\cup\partial^0\cD\to\Sigma$ such that
\begin{enumerate}
\item $\psi_i$, $i\in\{1,2\}$, is conformal on $\cD$,
\item $\psi_1(\cD)\cap\psi_2(\cD)=\emptyset$,
\item $\psi_1(\partial^0\cD)=\psi_2(\partial^0\cD)$ is a union of non-singular real-analytic curves (which we denote by $\mathfrak{J}$),
\item $\psi_1(w)=\psi_2(S(w))$, for $w\in\partial^0\cD$, and
\item $\Sigma=\psi_1(\cD\cup\partial^0\cD)\cup\psi_2(\cD)$.
\end{enumerate}
\smallskip

\noindent\textbf{Step 3: Defining a conformal involution $M$ on $\Sigma$.}
Let us define a map 
\begin{align*}
M:\Sigma\to\Sigma, \hspace{1cm} 
M=
\begin{cases}
\psi_2\circ\psi_1^{-1}\quad \mathrm{on}\quad \psi_1(\cD\cup\partial^0\cD),\\
\psi_1\circ\psi_2^{-1}\quad \mathrm{on}\quad \psi_2(\cD).
\end{cases}
\end{align*} 
The relations $\psi_1=\psi_2\circ S$ (on $\partial^0\cD$) and $S^{\circ 2}=\mathrm{id}$ (on $\partial^0\cD$) imply that the piecewise definitions of $M$ match continuously along $\mathfrak{J}$. Moreover, $M$ is easily seen to be holomorphic, and hence a conformal involution on $\Sigma$. We also note that $\psi_1$ conjugates the involution $S\vert_{\partial^0\cD}$ to $M\vert_{\mathfrak{J}}$. 
\smallskip

\noindent\textbf{Step 4: A finite degree branched cover $R$ and the type of $\Sigma$.} We define a holomorphic map $R:\Sigma\setminus\psi_2(S^{-1}(\mathfrak{S}))\longrightarrow\widehat{\C}\setminus\mathfrak{S}$ as follows:
\begin{align*}
R=
\begin{cases}
\psi_1^{-1} \quad \mathrm{on}\quad \psi_1(\cD\cup\partial^0\cD),\\
S\circ \psi_1^{-1}\circ M = S\circ \psi_2^{-1} \quad \mathrm{on}\quad \psi_2\left(\cD \setminus S^{-1}(\mathfrak{S})\right).
\end{cases}
\end{align*} 
The assumption that $S$ extends continuously to an involution of $\partial\cD$ fixing $\mathfrak{S}$ implies that $R$ is a proper holomorphic map. Hence, $R$ is a finite degree branched covering map. Suppose that the degree of $R$ is $d+1$, for some non-negative integer $d$.

As the punctured sphere $\widehat{\C}\setminus\mathfrak{S}$ is a finite type Riemann surface, it follows that $\Sigma\setminus\psi_2(S^{-1}(\mathfrak{S}))$ must also be a finite type Riemann surface. Hence, $\Sigma\setminus\psi_2(S^{-1}(\mathfrak{S}))$ is also a punctured sphere. This means that $S^{-1}(\mathfrak{S})$ is a finite set and the boundary components of $\Sigma$ arising from the welding procedure are points (i.e., not holes). 
\smallskip

\noindent\textbf{Step 5: Extending $R$ to a rational map and $M$ to a M{\"o}bius involution.} Since $R$ is of finite degree, it extends holomorphically to the punctures of $\Sigma\setminus\psi_2(S^{-1}(\mathfrak{S}))$. This produces a holomorphic map $R$ of the Riemann sphere, and hence $R$ is a rational map of degree $d+1$.

The fact that $\Sigma$ is a punctured sphere also implies that the conformal involution $M$ extends to the entire Riemann sphere, and yields a M{\"o}bius involution. Note that $M$ preserves the set of punctures of $\Sigma$.
After possibly a change of coordinates, we can assume that $M=\eta^+$.
\smallskip

\noindent\textbf{Step 6: Putting everything together.}
Since $\Sigma$ is a punctured sphere, the curve $\mathfrak{J}$ can also extended to a Jordan curve $\overline{\mathfrak{J}}\subset\widehat{\C}$ by throwing in the punctures. Moreover, the assumption that $S:\partial\cD\to\partial\cD$ is orientation-reversing implies that $\eta^+$ is an orientation-reversing self-homeomorphism of $\overline{\mathfrak{J}}$. Hence, $\eta^+$ must have exactly two fixed points (namely, $\pm 1$) on $\overline{\mathfrak{J}}$.

We set $\mathfrak{D}:=\psi_1(\cD)$ and note that $\mathfrak{D}$ is one of components of $\widehat{\C}\setminus\overline{\mathfrak{J}}$. Finally, the definition of $R$ and the above discussion imply that 
\begin{equation}
R\vert_{\widehat{\C}\setminus\overline{\mathfrak{D}}}=S\vert_{\cD}\circ R\vert_{\mathfrak{D}}\circ\eta^+\vert_{\widehat{\C}\setminus\overline{\mathfrak{D}}}\ \implies\ S\vert_{\cD}= R\circ\eta^+\circ (R\vert_{\mathfrak{D}})^{-1}.
\label{b_inv_formula}
\end{equation}
Finally, the degrees of the branched coverings $S:S^{-1}(\cD)\to\cD$ and $S:S^{-1}(\Int{T})\to \Int{T}$ follow directly from Expression~\eqref{b_inv_formula}.
\end{proof}

The next result gives a similar description for inversive multi-domains with two components such that the boundary components are swapped by the B-involution.

\begin{lem}\label{b_inv_lem_2}
Let $\cD=\Omega_0\sqcup\Omega_1$ be an inversive multi-domain with associated B-involution $S:\overline{\cD}\to\widehat{\C}$. 
Assume further that $S(\partial\Omega_0)=\partial\Omega_1$.
Then, there exist rational maps $R_0, R_1$ and Jordan disks $\mathfrak{D}_0, \mathfrak{D}_1$ such that the following hold for $j\in\{0,1\}$.
\begin{enumerate}\upshape
\item $\eta^+:\mathfrak{D}_j\to\widehat{\C}\setminus\overline{\mathfrak{D}_{1-j}}$ is a homeomorphism.
\item $\partial\mathfrak{D}_j$ is a piecewise non-singular real-analytic curve.
\item $R_j:\mathfrak{D}_j\to\Omega_j$ is a conformal isomorphism.
\item $S\vert_{\Omega_j}:= R_{1-j}\circ\eta^+\circ (R_j\vert_{\mathfrak{D}_j})^{-1}$.
\item $S:S^{-1}(\Omega_{1-j})\cap \Omega_j\longrightarrow \Omega_{1-j}$ is a branched covering of degree $\deg(R_{1-j})-1$.
\item $S:S^{-1}(\Int{\Omega_{1-j}^c})\cap\Omega_j\longrightarrow \Int{\Omega_{1-j}^c}$ is a branched covering of degree $\deg(R_{1-j})$.
\end{enumerate}
\end{lem}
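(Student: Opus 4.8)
The plan is to reduce the two-component case to the single inversive domain case treated in Lemma~\ref{b_inv_lem_1}, by exploiting the hypothesis $S(\partial\Omega_0)=\partial\Omega_1$. The key structural difference from Lemma~\ref{b_inv_lem_1} is that here $S$ swaps the two boundary components rather than mapping (components of) a single boundary to itself, so the welding must glue $\partial^0\Omega_0$ to $\partial^0\Omega_1$. First I would run the same conformal-welding construction as in Steps 1--5 of Lemma~\ref{b_inv_lem_1}: extend $S$ holomorphically across the non-singular real-analytic boundary curves via the Schwarz Reflection Principle, noting that $S\colon\partial^0\Omega_0\to\partial^0\Omega_1$ and $S\colon\partial^0\Omega_1\to\partial^0\Omega_0$ are inverse real-analytic diffeomorphisms (since $S$ is an involution of $\partial\cD$), and use these identifications to weld the bordered surface $\overline{\cD}\cup\partial^0\cD$ with a second copy of itself.

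The main point to watch is that $\cD$ now has \emph{two} components, so the welded surface $\Sigma$ is built from two copies of $\Omega_0\sqcup\Omega_1$; I would arrange the welding so that the copy of $\partial^0\Omega_0$ coming from $\psi_1$ is identified with the copy of $\partial^0\Omega_1$ coming from $\psi_2$ (and vice versa) through $S$. Because $S$ swaps $\partial\Omega_0$ and $\partial\Omega_1$, the resulting surface $\Sigma$ is again a connected genus-zero surface, and the conformal involution $M$ on $\Sigma$ (defined exactly as in Step 3) now interchanges the two welded sheets. The branched covering $R\colon\Sigma\setminus(\text{punctures})\to\widehat\C\setminus\mathfrak{S}$ is defined piecewise by $\psi_1^{-1}$ and $S\circ\psi_2^{-1}$ as before; finiteness of its degree forces $\Sigma$ to be a punctured sphere, so $M$ extends to a M\"obius involution which, after a change of coordinates, I take to be $\eta^+$. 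The curve $\overline{\mathfrak{J}}$ (the welding locus plus punctures) bounds two Jordan disks $\mathfrak{D}_0,\mathfrak{D}_1$, with $\mathfrak{D}_j:=\psi_1(\Omega_j)$ and $R_j:=R\vert_{\mathfrak{D}_j}$; the M\"obius involution $\eta^+$ carries $\mathfrak{D}_j$ onto $\widehat\C\setminus\overline{\mathfrak{D}_{1-j}}$ precisely because $M$ swaps the two sheets. This yields items (1)--(4), with the formula $S\vert_{\Omega_j}=R_{1-j}\circ\eta^+\circ(R_j\vert_{\mathfrak{D}_j})^{-1}$ reading off directly from the analogue of Equation~\eqref{b_inv_formula}.

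For items (5) and (6), I would simply compute degrees from the factorization in item (4), exactly as in Proposition~\ref{simp_conn_quad_prop}: since $\eta^+\colon\mathfrak{D}_j\to\widehat\C\setminus\overline{\mathfrak{D}_{1-j}}$ is a homeomorphism and $R_{1-j}$ has degree $\deg(R_{1-j})$, the map $S\vert_{\Omega_j}$ is a branched covering of $\widehat\C$ whose degree over $\Omega_{1-j}$ is $\deg(R_{1-j})-1$ (one preimage branch being accounted for by the univalent piece $R_{1-j}\vert_{\mathfrak{D}_{1-j}}$) and over $\Int{\Omega_{1-j}^c}$ is $\deg(R_{1-j})$. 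This is the same degree bookkeeping that appears in Proposition~\ref{simp_conn_quad_prop}, transported through $\eta^+$ in place of $\eta^-$.

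I expect the main obstacle to be purely organizational rather than conceptual: one must keep careful track of which copy and which component each boundary arc belongs to during the welding, so that the involution $M$ genuinely swaps sheets and the two disks $\mathfrak{D}_0,\mathfrak{D}_1$ come out interchanged by $\eta^+$ (rather than each being preserved, as would happen in the single-domain symmetric case). Verifying that $\Sigma$ remains connected and that the welding identifications $\psi_1\vert_{\partial^0\Omega_0}=\psi_2\circ S\vert_{\partial^0\Omega_0}$ and $\psi_1\vert_{\partial^0\Omega_1}=\psi_2\circ S\vert_{\partial^0\Omega_1}$ are mutually compatible (using $S^{\circ 2}=\mathrm{id}$) is where the swap hypothesis $S(\partial\Omega_0)=\partial\Omega_1$ is essential; everything downstream then follows mutatis mutandis from the proof of Lemma~\ref{b_inv_lem_1}.
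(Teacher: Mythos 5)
Your overall strategy---extend $S$ across $\partial^0\cD$ by the Schwarz Reflection Principle, weld along $S$, use properness of the induced map to force a punctured sphere, and read off the formula for $S$---is the right one, and it is the paper's. But the central topological claim in your construction is false: if you weld \emph{two copies of the whole} $\cD=\Omega_0\sqcup\Omega_1$, identifying the $\psi_1$-copy of $\partial^0\Omega_0$ with the $\psi_2$-copy of $\partial^0\Omega_1$ and vice versa, the resulting surface is \emph{disconnected}. The gluings pair $\psi_1(\Omega_0)$ only with $\psi_2(\Omega_1)$, and $\psi_1(\Omega_1)$ only with $\psi_2(\Omega_0)$, so $\Sigma$ splits into the two components $\psi_1(\Omega_0)\cup\psi_2(\Omega_1)$ and $\psi_1(\Omega_1)\cup\psi_2(\Omega_0)$. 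It is precisely the swap hypothesis $S(\partial\Omega_0)=\partial\Omega_1$ that disconnects this double; in Lemma~\ref{b_inv_lem_1} the analogous double is connected because there $S$ maps $\partial^0\cD$ to itself. Consequently the next steps fail as stated: $M$ is not a conformal involution \emph{of} $\Sigma$ but an isomorphism between the two components, so it cannot be normalized to the M{\"o}bius self-involution $\eta^+$ of a single sphere; and your disks $\mathfrak{D}_0:=\psi_1(\Omega_0)$, $\mathfrak{D}_1:=\psi_1(\Omega_1)$ lie on \emph{different} components, so ``$R_j:=R\vert_{\mathfrak{D}_j}$'' does not produce two rational maps on a common sphere. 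Even if one pretends $\Sigma$ is a single sphere, item (1) of the lemma becomes geometrically impossible: one would need $\eta^+(\mathfrak{D}_0)=M(\mathfrak{D}_0)=\psi_2(\Omega_0)$ to equal $\widehat{\C}\setminus\overline{\mathfrak{D}_1}$, but the latter set also contains the nonempty open sets $\psi_1(\Omega_0)$ and $\psi_2(\Omega_1)$.

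The repair lands you exactly on the paper's proof: weld \emph{one} copy of $\Omega_0\cup\partial^0\Omega_0$ to \emph{one} copy of $\Omega_1\cup\partial^0\Omega_1$ along $S:\partial^0\Omega_0\to\partial^0\Omega_1$ (this is one of your two components), with welding homeomorphisms $\psi_0,\psi_1$ into the resulting surface. Properness of the piecewise-defined map ($\psi_0^{-1}$ on the $\Omega_0$-side, $S\vert_{\Omega_1}\circ\psi_1^{-1}$ on the $\Omega_1$-side) makes this surface a punctured sphere and yields the rational map $R_0$, with $\mathfrak{D}_0:=\psi_0(\Omega_0)$. The second rational map then lives on a second copy of $\widehat{\C}$ and must be written down explicitly: set $\mathfrak{D}_1:=\eta^+\bigl(\widehat{\C}\setminus\overline{\mathfrak{D}_0}\bigr)$, and define $R_1:=\psi_1^{-1}\circ\eta^+$ on $\overline{\mathfrak{D}_1}$ and $R_1:=S\vert_{\Omega_0}\circ\psi_0^{-1}\circ\eta^+$ on $\widehat{\C}\setminus\overline{\mathfrak{D}_1}$ (equivalently: uniformize your second component and transport it by $\eta^+$); the relation $S^{\circ 2}=\mathrm{id}$ on $\partial\cD$ is what makes these pieces match along the welding curve. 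With this two-sphere bookkeeping in place, your derivation of (4) and your degree counts for (5)--(6)---one univalent preimage branch of $\Omega_{1-j}$ inside $\mathfrak{D}_{1-j}$, and none of $\Int{\Omega_{1-j}^c}$ there---go through verbatim and agree with the paper.
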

\begin{proof}
The proof essentially follows the ideas used in the proof of Lemma~\ref{b_inv_lem_1}.

As in the case of inversive domains, the map $S$ extends to a conformal involution in a neighborhood of~$\partial^0\cD$. 
We set $\mathfrak{S}_j:=\mathfrak{S}\cap\partial\Omega_j$ and $\partial^0\Omega_j:=\partial\Omega_j\setminus\mathfrak{S}_j=\partial^0\cD\cap\partial\Omega_j$, $j\in\{0,1\}$. The map $S:\partial\Omega_j\to\partial\Omega_{1-j}$ carries $\mathfrak{S}_j$ (respectively, $\partial^0\Omega_j$) onto $\mathfrak{S}_{1-j}$ (respectively, onto $\partial\Omega^0_{1-j}$). We now weld the bordered Riemann surfaces $\Omega_0\cup\partial^0\Omega_0$ and $\Omega_1\cup\partial^0\Omega_1$ using the real-analytic homeomorphism $S:\partial^0\Omega_0\to\partial^0\Omega_1$, and call the resulting Riemann surface~$\Sigma$. 

By construction of $\Sigma$, there exist homeomorphisms $\psi_0:\Omega_0\cup\partial^0\Omega_0\to\Sigma$ and $\psi_1:\Omega_1\cup\partial^0\Omega_1\to\Sigma$ such that
\begin{enumerate}
\item $\psi_j$ is conformal on $\Omega_j$, $j\in\{0,1\}$, 
\item $\psi_0(\Omega_0)\cap\psi_1(\Omega_1)=\emptyset$,
\item $\mathfrak{J}:=\psi_0(\partial^0\Omega_0)=\psi_1(\partial^0\Omega_1)$ is a union of non-singular real-analytic curves,
\item\label{welding_cond} $\psi_0=\psi_1\circ S$ on $\partial^0\Omega_0$, and
\item $\Sigma=\psi_0(\Omega_0)\sqcup\mathfrak{J}\sqcup\psi_1(\Omega_1)$.
\end{enumerate}
We now define a holomorphic map 
$$
R_0:\Sigma\setminus\psi_1\left(\left(S\vert_{\Omega_1}\right)^{-1}(\mathfrak{S}_0)\right)\longrightarrow\widehat{\C}\setminus\mathfrak{S}_0
$$
\begin{align*}
R_0=
\begin{cases}
\psi_0^{-1} \quad \mathrm{on}\quad \psi_0(\Omega_0\cup\partial^0\Omega_0),\\
S\vert_{\Omega_1}\circ\psi_1^{-1} \quad \mathrm{on}\quad \psi_1\left(\Omega_1 \setminus \left(S\vert_{\Omega_1}\right)^{-1}(\mathfrak{S}_0)\right).
\end{cases}
\end{align*} 
Condition~\eqref{welding_cond} and the relation $S\vert_{\partial\Omega_1}=\left(S\vert_{\partial\Omega_0}\right)^{-1}$ together guarantee that the piecewise definitions of $R_0$ match continuously along the piecewise real-analytic curve $\mathfrak{J}$.
Moreover, the assumption that the continuous extension of $S$ to $\partial\cD$ fixes $\mathfrak{S}$ implies that $R_0$ is a proper holomorphic map, and hence a finite degree branched covering map. As in the proof of Lemma~\ref{b_inv_lem_1}, this implies that $\Sigma\setminus\psi_1(\left(S\vert_{\Omega_1}\right)^{-1}(\mathfrak{S}_0))$ is biholomorphic to a punctured sphere. This allows us to extend $R_0$ to a rational map of the Riemann sphere. Moreover, the closure of $\mathfrak{J}$ in the Riemann sphere (obtained by adding finitely many points to $\mathfrak{J}$) is a Jordan curve.
\begin{figure}[ht]
\begin{tikzpicture}
\node[anchor=south west,inner sep=0] at (0,0) {\includegraphics[width=0.96\textwidth]{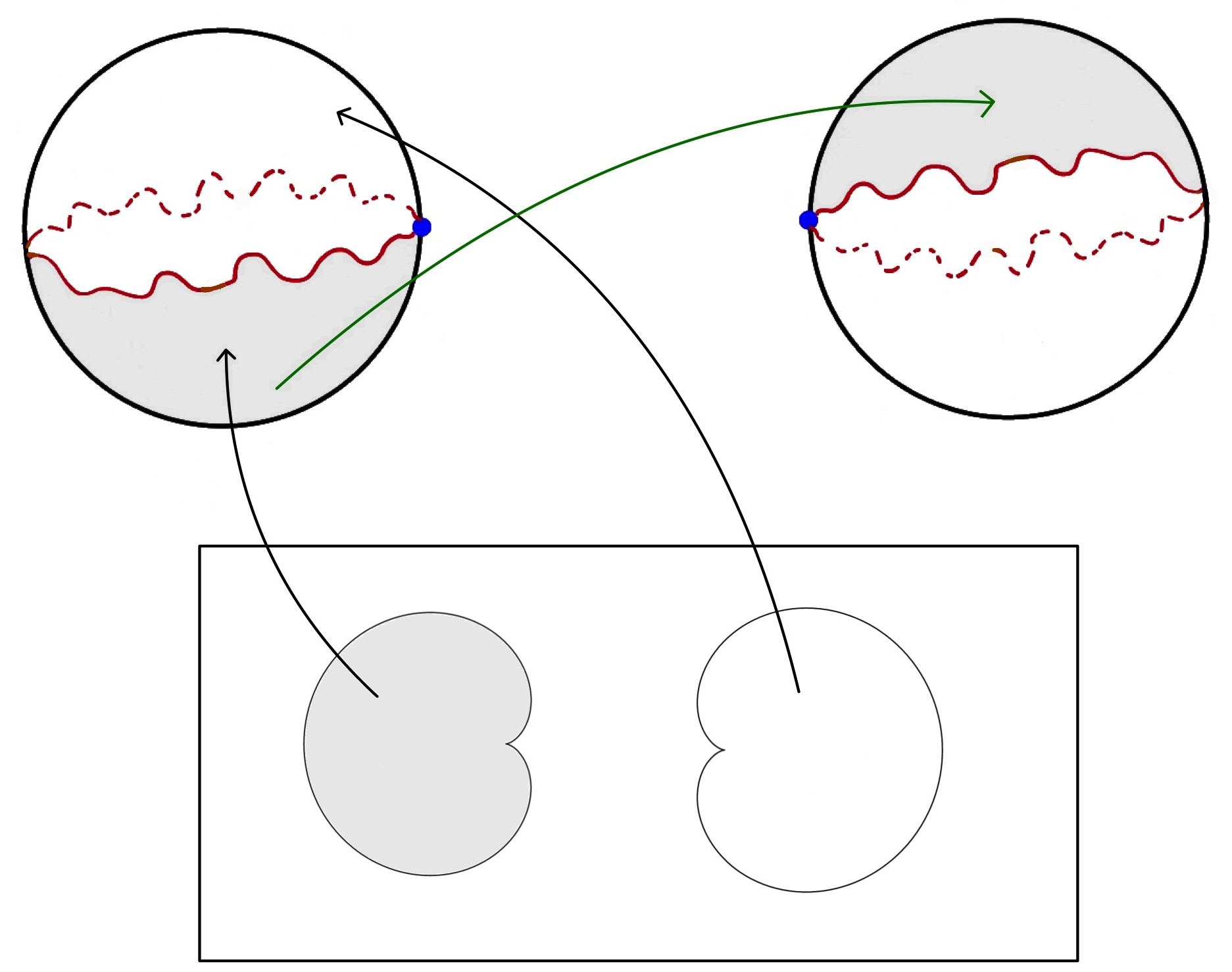}}; 
\node at (2.08,4.8) {\begin{Large}$\psi_0$\end{Large}};
\node at (7.5,5) {\begin{Large}$\psi_1$\end{Large}};
\node at (1.8,6) {\begin{Large}$\mathfrak{D}_0$\end{Large}};
\node at (10,6) {\begin{Large}$\mathfrak{D}_1$\end{Large}};
\node at (4,2) {\begin{Large}$\Omega_0$\end{Large}};
\node at (8,2) {\begin{Large}$\Omega_1$\end{Large}};
\node at (1.8,8.4) {\begin{Large}$\overline{\mathfrak{D}_0}^c$\end{Large}};
\node at (10.66,8.6) {\begin{Large}$\overline{\mathfrak{D}_1}^c$\end{Large}};
\node at (6.6,7.75) {\begin{Large}$\eta^+$\end{Large}};
\end{tikzpicture}
\caption{Illustrated is the proof of Lemma~\ref{b_inv_lem_2}.}
\label{inversive_domain_2_fig}
\end{figure}

Let us now set $\mathfrak{D}_0:=\psi_0(\Omega_0)$ and $\mathfrak{D}_1:=\eta^+\left(\widehat{\C}\setminus\overline{\mathfrak{D}_0}\right)$. We further define a rational map $R_1$ as
\begin{align*}
R_1=
\begin{cases}
\psi_1^{-1}\circ\eta^+ \quad \mathrm{on}\quad \overline{\mathfrak{D}_1},\\
S\vert_{\Omega_0}\circ\psi_0^{-1}\circ\eta^+ \quad \mathrm{on}\quad \widehat{\C}\setminus \overline{\mathfrak{D}_1}.
\end{cases}
\end{align*} 
Once again, Condition~\eqref{welding_cond} shows that the piecewise definitions of $R_1$ match continuously along the Jordan curve $\overline{\mathfrak{J}}$ (where the closure is taken in $\widehat{\C}$).

By construction, each $R_j:\mathfrak{D}_j\to\Omega_j$ is a conformal isomorphism, $j~\in~\{0,1\}$. To justify that $R_0, R_1$ are the desired rational maps, it only remains to check that $S\vert_{\Omega_j}:= R_{1-j}\circ\eta^+\circ (R_j\vert_{\mathfrak{D}_j})^{-1}$, $j\in\{0,1\}$. To this end, first note that
\begin{equation*}
\begin{split}
& \hspace{1.1cm} S\vert_{\Omega_1}\circ \psi_1^{-1} = R_0 \quad \textrm{ on } \quad \widehat{\C}\setminus\overline{\mathfrak{D}_0}\\
& \implies S\vert_{\Omega_1} = R_0\circ \psi_1 = R_0\circ \eta^+\circ \eta^+\circ \psi_1\\
& \implies S\vert_{\Omega_1} = R_0\circ\eta^+\circ \left(\psi_1^{-1}\circ\eta^+\vert_{\mathfrak{D}_1}\right)^{-1} = R_0\circ\eta^+\circ \left(R_1\vert_{\mathfrak{D}_1}\right)^{-1}.
\end{split}
\end{equation*}
On the other hand,
\begin{equation*}
\begin{split}
& \hspace{1.1cm} S\vert_{\Omega_0}\circ \psi_0^{-1}\circ\eta^+ = R_1 \quad \textrm{ on } \quad  \widehat{\C}\setminus \overline{\mathfrak{D}_1}\\
& \implies S\vert_{\Omega_0} = R_1\circ \eta^+\circ\psi_0\\
& \implies S\vert_{\Omega_0} = R_1\circ\eta^+\circ \left(\psi_0^{-1}\vert_{\mathfrak{D}_0}\right)^{-1} = R_1\circ\eta^+\circ \left(R_0\vert_{\mathfrak{D}_0}\right)^{-1}.
\end{split}
\end{equation*}
Finally, the statements about the degrees of $S$ follow from the relations $S\vert_{\Omega_j}\equiv R_{1-j}\circ\eta^+\circ(R_j\vert_{\mathfrak{D}_j})^{-1}$, $j\in\{0,1\}$.
The proof is now complete.
\end{proof}

We are now ready to establish the promised algebraic description of a general B-involution.

\begin{theorem}\label{b_inv_thm}
Let $S:\overline{\cD}\to\widehat{\C}$ be a B-involution of the inversive multi-domain $\cD=\displaystyle\bigsqcup_{j=1}^k\Omega_j$.
Then, there exist Jordan domains $\mathfrak{D}_j$ and rational maps $R_j$, $j\in\{1,\cdots,k\}$, such that the following hold.
\begin{enumerate}
\item $\eta^+:\mathfrak{D}_j\to\widehat{\C}\setminus\overline{\mathfrak{D}_{\kappa(j)}}$ is a homeomorphism.
\item $\partial\mathfrak{D}_j$ is a piecewise non-singular real-analytic curve.
\item  $R_j:\mathfrak{D}_j\to\Omega_j$ is a conformal isomorphism.
\item $S\vert_{\Omega_j}\equiv R_{\kappa(j)}\circ\eta^+\circ(R_j\vert_{\mathfrak{D}_j})^{-1}$.
\end{enumerate}
Here, $\kappa$ is a permutation of the set $\{1,\cdots,k\}$ of order two.
\end{theorem}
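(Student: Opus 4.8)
The plan is to reduce the general case to the two special cases already established in Lemmas~\ref{b_inv_lem_1} and~\ref{b_inv_lem_2}, treating each component $\Omega_j$ according to how the order-two permutation $\kappa$ (induced by $S$ on boundary components) acts on it. First I would define $\kappa$ precisely: by Condition~\eqref{permute_cond} of Definition~\ref{b_inv_def}, for each $j$ there is a unique $j'$ with $S(\partial\Omega_j)=\partial\Omega_{j'}$; set $\kappa(j):=j'$. Since $S\vert_{\partial\cD}$ is an involution (Condition~\eqref{inv_cond}), $S(S(\partial\Omega_j))=\partial\Omega_j$, which forces $\kappa(\kappa(j))=j$, so $\kappa$ is an order-two permutation of $\{1,\cdots,k\}$ as claimed. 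The fixed points of $\kappa$ are exactly the indices $j$ with $S(\partial\Omega_j)=\partial\Omega_j$, and the $2$-cycles $\{j,\kappa(j)\}$ are the pairs of components swapped by $S$.

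The key observation is that the welding construction underlying both lemmas is \emph{local to each orbit of $\kappa$}. Concretely, I would partition the components into $\kappa$-orbits: singletons $\{j\}$ with $\kappa(j)=j$, and pairs $\{j,\kappa(j)\}$ with $\kappa(j)\neq j$. For a fixed component $\Omega_j$ (i.e.\ $\kappa(j)=j$), the restriction $S\vert_{\overline{\Omega_j}}$ together with $S\vert_{\partial\Omega_j}$ being an orientation-reversing involution of $\partial\Omega_j$ preserving $\mathfrak{S}\cap\partial\Omega_j$ means that $\Omega_j$ is an inversive domain in the sense of Definition~\ref{b_inv_def} with $k=1$; Lemma~\ref{b_inv_lem_1} then produces a Jordan disk $\mathfrak{D}_j$ with $\eta^+:\mathfrak{D}_j\to\widehat{\C}\setminus\overline{\mathfrak{D}_j}$ a homeomorphism (so the stated relation holds with $\kappa(j)=j$) and a rational map $R_j:\mathfrak{D}_j\to\Omega_j$ with $S\vert_{\Omega_j}=R_j\circ\eta^+\circ(R_j\vert_{\mathfrak{D}_j})^{-1}$. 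For a swapped pair $\{j,\kappa(j)\}$, the union $\Omega_j\sqcup\Omega_{\kappa(j)}$ with $S$ restricted to it is an inversive multi-domain with two components whose boundaries are interchanged by $S$; Lemma~\ref{b_inv_lem_2} then supplies $\mathfrak{D}_j,\mathfrak{D}_{\kappa(j)}$ with $\eta^+:\mathfrak{D}_j\to\widehat{\C}\setminus\overline{\mathfrak{D}_{\kappa(j)}}$ a homeomorphism and rational maps $R_j,R_{\kappa(j)}$ satisfying $S\vert_{\Omega_j}=R_{\kappa(j)}\circ\eta^+\circ(R_j\vert_{\mathfrak{D}_j})^{-1}$, which is exactly the required formula. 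Properties (2) and (3) in each case come directly from the corresponding lemmas.

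Assembling these over all orbits yields the full collection $\{\mathfrak{D}_j\}_{j=1}^k$ and $\{R_j\}_{j=1}^k$, and every one of the four asserted properties is an orbit-by-orbit consequence of the two lemmas, so the gluing is formal once the orbit decomposition is in place. The main point requiring care — and what I expect to be the only genuine obstacle — is verifying that the hypotheses of Lemmas~\ref{b_inv_lem_1} and~\ref{b_inv_lem_2} are genuinely satisfied by each restriction: namely that $S$ restricted to a single $\kappa$-fixed component (resp.\ a $\kappa$-swapped pair) really is a B-involution of that sub(multi-)domain. This requires checking that the meromorphy of $S$ on $\cD$, the involutive orientation-reversing behavior on $\partial\cD$, and the preservation of $\mathfrak{S}$ all restrict correctly to each orbit; since the components are disjoint and $S$ permutes boundary components via $\kappa$, each orbit is an $S$-invariant piece of boundary data, so the restricted data indeed forms an inversive (multi-)domain and the lemmas apply verbatim. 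The remaining task is purely bookkeeping: recording that the disks $\mathfrak{D}_j$ produced by distinct orbits can be taken disjoint and consistently normalized (the normalization in Lemma~\ref{b_inv_lem_1} places $\pm1$ on the boundary, but for the multi-domain statement no global normalization across orbits is imposed, so no compatibility issue arises).
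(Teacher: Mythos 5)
Your proposal is correct and follows essentially the same route as the paper's proof: extract the order-two permutation $\kappa$ from the boundary involution, decompose $\{1,\cdots,k\}$ into $\kappa$-fixed indices and swapped pairs, and apply Lemma~\ref{b_inv_lem_1} to each fixed component and Lemma~\ref{b_inv_lem_2} to each swapped pair. The only difference is that you spell out the verification that each restricted piece genuinely satisfies the definition of an inversive (multi-)domain, which the paper asserts without elaboration; this is a harmless (indeed welcome) addition, and your closing remark is right that no normalization compatibility across orbits is needed since condition (1) only couples disks within a single $\kappa$-orbit.
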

\begin{proof}
The assumption that $S:\partial\cD\to\partial\cD$ is an involution which maps each $\partial\Omega_j$ homeomorphically onto some $\partial\Omega_{j'}$ implies that $S$ induces an order two permutation on the set $\{1,\cdots,k\}$. We denote this permutation by $\kappa$.
Since $\kappa$ is of order two, it can be factored as a product of disjoint $1$-cycles and $2$-cycles; i.e., $S$ either preserves $\partial\Omega_j$, or swaps $\partial\Omega_j$ and $\partial\Omega_{\kappa(j)}$.

If $\kappa(j)=j$, then $\Omega_j$ is an inversive domain with associated B-involution $S\vert_{\Omega_j}$. If $\kappa(j)\neq j$, then $\Omega_j\sqcup\Omega_{\kappa(j)}$ is an inversive multi-domain with associated B-involution given by the restriction of $S$. The existence of the desired Jordan domains $\mathfrak{D}_j$ and rational maps $R_j$ now follow from Lemmas~\ref{b_inv_lem_1} and~\ref{b_inv_lem_2}.
\end{proof}

\section{B-involutions with factor Bowen-Series external maps}\label{holo_para_space_reln_sec}

Throughout this section, we will work with a fixed $\Sigma\in\mathfrak{F}$. Let $m,n\in\N$ be as in Section~\ref{fbs_sec}.

\begin{defn}\label{degen_poly_like_space_def}
We define the space $\mathscr{B}_\Sigma$ to be the collection of all degenerate polynomial-like maps with connected non-escaping set that admit the factor Bowen-Series map $A_\Sigma^{\mathrm{f}}$ as their external map.
\end{defn}

{
\begin{remark}
We equip every degenerate polynomial-like map (with connected filled Julia set) with an invariant external access to its filled Julia set (or equivalently, by a fixed point on the ideal boundary of its escaping set). Two marked degenerate polynomial-like maps are identified if they are conjugate via a M{\"o}bius map that preserves the marking.      
\end{remark}
}

By definition, for a map $(S,P_1,P_2)\in\mathscr{B}_\Sigma$, there exists a conformal isomorphism 
$$
\psi:\D\to T^\infty(S):=\widehat{\C}\setminus K(S)
$$ 
that conjugates $A_\Sigma^{\mathrm{f}}$ to $S$, wherever the maps are defined. We set 
$$
\cD:=\widehat{\C}\setminus\overline{\psi(\mathcal{H})}
$$ 
and remark that $\overline{\cD}$ is a pinched polygon (note that the image of $\partial\mathcal{H}$ under the conformal isomorphism $\psi$ consists of finitely many non-singular real-analytic curves). The map $S$ extends to a continuous map on $\overline{\cD}$ which is meromorphic on $\cD$. We call $\overline{\cD}$ the \emph{canonical domain of definition} of $S$ (see Remark~\ref{canonical_dom_rem}).

We normalize $S\in\mathscr{B}_\Sigma$ so that the conformal map $\psi:\D\to T^\infty(S)$ that conjugates $A_\Sigma^{\mathrm{f}}$ to $S$ sends $0$ to $\infty$, and has asymptotics $z\mapsto 1/z+O(z)$ as $z\to 0$.

We also remark that since $S:\partial S^{-1}(\cD)\to\partial\cD$ has degree $d$, the map $S:\overline{S^{-1}(\cD)}\to\overline{\cD}$ can be extended to a degree $d$ branched cover of the sphere with a fixed critical point of multiplicity $d-1$ at $\infty$. In fact, this can be done in such a way that there is no other critical point of the branched cover outside $K(S)$. The fact that a degree $d$ branched cover of the sphere has $2d-2$ critical points (counted with multiplicities), it follows that $S$ has $d-1$ critical points (counted with multiplicities) in $K(S)$.

\subsection{Coarse lamination}\label{coarse_lami_subsec}
We now proceed to give an explicit description of the pinched polygon $\overline{\cD}$. 
Let us denote the set of $p=m/n$ ideal boundary points of $\mathcal{H}$ on $\mathbb{S}^1$ by $S_{\Sigma}$. These points are periodic of period at most two under $A_\Sigma^{\mathrm{f}}$; in fact, at most two points of $S_\Sigma$ are fixed by $A_\Sigma^{\mathrm{f}}$.
Under the circle homeomorphism $\mathfrak{h}$ that conjugates $z^d$ to $A_{\Sigma}^{\mathrm{f}}$, the set $S_{\Sigma}$ is pulled back to the set 
$$
\displaystyle\mathcal{A}:=\{\frac{i}{p}:i\in\{0,\cdots,p-1\}\}.
$$
(Here we identify $\mathbb{S}^1$ with $\T=\R/\Z$.) If $p$ is even, $A_{\Sigma}^{\mathrm{f}}$ has two fixed points in $S_\Sigma$ and these correspond to $0,1/2\in\mathcal{A}$. For odd $p$, the map $A_{\Sigma}^{\mathrm{f}}$ has a unique fixed point in $S_\Sigma$ and this fixed point corresponds to $0\in\mathcal{A}$. On the other hand, the $2-$cycles of $A_{\Sigma}^{\mathrm{f}}$ in $S_{\Sigma}$ correspond to the $2-$cycles $\pm \frac{r}{p}$ of $m_d:\R/\Z\to\R/\Z,\ \theta\mapsto d\theta$.

Following Section~\ref{partition_sec}, we can associate a \emph{coarse lamination} $\mathcal{L}$ with $S\in\mathscr{B}_\Sigma$ that records which points of $\mathcal{A}$ are identified on the boundary of $K(S)$. Then, $\overline{\cD}$ is homoemorphic to the pinched disk $\overline{\D}/\mathcal{L}$ (cf. \cite[Section~4.1]{MM2}.

As a slight abuse of terminology, we will call the components of $\D\setminus\cL$ of infinite (respectively, finite) hyperbolic area \emph{gaps} (respectively, \emph{polygons}) of $\cL$.
For a gap $\mathcal{G}$, the `boundary at infinity' $\partial \mathcal{G} \cap \T$ has length equal to $\frac{k}{p}$ for some integer $k\geq 1$.
We shall call this number $k$ the \emph{degree} of the gap, denoted by $\deg(\mathcal{G})$. 
We also denote the number of points of $\mathcal{A}$ in $\Int(\partial\mathcal{G} \cap \T)$ by $\cusp(\mathcal{G})$, and call it the {\em cusp number} of the gap.

The next result is immediate from the definition of coarse laminations.

\begin{lem}\label{inv_domain_touching_lem}
Let $S\in\mathscr{B}_\Sigma$ with canonical domain of definition $\overline{\cD}$.
The components of $\cD$ are in bijective correspondence with the gaps of $\mathcal{L}$. Two such components share at most one boundary point; moreover, they share a boundary point if and only if the corresponding gaps are cobordant on a leaf or a polygon of $\cL$.
\end{lem}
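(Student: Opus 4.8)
The plan is to deduce all three assertions from the already-established pinched-polygon description $\overline{\cD}\cong\overline{\D}/\mathcal{L}$ of Section~\ref{coarse_lami_subsec} (cf. \cite[Section~4.1]{MM2}), together with the defining property of the coarse lamination: a leaf (respectively, polygon) of $\mathcal{L}$ is precisely an equivalence class of size two (respectively, at least three) among the ideal vertices $S_\Sigma$ of $\mathcal{H}$, the equivalence being ``carried by $\psi$ to a common point of $\Lambda(S)$''. The starting observation is that $\psi:\D\to T^\infty(S)$ is conformal, hence injective on $\D$, so when the fundamental tile $\overline{\psi(\mathcal{H})}$ is removed from $\widehat{\C}$ the only coincidences created on $\partial\cD$ arise from the ideal vertices $S_\Sigma$. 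By construction these coincidences are exactly encoded by the leaves and polygons of $\mathcal{L}$, and in the quotient $\overline{\D}/\mathcal{L}$ each leaf and each polygon collapses to a single cut point of $\overline{\cD}$.

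For assertion (1) I would note that the quotient map $\overline{\D}\to\overline{\D}/\mathcal{L}$ is injective on the interior of every gap (the lamination identifies only points lying on leaves and polygons) while it collapses every leaf and every polygon to a point. Hence the non-degenerate open pieces of $\overline{\D}/\mathcal{L}$ are exactly the images of the gaps, and under the homeomorphism with $\overline{\cD}$ these are the components of $\cD$; this yields the one-to-one correspondence $\mathcal{G}\mapsto\Omega_{\mathcal{G}}$. Since an ideal polygon is convex, a given gap lies beyond exactly one side of any polygon and on one side of any isolated leaf, so each $\overline{\Omega_{\mathcal{G}}}$ is a topological disk that does not touch itself.

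For the touching structure, if $\mathcal{G}$ and $\mathcal{G}'$ are cobordant on a leaf $\ell$ (lying on its two sides) or on a polygon $P$ (bordering $P$ across two distinct sides), then $\ell$, respectively $P$, collapses to a single point $z\in\partial\Omega_{\mathcal{G}}\cap\partial\Omega_{\mathcal{G}'}$, so the two components share the boundary point $z$. Conversely, a shared boundary point of $\Omega_{\mathcal{G}}$ and $\Omega_{\mathcal{G}'}$ is a cut point of $\overline{\cD}$, hence the collapse image of a unique leaf or polygon $X$; both gaps then border $X$, i.e.\ they are cobordant on $X$. Finally, the bound ``at most one boundary point'' follows from simple connectivity of $\overline{\cD}$: the gap-disks are glued at the cut points in a tree pattern, a second shared cut point between the same two gap-disks would force a nontrivial loop contradicting simple connectivity of $\overline{\cD}$, and in a tree two pieces meet in at most one point.

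The step needing the most care is the first one, since for a general $S\in\mathscr{B}_\Sigma$ the limit set $\Lambda(S)$ need not be locally connected and so $\psi$ need not extend continuously to all of $\overline{\D}$. This potential difficulty is, however, absorbed into the inputs I am allowed to use: the sides of $\mathcal{H}$ lie in $\D$, where $\psi$ is conformal with real-analytic images bounding $\cD$, and the gluing of the parabolic ideal vertices is exactly what is recorded by $\mathcal{L}$ in the established identification $\overline{\cD}\cong\overline{\D}/\mathcal{L}$. Granting that identification, the remainder is the purely combinatorial reading carried out above, which is why the statement is essentially immediate.
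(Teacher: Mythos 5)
Your proposal is correct and follows the same route the paper intends: the paper states this lemma without proof, calling it ``immediate from the definition of coarse laminations,'' i.e., from the identification $\overline{\cD}\cong\overline{\D}/\mathcal{L}$ of Section~\ref{coarse_lami_subsec}, which is exactly the input you use. Your write-up simply fills in the combinatorial reading of that pinched-disk model (gaps $\leftrightarrow$ components, collapsed leaves/polygons $\leftrightarrow$ cut points, tree structure $\Rightarrow$ at most one shared point), all of which is sound.
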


We enumerate the gaps of as $\mathcal{G}_1,\cdots,\mathcal{G}_k$ such that $\mathcal{G}_j$ corresponds to $\Omega_j$.

Note that no leaf of $\mathcal{L}$ connects the fixed points $0$ or $1/2$ to the rest of ($2$-peridoic) points of $\mathcal{A}$ (cf. \cite[\S 18]{Mil06}).
If $r/p, s/p\in\mathcal{A}$ lie in the same $\cL-$equivalence class, then so do $-r/p=m_d(r/p)$ and $-s/p=m_d(s/p)$. Hence, the gaps of the lamination $\mathcal{L}$ either intersect the real line, or come in complex conjugate pairs. It is easily seen that
\smallskip

\noindent$\bullet$ if $\mathcal{G}_j$ intersects the real line, then $S(\partial\Omega_j)=\partial\Omega_j$; otherwise,\\

\noindent$\bullet$ $\mathcal{G}_j$ and $\mathcal{G}_{j'}$ (for some $j'\in\{1,\cdots,k\}$) form a complex conjugate pair of gaps, and $S(\partial\Omega_j)=\partial\Omega_{j'}$.
\smallskip

The next result demonstrates that degenerate polynomial-like maps having factor Bowen-Series maps as their external maps are B-involutions.

\begin{prop}\label{mating_always_b_inv_prop}
Let $S\in\mathscr{B}_\Sigma$ with canonical domain of definition $\overline{\cD}$.
The map $S:\overline{\cD}\to\widehat{\C}$ is a B-involution of an inversive multi-domain. In particular, the conformal mating $S$ described in Theorem~\ref{fbs_poly_conf_mat_thm} is a B-involution of an inversive multi-domain. 
\end{prop}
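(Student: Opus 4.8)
The plan is to verify directly the three defining conditions of Definition~\ref{b_inv_def} for $S:\overline{\cD}\to\widehat{\C}$, transporting the relevant structure across the conjugacy $\psi$ between $A_\Sigma^{\mathrm{f}}$ and $S$. First I would record the structural facts already established: by construction $\cD=\widehat{\C}\setminus\overline{\psi(\mathcal{H})}$, so $\partial\cD=\partial T^0(S)$, and its components $\Omega_1,\dots,\Omega_k$ are in bijection with the gaps $\mathcal{G}_1,\dots,\mathcal{G}_k$ of the coarse lamination $\mathcal{L}$ (Lemma~\ref{inv_domain_touching_lem}). Since each gap is a simply connected complementary component of $\mathcal{L}$ and $\overline{\cD}\cong\overline{\D}/\mathcal{L}$ is a pinched polygon, each $\Omega_j$ is a proper simply connected domain with $\Int\overline{\Omega_j}=\Omega_j$. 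The set $\partial^0\cD=\partial\cD\setminus\mathfrak{S}$ is the $\psi$-image of the sides $\partial\mathcal{H}\cap\D$, which (as noted in the setup of $\mathscr{B}_\Sigma$) is a finite union of disjoint non-singular real-analytic curves lying in $T^\infty(S)=\psi(\D)$, while $\mathfrak{S}$ is the image of the ideal vertices $S_{\Sigma}$ under the boundary extension of $\psi$ dictated by $\mathcal{L}$.

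Condition~\eqref{mero_cond} of Definition~\ref{b_inv_def} is immediate, as $S$ is meromorphic on $\cD$ by the definition of $\mathscr{B}_\Sigma$. For the remaining conditions the key input is that the factor Bowen-Series map restricts to an order-two self-homeomorphism $\iota:=A_\Sigma^{\mathrm{f}}\vert_{\partial\mathcal{H}}$ (Section~\ref{fbs_sec}). Because the sides $\partial\mathcal{H}\cap\D$ lie in the interior of $\D$ where $\psi$ is conformal, the conjugacy $S\circ\psi=\psi\circ A_\Sigma^{\mathrm{f}}$ extends to these sides, and for $x=\psi(w)\in\partial^0\cD$ we get $S(x)=\psi(\iota(\psi^{-1}(x)))\in\partial^0\cD$ with $S^{\circ 2}=\mathrm{id}$. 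Since $\iota$ permutes the sides of $\mathcal{H}$ and carries the ideal vertices $S_{\Sigma}$ to themselves (they are periodic of period at most two under $A_\Sigma^{\mathrm{f}}$), the induced map $S$ permutes the boundaries $\partial\Omega_j$ and preserves $\mathfrak{S}$. This is precisely the dichotomy already recorded before the statement: each $\mathcal{G}_j$ either meets $\R$, in which case $S(\partial\Omega_j)=\partial\Omega_j$, or is paired with a complex-conjugate gap, in which case $S(\partial\Omega_j)=\partial\Omega_{j'}$. This settles Condition~\eqref{permute_cond} and the $\mathfrak{S}$-invariance in Condition~\eqref{inv_cond}.

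What remains, and what I expect to be the main obstacle, is to show that $S\vert_{\partial\cD}$ is \emph{orientation-reversing}. Here I would argue intrinsically on $\partial\mathcal{H}$: the involution $\iota$ is realized by the side-pairing transformations of the Fuchsian group uniformizing $\widetilde{\Sigma}$, pushed to the factor. A side-pairing transformation is an orientation-preserving isometry of $\D$ which maps a side of $\mathcal{H}$ to another side while sending $\mathcal{H}$ to an adjacent copy lying on the opposite side of the image geodesic; this forces $\iota$ to reverse the boundary orientation of $\partial\mathcal{H}$, which is exactly the condition guaranteeing that the quotient is an orientable (orbifold) surface. As $\psi$ is orientation-preserving, conjugation shows $S\vert_{\partial\cD}$ reverses orientation, completing Condition~\eqref{inv_cond}.

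Finally I would check that the behavior at $\mathfrak{S}$ is consistent: the continuous extension of $\psi$ across the ideal vertices is governed by the identification pattern of $\mathcal{L}$, which is compatible with $\iota$ because $S$ (and hence $A_\Sigma^{\mathrm{f}}$) preserves $K(S)$ and its boundary identifications. Thus the order-two and orientation-reversing properties, verified on $\partial^0\cD$, extend continuously to all of $\partial\cD$, including $\mathfrak{S}$. With the three conditions verified, $S:\overline{\cD}\to\widehat{\C}$ is a B-involution of the inversive multi-domain $\cD=\bigsqcup_{j}\Omega_j$. The final assertion concerning Theorem~\ref{fbs_poly_conf_mat_thm} then follows at once, since the conformal mating produced there is by construction a member of $\mathscr{B}_\Sigma$.
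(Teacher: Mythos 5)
Your proposal is correct and follows essentially the same route as the paper: both transport the order-two self-homeomorphism $A_\Sigma^{\mathrm{f}}\vert_{\partial\mathcal{H}}$ across the conjugacy $\psi$ to get the boundary involution of $S$, and both use the preceding coarse-lamination discussion (the gap dichotomy) to obtain the permutation of the $\partial\Omega_j$ and the invariance of $\mathfrak{S}$, with the second statement following because the matings of Theorem~\ref{fbs_poly_conf_mat_thm} lie in $\mathscr{B}_\Sigma$. The only difference is one of detail: you explicitly verify orientation-reversal via the side-pairing geometry (a point the paper leaves implicit), whereas the paper additionally notes the shortcut of Remark~\ref{jordan_rem}, by which the permutation condition is automatic since each component of $\cD$ is a Jordan domain.
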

\begin{proof}
We have already observed that $\overline{\cD}$ is a pinched polygon with. Let us set $\mathfrak{S}=\psi(S_\Sigma)$, where $\psi:\D\to T^\infty(S)$ is the conformal conjugacy between $A_\Sigma^{\mathrm{f}}$ to $S$. The first statement is now an immediate consequence of the facts that a factor Bowen-Series map $A_\Sigma^{\mathrm{f}}:\overline{\D}\setminus\Int{\mathcal{H}}\to\overline{\D}$ restricts to a self-homeomorphism of order two on $\partial\mathcal{H}$ and the discussion above (or from the fact that each component of $\cD$ is a Jordan domain, see Remark~\ref{jordan_rem}). The second statement is now immediate as the conformal matings of Theorem~\ref{fbs_poly_conf_mat_thm} lie in the class $\mathscr{B}_\Sigma$.
\end{proof}

\subsection{Degrees and critical points of uniformizing rational maps}\label{crit_pnt_unif_rat_map_subsec}
Throughout this subsection, we fix an $S\in\mathscr{B}_\Sigma$ with canonical domain of definition $\overline{\cD}$.
By Theorem~\ref{b_inv_thm} and Proposition~\ref{mating_always_b_inv_prop}, there exist Jordan domains $\mathfrak{D}_j$ and rational maps $R_j$, $j\in\{1,\cdots,k\}$, such that \begin{enumerate}
\item $\eta^+:\mathfrak{D}_j\to\widehat{\C}\setminus\overline{\mathfrak{D}_{\kappa(j)}}$ is a homeomorphism,
\item  $R_j:\mathfrak{D}_j\to\Omega_j$ is a conformal isomorphism, and
\item $S\vert_{\Omega_j}\equiv R_{\kappa(j)}\circ\eta^+\circ(R_j\vert_{\mathfrak{D}_j})^{-1}$,
\end{enumerate}
where $\kappa$ is a permutation of the set $\{1,\cdots,k\}$ of order two. We denote the domain of $R_j$ by $\widehat{\C}_j$.

Let $d_j:=\deg{R_j}$. The relation $S\vert_{\Omega_j}\equiv R_{\kappa(j)}\circ\eta^+\circ(R_j\vert_{\mathfrak{D}_j})^{-1}$ implies that $S:S^{-1}(\Omega_{\kappa(j)})\cap \Omega_j\longrightarrow \Omega_{\kappa(j)}$ is a branched covering of degree $d_{\kappa(j)}-1$, and
$S:S^{-1}(\Int{\Omega_{\kappa(j)}^c})\cap\Omega_j\longrightarrow \Int{\Omega_{\kappa(j)}^c}$ is a branched covering of degree $d_{\kappa(j)}$. As $S:\overline{S^{-1}(\cD)}\to\overline{\cD}$ has degree $d$, we conclude that
\begin{equation}
\sum_{j=1}^k d_j = d+1.
\label{degree_formula}
\end{equation}

\begin{lem}\label{rat_map_deg_lem}
For each $j\in\{1,\cdots,k\}$,
\noindent\begin{enumerate}\upshape
\item $\deg(\mathcal{G}_j)=\deg(\mathcal{G}_{\kappa(j)})$, and
\item $d_j=n\cdot \deg(\mathcal{G}_j)$, and hence $d_j=d_{\kappa(j)}$.
\end{enumerate}
\end{lem}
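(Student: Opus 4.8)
The plan is to prove both parts by tracking how the factor Bowen--Series map $A_\Sigma^{\mathrm{f}}$ covers the ideal boundary arcs cut out by the points of $\mathcal{A}$, transfer this to the Schwarz-reflection-type dynamics of $S$ on the limit set via the conjugacy $\psi:\D\to T^\infty(S)$, and then compare with the degree of the uniformizing rational maps using Theorem~\ref{b_inv_thm}.

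For part (1), I would argue purely combinatorially at the level of the coarse lamination $\mathcal{L}$. Recall that the gaps either meet the real line (and are then self-paired by $S$, i.e. $\kappa(j)=j$) or come in complex-conjugate pairs $\mathcal{G}_j,\mathcal{G}_{\kappa(j)}=\overline{\mathcal{G}_j}$. In the first case there is nothing to prove. In the second case, complex conjugation $z\mapsto\bar z$ maps $\mathcal{G}_j$ homeomorphically onto $\mathcal{G}_{\kappa(j)}$ while preserving $\mathcal{A}$ (since $\mathcal{A}=\{i/p\}$ is symmetric under $\theta\mapsto-\theta$) and hence preserving the boundary-at-infinity length; therefore $\deg(\mathcal{G}_j)=\frac{k}{p}$ equals $\deg(\mathcal{G}_{\kappa(j)})$. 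This is routine once the symmetry of $\mathcal{L}$ (already established in the paragraph preceding the lemma via $-r/p=m_d(r/p)$) is invoked.

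For part (2), the key computation is to relate the global degree $d_j=\deg R_j$ to the combinatorial degree $\deg(\mathcal{G}_j)$. The Schwarz-reflection picture says $S\vert_{\Omega_j}\equiv R_{\kappa(j)}\circ\eta^+\circ(R_j\vert_{\mathfrak{D}_j})^{-1}$, so by the displayed degree count $S:S^{-1}(\Omega_{\kappa(j)})\cap\Omega_j\to\Omega_{\kappa(j)}$ has degree $d_{\kappa(j)}-1$. On the other hand, I would count the self-covering degree of $\Lambda(S)\cap\Omega_j$ under $S\vert_{\Omega_j}$ directly from the external dynamics: transporting through $\psi$, the relevant map is the first return of $A_\Sigma^{\mathrm{f}}$ to the arc $\partial\mathcal{G}_j\cap\T$. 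Here the factor $n$ enters because $A_\Sigma^{\mathrm{f}}=\xi\circ\widehat{A}^{\mathrm{BS}}_{\widetilde\Sigma}\circ\xi^{-1}$ is an $n$-fold quotient of the Bowen--Series map of the cyclic cover $\widetilde\Sigma$, so each ideal arc of length $1/p$ in the $\mathcal{A}$-partition lifts to $n$ arcs upstairs and the branched covering $R_j$, being the uniformization of the quadrature-type domain $\Omega_j$, wraps with total multiplicity $n\cdot\deg(\mathcal{G}_j)$. Matching the two degree counts (the analogue of the computation in part (5) of the ``Topological structure of the quadrature domain'' theorem, where $\pmb{\cN}_d$ covered each arc $(\deg(G)-1)$ times) should yield $d_{\kappa(j)}-1=n\deg(\mathcal{G}_{\kappa(j)})-1$, hence $d_j=n\deg(\mathcal{G}_j)$, and combined with part (1) this gives $d_j=d_{\kappa(j)}$.

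The main obstacle I anticipate is the careful bookkeeping of the factor $n$: one must verify that passing from $\widehat{A}^{\mathrm{BS}}_{\widetilde\Sigma}$ on the cone $\overline{\D}/\langle M_\omega\rangle$ down to $A_\Sigma^{\mathrm{f}}$ via $\xi$ (induced by $z\mapsto z^n$) multiplies the local covering degree over each gap by exactly $n$, and that this $n$-fold branching is precisely the branching of $R_j$ that is \emph{not} accounted for by $\deg(\mathcal{G}_j)$. Concretely, the delicate point is to confirm that the critical points of $A_\Sigma^{\mathrm{f}}$ (each of multiplicity $n-1$, all mapping to a common critical value, as noted in Section~\ref{fbs_sec}) contribute exactly the extra ramification needed so that $R_j$ has degree $n\deg(\mathcal{G}_j)$ rather than $\deg(\mathcal{G}_j)$. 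I would handle this by a Riemann--Hurwitz count on $R_j:\mathfrak{D}_j\cong\Omega_j$ relating the cusp number $\cusp(\mathcal{G}_j)$ (conformal cusps, i.e. boundary critical values of $R_j$) and the interior critical points inherited from the $n$-fold cover, checking consistency with the global identity $\sum_j d_j=d+1=m=np$ and $\sum_j\deg(\mathcal{G}_j)=p$.
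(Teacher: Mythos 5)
Your part (1) is correct and is exactly the paper's argument: the symmetry $-r/p=m_d(r/p)$ of the coarse lamination forces each gap either to meet the real line (so $\kappa(j)=j$) or to be paired with its complex conjugate, and conjugation preserves $\mathcal{A}$, hence preserves degrees. Your overall strategy for part (2) is also the paper's: equate the algebraic degree $d_{\kappa(j)}-1$ of $S:S^{-1}(\Omega_{\kappa(j)})\cap\Omega_j\to\Omega_{\kappa(j)}$, coming from $S\vert_{\Omega_j}=R_{\kappa(j)}\circ\eta^+\circ(R_j\vert_{\mathfrak{D}_j})^{-1}$, with a dynamically computed covering degree on the limit set. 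However, the step you defer as the ``main obstacle'' --- producing the factor $n$ in that covering degree --- \emph{is} the content of the lemma, and your proposal never proves it. The paper settles it with an elementary computation in the conjugated model, with no reference to the cyclic cover $\widetilde\Sigma$ or to $\xi$ at all: the external map is topologically conjugate to $m_d$ with $d=np-1$, and each arc of $\T\setminus\mathcal{A}$ has length $1/p$, so its $m_d$-image has length $n-\tfrac{1}{p}$; that is, each arc wraps around the whole circle $n-1$ times and additionally covers the complement of its complex conjugate arc once. Since the conjugates of the arcs of $\partial\mathcal{G}_j\cap\T$ are precisely the arcs of $\partial\mathcal{G}_{\kappa(j)}\cap\T$, every point of $\partial\mathcal{G}_{\kappa(j)}\cap\T$ is covered exactly $(n-1)\deg(\mathcal{G}_j)+(\deg(\mathcal{G}_j)-1)=n\deg(\mathcal{G}_j)-1$ times. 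Transporting this to $\Lambda(S)$ and using complete invariance identifies this with the proper-map degree $d_{\kappa(j)}-1$, and part (1) then gives $d_j=n\deg(\mathcal{G}_j)=d_{\kappa(j)}$.

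Two concrete problems with the way you propose to fill the gap. First, ``the first return of $A_\Sigma^{\mathrm{f}}$ to the arc $\partial\mathcal{G}_j\cap\T$'' is the wrong map: when $\kappa(j)\neq j$, a \emph{single} iterate carries $\partial\mathcal{G}_j\cap\T$ onto $\partial\mathcal{G}_{\kappa(j)}\cap\T$, and it is this single iterate whose degree compares with $d_{\kappa(j)}-1$; a first-return map involves at least the second iterate, whose degree is a product and does not match. Second, the Riemann--Hurwitz fallback is circular: to run it on $R_j$ you must know how many critical points of $\pmb{R}$ lie in $\mathfrak{D}_j$, on $\partial\mathfrak{D}_j$, and in $\widehat{\C}\setminus\overline{\mathfrak{D}_j}$, equivalently how the critical points of $S$ (both in $K(S)$ and in $T^\infty(S)$) distribute among the domains $\Omega_j$. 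In the paper this distribution is established in Proposition~\ref{crit_pnt_r_sharp_prop} \emph{after}, and using, the identity $d_j=n\deg(\mathcal{G}_j)$; there is no independent per-domain count available at this stage, and your global consistency checks $\sum_j d_j=np$ and $\sum_j\deg(\mathcal{G}_j)=p$ cannot pin down the individual $d_j$. So as written the proposal has a genuine gap at its central step.
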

\begin{proof}
1) This follows from the structure of the gaps of $\cL$ described in Section~\ref{coarse_lami_subsec}.

2) Recall that $S:S^{-1}(\Omega_{\kappa(j)})\cap\Omega_j\longrightarrow \Omega_{\kappa(j)}$ is a branched covering of degree $d_j-1$.

Note that under the map $m_{d}$, each arc of $\T\setminus\mathcal{A}$ is wrapped onto the whole circle $(n-1)$ times and onto the complement of the closure of its complex conjugate arc one more time. Hence, $m_{d}(\partial\mathcal{G}_j\cap\T)$ covers $\partial\mathcal{G}_{\kappa(j)}\cap\T$ exactly 
$$
(n-1)\cdot\deg(\mathcal{G}_j)+(\deg(\mathcal{G}_j)-1)=n\cdot \deg(\mathcal{G}_j)-1
$$ 
times. It follows that $\partial K(S)\cap\Omega_j$ covers $\partial K(S)\cap\Omega_{\kappa(j)}$ exactly $n\cdot \deg(\mathcal{G}_j)-1$ times under the map $S$. Since $\partial K(S)$ is completely invariant under $S$, we conclude that $S:S^{-1}(\Omega_{\kappa(j)})\cap\Omega_j\longrightarrow \Omega_{\kappa(j)}$ is a degree $n\cdot \deg(\mathcal{G}_j)-1$ branched covering. Therefore, $d_j=n\cdot \deg(\mathcal{G}_j)$.
\end{proof}

To describe the structure of critical points of the rational maps $R_j$ concisely, we employ the following notation. 
Consider the disjoint union 
$$
\mathfrak{U}\ :=\ \bigsqcup_{j=1}^k \widehat{\C}_{j}\ \cong\ \widehat{\C}\times\{1,\cdots,k\},
$$
and define the maps
$$
\pmb{R}:\ \mathfrak{U}\longrightarrow \widehat{\C},\quad (z,j)\mapsto R_{j}(z),
$$
and
$$
\pmb{\eta^+}\ : \mathfrak{U}\longrightarrow \mathfrak{U},\quad (z,j)\mapsto (\eta^+(z),\kappa(j)).
$$
Note that $\pmb{R}$ is a branched covering of degree $d+1$, and $\pmb{\eta^+}$ is a homeomorphism. We also set 
$$
\mathfrak{D}:= \bigsqcup_{j=1}^k \mathfrak{D}_j\subset\mathfrak{U}.
$$
Recall the notation $\mathfrak{S}=\psi(S_\Sigma)$. We denote the set of points in $\mathfrak{S}$ that do not disconnect $\partial\cD$ by $\mathfrak{S}^{\mathrm{cusp}}$. Note that $\vert \mathfrak{S}^{\mathrm{\cusp}}\cap\partial\Omega_j\vert = \cusp(\mathcal{G}_j)$.

The following result was proved in \cite[\S 4]{MM2} in certain special cases. We include the proof for completeness (and since the current setting is more general).

\begin{prop}\label{crit_pnt_r_sharp_prop}
\noindent\begin{enumerate}
\item $\pmb{R}$ has no critical points in $\mathfrak{D}$.

\item $\pmb{R}$ has $d-1=np-2$ critical points in $\pmb{R}^{-1}(\cK)\setminus\overline{\mathfrak{D}}$. On the other hand, for $n\geq 3$, the map $\pmb{R}$ has $p$ distinct critical points, each of multiplicity $n-1$, in $\pmb{R}^{-1}(T^\infty(S))\setminus\overline{\mathfrak{D}}$, and all these critical points are mapped by $\pmb{R}$ to the same point in $T^\infty(S)$. 

\item $\mathrm{crit}(\pmb{R})\cap \partial\mathfrak{D}=(\pmb{R}\vert_{\partial\mathfrak{D}})^{-1}(\mathfrak{S}^{\mathrm{cusp}})$.
\end{enumerate}
\end{prop}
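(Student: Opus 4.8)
The plan is to analyze the critical points of the branched covering $\pmb{R}:\mathfrak{U}\to\widehat{\C}$ of degree $d+1$ by accounting for them separately over the escaping set $T^\infty(S)$, the non-escaping set $K(S)$, and the boundary $\partial\cD=\partial\mathfrak{D}$ under $\pmb{R}$. The global Riemann--Hurwitz count gives $2(d+1)-2=2d$ critical points (with multiplicity) for $\pmb{R}$, and the strategy is to locate all of them and thereby force the remaining count to lie in $\pmb{R}^{-1}(\cK)\setminus\overline{\mathfrak{D}}$.

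First I would prove part (1): since each $R_j:\mathfrak{D}_j\to\Omega_j$ is a conformal isomorphism (Theorem~\ref{b_inv_thm}), $\pmb{R}$ restricts to a biholomorphism on each $\mathfrak{D}_j$, so $\pmb{R}$ has no critical points in $\mathfrak{D}$. For the escaping-set count in part (2), I would use the conformal conjugacy $\psi:\D\to T^\infty(S)$ between $A_\Sigma^{\mathrm{f}}$ and $S$, together with the fact that $\pmb{R}$ lifts $S$ via $\eta^+$ and the rational maps $R_j$. The critical points of $\pmb{R}$ over $T^\infty(S)\setminus\overline{\mathfrak{D}}$ correspond precisely to the critical points of the factor Bowen--Series map $A_\Sigma^{\mathrm{f}}$ pulled through $\psi$ and the welding construction. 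By the structure recalled in Section~\ref{fbs_sec}, when $n\geq 3$ the map $A_\Sigma^{\mathrm{f}}$ has exactly $p=m/n$ critical points, each of multiplicity $n-1$, all sharing a common critical value. Lifting this through the $R_j$'s (whose degrees are $d_j=n\deg(\mathcal{G}_j)$ by Lemma~\ref{rat_map_deg_lem}) produces exactly $p$ distinct critical points of $\pmb{R}$ in $\pmb{R}^{-1}(T^\infty(S))\setminus\overline{\mathfrak{D}}$, each of multiplicity $n-1$, all mapping to the same point; this contributes $p(n-1)=np-p$ to the multiplicity count. For the boundary, part (3), I would show that the only critical points of $\pmb{R}$ on $\partial\mathfrak{D}$ arise where $R_j\vert_{\partial\mathfrak{D}_j}$ develops a critical point on the circle, and by the algebraic description of B-involutions these are precisely the preimages of the non-disconnecting singular points $\mathfrak{S}^{\mathrm{cusp}}$ (conformal cusps of the quadrature-type boundary), giving $\sum_j\cusp(\mathcal{G}_j)=p$ such points counted appropriately.

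Having pinned down the escaping and boundary critical contributions, the non-escaping count in part (2) follows by subtraction. The total multiplicity is $2d$; the critical point at $\infty$ (the fixed critical point of multiplicity $d-1$ built into the degenerate polynomial-like structure, as noted just before Subsection~\ref{coarse_lami_subsec}) together with the escaping-set critical points and boundary critical points must be subtracted. A careful bookkeeping of $\eta^+$-symmetry (the involution $\pmb{\eta^+}$ pairing $\mathfrak{D}_j$ with $\widehat{\C}\setminus\overline{\mathfrak{D}_{\kappa(j)}}$) halves the count appropriately, leaving exactly $d-1=np-2$ critical points of $\pmb{R}$ in $\pmb{R}^{-1}(\cK)\setminus\overline{\mathfrak{D}}$. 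These correspond to the $d-1$ critical points of $S$ in $K(S)$ identified earlier.

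The main obstacle I anticipate is the precise Riemann--Hurwitz bookkeeping under the $\pmb{\eta^+}$-symmetry: one must carefully distinguish critical points of $\pmb{R}$ that lie over $T^\infty(S)$ from their $\eta^+$-images and ensure no double-counting, while simultaneously reconciling the common-critical-value phenomenon for the $p$ escaping critical points (which reflects that $A_\Sigma^{\mathrm{f}}$ has all its critical points mapping to one value). The subtle point is that the $n=1$ and $n\geq 3$ cases behave differently—when $n=1$ the factor Bowen--Series map has no critical points on $\mathbb{S}^1$, so the escaping-set contribution vanishes and only the cusps and the non-escaping critical points remain. I would handle the $n=1$ case first as a clean base case, then treat $n\geq 3$ by incorporating the multiplicity-$(n-1)$ critical points, verifying that the degree formula \eqref{degree_formula} and Lemma~\ref{rat_map_deg_lem} make the arithmetic close exactly to $np-2$.
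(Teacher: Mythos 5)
Your part (1) and the escaping-set half of part (2) follow the paper's route, but the subtraction scheme you propose for the non-escaping count in (2) breaks down at several concrete points. First, your global count is wrong: $\mathfrak{U}$ is a disjoint union of $k$ spheres, so Riemann--Hurwitz gives $\vert\mathrm{crit}(\pmb{R})\vert=\sum_{j=1}^k(2d_j-2)=2(d+1)-2k$, not $2(d+1)-2=2d$; likewise $\sum_{j}\cusp(\mathcal{G}_j)=p+2-2k$, not $p$, since each of the $k-1$ double points of $\partial\cD$ absorbs two points of $\mathfrak{S}$. These two errors compensate each other only because you have implicitly assumed $k=1$. Second, and more seriously, the ``critical point at $\infty$ of multiplicity $d-1$'' is not a critical point of $\pmb{R}$ at all: it belongs to the abstract topological extension of $S:\overline{S^{-1}(\cD)}\to\overline{\cD}$ to a degree $d$ branched cover of the sphere (the device used, just before Subsection~\ref{coarse_lami_subsec}, to show that $S$ has $d-1$ critical points in $K(S)$). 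Subtracting it makes your arithmetic close to $0$, not to $d-1$: with your numbers, $2d-(d-1)-p(n-1)-p=0$. The appeal to an ``$\eta^+$-symmetry halving'' cannot repair this, because $\pmb{\eta^+}$ is not a symmetry of $\pmb{R}$ (there is no identity $\pmb{R}\circ\pmb{\eta^+}=\pmb{R}$), so it induces no pairing on $\mathrm{crit}(\pmb{R})$.

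There is also a structural circularity: you derive the non-escaping count in (2) from part (3), but the hard direction of (3) --- that $\pmb{R}$ has \emph{no} boundary critical points other than $(\pmb{R}\vert_{\partial\mathfrak{D}})^{-1}(\mathfrak{S}^{\mathrm{cusp}})$ --- is merely asserted in your proposal, and in the paper it is precisely the step that needs the Riemann--Hurwitz count, which in turn takes both counts of part (2) as inputs. The missing idea that untangles this is to prove part (2) directly: the relation $S\vert_{\Omega_{\kappa(j)}}=R_j\circ\eta^+\circ(R_{\kappa(j)}\vert_{\mathfrak{D}_{\kappa(j)}})^{-1}$ exhibits $\eta^+\circ(R_{\kappa(j)}\vert_{\mathfrak{D}_{\kappa(j)}})^{-1}$ as an injection from $\Omega_{\kappa(j)}$ onto $\widehat{\C}_j\setminus\overline{\mathfrak{D}_j}$ carrying critical points of $S$ to critical points of $R_j$ with the same multiplicities; since $S$ is already known to have $d-1=np-2$ critical points in $K(S)$ and, for $n\geq 3$, exactly $p$ critical points of multiplicity $n-1$ in $T^\infty(S)$ with a common critical value (inherited from $A_\Sigma^{\mathrm{f}}$ via the conjugacy $\psi$), both counts of (2) follow at once, with no subtraction. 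Only afterwards does counting enter: the inequality $2np-2k\geq(np-2)+p(n-1)+\sum_{j}\cusp(\mathcal{G}_j)$, i.e.\ $p+2\geq 2k+\sum_j\cusp(\mathcal{G}_j)$, combined with the observation that the coarse lamination $\cL$ has no polygons, forces equality and thereby rules out any extra boundary critical points, completing part (3).
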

\begin{proof}
1) This follows from injectivity of $R_j$ on $\mathfrak{D}_j$.

2) Recall that $S$ has $d-1=np-2$ critical points in $K(S)$, and $p$ critical points, each of multiplicity $n-1$ (when $n\geq 3$), in $T^\infty(S)$ (coming from the $p$ critical points of $A_{\Sigma}^{\mathrm{f}}$). Moreover, $S$ maps all the $p(n-1)$ critical points in $T^\infty(S)$ to the same critical value since $A_{\Sigma}^{\mathrm{f}}$ sends all of its $p(n-1)$ critical points to the origin.

The result now follows from the relation $S\vert_{\Omega_{\kappa(j)}}\equiv R_{j}\circ\eta^+\circ(R_{\kappa(j)}\vert_{\mathfrak{D}_{\kappa(j)}})^{-1}$.

3) Since $A_{\Sigma}^{\mathrm{f}}$ does not admit an analytic continuation in a neighborhood of any point of $S_{\Sigma}$, the map $S$ does not admit an analytic continuation in a neighborhood of any point of $\mathfrak{S}$. Suppose that $x\in \mathfrak{S}^{\mathrm{cusp}}$ lies on the boundary of $\Omega_j$.
Then, the relation $S\vert_{\overline{\Omega_j}}=R_{\kappa(j)}\circ\eta^+\circ (R_j\vert_{\overline{\mathfrak{D}_j}})^{-1}$ implies that $(R_j\vert_{\overline{\mathfrak{D}_j}})^{-1}$ does not extend complex-analytically to a neighborhood of $x$. This forces $x$ to be a critical value of $R_j$ with a corresponding critical point on $\partial\mathfrak{D}_j$. Hence, $(\pmb{R}\vert_{\partial\mathfrak{D}})^{-1}(\mathfrak{S}^{\mathrm{cusp}})\subset \mathrm{crit}(\pmb{R})\cap\partial\mathfrak{D}$.

It remains to show that $\pmb{R}$ has no other critical point on $\partial\mathfrak{D}$. This will be derived from the following counting argument.
\begin{equation*}
\begin{split}
&\vert \mathrm{crit}(\pmb{R})\vert =\sum_{j=1}^k (2d_j-2)\geq (np-2)+p(n-1)+\sum_{j=1}^k \cusp(\mathcal{G}_j)\\
& \implies 2np-2k \geq 2np-p-2+ \sum_{j=1}^k \cusp(\mathcal{G}_j)\\
&\quad ( \textrm{recall that } d_j=n\cdot \deg(\mathcal{G}_j)\ \textrm{and}\  \sum_{j=1}^k\deg(\mathcal{G}_j)=p\ ) \\
&\implies p+2\geq 2k+ \sum_{j=1}^k \cusp(\mathcal{G}_j).\hspace{4cm} (\bigstar)
\end{split}
\end{equation*}

\noindent\textbf{Claim: The lamination $\cL$ contains no polygon.}
\begin{proof}[Proof of claim] If the lamination $\cL$ is empty; i.e., if $k=1$, then $\cusp(\mathcal{G}_1)$ consists of $p$ points, and the two sides of Inequality~($\bigstar$) coincide. Every time an $r-$gon is introduced in $\cL$, the number $k$ (i.e., the number of gaps) increases by $r-1$ and the number $\sum_{j=1}^k \cusp(\mathcal{G}_j)$ drops by $r$. Thus, if $r>2$, the introduction of an $r-$gon in $\cL$ would increase the right side of Inequality~($\bigstar$) by $2(r-1)-r=r-2>0$. But this would violate the inequality, proving that $\cL$ contains no polygon.
\end{proof}

Thanks to the above claim, the leaves of $\cL$ have disjoint closures in $\overline{\D}$. It is now easily seen that Inequality~($\bigstar$) is actually an equality, and hence $\vert \mathrm{crit}(\pmb{R})\cap\partial\mathfrak{D}\vert=\vert\mathfrak{S}^{\mathrm{cusp}}\vert$. We conclude that $(\pmb{R}\vert_{\partial\mathfrak{D}})^{-1}(\mathfrak{S}^{\mathrm{cusp}})= \mathrm{crit}(\pmb{R})~\cap~\partial\mathfrak{D}$.
\end{proof}

\begin{cor}\label{no_polygon_cor}
The coarse lamination $\cL$ associated with $S\in\mathscr{B}_\Sigma$ has no~polygon.
\end{cor}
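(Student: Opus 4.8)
The plan is to observe that this corollary has in fact already been established as an intermediate step in the proof of Proposition~\ref{crit_pnt_r_sharp_prop}. Specifically, while proving part (3) of that proposition, the inequality
$$
p+2\geq 2k+ \sum_{j=1}^k \cusp(\mathcal{G}_j) \qquad (\bigstar)
$$
was derived from a critical-point count for the branched covering $\pmb{R}$, and the non-existence of polygons in $\cL$ was deduced from $(\bigstar)$ as an explicit \textbf{Claim}. Thus the quickest route is simply to cite that claim, and the proof of the corollary reduces to a one-line reference.

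For completeness, I would recall the mechanism behind $(\bigstar)$ and the polygon-exclusion argument. First, combining Equation~\eqref{degree_formula} with $d=np-1$ gives $\sum_{j=1}^k d_j = np$, so by Riemann--Hurwitz the total number of critical points of $\pmb{R}$ (counted with multiplicity) equals $\sum_{j=1}^k(2d_j-2)=2np-2k$. Next, by Proposition~\ref{crit_pnt_r_sharp_prop} these critical points are accounted for by $np-2$ points in $\pmb{R}^{-1}(\cK)\setminus\overline{\mathfrak{D}}$, by $p(n-1)$ points in $\pmb{R}^{-1}(T^\infty(S))\setminus\overline{\mathfrak{D}}$ when $n\geq 3$, and by the cusp preimages $(\pmb{R}\vert_{\partial\mathfrak{D}})^{-1}(\mathfrak{S}^{\mathrm{cusp}})$ lying on $\partial\mathfrak{D}$, the last of which contributes at least $\sum_{j=1}^k\cusp(\mathcal{G}_j)$; assembling these contributions yields $(\bigstar)$.

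Finally, I would run the bookkeeping on $\cL$ itself. When $\cL$ is empty (so $k=1$), the single gap carries all $p$ points of $\mathcal{A}$ as cusps, and $(\bigstar)$ is the equality $p+2=2+p$. Inserting an ideal $r$-gon into $\cL$ raises the gap count $k$ by $r-1$ while lowering $\sum_j\cusp(\mathcal{G}_j)$ by $r$, so the right-hand side of $(\bigstar)$ changes by $2(r-1)-r=r-2$. For a genuine polygon one has $r\geq 3$, so this strictly increases the right-hand side and violates $(\bigstar)$. Hence $\cL$ can contain no polygon, as asserted. There is essentially no genuine obstacle here: the only point requiring care is the exact attribution of the critical points of $\pmb{R}$ that underlies $(\bigstar)$, and this has already been carried out in Proposition~\ref{crit_pnt_r_sharp_prop}, from which the corollary is immediate.
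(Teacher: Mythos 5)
Your proposal is correct and takes essentially the same route as the paper: Corollary~\ref{no_polygon_cor} is exactly the Claim established inside the proof of Proposition~\ref{crit_pnt_r_sharp_prop}, so citing it is all that is needed. Your reconstruction of the inequality $(\bigstar)$ from the Riemann--Hurwitz count $\sum_{j}(2d_j-2)=2np-2k$ and the three families of critical points, together with the bookkeeping that an $r$-gon raises the right-hand side by $r-2>0$, reproduces the paper's argument faithfully.
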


\subsection{Coarse partition and topology of $\mathscr{B}_\Sigma$}\label{b_sigma_top_subsec}
As in Section~\ref{partition_sec}, the above coarse laminations lead to a coarse partition of $\mathscr{B}_{\Sigma}$:
$$
\mathscr{B}_\Sigma = \bigcup_{\mathcal{L}} \mathscr{B}_{\Sigma,\mathcal{L}},
$$
where $\mathscr{B}_{\Sigma,\mathcal{L}}$ consists of all maps in $\mathscr{B}_\Sigma$ with coarse lamination $\mathcal{L}$, and the union is taken over all possible coarse laminations. This coarse partition can be used to topologize $\mathscr{B}_\Sigma$ as in Definition~\ref{topo_def_1}. Further the proof of Theorem~\ref{thm:compact} also applies mutatis mutandis to show compactness of $\mathscr{B}_\Sigma$.

\subsection{Parameter space homeomorphisms}\label{b_sigma_poly_conn_locus_subsec}

The local dynamics of the B-involution $S$ near the singular points of $\partial\cD$ is similar to that of a Schwarz reflection near the singular points of the boundary of its domain of definition, thanks to the algebraic description of B-involutions given in Theorem~\ref{b_inv_thm}. This fact allows one to give a classification of Fatou components of $S$ (i.e., connected components of $K(S)$) and to establish the classical relations between Fatou components/non-repelling cycles and critical points. Moreover, the tessellation of $\D$ given by the Fuchsian group $\Gamma$ (uniformizing the cyclic cover $\widetilde{\Sigma}$ of $\Sigma$) and the preferred fundamental domain $\Pi$ can be used to construct external dynamical rays for $S$ (following Definition~\ref{dyn_ray_schwarz}). The proof of Proposiion~\ref{per_rays_land} can also be applied to current setting to prove landing of pre-periodic dynamical rays, and this gives rise to the notion of the \emph{rational lamination} of a B-involution $S$ (see Definition~\ref{def_preper_lami}).

One can now define the spaces $\mathscr{B}_{\Sigma, r},\ \widehat{\mathscr{B}_{\Sigma, r}},\ \mathscr{B}_{\Sigma, fr},$ and $\mathscr{B}_{\Sigma, gf}$ as in the case of polygonal (or anti-Farey) Schwarz reflections.

Let $d\geq 2$ be the degree of $A_\Sigma^{\mathrm{f}}:\mathbb{S}^1\to\mathbb{S}^1$.
\begin{theorem}
\noindent\begin{enumerate}
\item There is a dynamically natural homeomorphism 
$$
\Phi: \widehat{\mathcal{C}^+_{d, r}} \longrightarrow \widehat{\mathscr{B}_{\Sigma, r}},
$$
where `dynamically natural' means that the circle homeomorphism conjugating $A_\Sigma^{\mathrm{f}}$ to $z^d$ carries the rational lamination of a polynomial to the rational lamination of a B-involution.

\item There is a dynamically natural homeomorphism 
$$
\Phi: \mathcal{C}^+_{d, fr}  \longrightarrow \mathscr{B}_{\Sigma, fr}.
$$
Here, dynamically natural means that for any $f\in \mathcal{C}^+_{d, fr}$, the B-involution $\Phi(f)$ is the unique conformal mating of $f$ with the factor Bowen-Series map $A_\Sigma^{\mathrm{f}}$.

\item There is a dynamically natural bijection 
$$
\Phi: \mathcal{C}^+_{d, gf} \longrightarrow \mathscr{B}_{\Sigma, gf}.
$$ 
Here, dynamically natural has the same meaning as in item (2) of this theorem.
\end{enumerate}
\label{b_inv_fbs_thm}
\end{theorem}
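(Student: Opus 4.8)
The plan is to transport the entire argument developed in Part~\ref{part_one} for polygonal and anti-Farey Schwarz reflections to the holomorphic setting, exploiting that a map in $\mathscr{B}_\Sigma$ is simultaneously a B-involution (Proposition~\ref{mating_always_b_inv_prop}) and a degenerate polynomial-like map whose external map is the factor Bowen-Series map $A_\Sigma^{\mathrm{f}}$. Since $A_\Sigma^{\mathrm{f}}$ plays exactly the role that $\pmb{\cN}_d$ and $\pmb{\cF}_d$ played before --- an expansive, piecewise-analytic, degree $d$ circle covering with parabolic fixed points (and now possibly several critical points when $n\geq 3$) --- the three assertions follow by the same three-step scheme used for Theorems~\ref{thm:CL}, \ref{thm:A}, and~\ref{thm:C}, together with the combinatorial apparatus of Sections~\ref{comb_class_poly_sec}--\ref{comb_class_schwarz_sec}, which was deliberately set up uniformly for both polynomials and anti-polynomials in $\mathcal{C}^\pm_{d,r}$. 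The structural results specific to B-involutions (the algebraic description of Theorem~\ref{b_inv_thm}, the mating-implies-B-involution statement of Proposition~\ref{mating_always_b_inv_prop}, and the degree/critical-point count of Section~\ref{crit_pnt_unif_rat_map_subsec}) are already in place, so what remains is the dynamical transfer.

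First I would establish the geometrically finite bijection, part~(3). Given $f\in\mathcal{C}^+_{d,gf}$, I pick a postcritically finite $f_0$ with $f\vert_{\mathcal{J}(f)}$ topologically conjugate to $f_0\vert_{\mathcal{J}(f_0)}$ (via Thurston's realization theorem), and build a partially defined map on the sphere by replacing the basin-of-infinity dynamics of $f_0$ with $A_\Sigma^{\mathrm{f}}$ through the David extension of the boundary conjugacy $\mathfrak{h}^{-1}$ conjugating $z^d$ to $A_\Sigma^{\mathrm{f}}$, and replacing the bounded Fatou dynamics by the Blaschke models of $f$, exactly as in the proof of Theorem~\ref{thm:C}. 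Integrating the resulting invariant David Beltrami coefficient produces a holomorphic map which, by Theorem~\ref{b_inv_thm} and the order-two boundary-involution property of $A_\Sigma^{\mathrm{f}}$ on $\partial\mathcal{H}$, lies in $\mathscr{B}_{\Sigma,gf}$ and realizes the conformal mating of $f$ and $A_\Sigma^{\mathrm{f}}$. Injectivity follows from rigidity of geometrically finite polynomials, surjectivity from realizing admissible laminations by postcritically finite maps, and uniqueness from conformal removability of the limit set (the basin of infinity being a John domain). This extends Theorem~\ref{fbs_poly_conf_mat_thm} from hyperbolic to geometrically finite $f$.

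Next I would verify combinatorial continuity and rigidity for $\mathscr{B}_\Sigma$. The renormalization puzzles of a B-involution are built from its dynamical rays landing at fixed points exactly as in Section~\ref{comb_class_schwarz_sec}; the counting arguments of Lemmas~\ref{lem:cj} and~\ref{lem:lgeq2S} show these rays cut the limit set, and Lemmas~\ref{lem:rns}--\ref{lem:rnsS} furnish (anti-)polynomial-like renormalizations of level $\geq 1$. The parabolic behavior at the singular boundary points (which may pinch $\widehat{\C}\setminus\overline{\cD}$) is handled as in the proof of Theorem~\ref{thm:rs}, via complex-box-mapping rigidity \cite{KvS09} and the shrinking of puzzle pieces attached at parabolic fixed points. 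Combined with stability of rays at repelling (pre)periodic points (the analog of Proposition~\ref{rep_stab}), this yields verbatim analogs of Theorems~\ref{thm:scc} and~\ref{thm:rs}. Finally I assemble: compactness of $\mathscr{B}_\Sigma$ (Section~\ref{b_sigma_top_subsec}), approximation of any periodically repelling combinatorics by postcritically finite maps (Theorem~\ref{thn:approxbypcf}), and combinatorial continuity give the homeomorphism of part~(1) exactly as in the proof of Theorem~\ref{thm:CL}; part~(2) then follows from part~(1) and rigidity precisely as Theorem~\ref{thm:A} follows from Theorems~\ref{thm:CL} and~\ref{thm:rs}.

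The main obstacle, and the one genuinely new ingredient, is the David-surgery input: one must produce a continuous David extension to $\D$ of the circle conjugacy between $z^d$ and $A_\Sigma^{\mathrm{f}}$, together with David/quasiconformal models on the bounded Fatou components, and check that the glued invariant Beltrami coefficient satisfies the David integrability condition. For $\pmb{\cN}_d$ and $\pmb{\cF}_d$ this was Proposition~\ref{extension_david_prop}, whose proof exploited the explicit parabolic (anti-)Blaschke normal forms; here $A_\Sigma^{\mathrm{f}}$ may carry several parabolic fixed points and (for $n\geq 3$) several critical points all mapping to a common critical value, so the corresponding normal-form and gluing analysis must be carried out with care to keep the David constant controlled and to ensure the integrating map is conformal off the limit set. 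Once this extension is established, the removability and rigidity steps are routine transfers of the arguments already given in Part~\ref{part_one}.
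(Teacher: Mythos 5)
Your overall strategy coincides with the paper's: part (3) is obtained by running the David surgery of Theorem~\ref{thm:C} mutatis mutandis (the paper, like you, treats the required David extension for $A_\Sigma^{\mathrm{f}}$ as a routine variant of Proposition~\ref{extension_david_prop}), and parts (1)--(2) by transporting the puzzle machinery, compactness, post-critically finite approximation, combinatorial continuity, and complex box-mapping rigidity of Part~\ref{part_one}. However, there is one concrete gap, and it is exactly the point the paper singles out as the necessary modification.

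You assert that ``the renormalization puzzles of a B-involution are built from its dynamical rays landing at fixed points exactly as in Section~\ref{comb_class_schwarz_sec}.'' This fails in the holomorphic setting. For a Schwarz reflection in $\mathscr{S}_{\pmb{\cN}_d}$, the singular points $\mathfrak{S}\subset\partial\cD$ --- which are precisely the points where the depth-zero equipotential meets the limit set, and where components of $\cD$ touch --- are \emph{fixed} points of $S$, so a skeleton made of fixed rays contains them and is forward invariant. For $S\in\mathscr{B}_\Sigma$ the boundary map $S:\partial\cD\to\partial\cD$ is an orientation-reversing involution: among the points of $S_\Sigma$ at most two are fixed by $A_\Sigma^{\mathrm{f}}$, and the remaining points of $\mathfrak{S}=\psi(S_\Sigma)$ --- in particular every pinching point of $\overline{\cD}$, since complex-conjugate gaps of the coarse lamination $\cL$ are swapped by $S$ (Section~\ref{coarse_lami_subsec}) --- form $2$-cycles. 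A depth-zero configuration built only from fixed rays therefore omits marked points that unavoidably sit on the boundaries of the depth-zero pieces (the equipotential touches $\Lambda(S)$ at \emph{every} point of $\mathfrak{S}$), and the cut structure of $\overline{\cD}$, which is intrinsic to every map of $\mathscr{B}_{\Sigma,\cL}$, is never recorded at any level of the puzzle. Concretely, this breaks the alignment/combinatorial-distance comparison with polynomial puzzles (the period-two identifications forced by the coarse lamination would be invisible on the B-involution side), and it leaves no analog of the step in the proof of Theorem~\ref{thm:rs} where the puzzle pieces attached at the pinching points $a_i$ of $\widehat{\C}\setminus\overline{\cD}$ (fixed in the Schwarz case, $2$-periodic here) are shown to shrink and are adjoined to the box mapping.

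The repair is the one the paper makes: at level $0$, take all external rays landing at fixed \emph{and period-two} cycles. The resulting skeleton is again forward invariant, contains all of $\mathfrak{S}$, and encodes the coarse lamination; with this single modification your transfer of Theorems~\ref{thm:scc}, \ref{thm:rs}, \ref{thm:CL}, and~\ref{thm:A} to $\mathscr{B}_\Sigma$ goes through as you describe.
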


\begin{remark}
As in the antiholomorphic setting, the bijection $\Phi: \mathcal{C}^+_{d, gf} \longrightarrow \mathscr{B}_{\Sigma, gf}$ should be discontinuous at infinitely many parameters; however, we refrain from justifying this statement here (see \cite{Ino09}).
\end{remark}

The David surgery technique used in Theorem~\ref{thm:C} works mutatis mutandis in the current setting, and yields a dynamically natural bijection between geometrically finite parameters as asserted in (3).
To construct puzzles for $B$-involutions, we need a slight modification at level $0$; namely, we consider all external rays landing at fixed and period $2$ cycles.
This is to accommodate the coarse lamination coming from the degenerate polynomial-like structure of the $B$-involution (see \S \ref{coarse_lami_subsec}).
With this modification, the proofs of (1) and (2) are the same as the proofs for Theorem \ref{thm:CL} and Theorem \ref{thm:A}.

\subsection{Correspondences associated with conformal matings}\label{corr_b_sigma_subsec}

As in Section~\ref{corr_construct_subsec}, the map $S\in\mathscr{B}_\Sigma$ can be lifted by the degree $d+1$ branched cover $\pmb{R}:\mathfrak{U}\to\widehat{\C}$ to obtain a bi-degree $d$:$d$ correspondence $\mathfrak{C}^{\circledast}$ on $\mathfrak{U}$. Specifically,  $\mathfrak{C}^{\circledast}$ is defined the algebraic equation:
\begin{equation}
\{(\mathfrak{u}_1,\mathfrak{u}_2)\in\mathfrak{U}\times\mathfrak{U}: \frac{\pmb{R}(\mathfrak{u}_2)-\pmb{R}(\pmb{\eta^+}(\mathfrak{u}_1))}{\mathfrak{u}_2-\pmb{\eta^+}(\mathfrak{u}_1)}=0\}.
\label{holo_corr_eqn}
\end{equation}
We then pass to the quotient 
$$
\mathfrak{W}\ :=\ \faktor{\mathfrak{U}}{\sim_{\mathrm{w}}},
$$
where $\sim_{\mathrm{w}}$ is the finite equivalence relation defined as
\begin{center}
For $z\in\partial\mathfrak{D}_i\subset\widehat{\C}_i$ and $w\in\partial\mathfrak{D}_j\subset\widehat{\C}_j$,
\smallskip

$(z,i)\sim_{\mathrm{w}} (w,j)\iff R_i(z)=R_j(w)$.
\end{center}
The space $\mathfrak{W}$ can be viewed as a compact, simply connected, (possibly) noded Riemann surface.

As in Section~\ref{corr_construct_subsec}, the maps $\pmb{R}, \pmb{\eta^+}$ descend to $\mathfrak{W}$, defining a bi-degree $d$:$d$ correspondence $\mathfrak{C}$ on $\mathfrak{W}$.

\begin{proof}[Proof of Theorem~\ref{all_corr_thm} (Fuchsian case)]
Let $f$ be a degree $d=d(\Sigma)$
\begin{itemize}
\item geometrically finite; or 
\item finitely renormalizable and periodically repelling
\end{itemize}
polynomial with connected Julia set. By Theorem~\ref{conf_mating_b_inv_thm}, there exists $S\in\mathscr{S}_\Sigma$ that is a conformal mating of $f$ with the factor Bowen-Series map $A_\Sigma^{\mathrm{f}}$. The above discussion allows us to associate with $S$ a bi-degree $d$:$d$ correspondence $\mathfrak{C}$ on a compact, simply connected, (possibly) noded Riemann surface $\mathfrak{W}$. The proof of \cite[Theorem~5.16]{MM2} applies verbatim to the current setting to show that $\mathfrak{C}$ is a mating of $f$ with the Fuchsian group $G$ uniformizing~$\Sigma$.
\end{proof}

\appendix

\section{Teichm{\"u}ller spaces and connectedness loci in the space of degenerate polynomial-like maps}\label{appendix}

\subsection{Hybrid classes and vertical fibers}\label{subsec:HC}

Recall that the triple $(g, P_1, P_2)$ is a \emph{degenerate (anti-)polynomial-like map} of degree $d$ if $g: P_1 \longrightarrow P_2$ is a degree $d$ holomorphic/anti-holomorphic map between two pinched polygons $P_1\subset P_2$ satisfying conditions in Definition \ref{degenerate_poly_def}.

\begin{defn}\label{hybrid_conj_def}
Let $(g_1,P_1, P_2)$ and $(g_2,Q_1,Q_2)$ be two degenerate (anti-)polynomial-like maps. We say that $g_1$ and $g_2$ are \textit{hybrid conjugate} if there exists a quasiconformal map $\Phi\colon \widehat{\C}\to \widehat{\C}$ such that:

\begin{enumerate}
	\item $\Phi$ sends the pinched points (respectively, corners) of $P_2$ to the pinched points (respectively, corners) of $Q_2$,
	\item $\Phi$ conjugates $g_1$ to $g_2$, restricted to the closure of a neighborhood of $K(g_i)$ pinched at the singular points of $\partial P_1, \partial Q_1$,
	\item $\overline \partial \Phi\equiv 0$ almost everywhere on $K(g_1)$.
\end{enumerate}
\end{defn}

Hybrid conjugacy defines an an equivalence relation on the space of degenerate (anti-)polynomial-like maps, and we call the corresponding equivalence classes \emph{hybrid classes} of degenerate (anti-)polynomial-like maps.

Recall that for a degenerate (anti-)polynomial-like map, the \emph{external map} can be defined in a similar way as external maps of polynomial-like maps, which is a piecewise real-analytic, degree $d$, covering map of $\mathbb{S}^1$.
As in the classical setting of polynomial-like maps, equivalence classes of piecewise real-analytic degree $d$ coverings of the circle under the equivalence relation induced by piecewise real-analytic circle conjugacy are called \emph{external classes} (cf. \cite[Chapter I.1]{DH2}). Two (piecewise) conformally conjugate degenerate (anti-)polynomial-like maps have the same external class. For our purposes, however, it is more convenient to follow the approach of \cite[\S 3]{Lyu99} and consider the space of degenerate (anti-)polynomial-like maps up to M{\"o}bius equivalence. Analogously, we will identify two external maps when they are conjugate by a M{\"o}bius automorphism of the disk.
Finally, we define a \emph{vertical fiber} in the space of degree $d$ degenerate (anti-)polynomial-like maps with connected Julia set as the collection of all such maps with a fixed external map.

\subsection{Total space as union of vertical fibers}

The space $\mathscr{S}_{\pmb{G}_d}\equiv\mathscr{S}_{\pmb{\cN}_d}$ introduced in Section~\ref{poly_schwarz_sec} consists of degree $d$ degenerate anti-polynomial-like maps with connected Julia set having the Nielsen map $\pmb{\cN}_d$ of the \emph{regular} ideal $(d+1)-$gon reflection group $\pmb{G}_d$ as their external map. The Teichm{\"u}ller space $\mathrm{Teich}(\pmb{G}_d)$ of the group $\pmb{G}_d$ is real $(d-2)-$dimensional. Specifically,
the space $\mathrm{Teich}(\pmb{G}_d)$ consists of M{\"o}bius conjugacy classes of discrete, faithful, strongly type-preserving representations of $\pmb{G}_d$ into $\mathrm{Aut}^{\pm}(\D)$. Any such representation $\rho$ with image $G$ is induced by a quasiconformal homeomorphism $\psi_\rho$ of $\widehat{\C}$ that preserves the disk $\D$ and carries the preferred fundamental domain $\Pi$ of $\pmb{G}_d$ to a preferred fundamental domain $\Pi(G)$ of $G$:
$$
\rho(g)=\psi_\rho\circ g\circ\psi_\rho^{-1},\ g\in\pmb{G}_d
$$
(cf. \cite[\S 3.3]{LLM1}).
Thus, the Nielsen map $\cN_G$ of $G$ associated with the fundamental domain $\Pi(G)$ is simply the conjugate of $\pmb{\cN}_d$ via $\psi_\rho$. We denote by $\mathscr{S}_G\equiv \mathscr{S}_{\cN_G}$ the space of degree $d$ degenerate anti-polynomial-like maps with connected Julia set having the Nielsen map $\cN_G$ as their external map. Note that each $\mathscr{S}_{G}$ is a vertical fiber in the space of degree $d$ degenerate anti-polynomial-like maps with connected Julia set (see Subsection~\ref{subsec:HC}).
Putting all these vertical fibers together, we get the \emph{total space}
$$
\mathscr{T}:=\bigsqcup_{G\in\mathrm{Teich}(\pmb{G}_d)}\ \mathscr{S}_{G}.
$$

On the holomorphic side, for a marked surface $\Sigma_0\in\mathfrak{F}$ with associated Fuchsian model $G_0$ (see Section~\ref{fbs_sec}), the slice $\mathscr{B}_{G_0}\equiv\mathscr{B}_{\Sigma_0}$ consists of all degree $d\equiv d(\Sigma_0)$ degenerate polynomial-like maps with connected Julia set that have the factor Bowen-Series map $A^{\mathrm{f}}_{\Sigma_0}$ as their external map. It follows from the construction of factor Bowen-Series maps that for $\Sigma\in\mathrm{Teich}(\Sigma_0)$ (or equivalently, for $G\in\mathrm{Teich}(G_0)$), the map $A_{\Sigma}^{\mathrm{f}}$ is quasiconformally conjugate to $A_{\Sigma_0}^{\mathrm{f}}$. Specifically, the quasiconformal conjugacy between $A_{\Sigma}^{\mathrm{f}}, A_{\Sigma_0}^{\mathrm{f}}$ is induced by the quasiconformal conjugacy between the Fuchsian models $G, G_0$ of $\Sigma, \Sigma_0$ (cf. \cite[\S 2.2]{MM2}). We will use the notation $\mathscr{B}_\Sigma$ and $\mathscr{B}_G$ interchangeably.
As in the antiholomorphic case, we consider the \emph{total space}
$$
\mathscr{T}:=\bigsqcup_{G\in\mathrm{Teich}(G_0)}\ \mathscr{B}_{G},
$$
which is the disjoint union of all the vertical fibers $\mathscr{B}_{G}$ where $G$ runs over the Teichm{\"u}ller space of $G_0$.

\begin{remark}
    To make things interesting, we will assume that $\mathrm{Teich}(\pmb{G}_d)$ (respectively, $\mathrm{Teich}(G_0)$) is not a singleton set. This is equivalent to saying that the ideal polygon defining $\pmb{G}_d$ has at least four sides (in the antiholomorphic case) or the surface $\Sigma_0$ has at least four marked points (in the holomorphic case). In both cases, this implies that $d>2$.
\end{remark}

\subsection{Cartesian product structure of the total space}

In what follows, the notation $\E_G$ will stand for the vertical fiber $\mathscr{S}_G$ or $\mathscr{B}_G$.
It is easy to see that maps in $\E_G$ are completely determined by their hybrid class. This can be proved using the arguments of \cite[\S 1.5]{DH2} (cf. \cite[\S 4.2]{LMM23}).

\begin{theorem}
Let $G_0$ stand for the regular ideal polygon reflection group $\pmb{G}_d$ or the Fuchsian model of a surface $\Sigma_0\in\mathfrak{F}$.
\begin{enumerate}
\item For each $S\in\E_{G_0}$, there exists an injective continuous map 
$$
\Psi_S:\left(\mathrm{Teich}(G_0),G_0\right)\hookrightarrow \left(\mathscr{T},S\right)
$$
such that $\Psi_S(G)$ is hybrid conjugate to $S$ and has $A_G^{\mathrm{f}}$ (respectively, $\cN_G$) as its external map.
\item For $G\in\mathrm{Teich}(G_0)$, the map
\begin{align*}
\Psi_G:\E_{G_0}\to\E_G \\ 
S\mapsto \Psi_S(G)
\end{align*}
is a bijection.
\item The space $\mathscr{T}$ has a Cartesian product structure; i.e., 
the map
\begin{align*}
\Psi:\mathrm{Teich}(G_0)\times\E_{G_0}\longrightarrow\mathscr{T}\\
(G,S)\mapsto \Psi_S(G)=\Psi_G(S)
\end{align*}
is a bijection.
\end{enumerate}
\label{cart_prod_thm}
\end{theorem}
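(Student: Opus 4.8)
The plan is to construct the two families of maps $\Psi_S(\cdot)$ and $\Psi_G(\cdot)$ via a quasiconformal surgery that decouples the internal (hybrid) data from the external (Teichm\"uller) data, and then to verify that these two families are mutually inverse in the appropriate sense so that the product map $\Psi$ is a bijection. First I would fix $S\in\E_{G_0}$ and, for each $G\in\mathrm{Teich}(G_0)$, recall that $G=\rho(G_0)$ is induced by a quasiconformal homeomorphism $\psi_\rho$ of $\widehat\C$ preserving $\D$ and carrying the preferred fundamental domain of $G_0$ to that of $G$; this conjugates the external map of $S$ (namely $\cN_{G_0}$ or $A^{\mathrm f}_{\Sigma_0}$) to $\cN_G$ or $A^{\mathrm f}_G$. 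The surgery then replaces the external dynamics of $S$ near its escaping set by the $\psi_\rho$-conjugated external map, leaving the dynamics on a neighborhood of $K(S)$ untouched. Concretely, one pulls back the standard structure on the escaping set by the new uniformizing conjugacy, sets the Beltrami coefficient equal to $0$ on $K(S)$, spreads it by the dynamics so that it becomes invariant, and integrates it by the Measurable Riemann Mapping Theorem to obtain a new degenerate (anti-)polynomial-like map $\Psi_S(G)$. By construction $\Psi_S(G)$ lies in $\E_G$ and is hybrid conjugate to $S$ (the integrating map is conformal on $K(S)$ by the invariance and the vanishing of $\mu$ there), establishing the image property in item (1).

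The next step is to check injectivity and continuity of $G\mapsto\Psi_S(G)$, giving item (1) in full. Injectivity follows because the external map of $\Psi_S(G)$ determines $G$ up to the marking: two distinct points of $\mathrm{Teich}(G_0)$ give external maps that are not M\"obius-conjugate respecting the marking, hence the resulting degenerate polynomial-like maps are distinct in $\mathscr T$. Continuity is a standard consequence of continuous dependence of the integrating map on the Beltrami coefficient, together with continuous dependence of $\psi_\rho$ (and hence of the pulled-back structure) on $G\in\mathrm{Teich}(G_0)$; here I would invoke the measurable Riemann mapping theorem with parameters. For item (2), fixing $G$ instead, the map $\Psi_G:\E_{G_0}\to\E_G$, $S\mapsto\Psi_S(G)$, is a bijection: surjectivity holds because any $S'\in\E_G$ can be straightened back to $\E_{G_0}$ by applying the inverse surgery (using $\psi_\rho^{-1}$), and injectivity follows from the fact that maps in a single vertical fiber $\E_{G_0}$ are determined by their hybrid class — which is preserved by the surgery — so that $\Psi_S(G)=\Psi_{S'}(G)$ forces $S$ and $S'$ to share a hybrid class and external map, hence to coincide. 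This last fact is exactly the statement, recorded just before the theorem, that maps in $\E_G$ are determined by their hybrid class (proved as in \cite[\S 1.5]{DH2}, cf.\ \cite[\S 4.2]{LMM23}).

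For item (3) I would assemble the two constructions into the single map $\Psi(G,S)=\Psi_S(G)=\Psi_G(S)$ and argue it is a bijection onto $\mathscr T$. The key compatibility to verify is that the two descriptions genuinely agree, i.e.\ that the surgery changing the external class (holding the hybrid class fixed) and the parametrization of a fixed fiber by hybrid classes are performed consistently; this is where one must check that the ``vertical'' coordinate $G$ and the ``hybrid'' coordinate $S\in\E_{G_0}$ are independent. Surjectivity of $\Psi$ is immediate since every element of $\mathscr T$ lies in some $\E_G$ and $\Psi_G$ is onto $\E_G$; injectivity follows because the external map recovers $G$ and then $\Psi_G^{-1}$ recovers $S\in\E_{G_0}$ uniquely.

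The main obstacle I expect is technical control of the surgery across the pinched structure: because the pinched polygons $P_1\subset P_2$ touch along singular points and the external maps have parabolic cycles (so are expansive but not expanding), the invariant Beltrami coefficient obtained by spreading may fail to be bounded near the parabolic points, and one must ensure the integrating map is genuinely quasiconformal rather than merely David. Handling this requires that the surgery be supported away from the parabolic orbits, or that one verifies the coefficient stays bounded because the quasiconformal conjugacy $\psi_\rho$ coming from $\mathrm{Teich}(G_0)$ is itself quasiconformal (not merely David) on the whole sphere — this is precisely where using the Teichm\"uller space of $G_0$, rather than degenerating to the boundary, keeps the dilatations uniformly bounded. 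Confirming this uniform boundedness, and that the resulting maps remain genuine degenerate (anti-)polynomial-like maps with the prescribed external class and connected Julia set, is the crux of the argument.
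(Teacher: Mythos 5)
Your proposal is correct and follows essentially the same route as the paper: a quasiconformal surgery on the escaping set (using the uniformly quasiconformal conjugacies between external maps coming from $\mathrm{Teich}(G_0)$) to define $\Psi_S$, injectivity via distinctness of vertical fibers, surjectivity of $\Psi_G$ by reversing the surgery, injectivity via the fact that maps in a vertical fiber are determined by their hybrid class, and assembly into the product structure. Your closing observation — that the surgery stays genuinely quasiconformal (rather than David) precisely because both external maps lie in the same Teichm\"uller orbit, so the conjugating dilatations are uniformly bounded — is exactly the point implicit in the paper's appeal to ``standard quasiconformal surgery.''
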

\begin{proof}
1) The factor Bowen-Series maps $A_G^{\mathrm{f}}$ (respectively, the Nielsen maps $\cN_G$), $G\in\mathrm{Teich}(G_0)$, are quasiconformally conjugate to $A_{G_0}^{\mathrm{f}}$ (respectively, to $\cN_{G_0}$), and these conjugacies can be chosen to depend continuously on $\mathrm{Teich}(G_0)$. With these quasiconformal conjugacies at hand, one can employ a standard quasiconformal surgery argument to replace the external map of $S$ with $A_G^{\mathrm{f}}$ (respectively, $\cN_G$), for any given $G\in\mathrm{Teich}(G_0)$. By construction, this quasiconformal deformation of $S$ is supported on its escaping set (i.e., it is conformal on $K(S)$), and hence, any such deformed map is hybrid conjugate to $S$ and lies in the vertical fiber $\E_G$. It follows that the deformed map is unique up to affine conjugacy (preserving marking).
This produces a well-defined map $\Psi_S:\mathrm{Teich}(G_0)\hookrightarrow \mathscr{T}$.
Finally, injectivity of $\Psi_S$ is a consequence of the fact that for $G\neq G'\in\mathrm{Teich}(G_0)$, the maps $\Psi_S(G), \Psi_S(G')$ lie in distinct vertical fibers.

2) Injectivity of $\Psi_G$ follows from the definition of $\Psi_G$ and the fact that maps in the vertical fiber $\E_{G_0}$ are completely determined by their hybrid class. That the map $\Psi_G$ is surjective can be seen from the observation that the quasiconformal deformation of the previous part can be reversed. More precisely, the
external map of any map $S_1\in\E_G$ can be replaced with the external map $A_{G_0}^{\mathrm{f}}$ (respectively, $\cN_{G_0}$) producing a degenerate (anti-)polynomial-like map $S\in\E_{G_0}$ with $\Psi_G(S)=S_1$.

3) By definition of $\Psi$ and part (2) of this theorem, the map $\Psi$ induces a bijection between $\{G\}\times\E_{G_0}$ and the vertical fiber $\E_G$. Since $\mathscr{T}$ is the disjoint union of all vertical fibers $\E_{G}$ as $G$ runs over $\mathrm{Teich}(G_0)$, the result~follows.
\end{proof}

\begin{remark}
The upshot of Theorem~\ref{cart_prod_thm} can be summarized as follows. The image $\Psi_S(\mathrm{Teich}(G_0))\subset\mathscr{T}$ passes through any map $S\in\E_{G_0}$, and it is a single hybrid class $[S]$. We call this, which is defined by quasiconformal deformation of the external map, the \emph{horizontal leaf} through $S$ in $\mathscr{T}$. Such a horizontal leaf intersects each vertical fiber $\E_G$ at a unique point. This defines a bijective \emph{holonomy map} $\Psi_G$ between the vertical fibers $\E_{G_0}$ and $\E_G$. These holonomy maps piece together to yield a product structure $\mathscr{T}\cong \mathrm{Teich}(G_0)\times\E_{G_0}$.
\end{remark}

It is desirable to replace $\E_{G_0}$ with $\mathcal{C}_d^\pm$ in the product structure of $\mathscr{T}$ given in Theorem~\ref{cart_prod_thm}. However, our techniques do not allow us to deduce this result due to analytic limitations of David surgery and combinatorial rigidity issues. However, thanks to Theorems~\ref{thm:A},~\ref{thm:C}, and~\ref{b_inv_fbs_thm}, we have the following weaker substitute. Here, $\mathscr{T}_{gf}$ (respectively, $\mathscr{T}_{fr}$) denotes the collection of geometrically finite (respectively, periodically repelling and finitely renormalizable) maps in $\mathscr{T}$.

\begin{cor}\label{cart_prod_poly_cor}
There exists a natural bijection between the restricted total space $\mathscr{T}_{gf}\cup\mathscr{T}_{fr}$ and the product space $\mathrm{Teich}(G_0)\times\left(\mathcal{C}_{d,gf}^\pm\cup\mathcal{C}_{d,fr}^\pm\right)$.
\end{cor}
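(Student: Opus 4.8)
The plan is to feed the straightening bijections of Theorems~\ref{thm:A}, \ref{thm:C}, and~\ref{b_inv_fbs_thm} into the Cartesian product structure supplied by Theorem~\ref{cart_prod_thm}. Recall that the latter gives a bijection $\Psi:\mathrm{Teich}(G_0)\times\E_{G_0}\to\mathscr{T}$, under which the horizontal leaf $\Psi(\mathrm{Teich}(G_0)\times\{S\})$ through a map $S\in\E_{G_0}$ is precisely the hybrid class $[S]$, and each holonomy map $\Psi_G:\E_{G_0}\to\E_G$ is a bijection realized by a quasiconformal deformation supported on the escaping set. Since the base fiber $\E_{G_0}$ is exactly $\mathscr{S}_{\pmb{\cN}_d}$ (antiholomorphic case) or $\mathscr{B}_\Sigma$ (holomorphic case), the three straightening theorems already identify the geometrically finite and finitely renormalizable parts of $\E_{G_0}$ with $\mathcal{C}^{\pm}_{d,gf}$ and $\mathcal{C}^{\pm}_{d,fr}$.

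The crux is to verify that the two classes in question are invariant along horizontal leaves, equivalently that each holonomy $\Psi_G$ restricts to bijections $\E_{G_0,gf}\to\E_{G,gf}$ and $\E_{G_0,fr}\to\E_{G,fr}$. This is where hybrid conjugacy does the work: by Definition~\ref{hybrid_conj_def}, maps on a common horizontal leaf are conjugate via a quasiconformal homeomorphism on a neighborhood of their non-escaping sets that is conformal on $K$, and hence genuinely conjugates the internal dynamics. Consequently every feature of the non-escaping dynamics is transported — the pre-periodicity of critical points lying in the limit set (the defining condition of geometric finiteness, Definition~\ref{geom_fin_def}), the repelling character of all cycles, and the polynomial/anti-polynomial-like renormalization decomposition together with its finiteness. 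The critical point of the external map lying in the escaping set plays no role in either condition, so it does not interfere. Therefore $\mathscr{T}_{gf}$ and $\mathscr{T}_{fr}$ are saturated by horizontal leaves, and $\Psi$ restricts to a bijection
$$
\Psi:\mathrm{Teich}(G_0)\times\left(\E_{G_0,gf}\cup\E_{G_0,fr}\right)\ \longrightarrow\ \mathscr{T}_{gf}\cup\mathscr{T}_{fr}.
$$

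Finally I would splice in the base-fiber identification. Applying Theorem~\ref{thm:C} (respectively, the geometrically finite part of Theorem~\ref{b_inv_fbs_thm}) to $\E_{G_0,gf}$ and Theorem~\ref{thm:A} (respectively, the finitely renormalizable part of Theorem~\ref{b_inv_fbs_thm}) to $\E_{G_0,fr}$ yields bijections onto $\mathcal{C}^{\pm}_{d,gf}$ and $\mathcal{C}^{\pm}_{d,fr}$. These two bijections agree on the overlap $\mathcal{C}^{\pm}_{d,gf}\cap\mathcal{C}^{\pm}_{d,fr}$ (the Misiurewicz-type maps that are simultaneously geometrically finite and periodically repelling), because both are characterized by sending a map to the unique conformal mating with the prescribed external map; hence they glue to a single bijection $\E_{G_0,gf}\cup\E_{G_0,fr}\cong\mathcal{C}^{\pm}_{d,gf}\cup\mathcal{C}^{\pm}_{d,fr}$. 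Precomposing the displayed restriction of $\Psi$ with the product of the identity on $\mathrm{Teich}(G_0)$ and this bijection produces the desired natural bijection.

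The main obstacle I anticipate is the leafwise invariance claim of the second paragraph: one must be sure that hybrid conjugacy — which is only asserted on a neighborhood of the non-escaping set pinched at the singular points — genuinely preserves the renormalization structure and the geometrically finite versus periodically repelling dichotomy, rather than merely the conformal conjugacy class of the filled Julia set. I would address this by observing that all the relevant combinatorial and dynamical data (cut cycles, puzzle structure, small filled Julia sets, and critical orbit relations in the limit set) are determined on arbitrarily small neighborhoods of $K$, which the hybrid conjugacy covers, so that these classes are indeed constant along horizontal leaves.
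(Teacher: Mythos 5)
Your proposal is correct and follows essentially the same route as the paper: the corollary is obtained there by combining the Cartesian product structure of Theorem~\ref{cart_prod_thm} with the straightening bijections of Theorems~\ref{thm:A},~\ref{thm:C}, and~\ref{b_inv_fbs_thm}. The two points you spell out explicitly --- that $\mathscr{T}_{gf}$ and $\mathscr{T}_{fr}$ are saturated by horizontal leaves (since geometric finiteness, periodic repulsion, and finite renormalizability are hybrid-class invariants), and that the two straightening maps glue along the overlap $\mathcal{C}^{\pm}_{d,gf}\cap\mathcal{C}^{\pm}_{d,fr}$ --- are exactly the verifications the paper leaves implicit, so your write-up is a faithful, slightly more detailed version of the intended argument.
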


\subsection{A topological product structure for a restricted total space}
The Cartesian product structure of Theorem~\ref{cart_prod_thm} may not be upgradable to a topological product structure due to quasiconformally non-rigid parameters. 
On the other hand, if we restrict to the smaller space of periodically repelling, finitely renormalizable maps, we have transverse continuity and hence the desired product structure.
\begin{theorem}\label{top_prod_poly_thm}
There exists a dynamically natural homeomorphism
$$
\mathscr{T}_{fr}\cong \mathrm{Teich}(G_0) \times \mathcal{C}_{d,fr}^\pm.
$$
\end{theorem}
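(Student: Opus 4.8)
The plan is to realize the asserted map as the restriction of the Cartesian bijection $\Psi$ of Theorem~\ref{cart_prod_thm}, and then to upgrade the purely set-theoretic product structure of Corollary~\ref{cart_prod_poly_cor} to a topological one. Concretely, for $(G,f)\in\mathrm{Teich}(G_0)\times\mathcal{C}_{d,fr}^\pm$ the map sends $(G,f)\mapsto\Psi_G(\Phi_{G_0}(f))$, where $\Phi_{G_0}$ denotes the base-fiber homeomorphism of Theorem~\ref{thm:A} (in the antiholomorphic case $G_0=\pmb{G}_d$, so $\E_{G_0}=\mathscr{S}_{\pmb{\cN}_d}$) or of the finitely renormalizable part of Theorem~\ref{b_inv_fbs_thm} (in the holomorphic case), and $\Psi_G\colon\E_{G_0}\to\E_G$ is the holonomy map. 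Equivalently, $\Psi(G,f)$ is the unique conformal mating of $f$ with the external map ($\cN_G$, resp. $A_G^{\mathrm{f}}$) associated with the marked group $G$; dynamical naturality is built into this description, and it restricts to $\Phi$ on the slice $\{G_0\}\times\mathcal{C}_{d,fr}^\pm$. Corollary~\ref{cart_prod_poly_cor} already supplies bijectivity onto $\mathscr{T}_{fr}$, so the entire content of the theorem is joint continuity of $\Psi$ and of $\Psi^{-1}$.

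I would isolate two ingredients. Continuity in the fiber direction (with $G$ fixed) is exactly Theorem~\ref{thm:A}/\ref{b_inv_fbs_thm}, asserting that $f\mapsto\Phi_{G_0}(f)$ and its inverse are homeomorphisms onto $\E_{G_0,fr}$. The essential remaining ingredient is \emph{transverse continuity} of the holonomy: the assignment $(G,S)\mapsto\Psi_G(S)$, together with its inverse, is continuous on the finitely renormalizable locus. Granting this, joint continuity of $\Psi$ follows by writing a convergent sequence $(G_n,f_n)\to(G,f)$ as $\Psi(G_n,f_n)=\Psi_{G_n}(S_n^0)$ with $S_n^0:=\Phi_{G_0}(f_n)\to S^0:=\Phi_{G_0}(f)$ in $\E_{G_0}$, and then applying transverse continuity. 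Continuity of $\Psi^{-1}$ follows symmetrically: the external map of $\Psi(G_n,f_n)$ recovers the marked group $G_n$, so $G_n\to G$, whence $\Psi_{G_n}^{-1}(\Psi(G_n,f_n))\to S^0$ and $f_n=\Phi_{G_0}^{-1}(S_n^0)\to f$.

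To establish transverse continuity I would argue through subsequential limits rather than through delicate Beltrami estimates. Suppose $G_n\to G$ and $S_n^0\to S^0$ in $\E_{G_0,fr}$, and set $T_n:=\Psi_{G_n}(S_n^0)$. Since the $G_n$ lie in a compact subset of $\mathrm{Teich}(G_0)$, a mild extension of the compactness Theorem~\ref{thm:compact}, allowing the normalizing conformal conjugacies $\psi_n$ (hence the external maps) to vary continuously with $G_n$, shows that $\{T_n\}$ is precompact in $\mathscr{T}$; pass to a convergent subsequence $T_n\to T$. Because the external maps of $T_n$ converge to that of $G$, we have $T\in\E_G$. Next, the holonomy $\Psi_{G_n}$ is conformal on non-escaping sets and therefore preserves rational laminations read through the circle conjugacies, so $\lambda(T_n)$ corresponds to $\lambda(S_n^0)$; combinatorial continuity (Theorem~\ref{thm:scc}, resp. its B-involution analog) applied to $S_n^0\to S^0$ then forces $T$ to share a rational lamination with $\Psi_G(S^0)$. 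Finally, $T$ and $\Psi_G(S^0)$ are finitely renormalizable members of the single fiber $\E_G$ with the same rational lamination, so the rigidity Theorem~\ref{thm:rs} (and its holomorphic counterpart) gives $T=\Psi_G(S^0)$. As every subsequential limit equals $\Psi_G(S^0)$, the full sequence converges, and the inverse direction is handled by the same extraction-plus-rigidity scheme.

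The main obstacle I anticipate is the interface between the \emph{fixed–external–map} combinatorial continuity of Theorem~\ref{thm:scc} and the \emph{varying} external maps $\cN_{G_n}$ (or $A_{G_n}^{\mathrm{f}}$) forced by $G_n\to G$. I would circumvent this by transporting everything back to the fixed fiber $\E_{G_0}$ through the holonomy maps, so that Theorem~\ref{thm:scc} is invoked only for the sequence $S_n^0\to S^0$ lying in one slice; this requires verifying that $\Psi_{G_n}$ carries the rational lamination of $S_n^0$ to that of $T_n$ compatibly with the marking and the conjugacies $\pmb{\mathcal{E}}_d$ (resp. $\mathfrak{h}$), which is exactly where continuous dependence of the quasiconformal surgery on the Teichm\"uller parameter enters. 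A secondary technical point is the needed extension of Theorem~\ref{thm:compact} to families over a compact part of $\mathrm{Teich}(G_0)$; its proof is identical to the fixed-group case once the conjugacies $\psi_n$ are normalized uniformly, using that Teichm\"uller deformations are quasiconformal, hence of uniformly bounded dilatation, over compact parameter sets. Everything else is a formal consequence of rigidity in the finitely renormalizable regime.
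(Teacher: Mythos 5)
Your global skeleton --- reduce everything to continuity of $\Psi$, obtain precompactness of $T_n=\Psi_{G_n}(S_n^0)$ from compactness of $\{G_n\}$ in $\mathrm{Teich}(G_0)$, extract a subsequential limit $T$, identify its fiber coordinate as $G$ via convergence of the tiling-set uniformizations, and then pin $T$ down using rigidity of finitely renormalizable maps --- is the same as the paper's. The difference lies in the mechanism for the final identification, and that is where your argument has a genuine gap. You identify $T$ with $\Psi_G(S^0)$ through rational laminations: $\lambda(T_n)$ corresponds to $\lambda(S_n^0)$ via holonomy, $\widehat{d}(S^0,S_n^0)\to 0$ by part (1) of Theorem~\ref{thm:scc}, and then you assert this ``forces $T$ to share a rational lamination with $\Psi_G(S^0)$''. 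But that implication is an instance of part (2) of Theorem~\ref{thm:scc} applied to the convergent sequence $T_n\to T$, and the $T_n$ lie in the \emph{varying} fibers $\E_{G_n}$, whereas Theorem~\ref{thm:scc} is proved only within a fixed fiber. Your proposed circumvention --- transporting the sequence back to $\E_{G_0}$ by holonomy --- does not repair this: transporting $T_n$ merely recovers $S_n^0$, but it does not transport the limit $T$, because the assertion that limits commute with holonomy, $\lim_n\Psi_{G_n}(S_n^0)=\Psi_G\bigl(\lim_n S_n^0\bigr)$, even at the level of laminations, is precisely the transverse continuity you are trying to prove. As written, the step is circular. To salvage your route you would have to extend the combinatorial continuity machinery (ray stability, puzzles, the Separation Lemma) to sequences whose external maps $\cN_{G_n}$ vary with $G_n\to G$; this is plausible, since dynamical rays are defined through the uniformizations $\psi_n$, which converge, but it is a genuine piece of work that the transport trick does not eliminate.

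The paper avoids the issue entirely, and more cheaply, by the very route you set aside as ``delicate Beltrami estimates'' (it is not delicate here): since the $G_n$ lie in a compact subset of $\mathrm{Teich}(G_0)$, the hybrid conjugacies between $T_n$ and $S_n^0$ are \emph{uniformly} quasiconformal, so compactness of normalized $K$-quasiconformal maps shows that any subsequential limit $T$ is quasiconformally conjugate to $S^0$. Because $S^0$ is periodically repelling and finitely renormalizable, its non-escaping set supports no nontrivial quasiconformal deformation (Theorem~\ref{thm:rs}), so the quasiconformal deformation space of $S^0$ coincides with the horizontal leaf $\Psi_{S^0}(\mathrm{Teich}(G_0))$; since that leaf meets the fiber $\E_G$ in the single point $\Psi_G(S^0)$, one concludes $T=\Psi_G(S^0)$ with no lamination analysis at all. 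I recommend replacing your lamination step by this argument, or else explicitly stating and proving the varying-fiber version of Theorem~\ref{thm:scc}(2) that your approach actually requires.
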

\begin{proof}[Sketch of proof]
For simplicity of the presentation, we will present the sketch of the proof for the holomorphic case.
Let $\Phi:\mathcal{C}_{d,fr}^+\to\mathscr{B}_{G_0,fr}$ be the homeomorphism of Theorem~\ref{b_inv_fbs_thm}. We will show that the map
\begin{align*}
\widehat{\Psi}: \mathrm{Teich}(G_0) \times \mathcal{C}_{d,fr}^+\longrightarrow \mathscr{T}_{fr}\\
(G,f)\mapsto \Psi(G,\Phi(f))
\end{align*}
is the desired homeomorphism, where $\Psi:\mathrm{Teich}(G_0)\times\mathscr{B}_{G_0}\longrightarrow\mathscr{T}$ is the bijection of Theorem~\ref{cart_prod_thm}.
For this, we only need to justify that $\Psi$ is a homeomorphism.

We proceed to demonstrate continuity of the map $\Psi$. Let $(G_n,S_n)\to (G_\infty,S_\infty)$ in $\mathrm{Teich}(G_0) \times \mathscr{B}_{G_0}$. We need to verify that the sequence of maps $\Psi(G_n,S_n)=\Psi_{S_n}(G_n)$ converges to the map $\Psi(G_\infty,S_\infty)=\Psi_{S_\infty}(G_\infty)$.
Since $\{G_n\}$ is contained in a compact set of $\mathrm{Teich}(G_0)$, it is easy to see that $\Psi(G_n,S_n)$ is pre-compact (cf. Theorem~\ref{thm:compact}.
Let $\mathcal{S}$ be a subsequential limit of $\Psi(G_n,S_n)$.
By Theorem~\ref{cart_prod_thm}, $\mathcal{S} = \Psi(G_\infty',S_\infty')$ for some $G_\infty' \in \mathrm{Teich}(G_0)$ and $S_\infty' \in \mathscr{B}_{G_0}$.
Consider the uniformization map $\phi_n$ of the tiling set $U_n$ of $\Psi(G_n,S_n)$.
Then $\phi_n$ conjugates the dynamics of the factor Bowen-Series maps $A_{G_n}^{\mathrm{f}}$ to $\Psi(G_n,S_n)|_{U_n}$.
It is easy to see that after appropriate normalization, $\phi_n$ is pre-compact.
Thus we can take a subsequential limit, and conclude that $G_\infty' = G_\infty$.
Since $\Phi(G_n,S_n)$ is hybrid conjugate to $S_n = \Phi(G_0,S_n)$, and the hybrid conjugacy is uniformly quasiconformal, after passing to a subsequence, we conclude that $\mathcal{S}$ is quasiconformally conjugate to $S_\infty = \Phi(G_0,S_\infty)$.
As $S_\infty$ is periodically repelling and finitely renormalizable,the non-escaping set $K(S_\infty)$ does not support any non-trivial quasiconformal deformation. Hence, the quasiconformal deformation space of the map $S_\infty$ agrees with the horizontal leaf (or hybrid class) $\Psi_{S_\infty}(\mathrm{Teich}(G_0))$.
It follows that $\mathcal{S} = \Psi(G_\infty,S_\infty)$.

 Continuity of the inverse $\Psi^{-1}$ can be proved using similar arguments. This completes the proof of the result.
\end{proof}

\subsection{A holomorphic motion and source of discontinuity}
We will end our discussion with a brief interpretation of the picture in terms of holomorphic motions. The account given below has roots in the analogous description of the space of quadratic-like maps given in \cite[\S 4]{Lyu99}.

\subsubsection{The holomorphic case}

We note that the domain of definition of a map in $\mathscr{T}$ is a finite tree of closed Jordan disks (a tree-like inversive multi-domain), and these Jordan domains can be uniformized by rational maps. 
One can regard  each strata
$$
\mathscr{T}_{\mathcal{L}}:=\bigsqcup_{G\in\mathrm{Teich}(G_0)} \mathscr{B}_{G,\mathcal{L}}
$$
of the total space as a subset of a certain quotient of the product space $\prod_{j=1}^k \mathrm{Rat}_{d_j}(\C)$, where $d_j$ are the degrees of the uniformizing rational maps and the quotients come from normalizations of the rational maps (cf. \cite[\S 6]{MM2}). Here, $\mathscr{B}_{G,\mathcal{L}}$ consists of maps in $\mathscr{B}_G$ with coarse lamination $\mathcal{L}$, see Subsection~\ref{coarse_lami_subsec}.

The map $\Psi$ of Theorem~\ref{cart_prod_thm} defines a `holomorphic motion' of $\mathscr{B}_{G_0}$ over the Teichm{\"u}ller space $\mathrm{Teich}(G_0)$ in the above ambient space; indeed, the maps $\Psi_S$ can be shown to be holomorphic (cf. \cite[Proposition~7.2]{MM2}). However, the dimension of $\mathscr{B}_{G_0}$ is larger than one as $d> 2$. Hence, there is no $\lambda$-lemma in this setting that guarantees transverse continuity of the holomorphic motion. Note that transverse continuity of the motion is equivalent to continuity of the holonomy maps $\Psi_G:\mathscr{B}_{G_0}\to\mathscr{B}_G$, $G\in\mathrm{Teich}(G_0)$.

We will now analyze the source of possible discontinuity of the above holomorphic motion in the transverse direction. To this end, let us first fix $G_1\in\mathrm{Teich}(G_0)$. For $G$ close to $G_1$ in $\mathrm{Teich}(G_0)$, the conjugacies between $G$ and $G_0$ can be chosen to be $K-$quasiconformal, for some $K>1$. This implies that the hybrid conjugacies between $S\in\mathscr{B}_{G_0}$ and $\Psi_S(G)$ are given by global $K$-quasiconformal homeomorphisms, for $G$ close to $G_1$ (note that $K$ does not depend on $S\in\mathscr{B}_{G_0}$). Consequently, the family of holomorphic maps 
$$
\left\{\Psi_S:\mathrm{Teich}(G_0)\to\mathscr{T}_{\mathcal{L}}\right\}_{S\in\mathscr{B}_{G_0,\mathcal{L}}}
$$
is locally bounded and hence normal (recall that $\mathscr{B}_{G_0}$ is compact).
Therefore, for a sequence $S_n\to S_\infty$ in $\mathscr{B}_{G_0,\mathcal{L}}$, the holomorphic maps $\Psi_{S_n}$ have (injective) holomorphic subsequential limits that send each $G\in\mathrm{Teich}(G_0)$ into the vertical fiber $\mathscr{B}_G$. However, although each $\Psi_{S_n}$ takes values in the hybrid class $[S_n]$, a priori, the images of the holomorphic subsequential limits are contained in the quasiconformal deformation space of the map $S_\infty$, which is possibly larger than the hybrid class $[S_\infty]$ (cf. \cite[\S II.7, Page 313]{DH2}). In other words, if the filled Julia set $K(S_\infty)$ supports non-trivial quasiconformal deformations, then the map $\Psi_{S_\infty}$ may not be the local uniform limit of $\{\Psi_{S_n}\}$. This is exactly the reason why the holonomy maps $\Psi_G$ may fail to be continuous at quasiconformally non-rigid parameters. (This is also the same phenomenon that is responsible for discontinuity in Theorem~\ref{thm:C} and Theorem~\ref{b_inv_fbs_thm}, and c.f. Theorem \ref{top_prod_poly_thm}.)

\begin{remark}
    The situation discussed above is analogous to the holomorphic motion of the Mandelbrot set over $\D$ in the space of quadratic rational maps (where one varies the multiplier of the fixed point at $\infty$ throughout $\D$, cf. \cite[Theorem~5.4]{BB09}). Since the Mandelbrot set lives in $\C$, this motion enjoys transverse continuity (in fact, quasiconformality).
\end{remark}

\subsubsection{The antiholomorphic case}

Much of the discussion of the previous subsection can be applied to Schwarz reflections after passing to the second iterate. Nevertheless, since the Teichm{\"u}ller space of the reflection group $\pmb{G}_d$ only admits a real-analytic structure (as opposed to complex structures on Teichm{\"u}ller spaces of Fuchsian groups), the motion of vertical fibers is only real-analytic in this context. 
\begin{figure}[ht]
\captionsetup{width=0.98\linewidth}
\begin{tikzpicture}
\node[anchor=south west,inner sep=0] at (0,0) {\includegraphics[width=0.8\textwidth]{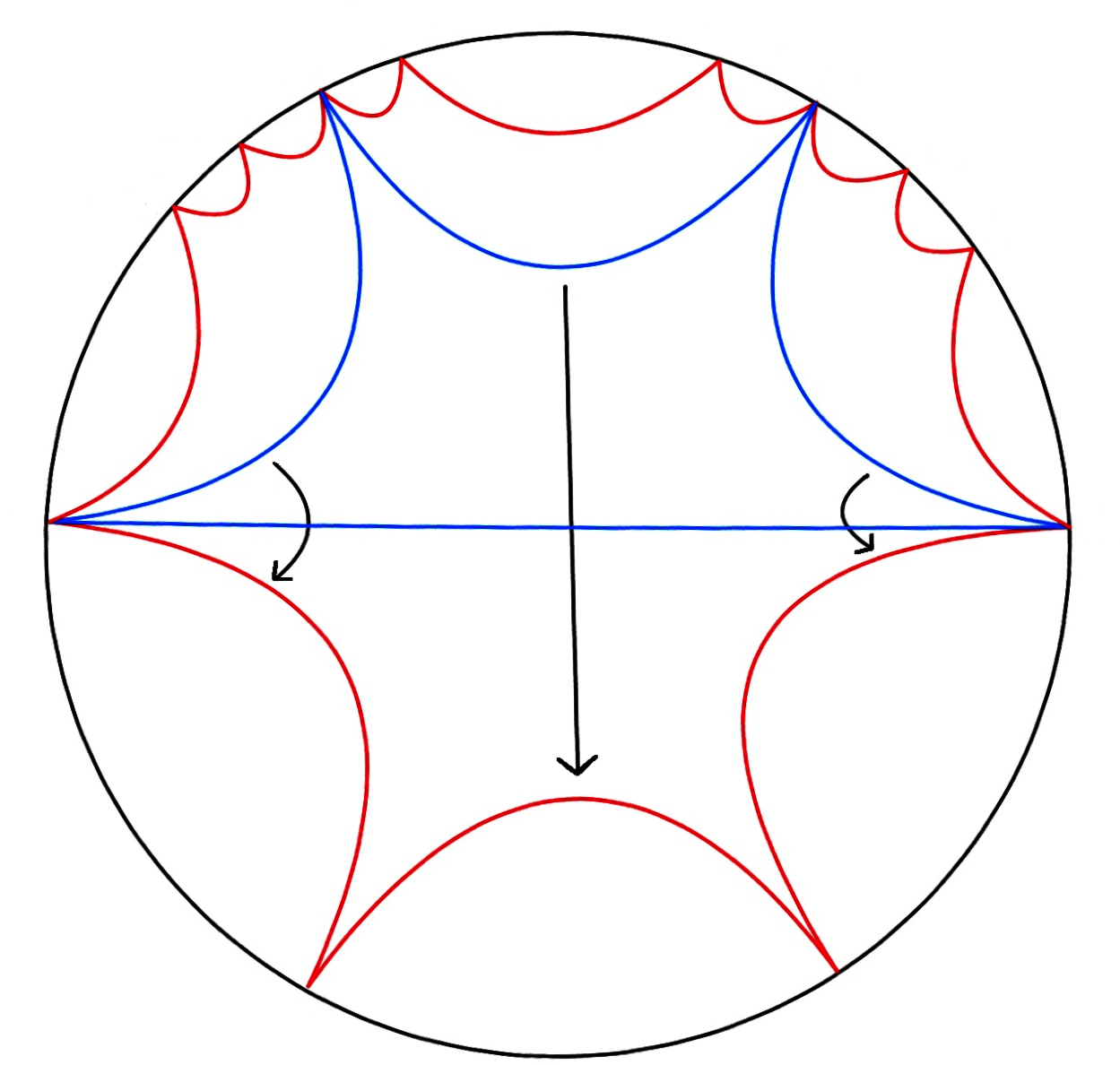}};
\node at (4,6.2) {\begin{footnotesize}$\Pi(G)$\end{footnotesize}};
\node at (4.25,3.2) {\begin{footnotesize}$r_1(\Pi(G))$\end{footnotesize}};
\node at (4.88,8) {\begin{footnotesize}$r_3(\Pi(G))$\end{footnotesize}};
\node [rotate=-45] at (7.9,6.4) {\begin{footnotesize}$r_2(\Pi(G))$\end{footnotesize}};
\node [rotate=45] at (2.5,6.6) {\begin{footnotesize}$r_4(\Pi(G))$\end{footnotesize}};
\node at (7.05,5.16) {\begin{footnotesize}$r_1\circ r_2$\end{footnotesize}};
\node at (5.8,6) {\begin{footnotesize}$r_1\circ r_3$\end{footnotesize}};
\node at (3.35,5.18) {\begin{footnotesize}$r_1\circ r_4$\end{footnotesize}};
\end{tikzpicture}
\caption{Depicted is the higher Bowen-Series map which arises as the second iterate of the Nielsen map of an ideal quadrilateral reflection group.}
\label{hbs_fig}
\end{figure}

To set up the framework, let us consider the second iterates $\cN_G^{\circ 2}$ of Nielsel maps of the reflection groups $G\in\mathrm{Teich}(\pmb{G}_d)$. Such a map is defined on the complement of the interior of an ideal $d(d+1)-$gon $\widehat{\Pi(G)}$ in $\D$. In Figure~\ref{hbs_fig} (where $d=3$), the polygon $\widehat{\Pi(G)}$ is bounded by the red geodesics, while the blue geodesics bound the original polygon $\Pi(G)$ whose boundary reflections generate $G$. If $r_1,\cdots, r_{d+1}$ are the reflections in the sides of $\Pi(G)$, then for each $j\in\{1,\cdots,d+1\}$, $$
\Xi_j:=\{r_j\circ r_k:\ k\in\{1,\cdots,d+1\},\ k\neq j\}
$$ 
is a minimal generating set of the index two Fuchsian subgroup $\widehat{G}$ of $G$. In fact, the elements of $\Xi_j$ are the side-pairing transformations for the fundamental domain $\Pi(G)\cup r_j(\Pi(G))$ of $\widehat{G}$. It is also easily seen that 
$$
\widehat{\Pi(G)}= \Pi(G)\cup\bigcup_{j=1}^{d+1}r_j(\Pi(G)),
$$
and $\D/\widehat{G}$ is a $(d+1)-$times punctured sphere that admits an antiholomorphic involution (induced by $r_j$). It can now be readily verified that the maps $\cN_G^{\circ 2}$, $G\in\mathrm{Teich}(\pmb{G}_d)$, are \emph{higher Bowen-Series maps} associated with $(d+1)-$times punctured spheres in the sense of \cite[\S 4]{MM1} (cf. \cite[\S 4]{MM3}).

According to \cite[\S 4]{MM1}, one can associate a higher Bowen-Series map to each member $\Gamma\in\mathrm{Teich}(\widehat{\pmb{G}_d})$ (where $\widehat{\pmb{G}_d}$ is the index two Fuchsian subgroup of $\pmb{G}_d$). Further, a quasiconformal conjugation between $\widehat{\pmb{G}_d}$ and $\Gamma$ induces a quasiconformal conjugation between their higher Bowen-Series maps. We denote the higher Bowen-Series map by $A_\Gamma^{h}$. We denote the vertical fiber of $A_\Gamma^{h}$ in the space of degree $d^2$ degenerate polynomial-like maps with connected Julia set by $\E_\Gamma^h$. We define the total space
$$
\mathscr{T}^h:=\bigsqcup_{\Gamma\in\mathrm{Teich}(\widehat{\pmb{G}_d})}\ \E_{\Gamma}^h.
$$
The arguments of the previous subsection now implies that the `base' vertical fiber $\E_{\widehat{\pmb{G}_d}}^h$ moves holomorphically over the Teichm{\"u}ller space of $\widehat{\pmb{G}_d}$. Here, we embed the total space $\mathscr{T}^h$ into the product space of holomorphic maps on finitely many domains in $\C$.

The operation of passing to the second iterate embeds the vertical fiber $\mathscr{S}_{\pmb{G}_d}$ (in the space of degree $d$ degenerate anti-polynomial-like maps with connected Julia set) into the vertical fiber $\E_{\widehat{\pmb{G}_d}}^h$ (in the space of degree $d^2$ degenerate polynomial-like maps with connected Julia set). Thus, this copy of $\mathscr{S}_{\pmb{G}_d}$ in $\E_{\widehat{\pmb{G}_d}}^h$ moves holomorphically over $\mathrm{Teich}(\widehat{\pmb{G}_d})$. This holomorphic motion can be used to explain the topological product structure in the antiholomorphic case of Theorem~\ref{top_prod_poly_thm}.

Finally, note that $\mathrm{Teich}(\pmb{G}_d)$ is the fixed-point set of an antiholomorphic involution on $\mathrm{Teich}(\widehat{\pmb{G}_d})$ (elements of $\mathrm{Teich}(\pmb{G}_d)$ correspond to `symmetric' $(d+1)-$times punctured spheres, cf. \cite[\S 3.3]{LLM1}), and hence $\mathrm{Teich}(\pmb{G}_d)$ can be regarded as a real slice of the complex manifold $\mathrm{Teich}(\widehat{\pmb{G}_d})$. It follows that the above copy of $\mathscr{S}_{\pmb{G}_d}$ moves real-analytically over $\mathrm{Teich}(\pmb{G}_d)$.


\begin{thebibliography}{LLMM23a}


\bibitem[AS76]{AS76}
D.~Aharonov and H.~S. Shapiro.
\newblock Domains on which analytic functions satisfy quadrature identities.
\newblock {\em J. Analyse Math.}, 30:39--73, 1976.

\bibitem[BB09]{BB09}
G.~Bassanelli and F.~Berteloot.
\newblock Lyapunov exponents, bifurcation currents and laminations in bifurcation loci.
\newblock {\em Math. Ann.}, 345:1--23, 2009.

\bibitem[BF03]{BF03}
S.~Bullett and M.~Freiberger.
\newblock Hecke groups, polynomial maps and matings.
\newblock {\em Internat. J. Modern Phys. B}, 17:3922--3931, 2003.

\bibitem[BF05]{BF05}
S.~Bullett and M.~Freiberger.
\newblock Holomorphic correspondences mating Chebyshev-like maps with Hecke groups.
\newblock {\em Ergodic Theory Dynam. Systems}, 25:1057--1090, 2005.

\bibitem[BH07]{BH07}
S.~Bullett and P.~Ha{\"i}ssinsky.
\newblock Pinching holomorphic correspondences. 
\newblock {\em Conform. Geom. Dyn.}, 11:65--89, 2007.

\bibitem[BL20]{BL20}
S.~Bullett and L.~Lomonaco.
\newblock Mating quadratic maps with the modular group {II}.
\newblock {\em Invent. Math.}, 220:185--210, 2020.

\bibitem[BL22]{BL22}
S.~Bullett and L.~Lomonaco.
\newblock Dynamics of Modular Matings.
\newblock {\em Adv. Math.}, 410, Part B, 108758, 2022.

\bibitem[BLLM24]{BLLM}
S.~Bullett, L.~Lomonaco, M.~Lyubich, and S.~Mukherjee.
\newblock Mating parabolic rational maps with Hecke groups.
\newblock \url{https://arxiv.org/abs/2407.14780}, 2024.

\bibitem[BP94]{BP}
S.~Bullett and C.~Penrose.
\newblock Mating quadratic maps with the modular group.
\newblock {\em Invent. Math.}, 115:483--511, 1994.

\bibitem[CDKvS22]{CDKvS22}
T.~Clark, K.~Drach, O.~Kozlovski, and S.~van~Strien.
\newblock The dynamics of complex box mappings. 
\newblock {\em Arnold Math. J.}, 8:319–-410, 2022.

\bibitem[DS06]{DS06}
T.-C.~Dinh and N.~Sibony.
\newblock Distribution des valeurs de transformations m{\'e}romorphes et applications.
\newblock {\em Comment. Math. Helv.} 81:221--258, 2006.

\bibitem[DH85a]{DH1}
A.~Douady and J.~H.~Hubbard.
\newblock {\em {\'E}tude dynamique des polyn\^omes complexes {I}, {II}}.
\newblock Publications Math\'ematiques d'Orsay. Universit\'e de Paris-Sud, D\'epartement de Math\'ematiques, Orsay, 1984 - 1985.
  
\bibitem[DH85b]{DH2}
A.~Douady and J.~H. Hubbard.
\newblock On the dynamics of polynomial-like mappings.
\newblock {\em Ann. Sci. {\'E}c. Norm. Sup.}, 18:287--343, 1985.

\bibitem[Dud11]{Dud11}
D.~Dudko.
\newblock Matings with laminations.
\newblock \url{https://arxiv.org/abs/1112.4780}, 2011.

\bibitem[Eps93]{Eps93}
A.~L. Epstein.
\newblock {\em Towers of finite type complex analytic maps}.
\newblock PhD thesis, The City University of New York, 1993.

\bibitem[Fat29]{Fatou29}
P.~Fatou.
\newblock Notice sur les travaux scientifiques de {M}. {P}. {F}atou.
\newblock Astronome titulaire de l'observatoire de Paris, 5--29, 1929, \url{https://www.math.purdue.edu/~eremenko/dvi/fatou-b.pdf}.

\bibitem[Gey20]{Gey20}
L.~Geyer.
\newblock Classification of critically fixed anti-rational maps.
\newblock \url{https://arxiv.org/abs/2006.10788v3}, 2020.

\bibitem[GM93]{GM93}
L. Goldberg and J. Milnor.
\newblock Fixed points of polynomial maps, part {II}: fixed point portraits.
\newblock {\em Ann. Sci. \'Ec. Norm. Sup\'er. (4)}, 26(1):51--98, 1993.

\bibitem[GT21]{GT21}
Y. Gao and G. Tiozzo.
\newblock The core entropy for polynomials of higher degree.
\newblock {\em J. Eur. Math. Soc. (JEMS)}, 24:2555--2603, 2022.

\bibitem[HS14]{HS}
J.~H.~Hubbard and D.~Schleicher.
\newblock {M}ulticorns are not path connected.
\newblock {\em Frontiers in Complex Dynamics: In Celebration of John Milnor's 80th Birthday}, pages 73--102, 2014.

\bibitem[Ino09]{Ino09}
H.~Inou.
\newblock Combinatorics and topology of straightening maps II: Discontinuity.
\newblock \url{http://arxiv.org/abs/0903.4289}, 2009.

\bibitem[IK12]{IK}
H.~Inou and J.~Kiwi.
\newblock {C}ombinatorics and topology of straightening maps, {I}: {C}ompactness and bijectivity.
\newblock {\em Adv. Math.}, 231:2666--2733, 2012.

\bibitem[IM21]{IM21}
H.~Inou and S.~Mukherjee.
\newblock Discontinuity of straightening in anti-holomorphic dynamics: {I}.
\newblock {\em Trans. Amer. Math. Soc.}, 374:6445--6481, 2021.
  
\bibitem[KvS09]{KvS09}
O. Kozlovski, S. van Strien. 
\newblock Local connectivity and quasi-conformal rigidity of non-renormalizable polynomials. 
\newblock \emph{Proc. Lond. Math. Soc. (3)}, 99:275--296, 2009.

\bibitem[Kiw97]{Kiw97}
J. Kiwi.
\newblock Rational rays and critical portraits of complex polynomials. 
\newblock \emph{Thesis, SUNY at Stony Brook}, 1997.

\bibitem[Kiw01]{Kiw01}
J.~Kiwi.
\newblock Rational laminations of complex polynomials.
\newblock In Mikhail Lyubich, John~W. Milnor, and Yair~N. Minsky, editors, {\em
  Laminations and Foliations in Dynamics, Geometry and Topology}, volume 269 of
  {\em Contemporary Mathematics}, pages 111--154. American Mathematical Society, 2001.
  
\bibitem[Kiw04]{Kiw04}
J.~Kiwi.
\newblock $\mathbb{R}$eal laminations and the topological dynamics of complex polynomials.
\newblock	{\em Adv. Math.}, 184:207--267, 2004.
 
\bibitem[LMM22]{LMM2}
K.~Lazebnik, N.~G. Makarov, and S.~Mukherjee.
\newblock Bers slices in families of univalent maps.
\newblock {\em Math. Z.}, 300:2771--2808, 2022.


\bibitem[LLMM21]{LLMM3}
S.-Y. Lee, M.~Lyubich, N.~G. Makarov, and S.~Mukherjee.
\newblock Schwarz reflections and anti-holomorphic correspondences.
\newblock {\em Adv. Math.}, 385:Paper No. 107766, 88, 2021.

\bibitem[LLMM23a]{LLMM1}
S.-Y. Lee, M.~Lyubich, N.~G. Makarov, and S.~Mukherjee.
\newblock Dynamics of {S}chwarz reflections: the mating phenomena.
\newblock {\em Ann. Sci. {\'E}c. Norm. Sup{\'e}r. (4)}, 56:1825--1881, 2023.
  
  
\bibitem[LLMM23b]{LLMM2}
S.-Y. Lee, M.~Lyubich, N.~G. Makarov, and S.~Mukherjee.
\newblock {S}chwarz reflections and the {T}ricorn.
\newblock To appear in \emph{Ann. Inst. Fourier (Grenoble)}, \url{https://arxiv.org/abs/1812.01573v2}, 2023.

\bibitem[LLMM23]{LLMM4}
R.~Lodge, M.~Lyubich, S.~Merenkov, and S.~Mukherjee.
\newblock On dynamical gaskets generated by rational Maps, Kleinian groups, and Schwarz reflections.
\newblock {\em Conform. Geom. Dyn.}, 27:1--54, 2023. 

\bibitem[LLM22a]{LLM1}
R.~Lodge, Y.~Luo, and S.~Mukherjee.
\newblock Circle packings, kissing reflection groups and critically fixed anti-rational maps.
\newblock {\em Forum Math. Sigma}, 10, paper no. e3, 38 pp., 2022.

\bibitem[LM14]{LM14}
S.~Y. Lee and N.~G. Makarov.
\newblock Sharpness of connectivity bounds for quadrature domains.
\newblock \url{https://arxiv.org/abs/1411.3415v1}, 2016.

\bibitem[LM16]{LM16}
S.~Y. Lee and N.~G. Makarov.
\newblock Topology of quadrature domains.
\newblock {\em J. Amer. Math. Soc.}, 29(2):333--369, 2016. 

\bibitem[Lom15]{Lom15}
L.~Lomonaco.
\newblock Parabolic-like mappings.
\newblock {\em Ergodic Theory Dynam. Systems}, 35:2171--2197, 2015.

\bibitem[Luo23]{L23}
Y. Luo.
\newblock On geometrically finite degenerations {I}: boundaries of main hyperbolic components.
\newblock  to appear in {\em J. Eur. Math. Soc.}, 2023.

\bibitem[Lyu97]{Lyu97}
M.~Lyubich.
\newblock Dynamics of quadratic polynomials. I, II.
\newblock {\em Acta Math.}, 178:185--297, 1997.

\bibitem[Lyu99]{Lyu99}
M.~Lyubich.
\newblock Feigenbaum-Coullet-Tresser universality and Milnor's hairiness conjecture. 
\newblock {\em Ann. of Math. (2)}, 149:319--420, 1999.

\bibitem[Lyu24]{Lyu23}
M.~Lyubich.
\newblock Conformal geometry and dynamics of quadratic polynomials, vol i-ii.
\newblock \url{http://www.math.stonybrook.edu/~mlyubich/book.pdf}, 2023.

\bibitem[LMM23]{LMM23}
M.~Lyubich, J.~Mazor, and S.~Mukherjee.
\newblock Antiholomorphic correspondences and mating I: realization theorems.
\newblock To appear in \emph{Comm. Amer. Math. Soc.}, \url{https://arxiv.org/abs/2303.02459}, 2023.

\bibitem[LMMN20]{LMMN}
M.~Lyubich, S.~Merenkov, S.~Mukherjee, and D.~Ntalampekos.
\newblock David extension of circle homeomorphisms, welding, mating, and removability.
\newblock To appear in \emph{Mem. Amer. Math. Soc.}, \url{https://arxiv.org/abs/2010.11256v2}, 2020.

\bibitem[LM93]{LM93}
M.~Lyubich and J.~Milnor.
\newblock The Fibonacci unimodal map.
\newblock {\em J. Amer. Math. Soc.}, 6:425--457, 1993.

\bibitem[Mak93]{Mak93}
P.~Makienko.
\newblock Pinching and plumbing deformations of  quadratic rational maps.
\newblock IC/93/32, International Centre for Theoretical Physics, Trieste, Italy, 1993, \url{https://inis.iaea.org/collection/NCLCollectionStore/_Public/24/036/24036519.pdf}.

\bibitem[Man93]{Man93}
R.~Ma{\~n}{\'e}.
\newblock On a theorem of Fatou.
\newblock {\em Bol. Soc. Brasil. Mat. (N.S.)}, 24:1--11, 1993.

\bibitem[McM88]{McM}
C.~McMullen.
\newblock Automorphisms of rational maps.
\newblock In {\em Holomorphic functions and moduli}, vol. I (Berkeley, CA, 1986), 31--60, Math. Sci. Res. Inst. Publ., 10, Springer, New York, 1988. 

\bibitem[McM94]{McM1}
C.~McMullen.
\newblock Complex dynamics and renormalization.
\newblock Annals of Mathematics Studies, 135. Princeton University Press, Princeton, NJ, 1994. x+214 pp. 
         
\bibitem[Mil00]{Mil00}
J.~Milnor.
\newblock Periodic orbits, external rays and the {M}andelbrot set.
\newblock {\em Ast\'erisque}, 261:277--333, 2000.


\bibitem[Mil06]{Mil06}
J.~Milnor.
\newblock {\em Dynamics in one complex variable}.
\newblock Princeton University Press, New Jersey, 3rd edition, 2006.         

\bibitem[MM23a]{MM1}
M.~Mj and S.~Mukherjee.
\newblock Combining rational maps and {K}leinian groups via orbit equivalence.
\newblock \emph{Proc. Lond. Math. Soc (3)}, 126:1740--1809, 2023.

\bibitem[MM23b]{MM2}
M.~Mj and S.~Mukherjee.
\newblock Matings, holomorphic correspondences, and a Bers slice. 
\newblock \url{https://arxiv.org/abs/2304.12699}, 2023.

\bibitem[MM23c]{MM3}
M.~Mj and S.~Mukherjee.
\newblock The Sullivan dictionary and Bowen-Series maps.
\newblock {\em EMS Surv. Math. Sci.}, 10:179--221, 2023.

\bibitem[MNS17]{MNS}
S.~Mukherjee, S.~Nakane, and D.~Schleicher.
\newblock On {M}ulticorns and {U}nicorns {II}: bifurcations in spaces of antiholomorphic polynomials.
\newblock {\em Ergodic Theory Dynam. Systems}, 37:859--899, 2017.

\bibitem[PR21]{PR21}
C.~L.~Petersen and P.~Roesch.
\newblock The Parabolic Mandelbrot Set.
\newblock \url{https://arxiv.org/abs/2107.09407}, 2021.

\bibitem[Sak91]{Sak91}
M.~Sakai.
\newblock Regularity of a boundary having a {S}chwarz function.
\newblock {\em Acta Math.}, 166:263--297, 1991.

\bibitem[Sul85]{Sul}
D.~Sullivan.
\newblock Quasiconformal homeomorphisms and dynamics {I}. solution of the
  {F}atou-{J}ulia problem on wandering domains.
\newblock {\em Ann. of Math. (2)}, 122(2):401--418, 1985.

\bibitem[Sul92]{Sul1}
D.~Sullivan.
\newblock Bounds, quadratic differentials, and renormalization conjectures. 
\newblock American Mathematical Society centennial publications, Vol. II (Providence, RI, 1988), 417–466, Amer. Math. Soc., Providence, RI, 1992.

\bibitem[SM98]{SulMc} 
D.~Sullivan and C.~T.~McMullen.
\newblock Quasiconformal homeomorphisms and dynamics III: The Teichm{\"u}ller space of a holomorphic dynamical system.
\newblock {\em Adv. Math.}, 135:351--395, 1998.

\bibitem[Urb03]{Urb1}
M.~Urbanski.
\newblock Measures and dimensions in conformal dynamics.
\newblock {\em Bull. Amer. Math. Soc. (N.S.)}, 40:281--321, 2003.

\end{thebibliography}
\end{document}